
\documentclass[11pt]{amsart}
\usepackage{amsmath,amsfonts,amssymb,amsthm,epsfig,color, mathrsfs,eucal, amscd} 
\usepackage{verbatim}
\usepackage{hyperref} 
\usepackage{bbm}

\usepackage{hyperref}
\usepackage{graphicx} 
\usepackage{booktabs} 
\usepackage{wrapfig} 
\usepackage[labelfont=bf]{caption} 
\usepackage[top=0.9in,bottom=0.9in,left=1in,right=1in]{geometry} 

\usepackage{tensor} 

\usepackage{mathabx}

\newcommand{\bb}{\mathbb}

\newcommand{\h}{\mathfrak{h}}

\newcommand{\mL}{\mathcal{L}}
\newcommand{\Pp}{\mathcal{P}}
\newcommand{\Hc}{\mathcal{H}}
\newcommand{\Rr}{\mathcal{R}}
\newcommand{\Nn}{\mathcal{N}}

\newcommand{\ZZ}{\bb Z}

\newcommand{\RR}{\bb R}
\newcommand{\TT}{\bb T}

\newcommand{\NN}{\bb N}

\newcommand{\In}{\mathbbm{1}}

\newcommand{\diag}{\operatorname{diag}}

\newcommand{\mU}{\mathcal U}

\newcommand{\Ss}{\mathcal S}
\newcommand{\Aa}{\mathcal A}
\newcommand{\Bb}{\mathcal B}
\newcommand{\Dd}{\mathcal D}
\newcommand{\Ee}{\mathcal E}
\newcommand{\Kk}{\mathcal K}
\newcommand{\Tt}{\mathcal T}
\newcommand{\I}{\mathcal{I}}
\newcommand{\J}{\mathcal{J}}

\newcommand{\Ga}{\Gamma}

\newcommand{\wrt}[1]{\mathrm{d}{#1}}

\newcommand{\supp}{\operatorname{supp}}

\newcommand{\vol}{\operatorname{vol}}
\newcommand{\SL}{\operatorname{SL}}
\newcommand{\GL}{\operatorname{GL}}

\newcommand{\PSL}{\operatorname{PSL}}

\newcommand{\SO}{\operatorname{SO}}
\newcommand{\Oo}{\operatorname{O}}
\newcommand{\Aut}{\operatorname{Aut}}

\newcommand{\F}{\mathcal F}

\newcommand{\pf}{\mathfrak p}
\newcommand{\qf}{\mathfrak q}
\newcommand{\Cf}{\mathfrak C}
\newcommand{\If}{\mathfrak{I}}

\newcommand{\Ff}{\mathscr {F}}
\newcommand{\SP}{\mathscr {SP}}
\newcommand{\Cc}{\mathcal {C}}

\newcommand{\Sie}{\operatorname{Sie}}

\newtheorem{Theorem}{Theorem}
\numberwithin{Theorem}{section}

\newtheorem{coro}[Theorem]{Corollary}

\newtheorem{theo}[Theorem]{Theorem}
\newtheorem{prop}[Theorem]{Proposition}

\newtheorem{lemm}[Theorem]{Lemma}

\newtheorem*{lemma*}{Lemma}
\newtheorem*{question*}{Question}

\newtheorem*{theorem*}{Theorem}

\theoremstyle{remark}

\newtheorem{rema}[Theorem]{\sc Remark}

\numberwithin{equation}{section}
\begin{document}

\title{Shrinking target horospherical equidistribution via translated Farey sequences}
\author{Jimmy Tseng}
\email{j.tseng@exeter.ac.uk}
\address{Department of Mathematics and Statistics, University of Exeter, Exeter EX4 4QF, UK}

\thanks{The author was supported by EPSRC grant EP/T005130/1.}

\begin{abstract} For a certain diagonal flow on $\SL(d, \ZZ)  \backslash \SL(d, \RR)$ where $d \geq 2$, we show that any bounded subset (with measure zero boundary) of the horosphere or a translated horosphere equidistributes, under a suitable normalization, on a target shrinking into the cusp.  This type of equidistribution is {\em shrinking target horospherical equidistribution (STHE)}, and we show STHE for several types of shrinking targets.  Our STHE results extend known results for $d=2$ and $\mL \backslash \PSL(2, \RR)$ where $\mL$ is any cofinite Fuchsian group with at least one cusp.

The two key tools needed to prove our STHE results for the horosphere are a renormalization technique and Marklof's result on the equidistribution of the Farey sequence on distinguished sections.   For our STHE results for translated horospheres, we introduce {\em translated Farey sequences}, develop some of their geometric and dynamical properties, generalize Marklof's result by proving the equidistribution of translated Farey sequences for the same distinguished sections, and use this equidistribution of translated Farey sequences along with the renormalization technique to prove our STHE results for translated horospheres.

\end{abstract}

\keywords{Shrinking target horospherical equidistribution, translated horospheres, translated Farey sequences, equidistribution, renormalization, Grenier domain, Cholesky factorization, space of unimodular lattices}

\maketitle

\tableofcontents

\section {Introduction}\label{secBackground}

Let $d \geq 2$ be an integer, $G:=\SL(d, \RR)$, $\Gamma:= \SL(d,\ZZ)$, and $\mu$ be the Haar measure on $G$ normalized so that the measure of a fundamental domain of the lattice $\Ga$ is equal to one.  In this paper, we introduce a generalization of the Farey sequence from number theory (\ref{eqn:DefnFareySeq}) which we call a \textit{translated Farey sequence} (\ref{eqn:DefnTranFareySeq}), prove geometric and dynamical properties of translated Farey sequences (Section~\ref{subsecTranFareySeq} and Theorem~\ref{thmGeneralMarklof}), and use these properties to prove shrinking target horospherical equidistribution for a certain diagonal flow (\ref{eqn:DefnOurDiaFlow}) on the space of unimodular lattices $\Gamma \backslash G$, extending the result for $d=2$, namely~\cite[Theorem~1.1]{Tse21}.  Like the Farey sequence, a translated Farey sequence has useful geometric and dynamical properties, namely that it is discrete and equidistributes on a distinguished section (\ref{eqn:DefnOurSection}) of $\Gamma \backslash G$, and these properties allow us to rescale in space and time, providing a {\em renormalization}, which, as we shall see, is a powerful tool in studying geometric and dynamical questions such as the type of equidistribution results considered in this paper and may have further applicability.  Even for $d=2$, the technique in this paper allows us to consider any horocycle, instead of just the periodic horocycles considered in~\cite{Tse21}.  The technique from~\cite{Tse21}, the double coset decomposition and the method of stationary phase, seems to only be useful for periodic horocycles.

\subsection{Shrinking target equidistribution and preliminary notation}\label{subsec:STHEandPrelimNotation}  On a finite-volume space with cusps and with a geometric object that, under a flow, equidistributes on that space, a natural question to consider is what happens to the equidistribution restricted to a target set shrinking into the cusps.  This question is thus about the relationship between two dynamics, the rate of the equidistribution of the object and the rate of the shrinking of the target set.  If the target shrinks too quickly relative to the equidistribution of the object on the whole space, then we expect that no equidistribution occurs on the shrinking set.  If the target shrinks slowly enough with respect to the equidistribution of the object on the whole space, then it may be possible to obtain equidistribution on the shrinking target, suitably normalized for the shrinking of the target.  For $d=2$ and the equidistribution of periodic horocycles, such shrinking target equidistribution is possible for not just the modular group but also any cofinite Fuchsian group with at least one cusp~\cite[Theorems~1.1 and~1.2]{Tse21}.  Moreover, this dichotomy is sharp, namely there is a {\em critical exponent of relative rate} $c_r$, under which the target shrinks slowly enough relative to the equidistribution so that, normalized by the rate of the shrinking of the target into a cusp raised to the {\em normalizing exponent} $c_e$, one obtains (a slower) equidistribution on a related fixed target, called the {\em renormalized target}.  In the notation (\ref{eqn:DefnOurDiaFlow}, \ref{eqnSectionForT}) of this paper, the results~\cite[Theorems~1.1 and~1.2]{Tse21} show that the pair $(c_r, c_e)$ is $(2, 1)$ and invariant over the aforementioned Fuchsian groups.  We note that our results, Theorems~\ref{thmShrinkCuspNeigh},~\ref{thmNANResultRankOneFlow},~\ref{thmNAKResultRankOneFlow},~\ref{thmThickenSubsetSectionSpherical},~\ref{thmTwoChartGeneralSpherical}, and~\ref{thmThickenSubsetSectionSphericalGeneral}, in this paper are in agreement and give this pair as $(d, d-1)$ for $\Gamma$.

The notion of shrinking target equidistribution for horocycles has been precisely formulated for $d=2$ and the aforementioned Fuchsian groups at~\cite[(1.2)]{Tse21}.  We now extend this definition to $d \geq 2$ and $\Gamma$, noting that for $d=2$ and $\Gamma$ the definition in~\cite{Tse21} and in this paper are essentially the same, with each being more general than the other in different aspects.  Let $I_\ell$ be the $\ell \times \ell$ identity matrix.  Define the diagonal flow \begin{align}\label{eqn:DefnOurDiaFlow}
 \Phi^t := \begin{pmatrix} e^{-t} I_{d-1} & \tensor*[^t]{\boldsymbol{0}}{}  \\ \boldsymbol{0} & e^{(d-1)t}\end{pmatrix} \end{align} and the unipotent elements \[n_+(\boldsymbol{\widetilde{x}}) := \begin{pmatrix} I_{d-1} & \tensor*[^t]{\boldsymbol{0}}{} \\ \boldsymbol{\widetilde{x}} & 1  \end{pmatrix} \quad \textrm{ and } \quad n_-(\boldsymbol{x}) := \begin{pmatrix} I_{d-1} & \tensor*[^t]{\boldsymbol{x}}{} \\ \boldsymbol{0} & 1  \end{pmatrix},\] where the abelian subgroups \[N^+:=\{n_+(\boldsymbol{\widetilde{x}}): \boldsymbol{\widetilde{x}}\in\RR^{d-1}\}  \quad \textrm{ and } \quad N^-:=\{n_-(\boldsymbol{x}): \boldsymbol{x}\in\RR^{d-1}\}\] are the stable and unstable horospherical subgroups, respectively, for $\Phi^t$.   Let $\wrt {\boldsymbol{x}}$ (and $\wrt {\widetilde{\boldsymbol{x}}}$) be the Lebesgue measure on $\RR^{d-1}$, normalized so that it is a probability measure on $\TT^{d-1}$.  For a set $\Bb \subset \RR^{d-1}$, let \[N^+(\Bb):=\{n_+(\boldsymbol{\widetilde{x}}): \boldsymbol{\widetilde{x}}\in\Bb\}  \quad \textrm{ and } \quad N^-(\Bb):=\{n_-(\boldsymbol{x}): \boldsymbol{x}\in\Bb\}.\] 
 
 Fix $L \in G$ and let $\Aa_L$ be defined as in Section~\ref{subsec:GammaDuplicates}.  Given a bounded subset $\Aa \subset \Aa_L$ with measure zero boundary with respect to $\wrt {\boldsymbol{x}}$, we refer to $\Ga L N^-(\Aa)$ as a {\em (left-)translated horosphere} and say it {\em equidistributes} on a $\mu$-measurable subset $\mU$ of $\Gamma \backslash G$ if \[ \int_\Aa \In_{\mU}\left(Ln_-(\boldsymbol{x}) \Phi^t\right)~\wrt {\boldsymbol{x}} \xrightarrow[]{t \rightarrow \infty}\mu(\mU)\left(\int_{\RR^{d-1}} \In_{\Aa}(\boldsymbol{x})~\wrt{\boldsymbol{x}}\right).\]  Here $\In_S$ is the indicator function for a set $S$, and the argument of $\In_{\mU}$ is considered as an element of $\Gamma \backslash G$.  When $L = I_d$, we also say the {\em horosphere} and the {\em horosphere equidistributes}.\footnote{There are some extra complications in the proofs for generic $L$ versus $L = I_d$ for which having terminology distinguishing these cases is convenient.}  Translated horosphere equidistribution has been studied in various setups in a number of papers such as, for example,~\cite[Theorem~1.2]{EM},~\cite[Proposition~2.4.8]{KM96},~\cite[Theorem~5.2]{MS10}, and~\cite[Theorem~1.1]{EMV09}.  Note that the horosphere and translated horospheres are all regarded as objects in the space $\Gamma \backslash G$, and their equidistribution is with respect to the right-action of $\Phi^t$.
 
For shrinking target equidistribution, we replace the fixed target $\mU$ with a shrinking target.  Let \[H := \left\{\begin{pmatrix} A & \tensor*[^t]{\boldsymbol{b}}{} \\ \boldsymbol{0} & 1  \end{pmatrix} : A \in \SL(d-1, \RR), \boldsymbol{b} \in \RR^{d-1} \right\} \quad \textrm{ and } \quad \Ga_H:= \Ga \cap H. \]  Let $\mu_0$ be the Haar measure on $\SL(d-1, \RR)$ normalized so that the measure of a fundamental domain of the lattice $\SL(d-1, \ZZ)$ is equal to one, and $\mu_H$ be the left-invariant Haar measure on $H$ normalized in the same way with respect to the lattice $\Ga_H$.  Explicitly, we have that $\wrt \mu_H = \wrt \mu_0 \wrt{ \boldsymbol{b}}$ (cf~\cite[(3.4)]{Mar10}).  We begin defining shrinking target equidistribution by considering a section \begin{align}\label{eqn:DefnOurSection}
 \Ss_1:=\Ga \backslash \Ga H \{ \Phi^{-s} : s \in \RR_{\geq 0}\}, \end{align} which is a closed embedded submanifold of $\Ga \backslash G$~\cite[Lemma~2]{Mar10}, and we are, in particular, interested in the subset \begin{align}\label{eqnSectionForT}
\Ss_T:=  \Ga \backslash \Ga H \{ \Phi^{-s} : s \in \RR_{\geq \frac 1 d \log T}\}  \end{align} for $T \geq 1$.  By thickening $\Ss_T$ or one of its subsets, we obtain a shrinking target $\mU_T$ to be defined precisely below.  Note that $\Ss_T$ and, thus, $\mU_T$  shrink into the cusp as $T \rightarrow \infty$.  For such a target $\mU_T$ shrinking into the cusp slowly enough with respect to the rate of the equidistribution of a translated horosphere, we expect, provided we normalize to account for the shrinking target, that the translated horosphere equidistributes as follows:  \begin{align}\label{eqn:STHEDefn} 
 {T^{d-1}}\int_{\Aa} \In_{\mU_T}({Ln_-(\boldsymbol{x}) \Phi^t})~\wrt {\boldsymbol{x}} \longrightarrow T_0^{d-1}\mu(\mU_{T_0})\left(\int_{\RR^{d-1}} \In_{\Aa}(\boldsymbol{x})~\wrt{\boldsymbol{x}}\right)\end{align} as $T \rightarrow \infty$ and $t \rightarrow \infty$.  We refer to (\ref{eqn:STHEDefn}) as {\em shrinking target horospherical equidistribution (STHE)}.  Here $T_0 \geq 1$ is a fixed constant and $\mU_{T_0}$ is a fixed target, which we refer to as the {\em renormalized target}.   The normalization to account for the shrinking target is to divide by $\left(\frac{T_0} {T}\right)^{d-1}$, and, since it is the same for all of our results in this paper, we refer to its exponent as the {\em normalizing exponent}.\footnote{This normalizing exponent is the same as that in~\cite{Tse21} even though the notation in that paper is slightly different.}  Note that STHE is closely related to Diophantine approximation in that Diophantine approximation can be regarded as the critical limit of STHE in a suitable sense, and this relationship and its implications for number theory will be elaborated upon in~\cite{Tse23}.

\subsection{Construction of shrinking targets}\label{secConstructShrinkTargets}  We construct the shrinking targets $\mU_T$ by thickening.  Fix a subset $\Bb \subset \TT^{d-1}$.  Since we are considering the equidistribution of a  translated horosphere, it is natural to consider the thickening in the stable horospherical directions, namely the set $\Ss_T N^+(\Bb)$.  (See Corollary~\ref{coroHaarMeaSTN} for $\mu\left(\Ss_T N^+(\Bb) \right)$.)  More generally, we will consider the analogous thickenings of subsets of $\Ss_T$, and these subsets are given in terms of (\ref{eqnFundDomCoord}), the Grenier coordinates.  These are the most convenient coordinates for our technique in this paper as the fundamental domain $\varphi(\Ff'_d)$, defined in Section~\ref{secCuspNeigh}, associated to these coordinates has a ``box shape'' in the cusp (see Section~\ref{secCuspNeigh} for details).  

Let us define a useful subgroup of \[K := \SO(d, \RR),\] namely \begin{align*} K':=\left\{\begin{pmatrix}  k &\tensor*[^t]{\boldsymbol{0}}{}  \\ \boldsymbol{0} & 1 \end{pmatrix} : k \in
\SO(d-1, \RR) \right\}. \end{align*}  Let $\wrt k$ denote the probability Haar measure on $K$ and $\wrt {\widetilde{k}}$ denote the induced Haar measure on $K'$.  By (\ref{eqnDefnSiegelSets}) and Remarks~\ref{rmkTheFunDoms} and~\ref{rmkOmittingVarphi}, we can form a subset on $\Ss_1$, which we refer to as a {\em Grenier box}, by choosing a measurable subset \begin{align}\label{eqnDefofTildeK}\widetilde{K} \subset K'.\end{align} and constants $\alpha_\ell$, $\gamma_\ell$, $\beta^-_{ij}$, $\beta^+_{ij}$ as follows:  \begin{align}\label{eqnDefnGrenierBox} \begin{cases}  \begin{cases}   0\leq \beta^-_{1j}\leq x_{1j}\leq \beta^+_{1j}\leq 1/2  &\textrm {for } j = 2,\cdots, d-1 \\ -1/2 \leq \beta^-_{1j} \leq  x_{1j}\leq \beta^+_{1j}\leq1/2 &\textrm {for } j =  d \\ -1/2\leq \beta^-_{ij} \leq x_{ij}\leq \beta^+_{ij} \leq 1/2 &\textrm {for } 2\leq i < j \leq  d \\  1\leq \alpha_\ell \leq y_\ell \leq \gamma_\ell \leq \infty &\textrm {for } \ell = 1,\cdots, d-1\end{cases} & \textrm{ if } d \textrm{ is even,} \\
\begin{cases}   0\leq \beta^-_{1j}\leq x_{1j}\leq \beta^+_{1j}\leq 1/2  &\textrm {for } j = 2,\cdots, d  \\ -1/2\leq \beta^-_{ij} \leq x_{ij}\leq \beta^+_{ij} \leq 1/2  &\textrm {for } 2\leq i < j \leq  d \\   1\leq \alpha_\ell \leq y_\ell \leq \gamma_\ell \leq \infty &\textrm {for } \ell = 1,\cdots, d-1\end{cases} & \textrm{ if } d \textrm{ is odd.}\end{cases}
  \end{align}  We will denote a Grenier box by $\Cc:=\Cc_{T_-, T_+}:=\Cc(\boldsymbol{\alpha}, \boldsymbol{\gamma}, \widetilde{K},  \beta^-_{ij}, \beta^+_{ij})$ where $\boldsymbol{\alpha} := (\alpha_1, \cdots, \alpha_{d-1})$, $\boldsymbol{\gamma} := (\gamma_1, \cdots, \gamma_{d-1})$, and \[T^d_-:=\prod^{d-1}_{k=1} \alpha_{d-k}^{2(d-k)}, \quad \quad T^d_+:=\prod^{d-1}_{k=1} \gamma_{d-k}^{2(d-k)}.\]  The constants $T_-$ and $T_+$ are the \textit{lower height} and \textit{upper height}, respectively, of the Grenier box.  Our shrinking targets $\mU_T$ thickened in the stable horospherical directions from subsets of $\Ss_T$ are then given by \[\widetilde{\Cc}_TN^+(\Bb):=\Cc \Phi^{-\frac 1 d \log(T/T_-^{d/2(d-1)})}N^+(\Bb).\]

Alternatively, in view of the Iwasawa decomposition of $G$, which gives a natural coordinate system for $G$, thickening in spherical directions is also natural and, perhaps, more useful than in stable directions.  Let $e_1, \cdots, e_\ell$ be the standard basis vectors of $\RR^\ell$ and $S^{\ell-1}$ be the sphere in $\RR^{\ell}$ of radius one centered at the origin.  If we thicken by a subset of $K$, we, in fact, only thicken in the directions given by the unit sphere $S^{d-1}$ because the directions corresponding to the subgroup $K'$ (whose elements fix, from the right, the vector $e_d$) are contained in the section $\Ss_1$.  Moreover, we have the identification \begin{align}\label{eqn:SphereIsKPrimeModK}
S^{d-1} = K' \backslash K  \end{align} as the $d$-th row of any element of an equivalence class is invariant under the class.  We will use similar notation to that in~\cite[Section~5.2]{MS10}.  Let $\Dd \subset \RR^{d-1}$ be an open bounded set.  Define a smooth mapping \begin{align}\label{eqn:DefnSmoothMapE} E: \Dd \rightarrow K  \end{align} such that  $\boldsymbol{0} \mapsto I_d$ and the induced mapping \begin{align}\label{eqnInducedDiffeoUsinged}
 \Dd \ni \boldsymbol{z} \mapsto e_d E(\boldsymbol{z})^{-1} \in S^{d-1} \end{align} is injective and has nonsingular differential at every point of $\Dd$ (cf the mapping into the $d-1$-dimensional unit sphere in~\cite[Corollary~5.4]{MS10}).  Define \begin{align}
\label{eqn:SphericalThickening} \Ee := \Ee(\Dd):= \left\{E(\boldsymbol{z})^{-1} : \boldsymbol{z} \in \Dd\right\}. \end{align}

Let us now write \begin{align}\label{eqnDefnEInv}
 E(\boldsymbol{z})^{-1} = \begin{pmatrix}  A &  \tensor*[^t]{\boldsymbol{w}}{} \\ \boldsymbol{v} & c  \end{pmatrix} := \begin{pmatrix}  A(\boldsymbol{z}) &  \tensor*[^t]{\boldsymbol{w}}{}(\boldsymbol{z}) \\ \boldsymbol{v}(\boldsymbol{z}) & c(\boldsymbol{z})  \end{pmatrix} \in \SO(d, \RR).  \end{align} Elementary properties of $K$ imply that \begin{align}\label{eqnElemPropSOd}
 \boldsymbol{v} \tensor*[^t]{\boldsymbol{v}}{} + c^2 =& 1 \\\nonumber \boldsymbol{v} A + c \boldsymbol{w} =& \boldsymbol{0}.  \end{align} We choose $\Dd$ so that \begin{align}\label{eqnRestrictionsOnD}
 c(\overline{\Dd})>0.\end{align}   A simple example of our shrinking targets $\mU_T$ thickened in spherical directions is $\Ss_T \Ee$ under the restriction (\ref{eqnRestrictionsOnD}), and this is target considered in Theorem~\ref{thmNAKResultRankOneFlow}.  More general spherical thickenings are considered in Section~\ref{secSdTGCatpothmNAK}, and these are obtained by removing the restriction (\ref{eqnRestrictionsOnD}) and by using certain Grenier boxes in their construction.  We will consider all of these thickenings in this paper.  Note that, in this paper, the renormalized target $\mU_{T_0}$ is constructed via the same method of thickening as the shrinking target $\mU_T$.

\subsection{Statement of main results}\label{secIntroAnyRankResults}  Our first result, Theorem~\ref{thmShrinkCuspNeigh}, is for shrinking targets constructed from Grenier boxes thickened in the stable horospherical directions.  These are the most general shrinking targets thickened in the stable directions that we consider.

\begin{theo}\label{thmShrinkCuspNeigh} Let $0<\eta <1$ be a fixed constant, $L \in G$,  $\Aa \subset \Aa_L$ be a bounded subset with measure zero boundary with respect to $\wrt {\boldsymbol{x}}$ and \[\widetilde{K} \subset K' \] be a measurable set with respect to $\wrt {\widetilde{k}}$.  Let $\boldsymbol{\alpha}$, $\boldsymbol{\gamma}$, $\beta^-_{ij}, \beta^+_{ij}$ be constants defining a neighborhood in $\Ss_1$ with lower height $\geq 1$ and define the following neighborhood of $\Ss_1$ \[\Cc:=\Cc_{T_-, T_+}:=\Cc(\boldsymbol{\alpha}, \boldsymbol{\gamma}, \widetilde{K},  \beta^-_{ij}, \beta^+_{ij})\] for constants $\infty \geq T_+ > T_- \geq 1$. Let  \[\Bb \subset \begin{cases}\left(T_- \right)\ZZ \backslash \RR & \textrm{ if } d =2 \\ \left(\frac 1 {\sqrt{d}}\left( \frac 3 4 \right)^{(d-1)/2} T_-^{d/2(d-1)} \right) \ZZ^{d-1} \bigg{\backslash} \RR^{d-1} & \textrm{ if }d \geq 3\end{cases}\] be a measurable set with respect to $\wrt {\widetilde{\boldsymbol{x}}}$.  For the family of neighborhoods of $\Gamma \backslash G$ \[\widetilde{\Cc}_TN^+(\Bb):=\Cc \Phi^{-\frac 1 d \log(T/T_-^{d/2(d-1)})}N^+(\Bb),\] then we have that \[T^{d-1} \int_{\Aa} \In_{\widetilde{\Cc}_T N^+(\Bb)}({Ln_-(\boldsymbol{x}) \Phi^t})~\wrt {\boldsymbol{x}} \xrightarrow[]{t \rightarrow \infty} T_-^{d/2}\mu\left(\Cc N^+(\Bb)\right)\int_{\RR^{d-1}} \In_{\Aa}(\boldsymbol{x})~\wrt{\boldsymbol{x}}\] uniformly for all $T \in [T_-^{d/2(d-1)}, e^{dt \eta}].$

\end{theo}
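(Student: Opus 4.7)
The plan is to reduce the shrinking cusp target to a fixed-location equidistribution statement via a rescaling of space and time, and then to invoke the equidistribution of translated Farey sequences (Theorem~\ref{thmGeneralMarklof}) together with the flow-box geometry on the section $\Ss_1$.

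\emph{Renormalization.} A direct block-matrix computation gives the conjugation identity $\Phi^{-s}n_+(\widetilde{\boldsymbol{x}})\Phi^s=n_+(e^{-ds}\widetilde{\boldsymbol{x}})$, and hence $\Phi^{-s}N^+(\Bb)=N^+(e^{-ds}\Bb)\Phi^{-s}$. Setting $s:=\tfrac{1}{d}\log(T/T_-^{d/2(d-1)})\geq 0$ and $\Bb_s:=e^{-ds}\Bb$, this yields $\widetilde{\Cc}_T N^+(\Bb)=\Cc\,N^+(\Bb_s)\,\Phi^{-s}$, so that
\[T^{d-1}\!\int_{\Aa}\!\In_{\widetilde{\Cc}_T N^+(\Bb)}(Ln_-(\boldsymbol{x})\Phi^t)\,\wrt{\boldsymbol{x}}=T^{d-1}\!\int_{\Aa}\!\In_{\Cc\,N^+(\Bb_s)}(Ln_-(\boldsymbol{x})\Phi^{t+s})\,\wrt{\boldsymbol{x}}.\]
Since the $N^+$-Haar measure is Lebesgue, $\mu(\Cc\,N^+(\Bb_s))=e^{-ds(d-1)}\mu(\Cc\,N^+(\Bb))=(T_-^{d/2}/T^{d-1})\,\mu(\Cc\,N^+(\Bb))$, so the target limit constant $T_-^{d/2}\mu(\Cc\,N^+(\Bb))\int\In_\Aa\,\wrt{\boldsymbol{x}}$ equals $T^{d-1}\mu(\Cc\,N^+(\Bb_s))\int\In_\Aa\,\wrt{\boldsymbol{x}}$ identically in $T$. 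It therefore suffices to prove
\[T^{d-1}\left|\int_{\Aa}\!\In_{\Cc\,N^+(\Bb_s)}(Ln_-(\boldsymbol{x})\Phi^{t+s})\,\wrt{\boldsymbol{x}}-\mu\!\left(\Cc\,N^+(\Bb_s)\right)\!\int_{\RR^{d-1}}\!\In_\Aa\,\wrt{\boldsymbol{x}}\right|\to 0\]
as $t\to\infty$, uniformly for $T\in[T_-^{d/2(d-1)},e^{dt\eta}]$.

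\emph{Translated Farey analysis.} Since $\Cc\subset\Ss_1$ and $N^+$ is transverse to $\Ss_1$, the set $\Cc\,N^+(\Bb_s)$ is a flow-box with base $\Cc$ and transversal fibre $N^+(\Bb_s)$. The intersections of the translated horosphere $\{Ln_-(\boldsymbol{x})\Phi^{t+s}:\boldsymbol{x}\in\Aa\}$ with $\Ss_1$ form precisely the translated Farey sequence generated by $L$ (Section~\ref{subsecTranFareySeq}), which by Theorem~\ref{thmGeneralMarklof} equidistributes on $\Ss_1$ with respect to the natural section measure. Each hit point landing in $\Cc$ contributes an $\boldsymbol{x}$-interval whose Lebesgue size is proportional to the transversal $N^+$-volume $|\Bb_s|$, and summing the contributions over all hits reconstructs $\mu(\Cc\,N^+(\Bb_s))\int\In_\Aa\,\wrt{\boldsymbol{x}}$ in the limit. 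The Grenier-box geometry of $\Cc$ (in particular the lower-height $\geq 1$ condition) ensures the base of the flow-box is a bounded, well-behaved subset of the section; meanwhile the prescribed sublattice scaling of $\Bb$ in the statement (the explicit $T_-$-dependent lattice factor in its domain) is arranged so that $N^+(\Bb_s)$ respects the natural discreteness of the section at scale $e^{-ds}$, ensuring the transversal fibre is coherently captured by the Farey discreteness.

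\emph{Main obstacle.} The crux is uniformity over the entire range $T\in[T_-^{d/2(d-1)},e^{dt\eta}]$. At the top end $T=e^{dt\eta}$ the transversal fibre $\Bb_s$ has shrunk by the factor $e^{-d\eta t(d-1)}$, so one cannot merely invoke the qualitative statement of Theorem~\ref{thmGeneralMarklof}: an effective form, essentially the quantitative lattice-point count that underlies its proof, is needed to guarantee that the equidistribution error, even after multiplication by the inflating factor $T^{d-1}$, tends to zero uniformly in $T$. The strict inequality $\eta<1$ supplies precisely the positive margin between the horosphere length $\sim e^{d(t+s)}$ and the inverse transversal scale $\sim e^{ds}$ that keeps the Farey hits in the shrunken flow-box well above the error scale. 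A sandwich approximation of $\In_\Cc$ and $\In_\Aa$ by continuous functions supported just inside and outside of $\Cc$ and $\Aa$, combined with the effective Farey estimate applied to these smooth surrogates, then handles the measure-zero boundary hypothesis and closes the uniformity argument.
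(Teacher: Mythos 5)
Your conjugation identity $\Phi^{-s}N^+(\Bb)=N^+(e^{-ds}\Bb)\Phi^{-s}$ and the resulting rewriting
\[T^{d-1}\int_\Aa\In_{\widetilde{\Cc}_T N^+(\Bb)}(Ln_-(\boldsymbol{x})\Phi^t)\,\wrt{\boldsymbol{x}}=T^{d-1}\int_\Aa\In_{\Cc N^+(\Bb_s)}(Ln_-(\boldsymbol{x})\Phi^{t+s})\,\wrt{\boldsymbol{x}}\]
are correct, as is the bookkeeping of the limit constant, but the target $\Cc N^+(\Bb_s)$ on the right is \emph{still shrinking} (now in the $N^+$-direction) as $T$ grows: you have relocated the shrinkage, not eliminated it. You then correctly concede that an effective, quantitative version of Theorem~\ref{thmGeneralMarklof} would be needed to control the equidistribution error uniformly as $\Bb_s$ shrinks while $T^{d-1}$ inflates. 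No such effective result is available in the paper, nor is one needed; this is the genuine gap, and your heuristic about the margin $\eta<1$ providing enough room does not substitute for a quantitative error bound.

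The paper's renormalization (Lemma~\ref{lemmUnstableVolumes2}, the Grenier-box analogue of Lemma~\ref{lemmUnstableVolumes}) works in the opposite direction. Passing to the transpose-inverse picture, so that the Farey hits lie on $\Ss_1$ and the transversal fibre is genuinely in $N^+$, the paper establishes, with $T_0:=T_-^{d/2(d-1)}$ and $\widetilde{Q}=e^{(d-1)(t-\frac{1}{d}\log(T/T_0))}$, the exact identity
\[\frac{T^{d-1}}{Q^d}\sum_{\boldsymbol{r}\in\F_Q}\widehat{f}_{T,\varepsilon}(\boldsymbol{r},n_-(\boldsymbol{r})\Phi^t)=\frac{T_0^{d-1}}{\widetilde{Q}^{d}}\sum_{\boldsymbol{r}\in\F_{\widetilde{Q}}}\widehat{f}_{T_0,\varepsilon}\bigl(\boldsymbol{r},n_-(\boldsymbol{r})\Phi^{t-\frac{1}{d}\log(T/T_0)}\bigr),\]
which maps the shrinking target $\widetilde{\Cc}_T N^+_\varepsilon$ at time $t$ to the \emph{fixed} target $\Cc N^+_\varepsilon$ at the slower time $t-\frac{1}{d}\log(T/T_0)$, using the bijection $\varphi$ between the hit sets and the scaling (\ref{eqnConjy1}). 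The right-hand side now has a $T$-independent target, and the slower time still tends to $\infty$ uniformly for $T\in[T_0,e^{dt\eta}]$ precisely because $\eta<1$; the limit then follows from the qualitative Theorem~\ref{thmGeneralMarklof} as in (\ref{eqnEquiOnSectTransB}) and its translated version (\ref{eqnEquiOnSectTransBTransHoro}), with the disjointness condition on $\Bb$ supplied by Lemma~\ref{lemmDisjointnessTransSections} and the limit constant identified via Theorem~\ref{thmHaarMeaCN} and Corollary~\ref{coroHaarMeaCTN}, all without any effective estimate. To salvage your route you would need to run the conjugation in the $\Phi^{+s}$ direction (slow the flow, expand the fibre), which is precisely the paper's lemma restated in the Farey-sum language.
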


\begin{rema}  Since our result is for $t \rightarrow \infty$, we may assume, without loss of generality, that $e^{dt \eta} \geq T_-^{d/2(d-1)}$.   By Corollary~\ref{coroHaarMeaCTN}, the right-hand side is also equal to \[\lim_{t \rightarrow \infty} T^{d-1} \mu(\widetilde{\Cc}_T N^+(\Bb))\left(\int_{\TT^{d-1}} \In_{\Aa}(\boldsymbol{x})~\wrt{\boldsymbol{x}}\right).\]  Finally, note that one could obtain an explicit (and complicated) formula for $\mu\left(\Cc N^+(\Bb)\right)$ in terms of the variables $x_{ij}$, $y_\ell$ (recall that $\widetilde{K}$ is fixed) from (\ref{eqnFundDomCoord}) using Grenier coordinates.
\end{rema}

A special but useful case of Theorem~\ref{thmShrinkCuspNeigh}, which does not involve Grenier boxes, is Theorem~\ref{thmNANResultRankOneFlow} in which the shrinking target is a small thickening of the section $\Ss_T$.  Let $\varepsilon>0$ be small, $\boldsymbol{\widetilde{y}} \in \RR^{d-1}$, \[N^+_\varepsilon (\boldsymbol{\widetilde{y}}) := \left\{n_+(\boldsymbol{\widetilde{x}}) \in N^+ : \|\boldsymbol{\widetilde{x}} - \boldsymbol{\widetilde{y}}\|_\infty < \frac \varepsilon 2\right\} \quad \textrm{ and } \quad  N^+_\varepsilon:=N^+_\varepsilon(\boldsymbol{0}).\]  Translating $\Ss_T$ along the stable direction by $n_+(\boldsymbol{\widetilde{y}})$ and slightly thickening, we obtain our shrinking target $\Ss_TN^+_\varepsilon (\boldsymbol{\widetilde{y}})$.  Note that $\Ss_TN^+_\varepsilon (\boldsymbol{\widetilde{y}})$ has positive $\mu$-measure.

\begin{theo}\label{thmNANResultRankOneFlow}
Let $0<\eta <1$ and $T_0\geq1$ be fixed constants.  Let $L \in G$, $\boldsymbol{\widetilde{y}} \in \RR^{d-1}$, $\varepsilon>0$ be small, and $\Aa \subset \Aa_L$ be a bounded subset with measure zero boundary with respect to $\wrt {\boldsymbol{x}}$.  Then \[T^{d-1} \int_{\Aa} \In_{\Ss_T N^+_\varepsilon (\boldsymbol{\widetilde{y}})}({L n_-(\boldsymbol{x}) \Phi^t})~\wrt {\boldsymbol{x}} \xrightarrow[]{t \rightarrow \infty} \frac 1 {d \zeta(d)}  \varepsilon^{d-1} \int_{\RR^{d-1}} \In_{\Aa} ~\wrt {\boldsymbol{x}}\] uniformly for all $T \in [T_0,e^{dt \eta}].$
\end{theo}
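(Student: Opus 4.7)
My plan is to deduce Theorem~\ref{thmNANResultRankOneFlow} as a direct specialization of Theorem~\ref{thmShrinkCuspNeigh}. The strategy is to realize the shrinking target $\Ss_T N^+_\varepsilon(\boldsymbol{\widetilde{y}})$ in the form $\widetilde{\Cc}_T N^+(\Bb)$ for a suitable Grenier box $\Cc$ and subset $\Bb$, then apply the general theorem and simplify the resulting constant.

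First I would choose the Grenier box parameters so that $\Cc$ coincides, up to a boundary of measure zero, with $\Ss_{T_0}$. Since the theorem's shift carries a Grenier lower-height parameter $T_-$ to the section cutoff $T_-^{d/2(d-1)}$, I set $T_- = T_0^{2(d-1)/d}$, $T_+ = \infty$, and $\widetilde{K} = K'$, and I choose the $\beta^\pm_{ij}$ and $\boldsymbol{\alpha}, \boldsymbol{\gamma}$ so that the Grenier box parametrizes a full fundamental domain for $\Ga_H \backslash H$ on the $\Ss_1$-slice of lower height $T_0^{2(d-1)/d}$. Using the identity $\Ss_a \Phi^{-\frac{1}{d}\log b} = \Ss_{ab}$, one then verifies
\[
\widetilde{\Cc}_T = \Cc \, \Phi^{-\frac{1}{d}\log(T/T_-^{d/2(d-1)})} = \Ss_{T_-^{d/2(d-1)}} \, \Phi^{-\frac{1}{d}\log(T/T_-^{d/2(d-1)})} = \Ss_T.
\]
Next, I take $\Bb = \{\boldsymbol{\widetilde{x}} \in \RR^{d-1} : \|\boldsymbol{\widetilde{x}} - \boldsymbol{\widetilde{y}}\|_\infty < \varepsilon/2\}$, so that $N^+(\Bb) = N^+_\varepsilon(\boldsymbol{\widetilde{y}})$; for $\varepsilon$ sufficiently small and $T_0 \geq 1$, the cube $\Bb$ embeds into the torus quotient required by Theorem~\ref{thmShrinkCuspNeigh}.

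With these choices, Theorem~\ref{thmShrinkCuspNeigh} yields convergence to $T_-^{d/2} \mu(\Cc N^+(\Bb)) \int_{\RR^{d-1}} \In_{\Aa}\, \wrt{\boldsymbol{x}}$, which in our normalization is $T_0^{d-1} \mu(\Ss_{T_0} N^+_\varepsilon(\boldsymbol{\widetilde{y}})) \int_{\RR^{d-1}} \In_{\Aa}\, \wrt{\boldsymbol{x}}$, uniformly in $T \in [T_0, e^{dt\eta}]$. It then remains to show $T_0^{d-1} \mu(\Ss_{T_0} N^+_\varepsilon(\boldsymbol{\widetilde{y}})) = \frac{\varepsilon^{d-1}}{d\zeta(d)}$. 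By left-invariance of $\mu_H$ this quantity is independent of $\boldsymbol{\widetilde{y}}$, and using the factorization $\wrt{\mu_H} = \wrt{\mu_0} \wrt{\boldsymbol{b}}$ together with Corollary~\ref{coroHaarMeaCTN}, the measure $\mu(\Ss_{T_0} N^+_\varepsilon(\boldsymbol{\widetilde{y}}))$ factors, for $\varepsilon$ small where the thickening is a genuine box, as $\varepsilon^{d-1}$ times the cusp-volume on the section, and the latter evaluates to $\frac{1}{d\zeta(d) T_0^{d-1}}$. This produces the claimed constant $\frac{1}{d\zeta(d)}\varepsilon^{d-1}$ after multiplication by $T_0^{d-1}$.

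The main technical care needed is the bookkeeping around the Grenier-height exponent $2(d-1)/d$, so that $T_- = T_0^{2(d-1)/d}$ correctly identifies $\Cc$ with $\Ss_{T_0}$, together with the verification that the cusp-volume computation produces exactly $\frac{1}{d\zeta(d)}$ (this is the standard zeta factor arising in the Minkowski--Siegel volume computation on $\SL(d,\ZZ)\backslash \SL(d,\RR)$, and is recorded by Corollary~\ref{coroHaarMeaCTN}). The dynamical content---the equidistribution on the shrinking target itself---is entirely supplied by Theorem~\ref{thmShrinkCuspNeigh}, so after this translation of notation no further dynamical work is required.
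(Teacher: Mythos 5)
Your proposal inverts the paper's expository order. The paper proves Theorem~\ref{thmNANResultRankOneFlow} first and directly, in Section~\ref{secNANResultRankOneFlow}, via the renormalization Lemma~\ref{lemmUnstableVolumes} and Marklof's equidistribution result (Theorem~\ref{thmMarklof}), and only afterwards generalizes that calculation to obtain Theorem~\ref{thmShrinkCuspNeigh} in Section~\ref{subsecProofMainTheoremFullCase}. You instead assume Theorem~\ref{thmShrinkCuspNeigh} and specialize. This is logically admissible --- the proof of Theorem~\ref{thmShrinkCuspNeigh} reuses the intermediate calculation (\ref{eqnEquiOnSectTransA}) from Section~\ref{secNANResultRankOneFlow}, not the statement of Theorem~\ref{thmNANResultRankOneFlow}, so there is no circularity --- and your bookkeeping is correct: $T_-=T_0^{2(d-1)/d}$ gives $T_-^{d/2}=T_0^{d-1}$; $\widetilde{\Cc}_T=\Ss_{T_0}\Phi^{-\frac1d\log(T/T_0)}=\Ss_T$ is exactly (\ref{eqnPullingBackOfSection}); and Corollary~\ref{coroHaarMeaSTN} (the relevant special case of Corollary~\ref{coroHaarMeaCTN}, which is the one you cite) yields $T_0^{d-1}\mu(\Ss_{T_0}N^+_\varepsilon(\widetilde{\boldsymbol{y}}))=\varepsilon^{d-1}/(d\zeta(d))$, matching the target constant.

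The one step that needs care is your claim that the Grenier box $\Cc$ can be chosen to coincide, up to a null set, with $\Ss_{T_0}$. The paper's definition of a Grenier box in Section~\ref{secCuspNeigh} requires $\alpha_k\geq 1$, while the Grenier fundamental domain only imposes $y_k\geq\sqrt{3}/2$; so for $d\geq 3$ a Grenier box with all $\alpha_k=1$ omits the positive-measure portion of $\Ss_{T_0}$ where some $y_k\in[\sqrt{3}/2,1)$ is compensated by a large $y_{d-1}$. (For $d=2$ there is no issue, since the single coordinate $y_1$ coincides with the height.) The paper does use the identification $\Cc:=\Ss_{T_0}$ in the proof of Theorem~\ref{thnHaarMeaSTEvsST0E}, and the proof of Theorem~\ref{thmShrinkCuspNeigh} does not in fact exploit $\alpha_k\geq 1$ beyond ensuring lower height $\geq 1$, so the constraint can be relaxed to $\alpha_k\geq\sqrt{3}/2$; but as written your argument tacitly assumes this relaxation, and you should say so. What your route buys is brevity; what the paper's route buys is that the simplest case isolates the renormalization mechanism on $\Ss_T$ before any Grenier-coordinate bookkeeping enters, which is what makes the subsequent generalizations transparent.
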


\begin{rema} We may assume, without loss of generality, that $e^{dt \eta} \geq T_0$.  By Corollary~\ref{coroHaarMeaSTN}, the right-hand side is also equal to \[\lim_{t \rightarrow \infty} T^{d-1} \mu\left(\Ss_T N^+_\varepsilon (\boldsymbol{\widetilde{y}})\right)\left(  \int_{\TT^{d-1}} \In_{\Aa} ~\wrt {\boldsymbol{x}}\right).\]  We also note that $\varepsilon$ does not have to be very small.  By Theorem~\ref{thmShrinkCuspNeigh}, we can choose it to be any positive value strictly less than \[\begin{cases} \frac 1 2  T_0& \textrm{ if } d =2 \\  \frac 1 2 \left(\frac 1 {\sqrt{d}}\left( \frac 3 4 \right)^{(d-1)/2} T_0\right) & \textrm{ if } d \geq 3 \end{cases}.\]

\end{rema}

Our second result is for thickenings of $\Ss_T$ in spherical directions and is a special case of the generalizations for certain more general Grenier boxes, Theorems~\ref{thmThickenSubsetSectionSpherical},~\ref{thmTwoChartGeneralSpherical} and~\ref{thmThickenSubsetSectionSphericalGeneral}, that we give in Section~\ref{secSdTGCatpothmNAK}.  We can even remove the restriction (\ref{eqnRestrictionsOnD}).  Let \[\h_0 := \begin{cases} 1 & \textrm{ if } d =2 \\ \sqrt{d} \left(\frac 4 3\right)^{\frac{d-1}2} &\textrm{ if } d \geq3\end{cases}.\]

\begin{theo}\label{thmNAKResultRankOneFlow}  

Let $0<\eta <1$ and $T_0>\h_0$ be fixed constants. Let $L \in G$ and $\Aa \subset \Aa_L$ be a bounded subset with measure zero boundary with respect to $\wrt {\boldsymbol{x}}$ and let (\ref{eqnRestrictionsOnD}) hold.  Then \begin{align}\label{eqnthmNAKResultRankOneFlow}
 {T^{d-1}}\int_{\RR^{d-1}} \In_{\Aa \times\Ss_T  \Ee}(\boldsymbol{x}, {Ln_-(\boldsymbol{x}) \Phi^t})~\wrt {\boldsymbol{x}} \xrightarrow[]{t \rightarrow \infty}T_0^{d-1}\mu(\Ss_{T_0} \Ee)\left(\int_{\RR^{d-1}} \In_{\Aa}(\boldsymbol{x})~\wrt{\boldsymbol{x}}\right) \end{align} uniformly for all $T \in [T_0,e^{dt \eta}].$

\end{theo}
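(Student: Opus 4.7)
The plan is to reduce the spherically thickened case to an application of the equidistribution of translated Farey sequences on the distinguished section (Theorem~\ref{thmGeneralMarklof}), coupled with the renormalization identity $\Ss_T = \Ss_1 \Phi^{-(1/d)\log T}$. The first step is to \emph{parametrize} the target $\Ss_T\Ee$. Using the injectivity of the induced map $\boldsymbol{z} \mapsto e_d E(\boldsymbol{z})^{-1}$ on $\Dd$ together with the positivity condition (\ref{eqnRestrictionsOnD}) on the $c$-component, every element of $\Ss_T \Ee$ factors uniquely as $g_0 E(\boldsymbol{z})^{-1}$ with $g_0 \in \Ss_T$ and $\boldsymbol{z} \in \Dd$. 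Consequently, $Ln_-(\boldsymbol{x})\Phi^t \in \Ss_T \Ee$ iff $Ln_-(\boldsymbol{x})\Phi^t E(\boldsymbol{z}) \in \Ss_T$ for a unique $\boldsymbol{z} = \boldsymbol{z}(\boldsymbol{x},t)$. Via Fubini together with the coarea/change-of-variables formula, the left-hand side of (\ref{eqnthmNAKResultRankOneFlow}) is then rewritten as
\[T^{d-1}\int_{\Dd}\int_{\Aa} \In_{\Ss_T}\!\bigl(L n_-(\boldsymbol{x})\Phi^t E(\boldsymbol{z})\bigr)\, J(\boldsymbol{z})\, d\boldsymbol{x}\, d\boldsymbol{z},\]
for an explicit Jacobian $J(\boldsymbol{z})$ coming from the parametrization (in particular, a power of $c(\boldsymbol{z})$).

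Next, I apply the renormalization. Setting $\sigma = (1/d)\log T$ and using $\Ss_T = \Ss_1 \Phi^{-\sigma}$, I invoke the Iwasawa-type decomposition $G = HAK$ to factor
\[\Phi^t E(\boldsymbol{z})\Phi^{\sigma} = h_t(\boldsymbol{z})\, \Phi^{\tau(t,\boldsymbol{z})}\, k_t(\boldsymbol{z}),\]
with $h_t \in H$, $\tau(t,\boldsymbol{z}) \in \RR$, and $k_t \in K$. A direct computation using (\ref{eqnDefnEInv})--(\ref{eqnzprime}) shows that, as $t-\sigma \to \infty$, the factor $k_t(\boldsymbol{z})$ converges to an element $k_\infty(\boldsymbol{z}) \in K'$, the exponent $\tau(t,\boldsymbol{z})$ tends to an explicit function of $\boldsymbol{z}$, and absorbing $h_t(\boldsymbol{z})$ into $\Gamma H$ transforms $n_-(\boldsymbol{x})$ into $n_-(\boldsymbol{x}')$ by an affine diffeomorphism in $\boldsymbol{x}$ whose Jacobian is a power of $c(\boldsymbol{z})$. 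After this change of variables, the inner integral reduces to
\[\int_{\Aa'(\boldsymbol{z})}\In_{\Ss_1}\!\bigl(L' n_-(\boldsymbol{x}')\Phi^{t-\sigma}k_\infty(\boldsymbol{z})\bigr)\, d\boldsymbol{x}',\]
plus an error vanishing uniformly as $t-\sigma \to \infty$.

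Finally, since $\Ss_1$ is right-$K'$-invariant the factor $k_\infty(\boldsymbol{z})$ is absorbed, and I apply Theorem~\ref{thmGeneralMarklof} pointwise in $\boldsymbol{z}$; integrating the resulting limit over $\boldsymbol{z}\in\Dd$ and recognising the product $J(\boldsymbol{z})\,d\boldsymbol{z}$ as the disintegrated transverse piece of $\mu$ on $\Ss_{T_0}\Ee$ (i.e., Step~1 applied at $T=T_0$), the $T^{d-1}$ prefactor cancels and the limit $T_0^{d-1}\mu(\Ss_{T_0}\Ee)\int_{\RR^{d-1}}\In_\Aa\,d\boldsymbol{x}$ emerges. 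The main obstacle is Step~2: because $E(\boldsymbol{z}) \in K$ neither commutes with $\Phi^t$ nor normalises $N^-$, the factorisation of $\Phi^t E(\boldsymbol{z}) \Phi^\sigma$ is nonlinear in $\boldsymbol{z}$, and one must track the error terms uniformly on the full range $T \in [T_0, e^{dt\eta}]$. The restriction $\eta<1$ is what ensures that the residual $N^-$-perturbation of the horosphere variable remains in a bounded set (uniformly in $T$ and $t$ in this range), so that Theorem~\ref{thmGeneralMarklof} applies with uniform error; the assumption $T_0 > \h_0$ is used to guarantee that the parametrization in Step~1 is globally injective on $\Ss_T \Ee$ for $T \geq T_0$ via the Grenier-domain geometry from Section~\ref{secCuspNeigh}.
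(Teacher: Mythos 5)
Your starting point---the unique factorisation $\Ss_T\Ee = \bigsqcup_{\boldsymbol{z}\in\Dd}\Ss_T E(\boldsymbol{z})^{-1}$ in $\Gamma\backslash G$ once $T\geq T_0 > \h_0$---is correct and parallels Theorem~\ref{thmUpperHemiThickening}. The step that fails is the Fubini/coarea rewriting. For each fixed $\boldsymbol{z}\in\Dd$, the set $\{\boldsymbol{x}\in\Aa : Ln_-(\boldsymbol{x})\Phi^t E(\boldsymbol{z}) \in \Ss_T\}$ has Lebesgue measure zero: the pushed horosphere $\{Ln_-(\boldsymbol{x})\Phi^t E(\boldsymbol{z}) : \boldsymbol{x}\in\RR^{d-1}\}$ is a $(d-1)$-dimensional submanifold, $\Ss_1$ has codimension $d-1$, and by Lemma~\ref{lemmTransverseOnS} they meet transversally, so the intersection is discrete. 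Hence your proposed expression $T^{d-1}\int_\Dd\int_\Aa \In_{\Ss_T}\bigl(Ln_-(\boldsymbol{x})\Phi^t E(\boldsymbol{z})\bigr)J(\boldsymbol{z})\,d\boldsymbol{x}\,d\boldsymbol{z}$ vanishes identically and cannot equal the left side of (\ref{eqnthmNAKResultRankOneFlow}). The coarea formula for the submersion $\boldsymbol{x}\mapsto\boldsymbol{z}(\boldsymbol{x},t)$ produces an integral over $\boldsymbol{z}$ of a \emph{sum over the discrete fibre} (weighted by the reciprocal Jacobian), not an inner $d\boldsymbol{x}$-integral of an indicator of a measure-zero set.

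The mechanism in the paper is structurally different and avoids this problem. Rather than fibring the $\boldsymbol{x}$-integral over $\Dd$, the paper first rewrites $\Ss_T\Ee$ in $N^+$-coordinates via Lemma~\ref{lemmMMyCoordinatesForSE}, which rests on the $H\cdot\Phi\cdot N^+$ factorisation (\ref{eqnDecompSOintoN}) of $\Phi^{-s}E(\boldsymbol{z})^{-1}n_+(-\boldsymbol{z'})$---not an $HAK$-type factorisation of $\Phi^tE(\boldsymbol{z})\Phi^\sigma$. This $N^+$-description turns $\Ss_T\Ee$ into an $N^+$-thickening of a $\boldsymbol{z}$-parametrised family of sections, so the original $\boldsymbol{x}$-integral becomes a sum of positive $N^+$-volumes $\widehat f_{T,\Dd}$ attached to (translated) Farey intersection points (Section~\ref{secSdatpothmNAK}). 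The renormalisation is then an exact matching of pieces because $\Phi^{-\frac1d\log(T/T_0)}$ acts on $N^+$-coordinates by pure dilation (\ref{eqnConjy1}, \ref{eqnSphereHoroEquivalence2}, Lemma~\ref{lemmUnstableVolumesNAKCase}); an $HAK$ decomposition of $\Phi^tE(\boldsymbol{z})\Phi^\sigma$ has no analogous exact identity. Even setting aside the measure-zero issue, your reduced inner quantity $\int_{\Aa'(\boldsymbol{z})}\In_{\Ss_1}(\cdot)\,d\boldsymbol{x}'$ is still the integral of an indicator of a codimension-$(d-1)$ set, so Theorem~\ref{thmGeneralMarklof}---which is a statement about sums over translated Farey points, not about such integrals---cannot be applied ``pointwise in $\boldsymbol{z}$'' to it. Finally, the role of $\eta<1$ is to ensure (\ref{eqnChangeOfRate}), i.e.\ that the renormalised time $t-\frac1d\log(T/T_0)$ tends to infinity uniformly over $T\in[T_0,e^{dt\eta}]$; it is not about keeping an $N^-$-perturbation bounded.
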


\begin{rema} We may assume, without loss of generality, that $e^{dt \eta} \geq T_0$. By Theorem~\ref{thnHaarMeaSTEvsST0E}, the right-hand side of (\ref{eqnthmNAKResultRankOneFlow}) is also equal to \[\lim_{t \rightarrow \infty} T^{d-1} \mu\left(\Ss_T \Ee\right)\left(  \int_{\TT^{d-1}} \In_{\Aa}(\boldsymbol{x})~\wrt {\boldsymbol{x}}\right).\]

\end{rema}

Our final result is the equidistribution of translated Farey sequences (defined in Section~\ref{subsecTranFareySeq}) and a generalization of Marklof's equidistribution result for the Farey sequence~\cite[Theorem~6]{Mar10}, which we quote as Theorem~\ref{thmMarklof}.\footnote{Even when $L=I_d$, the subsets of the translated Farey sequence in our result and the subsets of the Farey sequence in Theorem~\ref{thmMarklof}  are slightly different.  As Marklof's result and our result are key to the proofs of STHE for horospheres and translated horospheres, respectively, both results are stated for clarity.}  We use our final result as a geometrical and dynamical tool to prove STHE in our other results.  The subsets $\widehat{\If}_Q$ of the translated Farey sequence corresponding to $\Gamma L N^-$ are defined in (\ref{eqn:subsetTransFareySeq}).

\begin{theo}\label{thmGeneralMarklof}  Fix $\sigma \in \RR$.  Let $L \in G$, $\Aa$ be an open ball in $\RR^{d-1}$ of radius $r_0>0$, and \[f: \Aa \times \Ga \backslash G \rightarrow \RR\] be the product of a bounded continuous function with an indicator function of a subset $\Tt \subset \Aa \times \Ga \backslash G$ such that the set \begin{align}\label{eqnGeneralMarklofBound}
 \left\{(\boldsymbol{x}, M) : (\boldsymbol{x}, \tensor*[^t]{M}{^{-1}}) \in \Tt \right\} \end{align} has measure zero boundary with respect to $\wrt{\boldsymbol{x}}~\wrt \mu_H~\wrt s$.  Then, for $Q = e^{(d-1)(t-\sigma)}$, we have \begin{align*}
\lim_{t \rightarrow \infty} \frac {e^{-d(d-1)\sigma}} {\#(\widehat{\If}_Q)} & \sum_{\frac{\boldsymbol{\alpha}'}{\alpha_d} \in \widehat{\If}_{Q}} {f}\left(\frac{\boldsymbol{\alpha}'}{\alpha_d},  {L} n_-\left(\frac{\boldsymbol{\alpha}'}{\alpha_d}\right) \Phi^{t}\right) \\ \nonumber &= (d-1) \int_{\sigma}^{\infty}  \int_{\Aa \times \Gamma_H \backslash H} \widetilde{f} (\boldsymbol{x}, M \Phi^{-s})~\wrt{\boldsymbol{x}}~\wrt\mu_H(M)e^{-d(d-1)s}~\wrt s  \end{align*} with $\widetilde{f}(\boldsymbol{x}, M) := f(\boldsymbol{x}, \tensor*[^t]{M}{}^{-1} )$.

\end{theo}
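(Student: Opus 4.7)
The plan is to adapt Marklof's proof of Theorem~\ref{thmMarklof} (the equidistribution of the classical Farey sequence on $\Ss_1$) to the translated setting by carrying the parameter $L$ through the argument. The structural identification underlying this is that the translated Farey sequence $\widehat{\If}_Q$, by its construction in Section~\ref{subsecTranFareySeq}, parametrizes the intersection $\Gamma L N^-(\Aa)\Phi^t \cap \Ss_{e^{d\sigma}}$ in exact analogy with how the classical Farey sequence parametrizes $\Gamma N^-(\Aa)\cap\Ss_1$, with the parameter $Q = e^{(d-1)(t-\sigma)}$ recording the depth of each return into the cusp (i.e., the value of $s$ at the return).

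Granting this identification, I would rewrite the left-hand sum as a sum over the discrete set $\Gamma L N^-(\Aa)\Phi^t \cap \Ss_{e^{d\sigma}}$, with $f$ evaluated at each intersection point. The transpose-inverse $\widetilde{f}(\boldsymbol{x}, M) = f(\boldsymbol{x}, \tensor*[^t]{M}{^{-1}})$ appearing on the right-hand side is a bookkeeping device: it encodes the change of parametrization between the ``$N^-$-from-below'' description of the translated horosphere and the ``$H\Phi^{-s}$-from-above'' description of the section, a swap implemented up to a Gram--Schmidt correction by the Weyl element acting as transpose-inverse on $G$.

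Next I would invoke the equidistribution of the translated horosphere $LN^-(\Aa)\Phi^t$ in $\Gamma\backslash G$ as $t\to\infty$ (classical, e.g.~\cite{KM96,EM}), combined with a flow-box disintegration of $\mu$ near $\Ss_1$. In coordinates $(M,s)$ on $\Ss_1$ and a transverse $N^+$-direction, the Haar measure decomposes as $d\mu_H(M)\, e^{-d(d-1)s}\, ds$ times the transverse $N^+$-measure; the factor $e^{-d(d-1)s}$ is precisely the Jacobian of conjugation by $\Phi^{-s}$ on the $(d-1)$-dimensional stable subgroup $N^+$, which contracts each of the $d-1$ coordinates by $e^{-ds}$. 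A standard thickening/unthickening argument (smear the discrete intersection by a shrinking transverse $N^+$-box, apply horosphere equidistribution to the resulting open neighborhood, then let the thickness tend to zero) converts the discrete sum into the claimed integral; testing against $f\equiv 1$ pins down the asymptotic $\#\widehat{\If}_Q \sim c\, e^{d(d-1)(t-\sigma)}$ that fixes the normalization.

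The main obstacle is the non-compactness of $\Ss_{e^{d\sigma}}$ as $s\to\infty$: since $f$ is only bounded and the $s$-integral extends to infinity, one must show that the contribution from $\Gamma L N^-(\Aa)\Phi^t \cap \{s\geq S\}$ vanishes uniformly in $t$ as $S\to\infty$. This is a quantitative non-divergence estimate of Dani--Margulis type applied to the translated horosphere, and handling it for arbitrary $L$ rather than just $L=I_d$ is exactly where the restriction $\Aa\subset\Aa_L$ enters, ruling out the exceptional directions where the orbit could escape to the cusp with positive mass. Together with the measure-zero-boundary hypothesis on the set (\ref{eqnGeneralMarklofBound}), this justifies the truncation in $s$ and the approximation of $\Tt$ by continuity sets uniformly in $t$, completing the reduction to the compact equidistribution argument of the previous paragraph.
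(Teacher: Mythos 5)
Your high-level strategy is the same as the paper's: adapt Marklof's proof of Theorem~\ref{thmMarklof} by identifying $\If_Q$ with the intersection points of $\tensor*[^t]{(\Gamma L N^- \Phi^t)}{^{-1}}$ with the section (Theorem~\ref{thmTransFareyInterPointCorr}), replacing Shah's theorem with the translated version (Theorem~\ref{thm:TransShah:TransHoro}), thickening the intersection points in the transverse $N^+$ direction, applying equidistribution, and unthickening. The flow-box disintegration $\wrt\mu_H~e^{-d(d-1)s}~\wrt s$ and the transpose-inverse bookkeeping are exactly as in the paper. So the core of your proposal is sound and matches.

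However, your last paragraph contains two concrete errors. First, you assert that the restriction $\Aa\subset\Aa_L$ is a hypothesis of this theorem and that it "rul[es] out the exceptional directions where the orbit could escape to the cusp with positive mass." Neither claim is right: Theorem~\ref{thmGeneralMarklof} requires only that $\Aa$ be an open ball in $\RR^{d-1}$ (no $\Aa_L$ constraint appears), and $\Aa_L$ is, by Theorem~\ref{thm:HorosphereDuplicates}, the region free of $\Gamma$-duplicates on the translated horosphere --- it has nothing to do with nondivergence or mass escaping to the cusp; for Haar-almost-every $L$ it is all of $\RR^{d-1}$ anyway. The $\Aa_L$ hypothesis only enters the downstream STHE theorems (\ref{thmShrinkCuspNeigh}, \ref{thmNANResultRankOneFlow}, etc.), not here.

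Second, the mechanism you propose for the $s\to\infty$ truncation --- a quantitative Dani--Margulis nondivergence estimate --- is not the one the paper uses and is not needed. The paper first reduces (Step 0) to $f$ compactly supported, $\supp(f)\subset\widehat{\Aa}\times\Gamma\backslash\Gamma\Cc$ with $\Cc\subset G$ compact, so the $s$-integral is automatically truncated there. The discrete sum is truncated in parallel by restricting to $\widehat{\If}_{Q,\theta} = \{\boldsymbol{\alpha}'/\alpha_d\in\widehat{\If}_Q: \theta Q<\alpha_d\leq Q\}$; the contribution of the discarded points is controlled at the very end not by a nondivergence estimate but by the elementary lattice-point count $\#(\widehat{\If}_Q)\sim Q^d/\zeta(d)$ from (\ref{eqnAsyHatIf}), which gives
\[
\limsup_{t\to\infty}\frac{\#(\widehat{\If}_Q\setminus\widehat{\If}_{Q,\theta})}{e^{d(d-1)t}}\leq\frac{\theta}{\zeta(d)},
\]
and one lets $\theta\to0$. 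Passing from compactly supported $f$ to the general $f$ in the theorem is done exactly as in Marklof's original argument, which the paper simply cites. So the nondivergence machinery you invoke should be replaced by this $\theta$-truncation of the Farey sum plus the counting asymptotic, and the $\Aa_L$ discussion should be removed entirely.
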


\subsection{Outline of paper}  We prove all of our STHE results for the case $L = I_d$ first, deferring the case of generic $L \in G$ to after the definition and development of translated Farey sequences in Section~\ref{subsecTranFareySeq}.  Our renormalization technique and Marklof's result on the equidistribution of the Farey sequence (Theorem~\ref{thmMarklof}) are the two key ingredients in the proof of Theorem~\ref{thmNANResultRankOneFlow} given in Section~\ref{secNANResultRankOneFlow}.  Theorem~\ref{thmNAKResultRankOneFlow}, the first case of spherical thickenings, is proved in Section~\ref{secSdatpothmNAK}.  Cuspidal neighborhoods and Grenier coordinates (used to define Grenier boxes) are introduced and developed for our setting in Section~\ref{secCuspNeigh}, allowing for a proof of Theorem~\ref{thmShrinkCuspNeigh} in Section~\ref{subsecProofMainTheoremFullCase} under the assumption of disjointness (Lemma~\ref{lemmDisjointnessTransSections}).  Disjointness is shown for all of our setups for both stable horospherical and spherical thickenings in Section~\ref{secDisjTransNeigST} .  Three generalizations of our spherical thickening result are stated and proved in Section~\ref{secSdTGCatpothmNAK}, and the key ingredient in these proofs is a {\em reverse outer product Cholesky factorization.}  Theorem~\ref{thmGeneralMarklof}, the equidistribution of translated Farey sequences, is proved in Section~\ref{sec:proofGeneralMarklof}, and it is the key tool for proving our STHE results for generic $L \in G$ in Section~\ref{subsecUniEquiForTransHoro}.  Finally, we note that a key ingredient in the proof of Theorem~\ref{thmGeneralMarklof} (and also Theorem~\ref{thmMarklof}) is a version of Shah's theorem (Theorem~\ref{thm:TransShah:TransHoro} for the case of translated Farey sequences).  This version of Shah's theorem is a consequence of mixing and does not require Ratner's theorems.  We do, however, directly compute, in a simple way, orbit closures in Section~\ref{subsec:GammaDuplicates} for our (limited) setting.

\subsection*{Acknowledgements}  I wish to thank Jens Marklof for helpful discussions and the referee for helpful comments.

For the purpose of open access, the author has applied a CC BY public copyright licence (where permitted by UKRI, ‘Open Government Licence’ or ‘CC BY-ND public copyright licence’ may be stated instead) to any Author Accepted Manuscript version arising.  This study did not generate any new data.

\section{The geometry and equidistribution of the Farey sequence}\label{secEotFs}

A key ingredient in our proofs of STHE for the case $L = I_d$ is Marklof's result for the equidistribution of the Farey sequence.  In this section, we state Marklof's result and give a heuristic overview of the geometry of the Farey sequence, especially in relation to the horosphere.  We give a precise description of this geometry in Section~\ref{secTranslatedHorospheres}.

The key geometrical observation concerning the elements of the Farey sequence is that they, roughly speaking, anchor the horosphere to the section $\Ss_1$.  In particular, the local intersection of the horosphere with $\Ss_1$ is exactly one point (Theorem~\ref{thm:TranHoroInterSectS1AtAPoint}), and these intersection points correspond to, as shown in~\cite{Mar10}, the elements of the Farey sequence (see Theorem~\ref{thmTransFareyInterPointCorr} for a precise statement).  For conciseness, we identify these intersection points with their corresponding elements of the Farey sequence.  Under our flow $\Phi^t$ (for $t \geq 0$), these elements of the Farey sequence (i.e. these intersection points) remain in the section $\Ss_1$ and, moreover, move higher into the cusp within $\Ss_1$ (see Section~\ref{secNANResultRankOneFlow}).  Consequently, the dynamics of the horosphere under the flow is determined by the dynamics of the Farey sequence under the flow, the latter of which is governed by the powerful equidistribution result of Marklof.  This allows us to analyze the dynamics of the horosphere under the flow in a flexible way.  Note that, in the case $L = I_d$, we have that $ \Aa_{L}= \TT^{d-1}$ (Theorem~\ref{thm:HorosphereDuplicates}).

To state Marklof's result, let us introduce some notation.  Let $\wrt s$ be the Lebesgue measure on $\RR$.  Let $\widehat{\ZZ}^d := \{\boldsymbol{a} = (a_1, \cdots, a_d) \in \ZZ^d : \gcd(a_1, \cdots, a_d)=1\}$.  The elements of the {\em Farey sequence} \begin{align}\label{eqn:DefnFareySeq}
 \F_Q := \left\{\frac{\boldsymbol{p}} q \in [0, 1)^{d-1} : (\boldsymbol{p}, q) \in \widehat{\ZZ}^d, \quad 0 < q \leq Q \right\} \end{align} become equidistributed on $\Ss_T$ as $Q \rightarrow \infty$.  Precisely,

\begin{theo}[{\cite[Theorem~6]{Mar10}}]\label{thmMarklof}  Fix $\sigma \in \RR$.  Let $f: \TT^{d-1} \times \Ga \backslash G \rightarrow \RR$ be the product of a continuous bounded function with an indicator function of a subset $\Tt \subset \TT^{d-1} \times \Ga \backslash G$ such that the set \begin{align}
 \label{eqnOriginalMarklofBound}\left\{(\boldsymbol{x}, M) : (\boldsymbol{x}, \tensor*[^t]{M}{^{-1}}) \in \Tt \right\} \end{align} has measure zero boundary with respect to $\wrt{\boldsymbol{x}}~\wrt \mu_H~\wrt s$.  Then, for $Q = e^{(d-1)(t-\sigma)}$, we have \begin{align*}
 \lim_{t \rightarrow \infty}& \frac {e^{-d(d-1) \sigma}} {|\F_Q|} \sum_{\boldsymbol{r} \in \F_Q} f(\boldsymbol{r}, n_-(\boldsymbol{r}) \Phi^t) \\ = & d(d-1)  \int_\sigma^\infty \int_{\TT^{d-1} \times \Ga_H \backslash H} \widetilde{f}(\boldsymbol{x}, M \Phi^{-s})~\wrt{\boldsymbol{x}}~\wrt \mu_H(M) e^{-d(d-1)s}~\wrt s \end{align*} with $\widetilde{f}(\boldsymbol{x}, M) := f(\boldsymbol{x}, \tensor*[^t]{M}{}^{-1} )$.
 
\end{theo}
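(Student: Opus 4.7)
The plan is to realize each Farey fraction as a distinguished point on the cross-section $\Ss_1$ and then apply equidistribution of horospherical orbits. Given a primitive integer vector $(\boldsymbol{p}, q) \in \widehat{\ZZ}^d$ with $0 < q \leq Q$, primitivity lets us complete $(\boldsymbol{p}, q)$ to a matrix $\gamma \in \Gamma$ with last row $(\boldsymbol{p}, q)$; a direct computation then shows that $\gamma\, n_-(\boldsymbol{p}/q) \Phi^t \in H \Phi^{-s}$ for some $h \in H$ and $s = t - \tfrac{1}{d-1} \log q$. The choice $Q = e^{(d-1)(t-\sigma)}$ is precisely what makes $s$ range over $[\sigma, \infty)$, and the class of $h$ modulo $\Ga_H$ together with $s$ provides a bijection between $\F_Q$ and a discrete subset of $\Ss_1$ with explicit location.

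First I would verify this matrix identification by direct calculation, recording the image of each Farey point as a pair $(M_{\boldsymbol{r}}, s_{\boldsymbol{r}}) \in \Ga_H \backslash H \times [\sigma, \infty)$. Next, I would apply a version of Shah's equidistribution theorem to the horospherical subgroup $N^-$ acting on $\Ga \backslash G$: since the orbit $N^-$ is unipotent and $\Phi^t$ mixes, one obtains that the closed orbit $\{n_-(\boldsymbol{x}) \Phi^t : \boldsymbol{x} \in \TT^{d-1}\}$ equidistributes on $\Ga \backslash G$ as $t \to \infty$. Combining this with the explicit identification of Farey points on the section converts the discrete sum on the left-hand side into the integral over $\Ss_1$ on the right-hand side.

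The Jacobian of the map $H \times \RR_{\geq 0} \to G$, $(M, s) \mapsto M \Phi^{-s}$, taken with respect to the Haar measures normalized in the introduction, has the form $d(d-1)\, e^{-d(d-1)s}$; this is where both the weight $e^{-d(d-1)s}$ and the prefactor $d(d-1)$ in the right-hand integral come from. The outer renormalizing factor $e^{-d(d-1)\sigma}/|\F_Q|$, combined with the classical asymptotic $|\F_Q| \sim Q^d/(d\,\zeta(d))$, exactly balances the exponential growth of the sum so that the limit is finite. The appearance of $\widetilde{f}(\boldsymbol{x}, M) = f(\boldsymbol{x}, \tensor*[^t]{M}{^{-1}})$ reflects the fact that Shah-type equidistribution is naturally stated for the right action by the transpose-inverse; this convention can be absorbed by a single change of variables on $\Ga \backslash G$.

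The main technical obstacle is that $f$ is only the product of a continuous function with an indicator $\In_\Tt$, so the discrete Farey sum does not automatically converge against it. I would handle this by sandwiching $\In_\Tt$ between continuous functions from above and below that agree with it outside an arbitrarily small neighborhood of $\partial\Tt$; the measure-zero boundary hypothesis (\ref{eqnOriginalMarklofBound}) is precisely what makes this Portmanteau-style argument yield matching upper and lower limits. A secondary subtlety is uniformity of Shah's theorem in the test function, which is needed to transfer convergence from the continuous approximations of $f$ back to $f$ itself and to maintain control as the shape of $\Tt$ varies with $\boldsymbol{x}$.
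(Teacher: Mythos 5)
The paper quotes this theorem from \cite{Mar10} without proof, but Section~\ref{sec:proofGeneralMarklof} proves the generalization Theorem~\ref{thmGeneralMarklof} by adapting Marklof's Steps~0--6, and that is the reference for comparison. Your outline assembles several correct ingredients: the bijection placing Farey fractions $\boldsymbol{p}/q$ at points $(M_{\boldsymbol{r}}, s_{\boldsymbol{r}})\in\Ga_H\backslash H\times[\sigma,\infty)$ on the section; a two-variable Shah-type equidistribution statement on $\TT^{d-1}\times\Ga\backslash G$ (this is Theorem~\ref{thm:TransShah:TransHoro} with $L=I_d$, not merely one-variable equidistribution of the orbit); the Haar measure computation giving the weight $(d-1)e^{-d(d-1)s}$; the asymptotics $|\F_Q|\sim Q^d/(d\zeta(d))$ supplying the remaining factor of $d$; and a Portmanteau-style approximation to handle $f=(\text{bounded continuous})\cdot\In_\Tt$, which is indeed where the measure-zero boundary hypothesis enters.

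The genuine gap is in the sentence claiming that combining the section identification with horospherical equidistribution ``converts the discrete sum on the left-hand side into the integral over $\Ss_1$.'' That is the heart of the proof and no mechanism is supplied. Shah's theorem asserts $\lim_{t\to\infty}\int_{\TT^{d-1}} g(\boldsymbol{x}, n_-(\boldsymbol{x})\Phi^t)\,\wrt\boldsymbol{x} = \int g\,\wrt\boldsymbol{x}\,\wrt\mu$ for bounded continuous $g$; this is an integral statement about a $(d-1)$-dimensional orbit and does not directly sample anything at the discrete set of points where the orbit meets a codimension-one section. The required bridge---Marklof's Steps~1, 2, 4, 5, reproduced in Section~\ref{sec:proofGeneralMarklof}---is a thickening argument: one introduces the open cone $\Cf_\epsilon\subset\RR^d$ and the associated thickened section $\Hc^1_\epsilon$, proves a disjointness lemma so that $\Ga$-translates of $\Hc^1_\epsilon$ do not overlap inside a compact set, applies Shah's theorem to the product $f\cdot\chi_\epsilon$ where $\chi_\epsilon$ is the indicator of the $\Ga$-orbit of the thickening, computes the $\mu$-volume of the thickening as proportional to $\vol(B_1^{d-1})\epsilon^{d-1}$, and then uses distance estimates together with uniform continuity to show that the resulting integral concentrates on balls of radius comparable to $\epsilon e^{-dt}$ about each Farey point, so dividing out the ball volume recovers the discrete sum up to an error controlled by the modulus of continuity. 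Your proposal mentions neither the thickening, nor the disjointness lemma, nor the localization/distance estimates. The Portmanteau sandwiching you describe addresses the discontinuity of $f$ in its second argument but cannot substitute for thickening the section: without that construction there is simply no way to extract a discrete sum from Shah's equidistribution theorem.
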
  

\noindent Note that \begin{align} \label{eqnAsypFQOriginal}
 |\F_Q| \sim \frac {Q^d} {d \zeta(d)} \textrm{ as } Q \rightarrow \infty. \end{align}  We will always set $\sigma =0$ when using this theorem as this corresponds to our section $\Ss_1$.
 
 \begin{rema}  The generalization of Theorem~\ref{thmMarklof} to any translated Farey sequence is given in Theorem~\ref{thmGeneralMarklof}.

\end{rema}

\section{Renormalization:  proof of Theorem~\ref{thmNANResultRankOneFlow} for $L=I_d$}\label{secNANResultRankOneFlow}
  
In this section, we prove Theorem~\ref{thmNANResultRankOneFlow} for the case $L=I_d$.  Let $\wrt {\boldsymbol{\widetilde{x}}}$ be the Haar measure on $\RR^{d-1}$, induced by the Haar measure $\wrt {\widetilde{n}}$ on $N^+$, normalized so that the induced measure on $\TT^{d-1}$ is a probability measure. Let $Q := e^{(d-1)t}$.  Recall the definition of $N^+_\varepsilon (\boldsymbol{\widetilde{y}})$ from Section~\ref{secIntroAnyRankResults} and $\Aa$ from Section~\ref{subsec:STHEandPrelimNotation}.  Let us translate and thicken $\Ss_T$ in the stable directions with $N^+_\varepsilon (\boldsymbol{\widetilde{y}})$.  The notions of stable directions and of translating and thickening in these directions are well-defined in $G$ because we can take the coordinates given in (\ref{eqnParametrizationForG}) in Section~\ref{subsecVolumes}.  In particular, there is a unique point of intersection of a (piece of a) horosphere with the section $\Ss_1$.  We will estimate the volume of the intersection of a translate of $\Aa$ with the (translated and) thickened section $\Ss_T N^+_\varepsilon (\boldsymbol{\widetilde{y}})$.

Note first that, for $\boldsymbol{\widetilde{x}} \in \RR^{d-1}$, conjugation gives us:  \begin{align}\label{eqnConjy1}n_+(\boldsymbol{\widetilde{x}}) = \Phi^{\frac 1 d \log(T/T_0)} n_+\left(\frac {T_0} T \boldsymbol{\widetilde{x}}\right) \Phi^{-\frac 1 d \log(T/T_0)}.\end{align}

Also recall that, for $B \subset \Ga \backslash G$, $g, h \in G$, we have that  \begin{align}\label{eqnIndictorTranslation} \In_{B h}(g) = \In_{B}(g h^{-1}).
  \end{align}  (Note that we use the convention that the argument of the indicator function is considered as an element in the homogeneous space $\Ga \backslash G$.)

Recall that taking the transpose together with the inverse is an isometry and that (see~\cite[Equation(3.54)]{Mar10}) the point \[\Ga \tensor*[^t]{\left(n_-(\boldsymbol{r}) \Phi^t\right)}{}^{-1}  \in \Ss_1.\]  If the point $\Ga\tensor*[^t]{\left(n_-(\boldsymbol{r}) \Phi^t\right)}{}^{-1} $ intersects the section $\Ss_T \subset \Ss_1$, then the $N^+$-volume of the horosphere through this unique intersection point and meeting the thickening is given by \[f_{T, \varepsilon}(\boldsymbol{r}, n_-(\boldsymbol{r}) \Phi^t) := \int_{N^+} \In_{\Aa \times\Ss_T N^+_\varepsilon(\boldsymbol{\widetilde{y}})}\left(\boldsymbol{r}, \tensor*[^t]{\left(n_-(\boldsymbol{r}) \Phi^t\right)}{}^{-1} n_+(\boldsymbol{\widetilde{x}})\right) ~\wrt {\widetilde{n}(\boldsymbol{\widetilde{x}})}.\]  Recall that we are given $T \in [T_0,e^{dt \eta}]$ and hence, in particular, we have that \begin{align}\label{eqnChangeOfRate}
 \lim_{t \rightarrow \infty}\left(t-\frac 1 d \log(T/T_0)\right)= \infty, \end{align} which also allows for equidistribution when the Farey points are pushed with this slower time.

\begin{lemm}\label{lemmUnstableVolumes}  Let \[\widetilde{Q} = e^{(d-1)\left(t-\frac 1 d \log(T/T_0)\right)}.\]  
We have that 
 \[\frac {T^{d-1}} {Q^d} \sum_{\boldsymbol{r} \in \F_Q} f_{T, \varepsilon}(\boldsymbol{r}, n_-(\boldsymbol{r}) \Phi^t) = \frac {T_0^{d-1}} {\widetilde{Q}^{d}}\sum_{\boldsymbol{r} \in \F_{\widetilde{Q}}} f_{T_0, \varepsilon}\left(\boldsymbol{r}, n_-(\boldsymbol{r}) \Phi^{t- \frac 1 d \log(T/T_0)}\right).\] 
\end{lemm}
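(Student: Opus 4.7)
The plan is a direct renormalization argument using the conjugation identities $(\ref{eqnConjy1})$ and $(\ref{eqnConjy2})$ together with $(\ref{eqnIndictorTranslation})$. Set $\tau := \tfrac{1}{d}\log(T/T_0)$. First I verify that the normalizing prefactors agree: from $\widetilde Q = e^{(d-1)(t-\tau)}$ we compute $\widetilde Q^d = Q^d(T_0/T)^{d-1}$, hence $T_0^{d-1}/\widetilde Q^d = T^{d-1}/Q^d$, and the asserted identity collapses to
\[
\sum_{\boldsymbol r \in \F_Q} f_{T,\varepsilon}(\boldsymbol r, n_-(\boldsymbol r)\Phi^t) \;=\; \sum_{\boldsymbol r \in \F_{\widetilde Q}} f_{T_0,\varepsilon}(\boldsymbol r, n_-(\boldsymbol r)\Phi^{t-\tau}).
\]

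Next I would rewrite each integrand of $f_{T,\varepsilon}$ by right-multiplying the argument of the indicator by $\Phi^\tau$. Using $\Ss_T = \Ss_{T_0}\Phi^{-\tau}$ (immediate from the definition) together with $(\ref{eqnIndictorTranslation})$, the target $\Ss_T N^+_\varepsilon(\boldsymbol{\widetilde y})$ is converted to $\Ss_{T_0}\cdot\bigl(\Phi^{-\tau}N^+_\varepsilon(\boldsymbol{\widetilde y})\Phi^\tau\bigr) = \Ss_{T_0}N^+_{(T_0/T)\varepsilon}((T_0/T)\boldsymbol{\widetilde y})$, where the last equality uses $(\ref{eqnConjy2})$. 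Simultaneously, the argument $\tensor*[^t]{(n_-(\boldsymbol r)\Phi^t)}{^{-1}}n_+(\boldsymbol{\widetilde x}) = \Phi^{-t}n_+(\boldsymbol{\widetilde x}-\boldsymbol r)$ multiplied on the right by $\Phi^\tau$ becomes $\Phi^{-(t-\tau)}n_+\bigl((T_0/T)(\boldsymbol{\widetilde x}-\boldsymbol r)\bigr)$. The change of variables $\boldsymbol{\widetilde x}' = (T_0/T)\boldsymbol{\widetilde x}$ introduces a Jacobian $(T/T_0)^{d-1}$ that exactly cancels the $(T_0/T)^{d-1}$ volume shrinkage inherent in passing from $\varepsilon$ to $(T_0/T)\varepsilon$; this yields the term-by-term equality $f_{T,\varepsilon}(\boldsymbol r, n_-(\boldsymbol r)\Phi^t) = f_{T_0,\varepsilon}(\boldsymbol r, n_-(\boldsymbol r)\Phi^{t-\tau})$ for each $\boldsymbol r$ that contributes nontrivially on either side.

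The delicate step, and the main obstacle, is matching the Farey index sets $\F_Q$ and $\F_{\widetilde Q}$, since the rescaling $\boldsymbol r \mapsto (T_0/T)\boldsymbol r$ is generically irrational and does not carry one to the other. The resolution is that only Farey points whose horospheric image actually meets the relevant section contribute. Solving for the last row of $\gamma\Phi^{-t}n_+(-\boldsymbol p/q)\Phi^s \in H$ with $\gamma \in \Gamma$ forces $\gamma_{dd} = e^{(d-1)(t-s)}$, so the height $s$ associated with the primitive label $(\boldsymbol p, q)$ is determined by $q = e^{(d-1)(t-s)}$, and the condition $s \geq \tfrac{1}{d}\log T$ becomes $q \leq Q/T^{(d-1)/d}$. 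The analogous computation on the RHS at time $t-\tau$ against $\Ss_{T_0}$ gives $q \leq \widetilde Q/T_0^{(d-1)/d} = Q/T^{(d-1)/d}$, identical to the LHS. The contributing Farey points on both sides are therefore the same subset $\{\boldsymbol p/q \in \Aa : 0 < q \leq Q/T^{(d-1)/d}\}$, and combined with the term-by-term equality above, the identity follows.
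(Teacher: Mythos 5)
Your proposal is correct and follows the paper's own renormalization strategy: right-multiply by $\Phi^\tau$, use $\Ss_T = \Ss_{T_0}\Phi^{-\tau}$, change variables in the $N^+$-integral so the Jacobian $(T/T_0)^{d-1}$ cancels the $(T_0/T)^{d-1}$ shrinkage of the $\varepsilon$-thickening, and observe that the contributing Farey denominators on both sides coincide at the shared threshold $q \leq QT^{-(d-1)/d} = \widetilde{Q}T_0^{-(d-1)/d}$ (the paper's $\widehat{Q}$); this is precisely the paper's bijection-$\varphi$ argument, repackaged as a clean term-by-term identity $f_{T,\varepsilon}(\boldsymbol{r}, n_-(\boldsymbol{r})\Phi^t)=f_{T_0,\varepsilon}(\boldsymbol{r}, n_-(\boldsymbol{r})\Phi^{t-\tau})$ instead of the paper's ratio-of-Haar-measures phrasing. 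Two small slips that fortunately do not propagate: since $\tensor*[^t]{\left(n_-(\boldsymbol{r})\Phi^t\right)}{^{-1}} = n_+(-\boldsymbol{r})\Phi^{-t}$, one has $\tensor*[^t]{\left(n_-(\boldsymbol{r})\Phi^t\right)}{^{-1}}n_+(\boldsymbol{\widetilde{x}}) = \Phi^{-t}n_+(\boldsymbol{\widetilde{x}} - e^{dt}\boldsymbol{r})$, not $\Phi^{-t}n_+(\boldsymbol{\widetilde{x}} - \boldsymbol{r})$, and the section-intersection condition should read $\gamma\, n_+(-\boldsymbol{p}/q)\,\Phi^{-(t-s)}\in H$ rather than $\gamma\Phi^{-t}n_+(-\boldsymbol{p}/q)\Phi^s\in H$, though your derived formula $\gamma_{dd}=q=e^{(d-1)(t-s)}$ is unaffected because it reads off only the $(d,d)$ entry.
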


\begin{proof}

 Since \[ \In_{\Ss_T n_+(\boldsymbol{\widetilde{y}})}\left(\tensor*[^t]{\left(n_-(\boldsymbol{r}) \Phi^t\right)}{}^{-1} n_+(\boldsymbol{\widetilde{y}})\right) = \In_{\Ss_T }\left(\tensor*[^t]{\left(n_-(\boldsymbol{r}) \Phi^t\right)}{}^{-1} \right)\] holds by (\ref{eqnIndictorTranslation}), we have that multiplying on the right by $n_+(\boldsymbol{\widetilde{y}})$ and using the right invariance of the measure $\wrt{\widetilde{n}}$ gives \begin{align*}f_{T, \varepsilon}(\boldsymbol{r}, & n_-(\boldsymbol{r}) \Phi^t) =\int_{N^+} \In_{\Aa \times\Ss_T  N^+_\varepsilon(\boldsymbol{0})}\left(\boldsymbol{r}, \tensor*[^t]{\left(n_-(\boldsymbol{r}) \Phi^t\right)}{}^{-1} n_+(\boldsymbol{\widetilde{x}})\right) ~\wrt {\widetilde{n}(\boldsymbol{\widetilde{x}})}.
  \end{align*}  We note that, using (\ref{eqnIndictorTranslation}), the multiplication right-translates the center of the thickening to the point $\Ga \tensor*[^t]{\left(n_-(\boldsymbol{r}) \Phi^t\right)}{}^{-1}$.

Now consider slowing the speed of the flow pushing the Farey points from $t$ to $t - \frac 1 d \log(T/T_0)$.  Using the observation that \begin{align}\label{eqnPullingBackOfSection}
\Ss_T = \Ss_{T_0} \Phi^{-\frac 1 d \log(T/T_0)}  \end{align}holds, we see that $\Ga\tensor*[^t]{\left(n_-(\boldsymbol{r}) \Phi^t\right)}{}^{-1}  $ meets $\Ss_T$ if and only if $\Ga\tensor*[^t]{\left(n_-(\boldsymbol{r}) \Phi^{t - \frac 1 d \log(T/T_0)}\right)}{}^{-1} $ meets $\Ss_{T_0}$.  Let denote these intersections as follows:  \begin{align*}
\I(\boldsymbol{r}, t, T) &:= \Ga\tensor*[^t]{\left(n_-(\boldsymbol{r}) \Phi^{t}\right)}{}^{-1}  \cap \Ss_T \\ \J\left(\boldsymbol{r}, t ,T\right) &:=\Ga \tensor*[^t]{\left(n_-(\boldsymbol{r}) \Phi^{t - \frac 1 d \log(T/T_0)}\right)}{}^{-1}  \cap \Ss_{T_0}.\end{align*}  Thus, in particular, we have that the mapping \[  z \mapsto z \Phi^{-\frac 1 d \log(T/T_0)}\] is a bijection \[\varphi:\J(\boldsymbol{r}, t, T) \longrightarrow \I(\boldsymbol{r}, t, T)\] whenever either set is nonempty.  Since $\Ga \tensor*[^t]{\left(n_-(\boldsymbol{r}) \Phi^t\right)}{}^{-1}  \in \Ss_1$ for $t$ large enough, applying the flow $\Phi^{\widetilde{t}}$ to $n_-(\boldsymbol{r})$  (i.e. considering $n_-(\boldsymbol{r}) \Phi^{t+\widetilde{t}} $) for $\widetilde{t} \geq 0$ keeps it in $\Ss_1$ and, when $\widetilde{t}$ is large enough, it will be (and stay) in $\Ss_{T_0}$ and hence these sets will be nonempty and continue to be nonempty for increasing $\widetilde{t}$, which is guaranteed by (\ref{eqnChangeOfRate}).

Recalling that $\Ss_T \subset \Ss_{T_0}$, we have that the (piece of the) horosphere $z N^+_\delta(\boldsymbol{0})$ for $z \in \J(\boldsymbol{r}, t, T)$ (whenever the set is nonempty) corresponds to the (piece of the) horosphere through $\varphi(z)$, which is namely the set \[z N^+_\delta(\boldsymbol{0}) \Phi^{ -\frac 1 d \log(T/T_0)} = \varphi(z) N^+_{\delta T/T_0}(\boldsymbol{0}),\] where the equality follows by an application of (\ref{eqnConjy1}).   We want the pieces to exactly match (i.e. $\varphi(z)N^+_\varepsilon(\boldsymbol{0}) = \varphi(z) N^+_{\delta T/T_0}(\boldsymbol{0})$), and, hence, we have that \[\varepsilon = \delta \frac{T}{T_0}.\]

  Thus we have
  
\begin{align}\label{eqnCompareExpandingHaarVol}f_{T_0, \delta}(\boldsymbol{r},  n_-(\boldsymbol{r}) \Phi^{t-\frac 1 d \log(T/T_0)}) &= \int_{N^+} \In_{\Aa \times\Ss_{T_0}  N^+_{\varepsilon T_0/T}(\boldsymbol{0})}\left(\boldsymbol{r},\tensor*[^t]{\left(n_-(\boldsymbol{r}) \Phi^{t - \frac 1 d \log(T/T_0)}\right)}{}^{-1}n_+(\boldsymbol{\widetilde{x}})\right) ~\wrt {\widetilde{n}(\boldsymbol{\widetilde{x}})}\\ \nonumber &=\In_{\Aa \times\Ss_{T_0}}\left(\boldsymbol{r}, \tensor*[^t]{\left(n_-(\boldsymbol{r}) \Phi^{t - \frac 1 d \log(T/T_0)}\right)}{}^{-1}\right) \int_{N^+} \In_{N^+_{\varepsilon T_0/T}(\boldsymbol{0})}\left(n_+(\boldsymbol{\widetilde{x}})\right) ~\wrt {\widetilde{n}(\boldsymbol{\widetilde{x}})}\\ \nonumber&=\In_{\Aa \times\Ss_{T_0}}\left(\boldsymbol{r}, \tensor*[^t]{\left(n_-(\boldsymbol{r}) \Phi^{t - \frac 1 d \log(T/T_0)}\right)}{}^{-1}\right) \int_{\RR^{d-1}} \In_{\left[-\frac{\varepsilon T_0}{2T},\frac{\varepsilon T_0}{2T}\right]^{d-1} }\left(\boldsymbol{\widetilde{x}}\right) ~\wrt {\boldsymbol{\widetilde{x}}}\\ \nonumber&=\varepsilon^{d-1}\frac {T_0^{d-1}}{T^{d-1}}\In_{\Aa \times\Ss_{T_0}}\left(\boldsymbol{r}, \tensor*[^t]{\left(n_-(\boldsymbol{r}) \Phi^{t - \frac 1 d \log(T/T_0)}\right)}{}^{-1}\right) \\ \nonumber&= \frac {T_0^{d-1}}{T^{d-1}}f_{T_0, \varepsilon}\left(\boldsymbol{r}, n_-(\boldsymbol{r}) \Phi^{t-\frac 1 d \log(T/T_0)}\right).
  \end{align} Here we have used the fact that \[\int_{N^+} \In_{N^+_{\widetilde{\varepsilon}}(\boldsymbol{0})}\left(n_+(\boldsymbol{\widetilde{x}})\right) ~\wrt {\widetilde{n}(\boldsymbol{\widetilde{x}})} = \int_{\RR^{d-1}} \In_{\left[-\frac{\widetilde{\varepsilon}} 2, \frac {\widetilde{\varepsilon}}2\right]^{d-1}}(\boldsymbol{\widetilde{x}})~\wrt {\boldsymbol{\widetilde{x}}}. \]

Let $\widehat{Q}:= Q T^{-\frac {d-1}d}$.  Now, at time $t$, we have that $\boldsymbol{r} \in \F_Q$, but, in particular, if it determines a $z \in \I(\boldsymbol{r}, t, T)$, then $\boldsymbol{r} \in\F_{\widehat{Q}}$ because all $\boldsymbol{r} \in \F_Q \backslash\F_{\widehat{Q}}$ will correspond to points on the horosphere that lie in $\Ss_1 \backslash \Ss_{T}$ and hence the following three sums, which gives the sum of the Haar measures of the intersections, are equal: \[ \sum_{\boldsymbol{r} \in \F_Q} f_{T, \varepsilon}(\boldsymbol{r}, n_-(\boldsymbol{r}) \Phi^t) =  \sum_{\boldsymbol{r} \in \F_{\widetilde{Q}}} f_{T, \varepsilon}(\boldsymbol{r}, n_-(\boldsymbol{r}) \Phi^t) = \sum_{\boldsymbol{r} \in \F_{\widehat{Q}}} f_{T, \varepsilon}(\boldsymbol{r}, n_-(\boldsymbol{r}) \Phi^t).\]  We note that horospheres through different right-translated Farey points  $\Ga \tensor*[^t]{\left(n_-(\boldsymbol{r}) \Phi^t\right)}{}^{-1}$ are disjoint and thus the sum is over disjoint pieces, giving the total measure of the pieces with respect to the Haar measure $~\wrt {\widetilde{n}}$.

Note that $Q^d$ is, by direct computation, the Haar measure of $ \Phi^{-t}N_-(\TT^{d-1})\Phi^t$ on $N^-$  (this is the Haar measure normalized so to induce the probability Haar measure $\wrt {\boldsymbol{x}}$) and, likewise, $\widetilde{Q}^d$ is the the Haar measure of $ \Phi^{-t + \frac 1 d \log(T/T_0)}N_-(\TT^{d-1})\Phi^{t - \frac 1 d \log(T/T_0)}$ on $N^-$.  Applying $\varphi$ to the horosphere on $N^+$ changes the Haar measure by the factor $\frac {T^{d-1}}{T_0^{d-1}}$ and hence preserves the ratio of Haar measures:  \begin{align}\label{eqnRatioHaarMeasuresNANCase}
 \frac {1} {Q^d} \sum_{\boldsymbol{r} \in \F_Q} f_{T, \varepsilon}(\boldsymbol{r}, n_-(\boldsymbol{r}) \Phi^t) = \frac 1 {\widetilde{Q}^{d}}\sum_{\boldsymbol{r} \in \F_{\widetilde{Q}}} f_{T_0, \delta}\left(\boldsymbol{r}, n_-(\boldsymbol{r}) \Phi^{t- \frac 1 d \log(T/T_0)}\right). \end{align}  

Using (\ref{eqnCompareExpandingHaarVol}), we obtain the following:  \[\frac {T^{d-1}} {Q^d} \sum_{\boldsymbol{r} \in \F_Q} f_{T, \varepsilon}(\boldsymbol{r}, n_-(\boldsymbol{r}) \Phi^t) = \frac {T_0^{d-1}} {\widetilde{Q}^{d}}\sum_{\boldsymbol{r} \in \F_{\widetilde{Q}}} f_{T_0, \varepsilon}\left(\boldsymbol{r}, n_-(\boldsymbol{r}) \Phi^{t- \frac 1 d \log(T/T_0)}\right).\] 
 
 This gives the desired result.
\end{proof}

\begin{rema}
Note that, in the lemma, when we apply $\varphi$, we also pull down $S_T$ to $S_{T_0}$, and, thus, the number of intersection points of the expanding horosphere with $S_T$ is the same as with $S_{T_0}$.

\end{rema}

Lemma~\ref{lemmUnstableVolumes} allows us to apply Theorem~\ref{thmMarklof} to ${T^{d-1}}f_{T, \varepsilon}(\boldsymbol{r}, n_-(\boldsymbol{r}) \Phi^t)$ as follows.  Recall that $|\F_Q| \sim \frac {Q^d} {d \zeta(d)}$ as $Q \rightarrow \infty$.

We have that \begin{align}\label{eqnEquiOnSectTransA}\lim_{t \rightarrow \infty} &\frac {T^{d-1}} {|\F_Q|} \sum_{\boldsymbol{r} \in \F_Q}  f_{T, \varepsilon}(\boldsymbol{r}, n_-(\boldsymbol{r}) \Phi^t)   \\ \nonumber&=\lim_{t \rightarrow \infty} \frac {T_0^{d-1}} {|\F_{\widetilde{Q}}|} \sum_{\boldsymbol{r} \in \F_{\widetilde{Q}}}  f_{T_0, \varepsilon}(\boldsymbol{r}, n_-(\boldsymbol{r}) \Phi^{t- \frac 1 d \log(T/T_0)})\\ \nonumber&=  \lim_{t \rightarrow \infty} \frac {T_0^{d-1}} {|\F_{\widetilde{Q}}|} \sum_{\boldsymbol{r} \in \F_{\widetilde{Q}}} \varepsilon^{d-1}\In_{\Aa \times\Ss_{T_0}}\left(\boldsymbol{r}, \tensor*[^t]{\left(n_-(\boldsymbol{r}) \Phi^{t - \frac 1 d \log(T/T_0)}\right)}{}^{-1}\right) \\ \nonumber&=  \lim_{t \rightarrow \infty} \frac {T_0^{d-1}} {|\F_{\widetilde{Q}}|} \sum_{\boldsymbol{r} \in \F_{\widetilde{Q}}} \varepsilon^{d-1}\widetilde{\In}_{\Aa \times\Ss_{T_0}}\left(\boldsymbol{r},n_-(\boldsymbol{r}) \Phi^{t - \frac 1 d \log(T/T_0)}\right)\\ \nonumber&
  = d(d-1)T_0^{d-1}\varepsilon^{d-1} \int_0^\infty \int_{\TT^{d-1} \times \Ga_H \backslash H} \In_{\Aa \times \Ss_{T_0}}(\boldsymbol{x},  M \Phi^{-s})~\wrt{\boldsymbol{x}}~\wrt \mu_H(M) e^{-d(d-1)s}~\wrt s 
  \\ \nonumber&
  = d(d-1)T_0^{d-1}\varepsilon^{d-1} \int_{\frac 1 d \log(T_0)}^\infty \int_{\TT^{d-1}} \In_{\Aa}(\boldsymbol{x})~\wrt{\boldsymbol{x}} \ e^{-d(d-1)s}~\wrt s 
    \\\nonumber & = \varepsilon^{d-1}  \int_{\TT^{d-1}} \In_{\Aa}(\boldsymbol{x})~\wrt{\boldsymbol{x}}.
   \end{align}  Note, in particular, that, using the isometry given by taking transpose and inverse, we have applied Theorem~\ref{thmMarklof} in the third equality to a product of a continuous bounded function with an indicator function, as required.

Since $Q^d = e^{d(d-1)t}$ is the Haar measure of  $\Phi^{-t}N_-(\TT^{d-1})\Phi^t$ on $N^-$ and we have that\[\Ga \tensor*[^t]{\left(n_-(\boldsymbol{r}) \Phi^t\right)}{}^{-1}  \in \Ss_1,\] the proportion of the translates of $\Aa$ meeting $\Ss_T N^+_\varepsilon (\boldsymbol{\widetilde{y}})$ for all $T\geq T_0$ is uniformly (as long as $\lim_{t \rightarrow \infty} \frac {\log(T^{1/d})}  {t} < 1$) the following:

\[{T^{d-1}}\int_{\TT^{d-1}} \In_{\Aa \times\Ss_T N^+_\varepsilon (\boldsymbol{\widetilde{y}})}(\boldsymbol{x}, {n_-(\boldsymbol{x}) \Phi^t})~\wrt {\boldsymbol{x}} \xrightarrow[]{t \rightarrow \infty} \frac 1 {d \zeta(d)} \varepsilon^{d-1}  \int_{\TT^{d-1}} \In_{\Aa}(\boldsymbol{x}) ~\wrt {\boldsymbol{x}}.\]  Here the indicator function is preserved under the isometry given by taking transpose and inverse.  This proves Theorem~\ref{thmNANResultRankOneFlow}.

\section{Spherical directions:  proof of Theorem~\ref{thmNAKResultRankOneFlow} for $L=I_d$} \label{secSdatpothmNAK}

We now consider, in a simplified setting, thickenings in spherical directions or, equivalently (see (\ref{eqn:SphereIsKPrimeModK})), by subsets of $\SO(d, \RR)$.  The simplified setting is as follows.  First, we require that thickenings be given by a chart contained in an open hemisphere because, as we will prove in Section~\ref{subsecSpherDirectionsDisjoint}, such thickenings do not self-intersect.  Second, we restrict the set we thicken to all of $\Ss_T$.  In particular, in this section, we will prove Theorem~\ref{thmNAKResultRankOneFlow} in the case $L=I_d$.  We defer thickenings by more general sets, including the whole sphere, and thickenings of more general Grenier boxes in $\Ss_T$ to Section~\ref{secSdTGCatpothmNAK}.

We adapt the proof of Theorem~\ref{thmNANResultRankOneFlow} to give a proof of Theorem~\ref{thmNAKResultRankOneFlow}.  Recall the definitions of $\Dd, \Ee, \boldsymbol{v},$ and $c$ from Section~\ref{secConstructShrinkTargets}.   Let \begin{align}\label{eqnzprime}
 \boldsymbol{z'} := c(\boldsymbol{z})^{-1} \boldsymbol{v}(\boldsymbol{z}),\end{align} \[c_{\min}:= \min(\left\{c(\boldsymbol{z}) : \boldsymbol{z} \in\overline{\Dd} \right\}).\]  When (\ref{eqnRestrictionsOnD}) holds, we can change coordinates from coordinates involving the sphere (via (\ref{eqn:SphereIsKPrimeModK})) to coordinates involving the expanding horospherical subgroup $N^+$ (for the precise definition of the latter coordinates, see (\ref{eqnParametrizationForG}) and the beginning of Section~\ref{subsecVolumes}):  \begin{align}\label{eqnDecompSOintoN}
\Phi^{-s} E(\boldsymbol{z})^{-1} n_+(-\boldsymbol{z'}) = &\begin{pmatrix}  e^s(A - \tensor*[^t]{\boldsymbol{w}}{}\boldsymbol{z'}) & e^s ( \tensor*[^t]{\boldsymbol{w}}{})\\  0 & e^{-(d-1)s} c\end{pmatrix} \\\nonumber = &\begin{pmatrix}  c^{1/(d-1)}(A - \tensor*[^t]{\boldsymbol{w}}{}\boldsymbol{z'}) & c^{-1}  e^{ds} ( \tensor*[^t]{\boldsymbol{w}}{})\\  0 & 1\end{pmatrix} \begin{pmatrix} e^{s} c^{-1/(d-1)}I_{d-1} & \tensor*[^t]{\boldsymbol{0}}{}  \\ 0 & e^{-(d-1)s} c\end{pmatrix} \\\nonumber = &\begin{pmatrix}  c^{1/(d-1)}(A - \tensor*[^t]{\boldsymbol{w}}{}\boldsymbol{z'}) & c^{-1}  e^{ds} ( \tensor*[^t]{\boldsymbol{w}}{})\\  0 & 1\end{pmatrix} \Phi^{-s + \log c/ (d-1)}.
  \end{align}  Here the first equality follows by (\ref{eqnzprime}).  It is immediate that \begin{align}\label{eqnSOMinNDet1}
 \det (A - \tensor*[^t]{\boldsymbol{w}}{}\boldsymbol{z'}) = c^{-1}. \end{align} 
 
Also, setting $s=0$ in (\ref{eqnDecompSOintoN}), taking the inverse, and rewriting gives the following: 
 \begin{align}\label{eqnDecompNintoSO}
\Phi^{-s} n_+(\boldsymbol{z'}) E(\boldsymbol{z}) & =  \Phi^{-s-\log c/ (d-1)}\begin{pmatrix}  c^{1/(d-1)}(A - \tensor*[^t]{\boldsymbol{w}}{}\boldsymbol{z'}) & c^{-1}  ( \tensor*[^t]{\boldsymbol{w}}{})\\  0 & 1\end{pmatrix}^{-1}  \\\nonumber & =  \Phi^{-s-\log c/ (d-1)}\begin{pmatrix}  c^{-1/(d-1)}(A - \tensor*[^t]{\boldsymbol{w}}{}\boldsymbol{z'})^{-1} & -c^{-d/(d-1)} (A - \tensor*[^t]{\boldsymbol{w}}{}\boldsymbol{z'})^{-1}(\tensor*[^t]{\boldsymbol{w}}{})\\  0 & 1\end{pmatrix} \\\nonumber & = \begin{pmatrix}  c^{-1/(d-1)}(A - \tensor*[^t]{\boldsymbol{w}}{}\boldsymbol{z'})^{-1} & -e^{ds} (A - \tensor*[^t]{\boldsymbol{w}}{}\boldsymbol{z'})^{-1}(\tensor*[^t]{\boldsymbol{w}}{})\\  0 & 1\end{pmatrix} \Phi^{-s-\log c/ (d-1)}.
  \end{align}

 Consider the family of thickenings in spherical directions: \begin{align}\label{eqnDistortedSphericalFam}
 \{\Ss_{T} \Ee : T \geq T_0\}. \end{align}  Observe that, unlike for $N^+$, the action of the flow $\Phi^{-\frac 1 d \log (T/T_0)}$ is not uniform on $\Ee$ and, thus, the ``shape'' of these neighborhoods will change under different $T$.  

Using (\ref{eqnDecompSOintoN}), let us express $\Ss_{T} \Ee$ in terms of the coordinates involving $N^+$:

\begin{lemm}\label{lemmMMyCoordinatesForSE}
We have that \[\Ss_{T} \Ee = \left\{\Ss_{Tc^{-d/(d-1)}} n_+(\boldsymbol{z'}):  \boldsymbol{z} \in \Dd \right\}\] 
\end{lemm}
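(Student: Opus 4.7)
The strategy is to pass to a fiberwise equality: since $\Ee = \bigcup_{\boldsymbol{z} \in \Dd} \{E(\boldsymbol{z})^{-1}\}$, the lemma reduces to proving, for each fixed $\boldsymbol{z} \in \Dd$, the set equality
\[
\Ss_T E(\boldsymbol{z})^{-1} = \Ss_{T c(\boldsymbol{z})^{-d/(d-1)}}\, n_+(\boldsymbol{z'}),
\]
and then taking the union over $\boldsymbol{z} \in \Dd$ of both sides. The main input is the matrix identity (\ref{eqnDecompSOintoN}), which I would first rearrange by multiplying on the right by $n_+(\boldsymbol{z'})$ to obtain
\[
\Phi^{-s} E(\boldsymbol{z})^{-1} = M(\boldsymbol{z},s)\,\Phi^{-s + \frac{\log c}{d-1}}\,n_+(\boldsymbol{z'}),
\]
where $M(\boldsymbol{z},s)$ denotes the upper-triangular block matrix appearing in the second line of (\ref{eqnDecompSOintoN}).

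The first key step is to check that $M(\boldsymbol{z},s) \in H$ for every $s \in \RR$. Its bottom row is $(\boldsymbol{0},1)$ by construction, and its top-left $(d-1)\times(d-1)$ block $c^{1/(d-1)}(A - \tensor*[^t]{\boldsymbol{w}}{}\boldsymbol{z'})$ has determinant $c \cdot c^{-1} = 1$ by (\ref{eqnSOMinNDet1}); the upper-right entries are unconstrained in $H$, so the $s$-dependence there is harmless.

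Next I would unpack $\Ss_T = \Gamma\backslash\Gamma H\{\Phi^{-s} : s \geq \tfrac{1}{d}\log T\}$ and substitute. A generic element of $\Ss_T E(\boldsymbol{z})^{-1}$ is $\Gamma M\Phi^{-s}E(\boldsymbol{z})^{-1}$ with $M \in H$ and $s \geq \tfrac{1}{d}\log T$, which by the rearranged identity equals $\Gamma(M M(\boldsymbol{z},s))\Phi^{-s'}n_+(\boldsymbol{z'})$ with $s' := s - \tfrac{\log c}{d-1}$. Since $H$ is a group, $MM(\boldsymbol{z},s) \in H$, and the elementary computation
\[
s \geq \tfrac{1}{d}\log T \iff s' \geq \tfrac{1}{d}\log T - \tfrac{\log c}{d-1} = \tfrac{1}{d}\log\bigl(T c^{-d/(d-1)}\bigr)
\]
gives the inclusion $\subseteq$. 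The reverse inclusion is obtained by inverting: multiply the identity on the left by $M(\boldsymbol{z},s)^{-1}$ (still in $H$), and run the same chain in reverse via the shift $s' \mapsto s' + \tfrac{\log c}{d-1}$; alternatively, invoke the companion identity (\ref{eqnDecompNintoSO}) directly.

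The calculations are routine; the only mild obstacle is bookkeeping the shift $s \mapsto s'$ so that the cutoff threshold translates into exactly the exponent $c^{-d/(d-1)}$ on $T$, and keeping in mind that the $s$-dependence of $M(\boldsymbol{z},s)$ lives in the upper-right block where $H$-membership imposes no condition.
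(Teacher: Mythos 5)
Your proposal is correct and follows essentially the same route as the paper: apply the identity (\ref{eqnDecompSOintoN}), observe via (\ref{eqnSOMinNDet1}) that the upper-triangular factor lies in $H$ so that $H$-multiplication absorbs it, and reparametrize $s \mapsto s - \tfrac{\log c}{d-1}$ to convert the threshold $\tfrac{1}{d}\log T$ into $\tfrac{1}{d}\log\bigl(Tc^{-d/(d-1)}\bigr)$. The paper's proof is terser (it leaves the $H$-membership check and the exponent bookkeeping implicit and does not dwell on the reverse inclusion, which is automatic since $H M(\boldsymbol{z},s) = H$ is a set equality), but the content is the same.
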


\begin{proof}  Applying (\ref{eqnDecompSOintoN}), we have that \begin{align*} \Ss_{T}& \Ee \\ =& \Ga \backslash \Ga H \left\{\begin{pmatrix}  c^{1/(d-1)}(A - \tensor*[^t]{\boldsymbol{w}}{}\boldsymbol{z'}) & c^{-1}  e^{ds} ( \tensor*[^t]{\boldsymbol{w}}{})\\  0 & 1\end{pmatrix} \Phi^{-s + \log c/ (d-1)}  n_+(\boldsymbol{z'}) :  \boldsymbol{z} \in \Dd \textrm{ and } s \geq {\frac 1 d \log T}\right\}.
  \end{align*}
Fix $\boldsymbol{z}$ and $s$ temporarily.  By (\ref{eqnSOMinNDet1}), have that \[H \begin{pmatrix}  c^{1/(d-1)}(A - \tensor*[^t]{\boldsymbol{w}}{}\boldsymbol{z'}) & c^{-1}  e^{ds} ( \tensor*[^t]{\boldsymbol{w}}{})\\  0 & 1\end{pmatrix}  = H\] and, consequently,\[\Ga \backslash \Ga H \begin{pmatrix}  c^{1/(d-1)}(A - \tensor*[^t]{\boldsymbol{w}}{}\boldsymbol{z'}) & c^{-1}  e^{ds} ( \tensor*[^t]{\boldsymbol{w}}{})\\  0 & 1\end{pmatrix} \Phi^{-s + \log c/ (d-1)}  n_+(\boldsymbol{z'}) = \Ga \backslash \Ga H  \Phi^{-s + \log c/ (d-1)}  n_+(\boldsymbol{z'}).\]  Taking the union over all $\boldsymbol{z} \in \Dd$ and $s \geq {\frac 1 d \log T}$ yields the desired result.
\end{proof}

 As in the proof of Theorem~\ref{thmNANResultRankOneFlow}, we have that, if the point $\Ga\tensor*[^t]{\left(n_-(\boldsymbol{r}) \Phi^t\right)}{}^{-1} $ intersects the section $\Ss_1$, then, using Lemma~\ref{lemmMMyCoordinatesForSE}, the $N^+$-volume of the horosphere through this unique intersection point and meeting the neighborhood $\Ss_{T} \Ee$ is given by \[\widehat{f}_{T, \Dd}(\boldsymbol{r}, n_-(\boldsymbol{r}) \Phi^t) := \int_{N^+} \In_{\Aa \times\left\{\Ss_{Tc^{-d/(d-1)}} n_+(\boldsymbol{z'}):  \boldsymbol{z} \in \Dd \right\}}\left(\boldsymbol{r}, \tensor*[^t]{\left(n_-(\boldsymbol{r}) \Phi^t\right)}{}^{-1} n_+(\boldsymbol{\widetilde{x}})\right) ~\wrt {\widetilde{n}(\boldsymbol{\widetilde{x}})}.\]  More generally, for any $\alpha >0$, we have that the $N^+$-volume of the horosphere through the unique intersection point and meeting the neighborhood \[\left\{\Ss_{Tc^{-d/(d-1)}} n_+(\alpha \boldsymbol{z'}):  \boldsymbol{z} \in \Dd \right\}\] is given by \begin{align*}
\widehat{f}_{T, \Dd, \alpha}(\boldsymbol{r},  n_-(\boldsymbol{r}) \Phi^t)  = \int_{N^+} \In_{\Aa \times\left\{\Ss_{T c^{-d/(d-1)}} n_+\left(\alpha \boldsymbol{z'}\right):  \boldsymbol{z} \in \Dd \right\}}\left(\boldsymbol{r}, \tensor*[^t]{\left(n_-(\boldsymbol{r}) \Phi^t\right)}{}^{-1} n_+(\boldsymbol{\widetilde{x}})\right) ~\wrt {\widetilde{n}(\boldsymbol{\widetilde{x}})}.\end{align*} 

Now note that $\Ss_{T c^{-d/(d-1)}} \subset \Ss_T$ and $\Ss_{T_0 c^{-d/(d-1)}} \subset \Ss_{T_0}$ because $c \leq1$.  Moreover, $c$ can be $1$ because $\boldsymbol{0} \in \Dd$.  Thus, we have that \[\bigcup_{\boldsymbol{z} \in \Dd} \Ss_{T c^{-d/(d-1)}} = \Ss_T \textrm{ and }  \bigcup_{\boldsymbol{z} \in \Dd} \Ss_{T_0 c^{-d/(d-1)}} = \Ss_{T_0}.\]

Let $\widetilde{u} \in \Ss_{T}$.  Then the intersection of the horosphere through $\widetilde{u}$ with the neighborhood \[\left\{\Ss_{T c^{-d/(d-1)}} n_+\left(\alpha \boldsymbol{z'}\right):  \boldsymbol{z} \in \Dd \right\}\] is the set \[\widetilde{u}N_{T, \alpha, \Dd}:=\widetilde{u}N_{\widetilde{u}, T, \alpha,\Dd}:=\widetilde{u}N^+ \cap \left\{\Ss_{T c^{-d/(d-1)}} n_+\left(\alpha\boldsymbol{z'}\right):  \boldsymbol{z} \in \Dd \right\}.\] Let ${u} \in \Ss_{T_0}$.  By (\ref{eqnConjy1}), we have that \[uN_{T_0,\alpha,\Dd}\Phi^{-\frac 1 d \log (T/T_0)}=u\Phi^{-\frac 1 d \log (T/T_0)} \left\{n_+\left(\frac T {T_0}\boldsymbol{v}\right) : n_+(\boldsymbol{v}) \in N_{T_0,\alpha,\Dd}\right\}.\]  Recalling that the intersection of any expanding horosphere with the section $\Ss_1$ is a unique point, we have that \begin{align}\label{eqnSphereHoroEquivalence1}u\Phi^{-\frac 1 d \log (T/T_0)} N_{T,\alpha T/T_0,\Dd} = u\Phi^{-\frac 1 d \log (T/T_0)} \left\{n_+\left(\frac T {T_0}\boldsymbol{v}\right) : n_+(\boldsymbol{v}) \in N_{T_0,\alpha,\Dd}\right\}.
  \end{align}  Consequently, we have shown that \begin{align}\label{eqnSphereHoroEquivalence2}u\Phi^{-\frac 1 d \log (T/T_0)} N_{T,\alpha T/T_0,\Dd}=uN_{T_0,\alpha,\Dd}\Phi^{-\frac 1 d \log (T/T_0)}.
  \end{align}

We now show the analog of Lemma~\ref{lemmUnstableVolumes}.

\begin{lemm}\label{lemmUnstableVolumesNAKCase}  Let \[\widetilde{Q} = e^{(d-1)\left(t-\frac 1 d \log(T/T_0)\right)}.\]  
We have that \[\frac {T^{d-1}} {Q^d} \sum_{\boldsymbol{r} \in \F_Q} \widehat{f}_{T, \Dd}(\boldsymbol{r}, n_-(\boldsymbol{r}) \Phi^t)  = \frac {T_0^{d-1}} {\widetilde{Q}^{d}}\sum_{\boldsymbol{r} \in \F_{\widetilde{Q}}}\widehat{f}_{T_0, \Dd}(\boldsymbol{r}, n_-(\boldsymbol{r}) \Phi^{t-\frac 1 d \log (T/T_0)}).\] \end{lemm}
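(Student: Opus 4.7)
The plan is to mirror the proof of Lemma~\ref{lemmUnstableVolumes}, where the role of the conjugation scaling for the simple box thickenings $N^+_\varepsilon$ is replaced here by the pullback identities (\ref{eqnSphereHoroEquivalence1})--(\ref{eqnSphereHoroEquivalence2}) governing the more intricate sets $N_{T,\alpha,\Dd}$. Throughout I will abbreviate $s:=\tfrac{1}{d}\log(T/T_0)$ and $u_{t'}:=\tensor*[^t]{(n_-(\boldsymbol{r})\Phi^{t'})}{}^{-1}$.

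First I derive a per-Farey-point identity playing the role of~(\ref{eqnCompareExpandingHaarVol}). Starting from the defining integral of $\widehat{f}_{T,\Dd}(\boldsymbol{r},n_-(\boldsymbol{r})\Phi^t)$, the change of variables $\boldsymbol{\widetilde{x}}=(T/T_0)\boldsymbol{\widetilde{y}}$ (Jacobian $(T/T_0)^{d-1}$) combined with the conjugation (\ref{eqnConjy1}) rewrites $u_t n_+(\boldsymbol{\widetilde{x}})$ as $u_{t-s} n_+(\boldsymbol{\widetilde{y}})\Phi^{-s}$. Applying the indicator identity (\ref{eqnIndictorTranslation}) and computing
\[
\Ss_T\Ee\,\Phi^s=\bigl\{\Ss_{T_0 c^{-d/(d-1)}}\,n_+\bigl((T_0/T)\boldsymbol{z'}\bigr):\boldsymbol{z}\in\Dd\bigr\}
\]
via Lemma~\ref{lemmMMyCoordinatesForSE} together with the pullback $\Ss_{Tc^{-d/(d-1)}}\Phi^s=\Ss_{T_0 c^{-d/(d-1)}}$ from (\ref{eqnPullingBackOfSection}), one obtains
\[
\widehat{f}_{T,\Dd}(\boldsymbol{r},n_-(\boldsymbol{r})\Phi^t)=(T/T_0)^{d-1}\,\widehat{f}_{T_0,\Dd,T_0/T}\bigl(\boldsymbol{r},n_-(\boldsymbol{r})\Phi^{t-s}\bigr);
\]
alternatively this can be read off directly from (\ref{eqnSphereHoroEquivalence2}) with $\alpha=T_0/T$, since right-multiplication by $\Phi^{-s}$ stretches $N^+$-Haar measure by $(T/T_0)^{d-1}$.

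Second, exactly as in the NAN case, a Farey point $\boldsymbol{r}$ contributes nonzero intersection with $\Ss_T\Ee$ precisely when $\Ga\tensor*[^t]{(n_-(\boldsymbol{r})\Phi^t)}{}^{-1}\in\Ss_T$, which isolates $\boldsymbol{r}\in\F_{\widehat{Q}}$ with $\widehat{Q}=QT^{-(d-1)/d}$; the analogous condition at time $t-s$ and level $T_0$ picks out the same $\F_{\widehat{Q}}$. Using $\F_{\widehat{Q}}\subset\F_{\widetilde{Q}}\subset\F_Q$ lets me replace $\F_Q$ on the left and $\F_{\widetilde{Q}}$ on the right by the common $\F_{\widehat{Q}}$, and combined with the horosphere-piece ratio $Q^d=(T/T_0)^{d-1}\widetilde{Q}^d$ and the disjointness of horospheres through distinct right-translated Farey points this reduces the lemma to verifying the pointwise scaling
\[
\widehat{f}_{T_0,\Dd,T_0/T}\bigl(\boldsymbol{r},n_-(\boldsymbol{r})\Phi^{t-s}\bigr)=(T_0/T)^{d-1}\,\widehat{f}_{T_0,\Dd}\bigl(\boldsymbol{r},n_-(\boldsymbol{r})\Phi^{t-s}\bigr),
\]
which plays the role of the $\varepsilon$-to-$\delta$ step in (\ref{eqnCompareExpandingHaarVol}).

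I expect this last pointwise scaling to be the main obstacle. In the NAN case $N^+_\varepsilon$ is a flat box and the scaling is the Jacobian $(\delta/\varepsilon)^{d-1}$; here, for a fixed anchor, $N_{T_0,\alpha,\Dd}$ is parametrized by $\boldsymbol{z}\mapsto\boldsymbol{\widetilde{x}}_0(\boldsymbol{z})+\alpha\boldsymbol{z'}(\boldsymbol{z})$, so the $\boldsymbol{z}$-dependent offset $\boldsymbol{\widetilde{x}}_0$ obstructs a direct $\alpha^{d-1}$ Jacobian. To resolve this I plan to reapply (\ref{eqnSphereHoroEquivalence1}) at the slower time $t-s$ with $\alpha=T_0/T$, exploiting the identity $\boldsymbol{\widetilde{x}}_0^{(t,\,T c^{-d/(d-1)})}=(T/T_0)\,\boldsymbol{\widetilde{x}}_0^{(t-s,\,T_0 c^{-d/(d-1)})}$ derived from (\ref{eqnPullingBackOfSection}) to realize $N_{T_0,T_0/T,\Dd}$ as a genuine $(T_0/T)$-dilation of $N_{T_0,1,\Dd}$ (at the cost of transferring between the two anchors $u_{t-s}$ and $u_t$); once this measure scaling is in hand, the various factors of $(T/T_0)^{d-1}$ and $(T_0/T)^{d-1}$ collect exactly as in the NAN case, and the lemma follows.
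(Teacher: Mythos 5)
Your overall structure is sound and closely tracks the paper's argument: the per-point identity
\[
\widehat{f}_{T,\Dd}(\boldsymbol{r},n_-(\boldsymbol{r})\Phi^t)=(T/T_0)^{d-1}\,\widehat{f}_{T_0,\Dd,T_0/T}\bigl(\boldsymbol{r},n_-(\boldsymbol{r})\Phi^{t-s}\bigr)
\]
(obtainable either by the change of variables you describe or directly from~(\ref{eqnSphereHoroEquivalence2})) plus the Farey-sum bookkeeping $\F_{\widehat{Q}}\subset\F_{\widetilde{Q}}\subset\F_Q$ do reduce the lemma to the pointwise scaling $\widehat{f}_{T_0,\Dd,T_0/T}=(T_0/T)^{d-1}\widehat{f}_{T_0,\Dd}$, which is exactly~(\ref{eqnCompareExpandingHaarVolNAKCase}) in the paper's proof.

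The problem is that the ``main obstacle'' you identify for this final step does not exist, so your proposed fix is circular and proves nothing. There is no $\boldsymbol{z}$-dependent offset $\boldsymbol{\widetilde{x}}_0(\boldsymbol{z})$: the anchor $\widetilde{u}:=\Ga\tensor*[^t]{\left(n_-(\boldsymbol{r})\Phi^{t-s}\right)}{}^{-1}$ already lies in the section $\Ss_1$ for $\boldsymbol{r}\in\F_{\widetilde{Q}}$ (this is the Farey-point/section-intersection correspondence recalled at the start of Section~\ref{secNANResultRankOneFlow}, cf.\ the paper's requirement $\widetilde{u}\in\Ss_T$ in its definition of $N_{T,\alpha,\Dd}$). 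Since a local piece of $\widetilde{u}N^+$ meets $\Ss_1$ in a unique point and $\widetilde{u}$ is already that point, the condition $\widetilde{u}\,n_+(\boldsymbol{\widetilde{x}})\in\Ss_{T_0 c(\boldsymbol{z})^{-d/(d-1)}}n_+(\alpha\boldsymbol{z'}(\boldsymbol{z}))$ forces $\boldsymbol{\widetilde{x}}=\alpha\boldsymbol{z'}(\boldsymbol{z})$ outright, together with the $\alpha$-independent height condition $\widetilde{u}\in\Ss_{T_0 c(\boldsymbol{z})^{-d/(d-1)}}$. Hence $N_{T_0,\alpha,\Dd}$ is literally the $\alpha$-dilation of $N_{T_0,1,\Dd}$, and the Jacobian immediately gives $\widehat{f}_{T_0,\Dd,\alpha}=\alpha^{d-1}\widehat{f}_{T_0,\Dd}$ — which is precisely what the paper's change of variables $\boldsymbol{\widetilde{y}}=(T/T_0)\boldsymbol{\widetilde{x}}$ following~(\ref{eqnCompareExpandingHaarVolNAKCase}) accomplishes. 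Your proposed ``identity'' $\boldsymbol{\widetilde{x}}_0^{(t,\,Tc^{-d/(d-1)})}=(T/T_0)\boldsymbol{\widetilde{x}}_0^{(t-s,\,T_0c^{-d/(d-1)})}$ reduces to $0=(T/T_0)\cdot0$, and the plan to transfer between the anchors $u_t$ and $u_{t-s}$ at this stage would just re-derive your step 1 without touching the scaling. Replace that paragraph with the one-line observation that the anchor lies in the section and the offset vanishes, and the proof is complete and agrees with the paper.
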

\begin{proof}
Recall the definitions of \[\I(\boldsymbol{r}, t, T), \quad \J(\boldsymbol{r}, t, T), \quad \varphi\] from the proof of Lemma~\ref{lemmUnstableVolumes}.  Let $w \in \J(\boldsymbol{r}, t, T)$.  We have that the intersection of the horosphere through $w$ with $\left\{\Ss_{T_0c^{-d/(d-1)}} n_+(\beta \boldsymbol{z'}):  \boldsymbol{z} \in \Dd \right\}$, namely the set $wN_{T_0,\beta,\Dd}$, corresponds to the (piece of the) horosphere through $\varphi(w)$, namely the set \begin{align}\label{eqnSphereHoroEquivalence3}wN_{ T_0,\beta,\Dd}\Phi^{-\frac 1 d \log (T/T_0)} = \varphi(w)N_{T,\beta T/T_0,\Dd}
  \end{align} where the equality follows from (\ref{eqnSphereHoroEquivalence2}).  We want these pieces to exactly match, and, hence we must have that \begin{align*}
 \beta = \frac {T_0}{T}. \end{align*}

Consider \begin{align*}\widehat{f}_{T_0, \Dd, \beta}(\boldsymbol{r}, n_-(\boldsymbol{r})& \Phi^{t-\frac 1 d \log (T/T_0)}) \\ \nonumber & = \int_{N^+} \In_{\Aa \times\left\{\Ss_{T_0c^{-d/(d-1)}} n_+(\beta \boldsymbol{z'}):  \boldsymbol{z} \in \Dd \right\}}\left(\boldsymbol{r}, \tensor*[^t]{\left(n_-(\boldsymbol{r}) \Phi^{t-\frac 1 d \log (T/T_0)}\right)}{}^{-1} n_+(\boldsymbol{\widetilde{x}})\right) ~\wrt {\widetilde{n}(\boldsymbol{\widetilde{x}})} 
\\ \nonumber & = \In_{\Aa \times\Ss_{T_0c^{-d/(d-1)}}}\left(\boldsymbol{r}, \tensor*[^t]{\left(n_-(\boldsymbol{r}) \Phi^{t-\frac 1 d \log (T/T_0)}\right)}{}^{-1}\right)\int_{N^+} \In_{N_{ T_0,\beta,\Dd}} \left( n_+(\boldsymbol{\widetilde{x}})\right) ~\wrt {\widetilde{n}(\boldsymbol{\widetilde{x}})}. 
  \end{align*}
  
Note that, we have \begin{align*}\int_{N^+} \In_{N_{ T_0,\beta,\Dd}} \left( n_+(\boldsymbol{\widetilde{x}})\right) ~\wrt {\widetilde{n}(\boldsymbol{\widetilde{x}})}  = &\int_{N^+} \In_{N_{ T_0,\beta T/T_0,\Dd}} \left( n_+\left(\frac {T}{T_0}\boldsymbol{\widetilde{x}}\right)\right) ~\wrt {\widetilde{n}(\boldsymbol{\widetilde{x}})} \\ \nonumber = &\frac{T_0^{d-1}}{T^{d-1}}\int_{N^+} \In_{N_{ T_0,1,\Dd}} \left( n_+\left(\boldsymbol{\widetilde{y}}\right)\right) ~\wrt {\widetilde{n}(\boldsymbol{\widetilde{y}})}\end{align*} by changing variables $\widetilde{\boldsymbol{y}} = \frac {T}{T_0} \widetilde{\boldsymbol{x}}$, which implies that $\wrt {\boldsymbol{\widetilde{y}}} = \frac{T^{d-1}}{T_0^{d-1}}~\wrt {\boldsymbol{\widetilde{x}}}$ and, thus, $\wrt{\widetilde{n}(\boldsymbol{\widetilde{y}})} = \frac{T^{d-1}}{T_0^{d-1}}~\wrt {\widetilde{n}(\boldsymbol{\widetilde{x}})}$.

Consequently, we have \begin{align}\label{eqnCompareExpandingHaarVolNAKCase}\widehat{f}_{T_0, \Dd, \beta}(\boldsymbol{r}, n_-(\boldsymbol{r}) \Phi^{t-\frac 1 d \log (T/T_0)})  = \frac{T_0^{d-1}}{T^{d-1}}\widehat{f}_{T_0, \Dd}(\boldsymbol{r}, n_-(\boldsymbol{r}) \Phi^{t-\frac 1 d \log (T/T_0)}).
  \end{align}
  
Using the same proof as in Lemma~\ref{lemmUnstableVolumes} to obtain (\ref{eqnRatioHaarMeasuresNANCase}), we obtain \begin{align}\label{eqnRatioHaarMeasuresNAKCase}
 \frac {1} {Q^d} \sum_{\boldsymbol{r} \in \F_Q} \widehat{f}_{T, \Dd}(\boldsymbol{r}, n_-(\boldsymbol{r}) \Phi^t) = \frac 1 {\widetilde{Q}^{d}}\sum_{\boldsymbol{r} \in \F_{\widetilde{Q}}} \widehat{f}_{T_0, \Dd, \beta}\left(\boldsymbol{r}, n_-(\boldsymbol{r}) \Phi^{t- \frac 1 d \log(T/T_0)}\right). \end{align}
 
Using (\ref{eqnCompareExpandingHaarVolNAKCase}), we obtain the following:  \[\frac {T^{d-1}} {Q^d} \sum_{\boldsymbol{r} \in \F_Q} \widehat{f}_{T, \Dd}(\boldsymbol{r}, n_-(\boldsymbol{r}) \Phi^t)  = \frac {T_0^{d-1}} {\widetilde{Q}^{d}}\sum_{\boldsymbol{r} \in \F_{\widetilde{Q}}}\widehat{f}_{T_0, \Dd}(\boldsymbol{r}, n_-(\boldsymbol{r}) \Phi^{t-\frac 1 d \log (T/T_0)}).\] 

 This gives the desired result.

 \end{proof}
  
As in the proof of Theorem~\ref{thmNANResultRankOneFlow}, we use Lemma~\ref{lemmUnstableVolumesNAKCase} to apply Theorem~\ref{thmMarklof}:

 \begin{align}\label{eqnMainDerivUsingFrobNAKCase}\lim_{t \rightarrow \infty} &\frac {T^{d-1}} {|\F_Q|} \sum_{\boldsymbol{r} \in \F_Q} \widehat{f}_{T, \Dd}(\boldsymbol{r}, n_-(\boldsymbol{r}) \Phi^t)   \\ \nonumber&=\lim_{t \rightarrow \infty} \frac {T_0^{d-1}} {|\F_{\widetilde{Q}}|} \sum_{\boldsymbol{r} \in \F_{\widetilde{Q}}}  \widehat{f}_{T_0, \Dd}(\boldsymbol{r}, n_-(\boldsymbol{r}) \Phi^{t-\frac 1 d \log (T/T_0)}) 
 \\ \nonumber&=  \lim_{t \rightarrow \infty} \frac {T_0^{d-1}} {|\F_{\widetilde{Q}}|} \sum_{\boldsymbol{r} \in \F_{\widetilde{Q}}}\In_{\Aa \times\Ss_{T_0c^{-d/(d-1)}}}\left(\boldsymbol{r}, \tensor*[^t]{\left(n_-(\boldsymbol{r}) \Phi^{t-\frac 1 d \log (T/T_0)}\right)}{}^{-1}\right)\int_{N^+} \In_{N_{ T_0,1,\Dd}} \left( n_+\left(\boldsymbol{\widetilde{x}}\right)\right) ~\wrt {\widetilde{n}(\boldsymbol{\widetilde{x}})}
  \\ \nonumber&=  \lim_{t \rightarrow \infty} \frac {T_0^{d-1}} {|\F_{\widetilde{Q}}|} \sum_{\boldsymbol{r} \in \F_{\widetilde{Q}}} \widetilde{\In}_{\Aa \times\Ss_{T_0c^{-d/(d-1)}}}\left(\boldsymbol{r}, \left(n_-(\boldsymbol{r}) \Phi^{t-\frac 1 d \log (T/T_0)}\right)\right)\int_{N^+} \In_{N_{ T_0,1,\Dd}} \left( n_+\left(\boldsymbol{\widetilde{x}}\right)\right) ~\wrt {\widetilde{n}(\boldsymbol{\widetilde{x}})}
  \\ \nonumber&
  = d(d-1)T_0^{d-1} \int_0^\infty \int_{\TT^{d-1} \times \Ga_H \backslash H}\int_{N^+} \In_{\Aa \times \Ss_{T_0c^{-d/(d-1)}}}(\boldsymbol{x},  M \Phi^{-s})\\ \nonumber &\quad \quad \quad \quad \quad \quad \quad \quad \quad \quad \quad \quad \quad \quad \quad \quad \quad \quad  \In_{N_{ T_0,1,\Dd}} \left( n_+\left(\boldsymbol{\widetilde{x}}\right)\right) ~\wrt {\widetilde{n}(\boldsymbol{\widetilde{x}})}~\wrt{\boldsymbol{x}}~\wrt \mu_H(M) e^{-d(d-1)s}~\wrt s 
  \\ \nonumber&
  = d(d-1)T_0^{d-1} \int_0^\infty \int_{\TT^{d-1} \times \Ga_H \backslash H}\int_{N^+} \In_{\Aa \times \left\{\Ss_{T_0 c^{-d/(d-1)}} n_+\left(\boldsymbol{z'}\right):  \boldsymbol{z} \in \Dd \right\}}(\boldsymbol{x},  M \Phi^{-s}n_+\left(\boldsymbol{\widetilde{x}}\right))\\ \nonumber &\quad \quad \quad \quad \quad\quad \quad \quad\quad \quad \quad\quad \quad \quad\quad \quad \quad \quad \quad \quad \quad \quad \quad \quad \quad \quad \quad ~\wrt {\widetilde{n}(\boldsymbol{\widetilde{x}})}~\wrt{\boldsymbol{x}}~\wrt \mu_H(M) e^{-d(d-1)s}~\wrt s 
  \\ \nonumber&
  = d(d-1)T_0^{d-1}\left(\int_{\TT^{d-1}} \In_{\Aa}(\boldsymbol{x})~\wrt{\boldsymbol{x}}\right)\\ \nonumber &\quad \quad \quad  \left(\int_0^\infty \int_{\Ga_H \backslash H}\int_{N^+} \In_{\left\{\Ss_{T_0 c^{-d/(d-1)}} n_+\left(\boldsymbol{z'}\right):  \boldsymbol{z} \in \Dd \right\}}(M \Phi^{-s}n_+\left(\boldsymbol{\widetilde{x}}\right))~\wrt {\widetilde{n}(\boldsymbol{\widetilde{x}})}~\wrt \mu_H(M) e^{-d(d-1)s}~\wrt s \right)
  \\ \nonumber&
  = d\zeta(d) T_0^{d-1}\mu(\Ss_{T_0} \Ee)\left(\int_{\TT^{d-1}} \In_{\Aa}(\boldsymbol{x})~\wrt{\boldsymbol{x}}\right).
        \end{align}  

The last equality follows by Theorem~\ref{thmHaarMeaSE}.  Now, as in the proof of Theorem~\ref{thmNANResultRankOneFlow}, we have that the proportion of the translates of $\Aa$ meeting $\Ss_T \Ee$ for all $T\geq T_0$ is uniformly (as long as $\lim_{t \rightarrow \infty} \frac {\log(T^{1/d})}  {t} < 1$) the following:

\begin{align}\label{eqnNAKMainResultConclusion}
{T^{d-1}}\int_{\TT^{d-1}} \In_{\Aa \times\Ss_T  \Ee}(\boldsymbol{x}, {n_-(\boldsymbol{x}) \Phi^t})~\wrt {\boldsymbol{x}} \xrightarrow[]{t \rightarrow \infty}T_0^{d-1}\mu(\Ss_{T_0} \Ee)\left(\int_{\TT^{d-1}} \In_{\Aa}(\boldsymbol{x})~\wrt{\boldsymbol{x}}\right).
 \end{align}

 This proves Theorem~\ref{thmNAKResultRankOneFlow}.

 \section{Cuspidal neighborhoods}\label{secCuspNeigh}  In this section, we define the family of cuspidal neighborhoods that we consider in Theorem~\ref{thmShrinkCuspNeigh}.  We use the Grenier fundamental domain~\cite{Gre93, Gre88} (first considered by Hermite and also by Korkine-Zolotareff~\cite{KZ1873}) because this domain is well-suited for the geometric constructions that we need to make as it has a ``box shape'' in the cusp (see {\em exact box shape} in \cite[Section~1.4.3]{Ter16}).  For similar reasons, the Grenier fundamental domain is also used in other settings such as in the construction of an Eisenstein series in harmonic analysis (see, for example,~\cite[Chapter~6]{JL05} or~\cite[Section~1.5]{Ter16}) or working with the Epstein Zeta function~\cite{SaSt06}.  For an introduction to this domain, a discussion of its history, and a comparison with the best known fundamental domain, the Minkowski fundamental domain, see~\cite[Section~1.4]{Ter16}.  The facts about the Grenier fundamental domain that we use have been conveniently collated in~\cite[Chapter~1]{JL05}.

Let $\widetilde{G}:= \widetilde{G}_d := \GL(d,\RR)$ and $\widetilde{\Ga}:=\widetilde{\Ga}_d:= \GL(d, \ZZ)$.  Here, $\widetilde{\Ga}$ is a discrete subgroup of $\widetilde{G}$.  Let $G$ and $\Ga$ be as before.  Let us denote the determinant one hypersurface of the space of positive symmetric matrices by \[\SP:= \SP_d:=\{g \tensor*[^t]{g}{}  : g \in \GL(d,\RR), |\det(g)| =1 \}.\]  The actions of $G$ and $\widetilde{\Ga}$ on $\SP$ are by \begin{align} \label{defnofSquareBraket}
 [g]\widetilde{Z} :=  g\widetilde{Z} \tensor*[^t]{g}{} \end{align} for $\widetilde{Z} \in \SP$ and $g \in G$ or $g \in \widetilde{\Ga}$.\footnote{This gives us a left action as in~\cite[Chapter~1]{JL05}, not the right action of Grenier.}  Let us consider a recursively-defined family of coordinates, called {\em partial Iwasawa coordinates} (see~\cite[Page 14]{Ter16} or~\cite[Pages~14-16]{JL05} where ``coordinates'' are replaced by ``decomposition''). The first member of this family is the {\em first-order Iwasawa coordinates} \begin{align}\label{eqn1OrdIwaCoord}
  \widetilde{Z}=\begin{bmatrix}  I_{d-1}&\tensor*[^t]{\boldsymbol{x}}{}^{(d-1)} \\ \boldsymbol{0} & 1\end{bmatrix}\begin{pmatrix}a_d^{1/(d-1)} \widetilde{Z}^{(d-1)}&\tensor*[^t]{\boldsymbol{0}}{}  \\ \boldsymbol{0} & a_d^{-1}   \end{pmatrix} \end{align}where $\widetilde{Z}^{(d-1)} \in \SP_{d-1}$ and $\tensor*[^t]{\boldsymbol{x}}{}^{(d-1)}$ is a column vector with $d-1$ entries.  The element $\widetilde{Z}^{(d-1)}$ can also be expressed in first-order Iwasawa coordinates \[\widetilde{Z}^{(d-1)}=\begin{bmatrix}  I_{d-2}&\tensor*[^t]{\boldsymbol{x}}{}^{(d-2)} \\ \boldsymbol{0} & 1\end{bmatrix}\begin{pmatrix}a_{d-1}^{1/(d-2)} \widetilde{Z}^{(d-2)}&\tensor*[^t]{\boldsymbol{0}}{}  \\ \boldsymbol{0} & a_{d-1}^{-1}   \end{pmatrix} \] where $ \widetilde{Z}^{(d-2)}\in \SP_{d-2}$ and $\tensor*[^t]{\boldsymbol{x}}{}^{(d-2)}$ is a column vector with $d-2$ entries.  Directly computing, we obtain the {\em second-order Iwasawa coordinates} \[\widetilde{Z}=\begin{bmatrix}  I_{d-2}&\tensor*[^t]{\boldsymbol{x}}{}^{(d-2)} & \tensor*[^t]{\boldsymbol{x}}{}^{(d-1)}\\ \boldsymbol{0} & 1 & \\ \boldsymbol{0} & 0 &1\end{bmatrix}\begin{pmatrix}a_d^{1/(d-1)} a_{d-1}^{1/(d-2)}\widetilde{Z}^{(d-2)}& &  \\ &a_d^{1/(d-1)}a_{d-1}^{-1}   &\\& & a_d^{-1}   \end{pmatrix}.\]  Recursively repeating this decomposition into higher order Iwasawa coordinates, we obtain, at the final step, the {\em full Iwasawa coordinates} for $\widetilde{Z}$~\cite[Page~16, Equation (5)]{JL05}:  \begin{align}\label{eqnFullIwasawa} \begin{bmatrix}  1&x_{ij} &\\ &\ddots& \\& & 1\end{bmatrix}y^{-1} \begin{pmatrix}(y_{d-1}y_{d-2} \cdots y_{1})^2&&&& \\ &\ddots&&&&\\ && (y_{d-1} y_{d-2})^2&&\\ &&&y_{d-1}^2&\\ &&&& 1   \end{pmatrix}
  \end{align} where $y:= a_d$, $y_\ell^2:=a_{\ell +1}^{(\ell +1)/\ell} a_\ell^{-1}$ for an integer $1\leq \ell \leq d-1$, and $x_{ij}$ is the entry in the $i$-th row and $j$-th column of \[\begin{bmatrix}  1&x_{ij} &\\ &\ddots& \\& & 1\end{bmatrix} := \begin{bmatrix}  1&\tensor*[^t]{\boldsymbol{x}}{}^{(1)}  &  \cdots &\tensor*[^t]{\boldsymbol{x}}{}^{(d-2)} & \tensor*[^t]{\boldsymbol{x}}{}^{(d-1)}\\ 0 & 1 & \\ 0 &0 & \ddots & & \\ \vdots & \vdots  & &1\\ 0 & 0 & \cdots & 0&1\end{bmatrix}.\] Note that the partial Iwasawa coordinates allow us to embed lower rank spaces $\SP_m$ into $\SP_d$, which we regard as an identification~\cite{Gre93}.

The subgroup $\widetilde{\Ga}$ acts properly continuously on $\SP$ and a fundamental domain $\Ff_d$ (the Grenier fundamental domain) is recursively defined as the set of all $\widetilde{Z} \in \SP$ satisfying for all primitive vectors $(\boldsymbol{p}, q) \in \ZZ^d$ the following three properties~\cite[Page~17-18]{JL05}:
\begin{description}
\item[Gren 1] $a_d(\boldsymbol{p}, q)\widetilde{Z} \begin{pmatrix}\tensor*[^t]{ \boldsymbol{p}}{} \\ q  \end{pmatrix} \geq 1$.
\item[Gren 2] $\widetilde{Z}^{(d-1)} \in \Ff_{d-1}$.
\item[Gren 3] $\begin{cases}  0\leq \boldsymbol{x}^{(d-1)}_j\leq1/2  &\textrm{ if } j=1  \\ |\boldsymbol{x}^{(d-1)}_j|\leq1/2 &\textrm{ if }  j=2, \cdots, d-1.\end{cases}$
\end{description}  Note that the notation $\boldsymbol{x}_j$ denotes the $j$-th entry of a vector $\boldsymbol{x}$.  The fundamental domain $\Ff_d$ can be identified with the space $\widetilde{\Ga} \backslash \SP$.  For $d$ odd, we note that $\widetilde{\Ga}   \backslash \SP= \Ga  \backslash \SP$ and thus note that $\Ff'_d := \Ff_d$ is a fundamental domain for $\Ga \backslash \SP$.  For $d$ even, the domain of $\widetilde{\Ga}  \backslash \SP$ is strictly smaller.  Since we have that $\widetilde{\Ga} = \{ \Ga, \Ga\gamma\}$ where $\gamma:= \diag(1, \cdots, 1, -1)$, it follows that $\Ff'_d := \Ff_d \cup  [\gamma]\Ff_d$ is a fundamental domain for $\Ga \backslash \SP$.\footnote{Our choice of $\gamma$ is for convenience.  Let $\Delta_d \subset \widetilde{\Ga}$ be the subgroup of diagonal matrices with $\pm1$ as diagonal elements.  Then replacing $\gamma$ with another element of determinant $-1$ in $\{\pm I_d\} \backslash \Delta_d$ will yield another fundamental domain for $\Ga \backslash \SP$ when $d$ is even.  Our results are not affected by this choice except for the explicit construction of Siegel sets in (\ref{eqnDefnSiegelSets}) and of Grenier boxes (defined in Section~\ref{secConstructShrinkTargets}).  The explicit constructions for any of these choices can be easily derived if desired.}

Because our targets are shrinking into the cusp, we do not need to work with the fundamental domain $\Ff'_d$ but may instead work with geometrically simpler sets (Lemma~\ref{lemm:SiegelSetApprox}).  These geometrically simpler sets are the {\em Siegel sets for $\Ff'_d$}, denoted by $\widetilde{\Sie}^{(d)}_T$ for a real number $T > 0$ and defined as follows (notation as in (\ref{eqnFullIwasawa})):

\begin{align}\label{eqnDefnSiegelSets}\widetilde{\Sie}^{(d)}_T= \begin{cases} \left\{\widetilde{Z} \in \SP: \begin{cases}   0\leq x_{1j}\leq 1/2  &\textrm {for } j = 2,\cdots, d-1 \\ |x_{1j}|\leq 1/2 &\textrm {for } j =  d \\ |x_{ij}|\leq 1/2 &\textrm {for } 2\leq i < j \leq  d \\  y^2_\ell \geq T &\textrm {for } \ell = 1,\cdots, d-1\end{cases}\right\} & \textrm{ if } d \textrm{ is even,} \\
\left\{\widetilde{Z} \in \SP: \begin{cases}   0\leq x_{1j}\leq 1/2  &\textrm {for } j = 2,\cdots, d  \\ |x_{ij}|\leq 1/2 &\textrm {for } 2\leq i < j \leq  d \\  y^2_\ell \geq T &\textrm {for } \ell = 1,\cdots, d-1\end{cases}\right\} & \textrm{ if } d \textrm{ is odd.}\end{cases}
  \end{align}  The importance of Siegel sets is that they bound the Grenier fundamental domain:
  
  \begin{lemm}\label{lemm:SiegelSetApprox}$\widetilde{\Sie}^{(d)}_1 \subset \Ff'_d \subset \widetilde{\Sie}^{(d)}_{3/4}$.
 
\end{lemm}
\begin{proof}
 The proof is analogous to the proof of~\cite[Chapter~1, Theorem~5.1]{JL05} with our Siegel set $\widetilde{\Sie}^{(d)}_T$ and domain $\Ff'_d$ replacing, respectively, the Siegel set $\Sie^{(d)}_T$ and domain $F_d^{\pm}$ defined in~\cite[Page~20-21]{JL05}. 
\end{proof}

To identify $\SP$ with the group $NA$ where $A \subset G$ is the subgroup of diagonal matrices with all positive entries and $N \subset G$ is the unipotent upper triangular subgroup, we apply the diffeomorphism $\varphi: \SP \rightarrow NA$ in which $[n]a^2 \mapsto na$.  (Our diffeomorphism $\varphi$ is the inverse of the diffeomorphism $\varphi^+$ from~\cite[Chapter~2, Proposition~2.2]{JL05}.)  This is well-defined because our Lie groups are defined over the real numbers.

Now recall the definitions of $K$ and $K'$ in Section~\ref{secConstructShrinkTargets}.  Consider the following \begin{align}\label{eqnDecompsitionOfEleOfH}
  \tensor*[]{\begin{pmatrix} B &  \tensor*[^t]{\boldsymbol{b}}{} \\ \boldsymbol{0} & 1\end{pmatrix}}{}  = \begin{pmatrix}  I_{d-1} &  \tensor*[^t]{\boldsymbol{b}}{} \\ \boldsymbol{0} & 1\end{pmatrix}\begin{pmatrix}  \tensor*[]{Z}{}  &  \tensor*[^t]{\boldsymbol{0}}{} \\ \boldsymbol{0} & 1\end{pmatrix}\begin{pmatrix} k  &  \tensor*[^t]{\boldsymbol{0}}{} \\ \boldsymbol{0} & 1\end{pmatrix} \end{align} where $B \in \SL(d-1, \RR)$ and can be uniquely written in Iwasawa coordinates as $B = Zk$ where $k \in K'$ and $\varphi^{-1}(Z) \in \SP_{d-1}$.  (Here we are using the identification of $\SP_{d-1}$ with the subspace of $\SP$ given by the partial Iwasawa coordinates.)  Let us apply the diagonal flow $\Phi^{-\frac{\log y}{2(d-1)}}$ to (\ref{eqnDecompsitionOfEleOfH}) on the right:  \begin{lemm}\label{lemmeqnFundDomCoord}  We have that
 \begin{align}\label{eqnFundDomCoord} \tensor*[]{\begin{pmatrix} B &  \tensor*[^t]{\boldsymbol{b}}{} \\ \boldsymbol{0} & 1\end{pmatrix}}{}&\begin{pmatrix}y^{1/2(d-1)} I_{d-1}&\tensor*[^t]{\boldsymbol{0}}{}  \\ \boldsymbol{0} & y^{-1/2}   \end{pmatrix}   \\ \nonumber &= \begin{pmatrix}  1&x_{ij} &\\ &\ddots& \\& & 1\end{pmatrix}y^{-1/2} \begin{pmatrix}y_{d-1}y_{d-2} \cdots y_{1}&&&& \\ &\ddots&&&&\\ && y_{d-1} y_{d-2}&&\\ &&&y_{d-1}&\\ &&&& 1   \end{pmatrix}\begin{pmatrix} k  &  \tensor*[^t]{\boldsymbol{0}}{} \\ \boldsymbol{0} & 1\end{pmatrix}.\end{align}  
\end{lemm}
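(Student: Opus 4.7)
The plan is to take the identity (\ref{eqnDecompsitionOfEleOfH}) and right-multiply both sides by $\Phi^{-\log y/(2(d-1))}$, which is the diagonal matrix $\mathrm{diag}(y^{1/(2(d-1))} I_{d-1},\, y^{-1/2})$ appearing on the LHS of (\ref{eqnFundDomCoord}). A first observation is that the $K'$-block $\begin{pmatrix} k & \tensor*[^t]{\boldsymbol{0}}{} \\ \boldsymbol{0} & 1\end{pmatrix}$ commutes with this diagonal matrix, because $\Phi^{\bullet}$ acts as a scalar on the upper $(d-1)\times(d-1)$-block. Hence we can slide the $K'$-factor past $\Phi^{-\log y/(2(d-1))}$ to the extreme right, isolating in the middle the product
\[
g' \;:=\; \begin{pmatrix} I_{d-1} & \tensor*[^t]{\boldsymbol{b}}{} \\ \boldsymbol{0} & 1\end{pmatrix}\begin{pmatrix} Z & \tensor*[^t]{\boldsymbol{0}}{}\\ \boldsymbol{0} & 1\end{pmatrix}\begin{pmatrix} y^{1/(2(d-1))} I_{d-1} & \tensor*[^t]{\boldsymbol{0}}{}\\ \boldsymbol{0} & y^{-1/2}\end{pmatrix},
\]
which is upper triangular with positive diagonal entries, hence lies in $NA$.

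Next I apply $\varphi^{-1}\colon NA\to \SP$, i.e.\ $na\mapsto na^{2}\tensor*[^t]{n}{}$, to $g'$. A direct computation collapses the two diagonal factors together, giving
\[
\varphi^{-1}(g') \;=\; g'\,\tensor*[^t]{g'}{} \;=\; \begin{pmatrix} I_{d-1} & \tensor*[^t]{\boldsymbol{b}}{}\\ \boldsymbol{0} & 1 \end{pmatrix}\begin{pmatrix} y^{1/(d-1)}\, Z\tensor*[^t]{Z}{} & \tensor*[^t]{\boldsymbol{0}}{} \\ \boldsymbol{0} & y^{-1}\end{pmatrix}\begin{pmatrix} I_{d-1} & \tensor*[^t]{\boldsymbol{0}}{}\\ \boldsymbol{b} & 1\end{pmatrix},
\]
which is precisely the partial Iwasawa form of an element of $\SP_d$ with lower-rank entry $\widehat{Z}:=\varphi^{-1}(Z)=Z\tensor*[^t]{Z}{}\in\SP_{d-1}$. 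Applying the recursive decomposition (\ref{eqnFullIwasawa}) to this $\SP_d$-element (the outermost step extracts exactly the shift parameter $y$, and the inductive steps extract $y_{d-1},\ldots,y_1$ and the upper-triangular entries $x_{ij}$) expresses $\varphi^{-1}(g')$ as $N_{\mathrm{unip}}\,D\,\tensor*[^t]{N_{\mathrm{unip}}}{}$ with $D = y^{-1}\mathrm{diag}((y_{d-1}\cdots y_1)^2,\ldots, y_{d-1}^2,1)$ and $N_{\mathrm{unip}}$ the unipotent factor on the RHS of (\ref{eqnFundDomCoord}).

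Finally I go back from $\SP$ to $NA$ by applying $\varphi$, which simply takes the positive square root on the diagonal part: $g' = N_{\mathrm{unip}}\cdot y^{-1/2}\mathrm{diag}(y_{d-1}\cdots y_1,\ldots, y_{d-1},1)$. Multiplying back by the $K'$-factor on the right reassembles the RHS of (\ref{eqnFundDomCoord}), completing the proof. The main piece of bookkeeping — and the only point where care is required — is the passage through $\varphi$: the recursive formula (\ref{eqnFullIwasawa}) is stated in $\SP$-coordinates where the diagonal entries appear squared, whereas (\ref{eqnFundDomCoord}) is in $NA$-coordinates, so the apparent discrepancy between the exponents in the two formulas is resolved precisely by the $a\mapsto a^{1/2}$ coming from $\varphi$.
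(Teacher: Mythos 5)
Your proposal is correct and follows essentially the same route as the paper's proof: both start from (\ref{eqnDecompsitionOfEleOfH}), right-multiply by the diagonal flow, commute the $K'$-factor to the far right, pass the remaining $NA$-component through $\varphi^{-1}$ to the quadratic form $g'\,\tensor*[^t]{g'}{}$ in $\SP_d$, apply the recursive Iwasawa decomposition (\ref{eqnFullIwasawa}), and return via $\varphi$, reattaching the $K'$-factor. The only surface difference is notational: the paper packages the $K'$-separation into the single diffeomorphism $(\varphi,\mathrm{id})$ on $\SP\times K$ and uses the bracket $[\cdot]$ shorthand, while you carry out the conjugation $\varphi^{-1}(g')=g'\,\tensor*[^t]{g'}{}$ explicitly, but the bookkeeping of the square-root exponents through $\varphi$ is handled identically in both.
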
 
\begin{proof}  We will use the diffeomorphism \[(\varphi, \textrm{id}):  \SP \times K \rightarrow G \quad \quad ([na], k) \mapsto nak.\]  Equation (\ref{eqnDecompsitionOfEleOfH}) yields \begin{align*} \tensor*[]{\begin{pmatrix} B &  \tensor*[^t]{\boldsymbol{b}}{} \\ \boldsymbol{0} & 1\end{pmatrix}}{}\begin{pmatrix}y^{1/2(d-1)} I_{d-1}&\tensor*[^t]{\boldsymbol{0}}{}  \\ \boldsymbol{0} & y^{-1/2}   \end{pmatrix}  = \begin{pmatrix}  I_{d-1} &  \tensor*[^t]{\boldsymbol{b}}{} \\ \boldsymbol{0} & 1\end{pmatrix}\begin{pmatrix}  \tensor*[]{Z}{}  &  \tensor*[^t]{\boldsymbol{0}}{} \\ \boldsymbol{0} & 1\end{pmatrix}\begin{pmatrix}y^{1/2(d-1)} I_{d-1}&\tensor*[^t]{\boldsymbol{0}}{}  \\ \boldsymbol{0} & y^{-1/2}   \end{pmatrix} \begin{pmatrix} k  &  \tensor*[^t]{\boldsymbol{0}}{} \\ \boldsymbol{0} & 1\end{pmatrix},
  \end{align*} to the right-hand side of which we apply the diffeomorphism $(\varphi, \textrm{id})^{-1}$ to obtain \begin{align*}\left(\begin{bmatrix}  I_{d-1} &  \tensor*[^t]{\boldsymbol{b}}{} \\ \boldsymbol{0} & 1\end{bmatrix}\begin{bmatrix}  \tensor*[]{Z}{}  &  \tensor*[^t]{\boldsymbol{0}}{} \\ \boldsymbol{0} & 1\end{bmatrix}\begin{bmatrix}y^{1/2(d-1)} I_{d-1}&\tensor*[^t]{\boldsymbol{0}}{}  \\ \boldsymbol{0} & y^{-1/2}   \end{bmatrix}, \begin{pmatrix} k  &  \tensor*[^t]{\boldsymbol{0}}{} \\ \boldsymbol{0} & 1\end{pmatrix}\right) \\ = \left(\begin{bmatrix}  I_{d-1} &  \tensor*[^t]{\boldsymbol{b}}{} \\ \boldsymbol{0} & 1\end{bmatrix}\begin{pmatrix}  y^{1/(d-1)}\varphi^{-1}(Z)  &  \tensor*[^t]{\boldsymbol{0}}{} \\ \boldsymbol{0} & y^{-1}\end{pmatrix}, \begin{pmatrix} k  &  \tensor*[^t]{\boldsymbol{0}}{} \\ \boldsymbol{0} & 1\end{pmatrix}\right).
  \end{align*}  Applying first (\ref{eqnFullIwasawa}) and then the diffeomorphism $(\varphi, \textrm{id})$ to the right-hand side of this equation yields the desired result.
 
\end{proof} 
\noindent Note that any point $z$ in $H \{\Phi^{-s} :  s \in \RR\}$ can be uniquely expressed in the form (\ref{eqnFundDomCoord}), which we refer to as the {\em Grenier coordinates}, and the value of $y$ in this expression is called the \textit{height} of this point, a well-defined notion because of the uniqueness of the expression.  Moreover, in Grenier coordinates, the height can be expressed as \begin{align}\label{eqnHeightinGrenierCoords}
 y^d = \prod^{d-1}_{k=1} y_{d-k}^{2(d-k)}. \end{align} Let us denote the height of $z$ by $\h(z)$.  
 
 \begin{rema}\label{rmkTheFunDoms} The Grenier coordinates yield a fundamental domain $\varphi(\Ff'_d) K'$ for the left-action of $\Gamma$ on $\Gamma H \{\Phi^{-s} :  s \in \RR\}$, and, therefore, allows us to identify $\varphi(\Ff'_d) K'$ with $\Ss_0$ and $\varphi(\widetilde{\Sie}^{(d)}_1) K'$ with $\Ss_1$.\footnote{One can also give a fundamental domain for the left-action of $\Gamma$ on $G$ using the Grenier coordinates.  As the kernel of the representation $\gamma \mapsto [\gamma]$ of $\widetilde{\Ga}$ in $\Aut(\SP)$ is $\pm I_d$~\cite[Page~6]{JL05} and under $\varphi$ the action given by $[\cdot]$ is mapped to the left-action on $G$, we have a fundamental domain for the left-action by $\Gamma$ on $G$ to be $\varphi(\Ff'_d) K$ when $d$ is odd.  On the other hand, we have $-I_d \in \Gamma$ for $d$ even and thus $\varphi(\Ff'_d)K_+$ is a fundamental domain where $K_+$ is a set of coset representatives of $K\backslash{\pm I_d}$.  While many choices of $K_+$ exist, we require that every element of $K_+$ have nonnegative $(d,d)$-th entry.  Note that, as the condition that the $(d,d)$-th entry is equal to zero yields a measure zero set, we have that  $\varphi(\Ff'_d)K_+$ is a Borel set.  Also note that, in particular, $K' \subset K_+$, and, thus, our Grenier boxes (defined in Section~\ref{secConstructShrinkTargets}) lie within this fundamental domain for the left $\Gamma$-action on $G$.}
\end{rema}  
 
\begin{rema}\label{rmkOmittingVarphi}
For conciseness, let us identify $\varphi(\Ff'_d)$ with $\Ff'_d$,  $\varphi(\widetilde{\Sie}^{(d)}_T)$ with $\widetilde{\Sie}^{(d)}_T$, and, henceforth, omit $\varphi$ from the notation. 
\end{rema}

Recall from Section~\ref{secConstructShrinkTargets} the definition of a Grenier box $\Cc:=\Cc_{T_-, T_+}:=\Cc(\boldsymbol{\alpha}, \boldsymbol{\gamma}, \widetilde{K},  \beta^-_{ij}, \beta^+_{ij})$ and its lower height $T_-$ and upper height $T_+$.  Since (\ref{eqnHeightinGrenierCoords}) holds, we note that the height of any point in $\Cc$ lies between $T_-$ and $T_+$.  Note that $\Cc \subset \Ss_1$.\footnote{Here and below, we identify $\Cc$ with $\Ga \backslash \Ga \Cc$ using context to distinguish between them if needed.  This allows us to more quickly develop our main results.  Since the lower heights of interest to us will always be large enough so that $\Cc$ lies in a fundamental domain for the left $\Gamma$-action, this identification is justified.}  Also note that (\ref{eqnFundDomCoord}) relates two sets of coordinates for the points of $\Ss_1$, namely the left-hand side, which we refer to as {\em first-order section coordinates} and the right-hand side, which we refer to as {\em full section coordinates}.

By considering full section coordinates, we see that the only way to approach the cusp is to let at least one of the $y_k \rightarrow \infty$; consequently, should at least one of the components of $\boldsymbol{\gamma}$ be $\infty$, then our neighborhood meets the cusp and conversely.  This equivalence can be reformulated in terms of the upper height:

\begin{lemm}\label{lemmBndHeightIFFRelCpct} The set $\Cc_{T_-, T_+}$ is relatively compact if and only if $T_+ < \infty$.
 
\end{lemm}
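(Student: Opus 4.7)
The plan is to reduce the statement to a direct check on the full section coordinates (\ref{eqnFundDomCoord}), since $\Cc$ is defined as the set of points whose $x_{ij}$, $y_k$, and rotational parameters vary in prescribed ranges. Both directions of the equivalence then follow from the product structure of these ranges together with the reduction-theoretic fact that the cusp of $\Gamma \backslash G$ corresponds to large height in the Grenier fundamental domain.

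For the implication $T_+ < \infty \Rightarrow$ relatively compact, I assume every $\gamma_k$ is finite. Then $\Cc$ is the continuous image under (\ref{eqnFundDomCoord}) of the compact product $\prod_{i<j}[\beta^-_{ij}, \beta^+_{ij}] \times \prod_k [\alpha_k, \gamma_k] \times \overline{\widetilde{K}}$, noting that $\overline{\widetilde{K}} \subseteq K'$ is compact. Hence $\Cc$ is relatively compact in $G$, and its image under the quotient $G \to \Gamma \backslash G$ is relatively compact in $\Gamma \backslash G$.

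For the converse I argue by contraposition: assume $T_+ = \infty$, so $\gamma_{k_0} = \infty$ for some index $k_0 \in \{1,\ldots,d-1\}$. I build a sequence $\{z_n\} \subset \Cc$ by fixing every $x_{ij}$ in its admissible interval, fixing $y_k = \alpha_k$ for $k \neq k_0$, fixing some element of $\widetilde{K}$, and letting $y_{k_0} = n \to \infty$. By (\ref{eqnHeightinGrenierCoords}), the heights satisfy $\h(z_n) \to \infty$. The geometric input I need is that points in the Grenier fundamental domain $\Ff'_d$ whose heights tend to infinity project to a sequence in $\Gamma \backslash G$ which leaves every compact set; equivalently, by Mahler's compactness criterion, the associated unimodular lattices contain arbitrarily short nonzero vectors. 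This forces $\Cc$ not to be relatively compact in $\Gamma \backslash G$.

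The main obstacle is this cuspidal direction, which rests on the reduction-theoretic fact that large-height points in $\Ff'_d$ genuinely escape to the cusp of $\Gamma \backslash G$ rather than being folded back by $\Gamma$ into a bounded region. This is precisely the ``exact box shape in the cusp'' property of the Grenier domain, and I would cite it from \cite[Chapter~1]{JL05} or \cite[Section~1.4]{Ter16}. With this fact in hand, the lemma reduces to elementary bookkeeping in the coordinates of (\ref{eqnFundDomCoord}).
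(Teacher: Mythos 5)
Your proof is correct and, at its core, is the same argument the paper gives: both directions reduce to bookkeeping in the full section coordinates (\ref{eqnFundDomCoord}), together with the reduction-theoretic fact that unbounded height in the Grenier domain corresponds to escape to the cusp. For the implication $T_+ < \infty \Rightarrow$ relatively compact, you phrase it as a continuous-image-of-compact argument using the finiteness of each $\gamma_k$, whereas the paper phrases it by combining $\h \leq T_+$ with the uniform lower bounds on the $y_k$'s and (\ref{eqnHeightinGrenierCoords}) to deduce upper bounds on each $y_k$; these are logically the same observation. For the other direction, you argue by contraposition with an explicit sequence escaping to the cusp, while the paper simply reads off boundedness of the $y_k$'s from relative compactness; again the same reduction-theoretic input, just a slightly more hands-on presentation on your side.
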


\begin{proof}
We first prove the direct implication.  Every point in the set $\Cc_{T_-, T_+}$ can be written in coordinates given by (\ref{eqnFundDomCoord}).  Using the definition of the Grenier domain and the fact that $\widetilde{K}$ is relatively compact implies that each $y_k$ coordinate must be bounded.  Consequently, $T_+ < \infty$.

We now prove the inverse implication.  Since the height of any point in the neighborhood is less than or equal to $T_+$ and since all of the $y_k$'s are bounded from below for any point in the Grenier domain, the condition (\ref{eqnHeightinGrenierCoords}) implies that all of the $y_k$'s for this neighborhood are bounded from above, which gives the desired result.
 
\end{proof}

\subsection{Applying the flow to $\Cc$}  Let $T\geq 0$.  Using (\ref{eqnFundDomCoord}) (or, equivalently, the standard coordinates~\cite[Page 15]{JL05} under $\varphi$) and that \begin{align}\label{eqnTFlow}
 \begin{pmatrix}
T^{1/(d-1)} I_{d-1}&\tensor*[^t]{\boldsymbol{0}}{}  \\ \boldsymbol{0} & T^{-1}   \end{pmatrix}  \end{align}commutes with $K'$, we have the following relationship:  \begin{align}\label{eqnRelCTAndC}
\Cc^T:=\Cc\left((\alpha_1, \cdots, \alpha_{d-2}, T^{d/(d-1)}\alpha_{d-1}), (\gamma_1, \cdots, \gamma_{d-2}, T^{d/(d-1)}\gamma_{d-1}), \widetilde{K},  \beta^-_{ij}, \beta^+_{ij}\right) & \\ \nonumber =  \Cc\left((\alpha_1, \cdots, \alpha_{d-1}), (\gamma_1, \cdots, \gamma_{d-1}), \widetilde{K},  \beta^-_{ij}, \beta^+_{ij}\right) & \begin{pmatrix}
T^{1/(d-1)} I_{d-1}&\tensor*[^t]{\boldsymbol{0}}{}  \\ \boldsymbol{0} & T^{-1}   \end{pmatrix}.
  \end{align}  In particular, note that applying (on the right) the flow given by (\ref{eqnTFlow}) to $\Cc$ changes only the $y_{d-1}$-coordinate and that  letting $T \rightarrow \infty$ flows the neighborhood $\Cc$ towards the cusp along only the $y_{d-1}$-coordinate, while leaving invariant all other coordinates.

\subsection{Volumes}\label{subsecVolumes} We must compute the volumes of the thickenings of $\Cc$, $\Cc^T$.  To do this, we parametrize $G$ following~\cite{Mar10} to obtain a convenient normalization for the Haar measure on $\Ga \backslash G$.

For $d\geq 3$ and odd, let \begin{align} \label{eqnDefTildeIdMin1}
 \widetilde{I}_{d-1} := \begin{pmatrix}  I_{d-2} &  \tensor*[^t]{\boldsymbol{0}}{} \\ \boldsymbol{0} & -1  \end{pmatrix}. \end{align}  Now given $\boldsymbol {y} :=(y_1, \cdots, y_d) \in \RR^d$ where $y_d \neq 0$, set $\boldsymbol{y}':=(y_1, \cdots, y_{d-1})$ and form the matrix \[M_{\boldsymbol{y}} :=  \begin{cases}  \begin{pmatrix}
y_d^{-1/(d-1)} I_{d-1}&\tensor*[^t]{\boldsymbol{0}}{}  \\ \boldsymbol{y'} & y_d   \end{pmatrix} &\text{ if } y_d >0 \\ \begin{pmatrix}  (-y_d)^{-1/(d-1)} I_{d-1} & \tensor*[^t]{\boldsymbol{0}}{} \\ -\boldsymbol{y'} & -y_d  \end{pmatrix} &\text{ if } y_d <0\end{cases}\] and note that the mapping \begin{align}\label{eqnParametrizationForG}
H \times \RR^d \backslash (\RR^{d-1} \times \{0\} ) \rightarrow G \quad \begin{cases}  (M, \boldsymbol{y}) \mapsto MM_{\boldsymbol{y}}  &\text{ if } y_d >0 \\ (M, \boldsymbol{y}) \mapsto -I_d MM_{\boldsymbol{y}}  &\text{ if } y_d <0 \text{ and } d \text{ is even} \\  (M, \boldsymbol{y}) \mapsto \begin{pmatrix}  \widetilde{I}_{d-1} &  \tensor*[^t]{\boldsymbol{0}}{} \\ \boldsymbol{0} & -1  \end{pmatrix}MM_{\boldsymbol{y}}  &\text{ if } y_d <0 \text{ and } d \text{ is odd}\end{cases}\end{align} provides a parametrization of $G$ (except for the null set $\{A \in G: (\boldsymbol{0}, 1) \  A \ \tensor*[^t]{(\boldsymbol{0}, 1)}{}= 0\}$).  Also note that the use of \[\begin{pmatrix}  \widetilde{I}_{d-1} &  \tensor*[^t]{\boldsymbol{0}}{} \\ \boldsymbol{0} & -1  \end{pmatrix}\] is a natural choice, but not the only one.  Other choices would lead to different parameterizations, but would not change the results in this paper.

 Using Iwasawa coordinates, we have that \[M_{\boldsymbol{y}} =  \begin{cases}  \begin{pmatrix}
y_d^{-1/(d-1)} I_{d-1}&\tensor*[^t]{\boldsymbol{0}}{}  \\ \boldsymbol{0} & y_d   \end{pmatrix} \begin{pmatrix}
 I_{d-1}&\tensor*[^t]{\boldsymbol{0}}{}  \\ y_d^{-1}\boldsymbol{y'} & 1   \end{pmatrix} & \text{ for } y_d >0 \\ \begin{pmatrix}
(-y_d)^{-1/(d-1)} I_{d-1}&\tensor*[^t]{\boldsymbol{0}}{}  \\ \boldsymbol{0} & -y_d   \end{pmatrix} \begin{pmatrix}
 I_{d-1}&\tensor*[^t]{\boldsymbol{0}}{}  \\ y_d^{-1}\boldsymbol{y'} & 1   \end{pmatrix} & \text{ for } y_d <0\end{cases},\] which allows us to derive the following explicit formula for the right Haar measure on \[\left\{M_{\boldsymbol{y}} \in G: {\boldsymbol{y}}\in \RR^d \backslash (\RR^{d-1} \times \{0\} )\right\},\] namely \begin{align}\label{eqnFormulaDy} \wrt{\boldsymbol{y}} = |y_d|^{d} \frac {\wrt y_d}{|y_d|}\frac{\wrt y_1 \cdots~\wrt y_{d-1}}{|y_d|^{d-1}} = \wrt y_1 \cdots~\wrt y_{d}.
\end{align}  Note that the leading factor of $|y_d|^{d}$ is the modular function.  Recall that $\wrt \mu_H =~\wrt \mu_0~\wrt{\boldsymbol{b}}$ is the left Haar measure on $H$, normalized so that $\mu_H\left(\Gamma_H \backslash H\right)=1$, which, in turn, implies that the Haar measure on $G$ in this parametrization is \begin{align}\label{eqnFormulaMuEqualsHy}
\wrt \mu_H~\wrt{\boldsymbol{y}}. \end{align}  It, thus, must be a constant multiple of the Haar measure $\mu$ on $G$, normalized so that $\mu(\Gamma \backslash G)=1$.  Now, a well-known result of Siegel shows that the volume of $\Gamma \backslash G$ is $\zeta(d) \cdots \zeta(2)$, which implies that\begin{align}\label{eqnHaarMeasAsMultOFLebMeas}
  \wrt \mu \frac{\wrt t} {t} =\left(\zeta(d) \cdots \zeta(2)\right)^{-1} \det(X)^{-d}\prod_{i,j=1}^d~\wrt X_{ij}, \end{align}  where $X = (X_{ij}) = t^{1/d}A \in \GL^+(d, \RR)$ and $A \in G$.  Thus, in view of our parametrization, we have that (cf~\cite[Equation (3.39)]{Mar10}) \begin{align}\label{eqnHaarInMMyCoord}
\wrt \mu = \zeta(d)^{-1}~\wrt \mu_H~\wrt{\boldsymbol{y}}. \end{align}  See~\cite[Section~4.1]{APT16} for a similar construction.

Finally, we have 

\begin{lemm}
We have \[\wrt\mu_{\tensor*[^t]{H}{^{-1}}} = \wrt \mu_H.\] 
\end{lemm}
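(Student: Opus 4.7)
The plan is to exhibit a natural group isomorphism $\phi\colon H \to \tensor*[^t]{H}{^{-1}}$ pushing $\mu_H$ onto $\mu_{\tensor*[^t]{H}{^{-1}}}$, and then corroborate this with a direct coordinate computation that places the two Haar measures in the same form.

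First I would note that $g \mapsto \tensor*[^t]{g}{^{-1}}$ is an involutive automorphism of $G$ (the two order-reversals from inverse and transpose cancel), which additionally preserves $\Gamma$. Writing a generic element of $H$ as $\begin{pmatrix} A & \tensor*[^t]{\boldsymbol{b}}{} \\ \boldsymbol{0} & 1 \end{pmatrix}$, one computes $\phi\begin{pmatrix} A & \tensor*[^t]{\boldsymbol{b}}{} \\ \boldsymbol{0} & 1 \end{pmatrix} = \begin{pmatrix} (A^t)^{-1} & \tensor*[^t]{\boldsymbol{0}}{} \\ -\boldsymbol{b}(A^t)^{-1} & 1 \end{pmatrix}$, so $\tensor*[^t]{H}{^{-1}}$ is exactly the block lower-triangular subgroup $\left\{\begin{pmatrix} C & \tensor*[^t]{\boldsymbol{0}}{} \\ \boldsymbol{d} & 1 \end{pmatrix} : C \in \SL(d-1,\RR),\ \boldsymbol{d} \in \RR^{d-1}\right\}$, and $\phi|_H$ is a group isomorphism carrying $\Gamma_H$ onto $\Gamma_{\tensor*[^t]{H}{^{-1}}} := \tensor*[^t]{H}{^{-1}} \cap \Gamma$.

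Since $\phi$ is a group isomorphism, $\phi_* \mu_H$ is a left Haar measure on $\tensor*[^t]{H}{^{-1}}$, and since $\phi$ descends to a bijection between the respective $\Gamma$-quotients, the pushforward has total mass $1$ on $\Gamma_{\tensor*[^t]{H}{^{-1}}} \backslash \tensor*[^t]{H}{^{-1}}$, which uniquely identifies $\phi_* \mu_H = \mu_{\tensor*[^t]{H}{^{-1}}}$. To make the stated equality $\wrt \mu_{\tensor*[^t]{H}{^{-1}}} = \wrt \mu_H$ visible at the level of formulas, I would parametrize $\tensor*[^t]{H}{^{-1}}$ by $(C,\boldsymbol{d})$ and check directly that left multiplication sends $(C,\boldsymbol{d}) \mapsto (C_0 C,\ \boldsymbol{d}_0 C + \boldsymbol{d})$, so $\wrt \mu_0(C)\wrt \boldsymbol{d}$ is left-invariant; the normalization is then confirmed since for each fixed $C \in \SL(d-1,\RR)$ the lattice $\ZZ^{d-1} C$ has covolume $|\det C|=1$, giving $\mu_0 \otimes \wrt \boldsymbol{d}(\Gamma_{\tensor*[^t]{H}{^{-1}}} \backslash \tensor*[^t]{H}{^{-1}}) = 1$. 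Therefore $\wrt \mu_{\tensor*[^t]{H}{^{-1}}} = \wrt \mu_0 \wrt \boldsymbol{d}$, matching the formula $\wrt \mu_H = \wrt \mu_0 \wrt \boldsymbol{b}$ recorded earlier in the excerpt.

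The only subtle step is the normalization bookkeeping: one must verify that the $\boldsymbol{d}$-slice has covolume exactly $1$ rather than a $|\det C|$-dependent factor, which uses crucially that $C \in \SL(d-1, \RR)$. Everything else is formal — an abstract check that $\phi$ is a group isomorphism preserving the lattice, together with one short coordinate computation.
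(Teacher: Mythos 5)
Your proof is correct and reaches the conclusion by a closely related but differently organized route. The paper splits the map $M \mapsto \tensor*[^t]{M}{^{-1}}$ into two involutive \emph{anti}-automorphisms applied in succession: inversion (which carries the left Haar $\mu_H$ on $H$ to a \emph{right}-invariant Haar measure), and then transposition (which converts that right Haar measure into the left Haar measure on $\tensor*[^t]{H}{^{-1}}$). At each step the paper checks, via the explicit inverse formula and via the unimodularity and transpose-invariance of $\mu_0$ on $\SL(d-1,\RR)$, that the coordinate expression $\wrt\mu_0\,\wrt\boldsymbol{b}$ is preserved; the correct normalization is then inherited automatically from that formula. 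You instead treat $\phi(M)=\tensor*[^t]{M}{^{-1}}$ as a single genuine group isomorphism $H \to \tensor*[^t]{H}{^{-1}}$, invoke the abstract fact that an isomorphism pushes a left Haar measure to a left Haar measure, and pin down the normalization by noting $\phi(\Gamma_H)=\Gamma\cap\tensor*[^t]{H}{^{-1}}$, so the pushforward assigns total mass one to the quotient (your slice-by-slice lattice/covolume argument, with $|\det C|=1$, carries this through). The paper's version never has to mention the quotient's measure because the formula does the bookkeeping; yours has to argue the normalization explicitly but in exchange avoids tracking the left-to-right-to-left passage across two anti-automorphisms. Both are valid short proofs; yours is the more conceptually self-contained, the paper's the more computational.
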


\begin{proof}  Note first that the measure $\wrt \mu_0$ on $\SL(d-1, \RR)$ is invariant under taking the inverse, which is an involutive anti-automorphism, because $\SL(d-1, \RR)$ is unimodular (\cite[Proposition~1.3]{Bum13}).
 
As \[\begin{pmatrix} A & \tensor*[^t]{\boldsymbol{b}}{} \\ \boldsymbol{0} & 1  \end{pmatrix}^{-1} = \begin{pmatrix} A^{-1} & -A^{-1}\tensor*[^t]{\boldsymbol{b}}{} \\ \boldsymbol{0} & 1  \end{pmatrix}\] and $\det(A^{-1})=1$, we have the right-invariant Haar measure $\wrt\mu_{H{^{-1}}} = \wrt \mu_0 \wrt {\boldsymbol{b}}.$  As taking the transpose is another involutive anti-automorphism on $\SL(d-1, \RR)$, we have the left-invariant Haar measure $\wrt\mu_{\tensor*[^t]{H}{^{-1}}} = \wrt \mu_0 \wrt {\boldsymbol{b}}= \wrt \mu_H$.
 
\end{proof}

\subsubsection{Volume ratios of $\Cc$ and $\Cc^T$ and section coordinates}  Let \[\widetilde{\Phi} := \left\{\begin{pmatrix}y^{1/(d-1)} I_{d-1}&\tensor*[^t]{\boldsymbol{0}}{}  \\ \boldsymbol{0} & y^{-1}   \end{pmatrix}: y >0\right\}.\]  In this section, we express the left Haar measure of the group $H \widetilde{\Phi}$ in terms of first-order and full section coordinates and use the latter to compute the ratio of the volumes of $\Cc$ and $\Cc^T$.  First note that $H$ is normal in $H \widetilde{\Phi}$, which leads to the modular function $y^{-d}$ and thus a left Haar measure \begin{align}\label{eqn:leftHaarHTildePhi}
 y^{-d} \wrt \mu_H \frac{\wrt y}{y} \end{align} for $H \widetilde{\Phi}$ in first-order section coordinates.

We now give a left Haar measure in full section coordinates.  Let $\wrt n$ be the Haar measure on the subgroup $N$ of upper unipotent matrices in $G$ (under the usual identification~\cite[Chapter~V, Lemma~2.2]{BM00}) and $\wrt a$ denote the Haar measure on the subgroup $A$ of diagonal matrices of $G$ with all positive entries on the diagonal (under the usual identification~\cite[Chapter~V, Lemma~2.3]{BM00}).  With the modular function \begin{align}\label{eqnModularFct}
\rho(a) = \prod_{i<j} \frac{a_j}{a_i}, \end{align} the measure $\rho(a)~\wrt n~\wrt a$ is a left-Haar measure on $NA$~\cite[Chapter~V, Lemma~2.4]{BM00}.  Recall that $\wrt{\widetilde{k}}$ is a Haar measure on $K'$.  

\begin{lemm}\label{lemm:FullSecCoordLeftHaarHPhi}  We have that $\rho(a)~\wrt n~\wrt a~\wrt {\widetilde{k}}$ is a left Haar measure on $H \widetilde{\Phi} = NAK'.$
 
\end{lemm}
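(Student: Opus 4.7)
My strategy is to verify left-invariance of the candidate measure $\rho(a)\,dn\,da\,d\widetilde{k}$ under left multiplication by each of the three types of factors $n_0 \in N$, $a_0 \in A$, and $k_0 \in K'$ separately. The first two cases are routine from material already established in Section~\ref{subsecVolumes}; the genuine content lies in the $K'$ case, which I would handle by interpreting local coordinates via the global Iwasawa decomposition $G = NAK$.

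First, I would confirm that $(n,a,k) \mapsto nak$ is a diffeomorphism $N \times A \times K' \to H\widetilde{\Phi}$. Surjectivity is the equality $H\widetilde{\Phi} = NAK'$ stated in the lemma. Uniqueness follows from the uniqueness of the $G$-Iwasawa decomposition $G = NAK$: any $g \in H\widetilde{\Phi}$ has a unique $G$-Iwasawa factorization $g = nak$ with $n \in N$, $a \in A$, $k \in K$, and since $na \in NA \subset H\widetilde{\Phi}$ one has $k = (na)^{-1} g \in H\widetilde{\Phi}$, hence $k \in K \cap H\widetilde{\Phi}$. A short computation using the explicit form of $H\widetilde{\Phi}$ (an element has last row $(\boldsymbol{0}, y^{-1})$) shows that orthogonality forces $y = 1$ and then $\boldsymbol{b} = \boldsymbol{0}$, yielding $K \cap H\widetilde{\Phi} = K'$. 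Thus the $H\widetilde{\Phi}$-factorization is simply the $G$-Iwasawa restricted to $H\widetilde{\Phi}$.

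Left multiplication by $n_0 \in N$ acts in coordinates as $(n,a,k) \mapsto (n_0 n, a, k)$, preserving the measure by left-invariance of $dn$ on $N$. Left multiplication by $a_0 \in A$ acts as $(n,a,k) \mapsto (a_0 n a_0^{-1}, a_0 a, k)$; since $\rho(a)\,dn\,da$ is already known to be left Haar on $NA$, the $NA$-factor is preserved, and the $K'$-factor is untouched.

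The main obstacle is left multiplication by $k_0 \in K'$, since $K'$ neither normalizes $N$ nor normalizes $A$. To handle it, I would Iwasawa-decompose $k_0 \cdot na = n_2 a_2 k_2$ in $G = NAK$ with $n_2 \in N$, $a_2 \in A$, $k_2 \in K$. Because $k_0, n, a \in H\widetilde{\Phi}$, so is $k_2$, whence $k_2 \in K \cap H\widetilde{\Phi} = K'$. Therefore $k_0 \cdot nak = n_2 a_2 (k_2 k)$, and left multiplication by $k_0$ in $NAK'$-coordinates is the map $(n,a,k) \mapsto (n_2, a_2, k_2 k)$. The $k$-factor transforms by a left translation on $K'$, which preserves $d\widetilde{k}$. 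For the $(n,a)$-factor, the crucial observation is that under the identification $NA \cong G/K$ via $na \mapsto naK$, the map $(n,a) \mapsto (n_2, a_2)$ is precisely left multiplication by $k_0 \in G$ on the homogeneous space $G/K$; since $\rho(a)\,dn\,da$ is the $G$-invariant measure on $G/K$ obtained by disintegrating the bi-invariant Haar measure $\rho(a)\,dn\,da\,dk$ of $G$ along the $K$-fibers, this map preserves $\rho(a)\,dn\,da$. Combining the three cases shows that $\rho(a)\,dn\,da\,d\widetilde{k}$ is left-invariant on $H\widetilde{\Phi}$, and uniqueness of Haar measure completes the argument.
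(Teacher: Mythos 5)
Your proof is correct, but it follows a genuinely different route from the paper's. The paper invokes a Weil-type criterion (\cite[Corollary~B.1.7]{BHV08}): since $K'$ is compact, the modular functions of $NAK'$ and of $K'$ both restrict trivially to $K'$, so there exists a left $NAK'$-invariant measure $\nu$ on the quotient $NAK'/K'$ with $\wrt{\nu}\,\wrt{\widetilde{k}}$ left Haar on $NAK'$; the homeomorphism $NAK'/K' \cong NA$ carries $\nu$ to a left Haar measure on $NA$, which must be proportional to $\rho(a)\,\wrt{n}\,\wrt{a}$, and the result drops out. You instead verify left-invariance of $\rho(a)\,\wrt{n}\,\wrt{a}\,\wrt{\widetilde{k}}$ head-on, checking the three generating cases $n_0\in N$, $a_0\in A$, $k_0\in K'$ separately. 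The first two are routine; for the $K'$-case your key observation is that the induced action on the $(n,a)$-factor is exactly left multiplication by $k_0$ on $G/K\cong NA$, which preserves $\rho(a)\,\wrt{n}\,\wrt{a}$ because the latter is the $G$-invariant measure obtained by disintegrating the Haar measure $\rho(a)\,\wrt{n}\,\wrt{a}\,\wrt{k}$ of $G$ over the $K$-fibers, while the fiberwise twist $k\mapsto k_2 k$ is a left translation on the unimodular compact group $K'$ and hence harmless. Your computation $K\cap H\widetilde{\Phi}=K'$, needed to place the Iwasawa $k$-component in $K'$, is correctly carried out. Both arguments ultimately rest on the unimodularity of $K'$ and the Iwasawa decomposition; the paper's is shorter at the cost of citing an abstract existence theorem, while yours is more computational, uses the explicit $G$-Haar formula (\cite[Chapter~V, Lemma~2.5]{BM00}), and makes the fibered invariance structure fully explicit.
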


\begin{proof}
 By (\ref{eqnFundDomCoord}), we have that $H \widetilde{\Phi} = NAK'.$  Let $\rho_{NAK'}$ and $\rho_{K'}$ be the modular functions of $NAK'$ and $K'$, respectively.  As $K'$ is compact, we have that $\rho_{NAK'} \vert_{K'}=1$ and $\rho_{K'} =1$.  Consequently, it follows that there exists a (left) $NAK'$-invariant regular Borel measure $\nu$, unique up to scalar multiple, on the homogeneous space $NAK'/K'$ such that $\wrt \nu~\wrt {\widetilde{k}}$ is a left Haar measure on $NAK'$~\cite[Corollary~B.1.7]{BHV08}.  Pushing forward $\nu$ via the homeomorphism $NAK'/K' \cong NA$ yields a left Haar measure on $NA$.  The desired result now follows.
\end{proof}

Thus by (\ref{eqn:leftHaarHTildePhi}) and the lemma, we have that \begin{align}\label{eqnDecompOfMeasNAtildek}
 y^{-d}~\wrt \mu_H \frac{\wrt y}{y} = C_0 \rho(a)~\wrt n~\wrt a~\wrt {\widetilde{k}}\end{align} for a fixed positive constant $C_0$.  Using this, we could compute the measure of $\Cc^T$ explicitly, but we only need a ratio:

\begin{lemm}\label{lemmRatioHaarMeasOnSections} We have that 
\[\int\int\int \In_{\Cc^T}(n a \widetilde{k}) \rho(a)~\wrt n~\wrt a~\wrt {\widetilde{k}}= \frac 1 {T^d} \int\int\int \In_{\Cc}(n a \widetilde{k})\rho(a)~\wrt n~\wrt a~\wrt {\widetilde{k}}.\]
\end{lemm}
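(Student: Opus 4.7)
The plan is to recognize the right-hand side as a right translate of the left-hand side in the group $H\widetilde{\Phi} = NAK'$ on which $\rho(a)\,\wrt n\,\wrt a\,\wrt\widetilde{k}$ is a left Haar measure (by the preceding Lemma~\ref{lemm:FullSecCoordLeftHaarHPhi}), and to read off the modular factor directly. By (\ref{eqnRelCTAndC}),
\[
\Cc^T = \Cc \cdot g_T, \qquad g_T := \begin{pmatrix} T^{1/(d-1)} I_{d-1} & \tensor*[^t]{\boldsymbol{0}}{} \\ \boldsymbol{0} & T^{-1} \end{pmatrix} \in A.
\]
Since $g_T$ acts as a scalar on the upper-left $(d-1)\times(d-1)$ block, it commutes with every element of $K'$; hence for $nak \in H\widetilde\Phi$ one has $nak \cdot g_T = n(a g_T)k$, and unique $NAK'$-decomposition gives
\[
\In_{\Cc^T}(nak) \;=\; \In_{\Cc}\bigl(n, a g_T^{-1}, k\bigr).
\]

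Next, I would perform the change of variables $a = a' g_T$ in the $A$-integral (valid because $\wrt a$ is bi-invariant on the abelian group $A$). This gives
\begin{align*}
\int\!\!\int\!\!\int \In_{\Cc^T}(nak)\, \rho(a)\,\wrt n\,\wrt a\,\wrt\widetilde{k}
&= \int\!\!\int\!\!\int \In_{\Cc}(na'k)\, \rho(a' g_T)\,\wrt n\,\wrt a'\,\wrt\widetilde{k} \\
&= \rho(g_T) \int\!\!\int\!\!\int \In_{\Cc}(nak)\, \rho(a)\,\wrt n\,\wrt a\,\wrt\widetilde{k},
\end{align*}
where I used that $\rho$ is a character on $A$ by inspection of (\ref{eqnModularFct}).

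Finally, a direct computation from (\ref{eqnModularFct}) gives the constant: the diagonal of $g_T$ is $(T^{1/(d-1)}, \ldots, T^{1/(d-1)}, T^{-1})$, so for $i < j < d$ the factor $(g_T)_j/(g_T)_i$ equals $1$, while for each of the $d-1$ pairs with $j = d$ it equals $T^{-1}/T^{1/(d-1)} = T^{-d/(d-1)}$. Therefore
\[
\rho(g_T) \;=\; \prod_{i<j}\frac{(g_T)_j}{(g_T)_i} \;=\; T^{-d/(d-1)\cdot (d-1)} \;=\; T^{-d},
\]
which is exactly the claimed factor. Alternatively, one may recognise this directly from (\ref{eqnDecompOfMeasNAtildek}): in first-order section coordinates the left Haar measure on $H\widetilde\Phi$ carries the modular weight $y^{-d}$, and $g_T$ corresponds to $y = T$.

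No step looks genuinely obstructive; the only subtlety is that the right translation is by an element of $A$ rather than of the full group, so one must take care that $g_T$ commutes with $K'$ and that $\wrt a$ is bi-invariant, both of which hold here. The computation would be considerably less clean if $g_T$ failed to commute with $K'$, since one would then have to track an additional conjugation change of variables in the $\widetilde k$ integral.
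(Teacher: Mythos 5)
Your proof is correct and takes essentially the same route as the paper: both identify $\Cc^T = \Cc\, g_T$, use that $g_T$ commutes with $K'$ to reduce to a change of variables in the $A$-integral, and extract the factor $T^{-d}$ from the modular function $\rho$. Your version makes the multiplicativity of $\rho$ on $A$ explicit and computes $\rho(g_T)$ directly, whereas the paper folds this into a one-line use of (\ref{eqnModularFct}) with the coordinates of (\ref{eqnFundDomCoord}), but these are the same calculation.
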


\begin{proof}
Recall from (\ref{eqnRelCTAndC}) that $\Cc^T$ and $\Cc$ have the same $\widetilde{K}$ and, hence, the integral over $\wrt {\widetilde{k}}$ cancels.  Recall that the flow \[ \begin{pmatrix}
T^{1/(d-1)} I_{d-1}&\tensor*[^t]{\boldsymbol{0}}{}  \\ \boldsymbol{0} & T^{-1}   \end{pmatrix}\] commutes with $K'$ and hence we can change coordinates \[\widetilde{a} = a  \begin{pmatrix}
T^{1/(d-1)} I_{d-1}&\tensor*[^t]{\boldsymbol{0}}{}  \\ \boldsymbol{0} & T^{-1}   \end{pmatrix}.\]  Note that \[\In_{\Cc^T}(n \widetilde{a} \widetilde{k}) = \In_{\Cc}(n a \widetilde{k}).\]

 Using (\ref{eqnModularFct}) and the coordinates given by (\ref{eqnFundDomCoord}), we have that \[\rho(\widetilde{a}) = \frac 1 {T^d} \rho (a)\] and, hence, we have that \[\int  \In_{\Cc^T}(n \widetilde{a} \widetilde{k}) \rho(\widetilde{a})~\wrt{\widetilde{a}} = \int \In_{\Cc}(n a \widetilde{k}) \rho(\widetilde{a})~\wrt {\widetilde{a}} = \frac 1 {T^d}  \int \In_{\Cc}(n a \widetilde{k})\rho(a)~\wrt a.\]  Finally, recall that the $x$-coordinates of $\Cc^T$ and $\Cc$ are the same.  Consequently, we have our desired result.

\end{proof}

\subsubsection{Volumes of thickened neighborhoods}

Let $\Bb \subset \TT^{d-1}$.   Let us now thicken our neighborhoods:  \[\Cc N^+(\Bb) \quad \textrm{and} \quad \Cc^T N^+(\Bb).\]

\begin{theo}\label{thmHaarMeaCN}
We have that \begin{align*} \mu(\Cc N^+(\Bb)) =  \frac 1 {\zeta(d)} \left( \int_{\RR^{d-1}} \In_{\Bb}(\boldsymbol{y'})~\wrt{\boldsymbol{y'}}\right) \left(\int \int \In_{\Cc}\left(M \begin{pmatrix}
y^{1/(d-1)} I_{d-1}&\tensor*[^t]{\boldsymbol{0}}{}  \\ \boldsymbol{0} & y^{-1}   \end{pmatrix}\right) y^{-d}~\wrt \mu_H \frac{\wrt y}{y} \right).
\end{align*}
\end{theo}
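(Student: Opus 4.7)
The plan is to exploit the parametrization of $G$ given in (\ref{eqnParametrizationForG}) together with the Haar measure formula (\ref{eqnHaarInMMyCoord}), computing the measure of the thickening by identifying $a\, n_+(\boldsymbol{\widetilde{x}})$ with an element $M_{\boldsymbol{y}}$ of the parametrization and then performing a single change of variables.

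First, since $\Cc \subset \Ss_1$ has lower height $\geq 1$, I would work with its lift inside $H \widetilde{\Phi}$, so that every point of $\Cc$ is represented by $Ma$ with $M \in H$ and
\[a = a(y) := \begin{pmatrix} y^{1/(d-1)} I_{d-1} & \tensor*[^t]{\boldsymbol{0}}{} \\ \boldsymbol{0} & y^{-1}\end{pmatrix} \in \widetilde{\Phi}, \qquad y > 0.\]
A direct block-matrix computation shows
\[a(y)\, n_+(\boldsymbol{\widetilde{x}}) = \begin{pmatrix} y^{1/(d-1)} I_{d-1} & \tensor*[^t]{\boldsymbol{0}}{} \\ y^{-1}\boldsymbol{\widetilde{x}} & y^{-1}\end{pmatrix} = M_{\boldsymbol{y}},\]
where $\boldsymbol{y} = (\boldsymbol{y'}, y_d)$ with $y_d = y^{-1} > 0$ and $\boldsymbol{y'} = y^{-1}\boldsymbol{\widetilde{x}}$. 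Thus every point of $\Cc N^+(\Bb)$ lies in the $y_d > 0$ branch of (\ref{eqnParametrizationForG}), and the indicator of $\Cc N^+(\Bb)$ at $MM_{\boldsymbol{y}}$ factors as $\In_{\Cc}(Ma(y)) \cdot \In_{\Bb}(\boldsymbol{\widetilde{x}})$.

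Second, substituting into (\ref{eqnHaarInMMyCoord}) and performing the change of variables from $(\boldsymbol{y'}, y_d)$ to $(\boldsymbol{\widetilde{x}}, y)$ given by $\boldsymbol{\widetilde{x}} = y_d^{-1}\boldsymbol{y'}$, $y = y_d^{-1}$ reduces the problem to a Jacobian calculation. The Jacobian matrix is block triangular with the $(d-1)\times(d-1)$ block $y_d^{-1} I_{d-1}$ in the upper-left and the scalar $-y_d^{-2}$ in the lower-right, giving absolute determinant $y_d^{-(d+1)} = y^{d+1}$. Hence
\[d\boldsymbol{y} = y^{-(d+1)}\, d\boldsymbol{\widetilde{x}}\, dy = y^{-d}\, d\boldsymbol{\widetilde{x}}\, \frac{dy}{y}.\]
Plugging this into
\[\mu(\Cc N^+(\Bb)) = \zeta(d)^{-1} \int_{H} \int_{y_d > 0} \In_{\Cc N^+(\Bb)}(MM_{\boldsymbol{y}})\, d\mu_H(M)\, d\boldsymbol{y}\]
and applying Fubini with the factorized indicator separates the $\boldsymbol{\widetilde{x}}$-integration (over $\RR^{d-1}$, producing $\int \In_{\Bb}(\boldsymbol{y'})\, d\boldsymbol{y'}$ after renaming) from the $(M, y)$-integration (producing the claimed integral against $y^{-d}\, d\mu_H\, dy/y$).

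The main technical point, rather than the Jacobian computation itself, is ensuring that the lifted integral over $H \widetilde{\Phi} N^+$ computes the $\mu$-measure on $\Gamma \backslash G$, i.e.\ that $\Cc N^+(\Bb)$ is embedded. This is precisely what the lower-height hypothesis (together with the restriction on $\Bb$) is designed to guarantee, and is established in Section~\ref{secDisjTransNeigST}; once that is in hand, the remaining steps are the mechanical change of variables outlined above.
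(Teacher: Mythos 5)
Your proof is correct and matches the paper's approach: both use the parametrization (\ref{eqnParametrizationForG}) with the Haar measure formula (\ref{eqnHaarInMMyCoord}), perform the change of variables $y_d \mapsto y^{-1}$ together with the rescaling $\boldsymbol{y'} = y^{-1}\boldsymbol{\widetilde{x}}$ to produce the Jacobian factor $y^{-d}\,\wrt\boldsymbol{\widetilde{x}}\,\wrt y/y$, factor the indicator, and apply Fubini (the paper writes the same Jacobian somewhat opaquely as $y^{-d}\frac{\wrt y}{y}\,y^{d-1}\frac{\wrt{\boldsymbol{y'}}}{y^{d-1}}$). Your explicit remark that the formula computes the measure on $\Gamma\backslash G$ only when the lift of $\Cc N^+(\Bb)$ meets each $\Gamma$-orbit at most once is a valid point that the paper leaves implicit, handled via the footnote on the lower height of $\Cc$ and Lemma~\ref{lemmDisjointnessTransSections}.
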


\begin{proof}
Using (\ref{eqnFormulaDy}) and (\ref{eqnHaarInMMyCoord}) and changing variables $y_d \mapsto y^{-1}$, we have \begin{align*} \mu(\Cc N^+(\Bb)) & \\ =\zeta(d)^{-1} &\int \int \int \In_{\Cc N^+(\Bb)}\left(M \begin{pmatrix}
y^{1/(d-1)} I_{d-1}&\tensor*[^t]{\boldsymbol{0}}{}  \\ \boldsymbol{0} & y^{-1}   \end{pmatrix}\begin{pmatrix}
 I_{d-1}&\tensor*[^t]{\boldsymbol{0}}{}  \\ y\boldsymbol{y'}/y & 1   \end{pmatrix}\right) y^{-d}~\wrt \mu_H \frac{\wrt y}{y}y^{d-1}\frac{\wrt{\boldsymbol{y'}}}{y^{d-1}} \\ =\zeta(d)^{-1} &\int \int \In_{\Cc}\left(M \begin{pmatrix}
y^{1/(d-1)} I_{d-1}&\tensor*[^t]{\boldsymbol{0}}{}  \\ \boldsymbol{0} & y^{-1}   \end{pmatrix}\right) \left(\int \In_{N^+(\Bb)}\left(\begin{pmatrix}
 I_{d-1}&\tensor*[^t]{\boldsymbol{0}}{}  \\ y\boldsymbol{y'}/y & 1   \end{pmatrix}\right)y^{d-1}\frac{\wrt{\boldsymbol{y'}}}{y^{d-1}} \right)y^{-d}~\wrt \mu_H \frac{\wrt y}{y}.
  \end{align*}  Now we have that \begin{align*}\int \In_{N^+(\Bb)}\left(\begin{pmatrix}
 I_{d-1}&\tensor*[^t]{\boldsymbol{0}}{}  \\ y\boldsymbol{y'}/y & 1   \end{pmatrix}\right) y^{d-1}\frac{\wrt{\boldsymbol{y'}}}{y^{d-1}} &= \int \In_{N^+(\Bb)}\left(\begin{pmatrix}
 I_{d-1}&\tensor*[^t]{\boldsymbol{0}}{}  \\ \boldsymbol{y'} & 1   \end{pmatrix}\right)~\wrt{\boldsymbol{y'}} = \int_{\RR^{d-1}} \In_{\Bb}(\boldsymbol{y'})~\wrt{\boldsymbol{y'}},
\end{align*} giving the desired result.

\end{proof}

\begin{coro}\label{coroHaarMeaCTN}
We have that \begin{align*}\mu(\Cc^T N^+(\Bb))& = \frac 1 {T^d} \mu(\Cc N^+(\Bb)) \\ &=  \frac 1 {\zeta(d) T^d} \left( \int_{\RR^{d-1}} \In_{\Bb}(\boldsymbol{y'})~\wrt{\boldsymbol{y'}}\right) \left(\int \int \In_{\Cc}\left(M \begin{pmatrix}
y^{1/(d-1)} I_{d-1}&\tensor*[^t]{\boldsymbol{0}}{}  \\ \boldsymbol{0} & y^{-1}   \end{pmatrix}\right) y^{-d}~\wrt \mu_H \frac{\wrt y}{y} \right).
\end{align*}
\end{coro}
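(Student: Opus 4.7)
The plan is to deduce the corollary directly from Theorem~\ref{thmHaarMeaCN} by exploiting the simple relationship (\ref{eqnRelCTAndC}) between $\Cc^T$ and $\Cc$ under the flow, together with a single change of variables. Applying Theorem~\ref{thmHaarMeaCN} to $\Cc^T N^+(\Bb)$ gives
\[
\mu(\Cc^T N^+(\Bb)) \;=\; \frac{1}{\zeta(d)} \left(\int_{\RR^{d-1}} \In_{\Bb}(\boldsymbol{y'})~\wrt{\boldsymbol{y'}}\right) \int\!\int \In_{\Cc^T}\!\left(M \begin{pmatrix} y^{1/(d-1)} I_{d-1} & \tensor*[^t]{\boldsymbol{0}}{}\\ \boldsymbol{0} & y^{-1} \end{pmatrix}\right) y^{-d}~\wrt \mu_H \frac{\wrt y}{y},
\]
so the task reduces to showing that the inner double integral equals $T^{-d}$ times the analogous double integral with $\Cc$ in place of $\Cc^T$.

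To see this, I would use (\ref{eqnRelCTAndC}), which says $\Cc^T = \Cc\, \Phi^*(T)$ where $\Phi^*(y) := \diag(y^{1/(d-1)} I_{d-1}, y^{-1})$ is the one-parameter subgroup appearing in the integrand. Since $\Phi^*(y)\Phi^*(T)^{-1} = \Phi^*(y/T)$, we have
\[
\In_{\Cc^T}\!\left(M \Phi^*(y)\right) \;=\; \In_{\Cc}\!\left(M \Phi^*(y/T)\right).
\]
Now substitute $u = y/T$ in the $y$-integral; because the measure $y^{-d}\,\wrt\mu_H\,\frac{\wrt y}{y}$ is the left Haar measure on $H\widetilde{\Phi}$ (see (\ref{eqn:leftHaarHTildePhi})) and $\Phi^*$ is a subgroup, the scaling $y \mapsto y/T$ introduces exactly the modular factor $T^{-d}$. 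Concretely, $y^{-d}\frac{\wrt y}{y} = T^{-d} u^{-d}\frac{\wrt u}{u}$, yielding
\[
\int\!\int \In_{\Cc^T}\!\left(M \Phi^*(y)\right) y^{-d}~\wrt \mu_H \frac{\wrt y}{y} \;=\; T^{-d} \int\!\int \In_{\Cc}\!\left(M \Phi^*(u)\right) u^{-d}~\wrt \mu_H \frac{\wrt u}{u}.
\]
Substituting back into the expression for $\mu(\Cc^T N^+(\Bb))$ and recognising the right-hand side, via Theorem~\ref{thmHaarMeaCN}, as $T^{-d}\mu(\Cc N^+(\Bb))$ then produces both asserted equalities.

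There is no serious obstacle here: both ingredients (Theorem~\ref{thmHaarMeaCN} and the flow-covariance (\ref{eqnRelCTAndC})) are already in hand, and only the correct modular factor must be tracked through the change of variables. As a sanity check, one could instead route through Lemma~\ref{lemmRatioHaarMeasOnSections} combined with (\ref{eqnDecompOfMeasNAtildek}) to rewrite the inner double integral in $(n,a,\widetilde{k})$-coordinates; the lemma then supplies the factor $T^{-d}$ directly, and the positive constant $C_0$ from (\ref{eqnDecompOfMeasNAtildek}) cancels between numerator and denominator. Either route yields the corollary cleanly.
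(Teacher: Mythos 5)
Your argument is correct, and it reaches the result by a genuinely different (though closely related) route from the paper's. The paper conjugates the thickening: using (\ref{eqnRelCTAndC}) it writes $\Cc^T N^+(\Bb) = \Cc\, N^+\!\left(T^{-d/(d-1)}\Bb\right) \Phi^*(T)$ where $\Phi^*(T) = \diag(T^{1/(d-1)}I_{d-1}, T^{-1})$, invokes the right-invariance of $\mu$ on $\Gamma\backslash G$ under the diagonal flow to drop $\Phi^*(T)$, and then applies Theorem~\ref{thmHaarMeaCN} to $\Cc\,N^+(T^{-d/(d-1)}\Bb)$; the factor $T^{-d}$ thus emerges from rescaling $\Bb$ in the $\wrt\boldsymbol{y'}$-integral, since $\bigl(T^{-d/(d-1)}\bigr)^{d-1} = T^{-d}$. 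You instead apply Theorem~\ref{thmHaarMeaCN} directly to $\Cc^T N^+(\Bb)$ with $\Bb$ unchanged, and extract $T^{-d}$ from the inner $y$-integral by writing $\In_{\Cc^T}(M\Phi^*(y)) = \In_{\Cc}(M\Phi^*(y/T))$ and substituting $u = y/T$, where the modular density $y^{-d}$ supplies exactly $T^{-d}$. The paper's route is marginally more conceptual (leaning on invariance of the homogeneous-space measure and outsourcing the computation entirely to the already-proved theorem), whereas yours is slightly more direct and avoids the conjugation step at the cost of redoing a change of variables; both are short and both are correct. Your closing remark that one could alternatively route through Lemma~\ref{lemmRatioHaarMeasOnSections} and (\ref{eqnDecompOfMeasNAtildek}) is also sound, since the constant $C_0$ indeed cancels — this is essentially what the paper does for the spherical analogue in Theorem~\ref{thnHaarMeaSTEvsST0E}.
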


\begin{proof} Conjugation gives \[N^+(\Bb)=\begin{pmatrix}
T^{1/(d-1)} I_{d-1}&\tensor*[^t]{\boldsymbol{0}}{}  \\ \boldsymbol{0} & T^{-1}   \end{pmatrix}^{-1} N^+(T^{-d/(d-1)}\Bb) \begin{pmatrix}
T^{1/(d-1)} I_{d-1}&\tensor*[^t]{\boldsymbol{0}}{}  \\ \boldsymbol{0} & T^{-1}   \end{pmatrix}.\]  Since the diagonal flow preserves the Haar measure, we have that \begin{align*} \mu(\Cc^T N^+(\Bb)) =  \mu\left(\Cc N^+(T^{-d/(d-1)}\Bb)\right),
\end{align*} which, using Theorem~\ref{thmHaarMeaCN}, gives the desired result.
 
\end{proof}

Using full section coordinates, we could explicitly compute volumes for $\Cc^T$ to obtain a formula in terms of $\boldsymbol{\alpha}, \boldsymbol{\gamma}, \widetilde{K}, \beta_{ij}^+, \beta_{ij}^-$, and $T$.  However, we will do this only in the case of $\Ss_T$:

\begin{coro}\label{coroHaarMeaSTN}
We that that \[\mu(\Ss_T N^+(\Bb)) = \frac 1 {d \zeta(d)} \frac 1{T^{d-1}} \int_{\RR^{d-1}} \In_{\Bb}(\boldsymbol{y'})~\wrt{\boldsymbol{y'}}\]
\end{coro}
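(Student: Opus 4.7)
The plan is to read off the measure of $\Ss_T N^+(\Bb)$ directly from the Fubini decomposition used in the proof of Theorem~\ref{thmHaarMeaCN}. That proof is purely an application of the Haar decomposition $\wrt\mu = \zeta(d)^{-1}\wrt\mu_H\wrt{\boldsymbol{y}}$ from~(\ref{eqnHaarInMMyCoord}) together with the substitution $y_d\mapsto y^{-1}$; it nowhere exploits that $\Cc$ is a Grenier box, only that $\Cc\subset\Ss_1$ lies in the region where the parametrization $(M,s)\mapsto \Gamma M\Phi^{-s}$ from $(\Gamma_H\backslash H)\times[0,\infty)$ is injective, which is~\cite[Lemma~2]{Mar10}. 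Hence the same factorization is valid with $\Cc$ replaced by $\Ss_T$, namely
\[
\mu(\Ss_T N^+(\Bb)) = \frac{1}{\zeta(d)}\left(\int_{\RR^{d-1}}\In_\Bb(\boldsymbol{y'})\,\wrt{\boldsymbol{y'}}\right)\left(\int\!\!\int\In_{\Ss_T}\!\!\left(M\begin{pmatrix}y^{1/(d-1)}I_{d-1}&\tensor*[^t]{\boldsymbol{0}}{}\\\boldsymbol{0}&y^{-1}\end{pmatrix}\right)y^{-d}\,\wrt\mu_H\,\frac{\wrt y}{y}\right).
\]

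Next I identify the inner region in the coordinates of the factorization. The diagonal matrix above equals $\Phi^{-s}$ with $y = e^{(d-1)s}$, so the condition $s\geq \tfrac{1}{d}\log T$ appearing in the definition of $\Ss_T$ translates to $y\geq T^{(d-1)/d}$. Since $\In_{\Ss_T}$ is the product of this $y$-constraint with the indicator that $\Gamma M$ ranges over $\Gamma_H\backslash H$, the $\wrt\mu_H$-integral contributes $\mu_H(\Gamma_H\backslash H)=1$ by the normalization of~$\mu_H$. The remaining $y$-integral is elementary:
\[
\int_{T^{(d-1)/d}}^{\infty} y^{-d}\,\frac{\wrt y}{y} = \int_{T^{(d-1)/d}}^{\infty} y^{-d-1}\,\wrt y = \frac{1}{d}\,T^{-(d-1)}.
\]
Multiplying the two factors yields
\[
\mu(\Ss_T N^+(\Bb)) = \frac{1}{d\zeta(d)}\,\frac{1}{T^{d-1}}\int_{\RR^{d-1}}\In_\Bb(\boldsymbol{y'})\,\wrt{\boldsymbol{y'}},
\]
which is the claim.

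The only point that requires care is the extension of Theorem~\ref{thmHaarMeaCN}'s factorization from a Grenier box $\Cc$ to the half-infinite set $\Ss_T$. There is no real obstacle: $\Ss_T\subset\Ss_1$, and the injectivity of the section map for $s\geq 0$ from~\cite[Lemma~2]{Mar10} ensures the Fubini computation in the lift $\Gamma H\{\Phi^{-s}:s\geq\frac{1}{d}\log T\}N^+(\Bb)$ is well-defined and the intersections of $\Gamma$-translates have measure zero; no upper bound on $y$ or constraint on $\boldsymbol{x}$-coordinates intervenes. As an alternative checkable derivation, one can first use right-invariance of~$\mu$ and the conjugation identity $\Phi^{-\frac{1}{d}\log T}n_+(\boldsymbol{\widetilde x})\Phi^{\frac{1}{d}\log T}=n_+(T^{-1}\boldsymbol{\widetilde x})$ (cf.~(\ref{eqnConjy1})) to reduce to $\mu(\Ss_T N^+(\Bb))=\mu(\Ss_1 N^+(T^{-1}\Bb))$, and then apply the $T=1$ case together with the scaling $\int\In_{T^{-1}\Bb}=T^{-(d-1)}\int\In_\Bb$; both routes produce the same constant $\tfrac{1}{d\zeta(d)}$.
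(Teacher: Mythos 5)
Your proof is correct and follows essentially the same route as the paper's: apply the Fubini factorization from Theorem~\ref{thmHaarMeaCN} with $\Ss_T$ in place of $\Cc$, integrate out the trivial $\mu_H$-factor using $\mu_H(\Gamma_H\backslash H)=1$, and compute the remaining one-dimensional integral (you work in the $y$-variable with lower endpoint $T^{(d-1)/d}$; the paper substitutes $y=e^{(d-1)s}$ and integrates $e^{-d(d-1)s}\,\wrt s$ from $\tfrac{1}{d}\log T$, which is the same computation). Your explicit remark on why the factorization extends from Grenier boxes to $\Ss_T$, and the alternate derivation via right-invariance and conjugation, are sound but supplementary — the substance matches the paper's argument.
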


\begin{proof}
Changing variables $y = e^{(d-1)s}$ in Theorem~\ref{thmHaarMeaCN}, we obtain\begin{align*}\mu(\Ss_T N^+(\Bb)) &=   \frac {d-1} {\zeta(d)} \left( \int_{\RR^{d-1}} \In_{\Bb}(\boldsymbol{y'})~\wrt{\boldsymbol{y'}}\right) \left(\int \int \In_{\Ss_T}\left(M \Phi^{-s} \right)~\wrt \mu_H \ e^{-d(d-1)s}~\wrt s \right)  \\ \nonumber &=   \frac {d-1} {\zeta(d)} \left( \int_{\RR^{d-1}} \In_{\Bb}(\boldsymbol{y'})~\wrt{\boldsymbol{y'}}\right) \left(\int_{\frac 1 d \log(T)}^\infty  e^{-d(d-1)s}~\wrt s \right)   \\ \nonumber &= \frac 1 {d \zeta(d)} \frac 1{T^{d-1}}\int_{\RR^{d-1}} \In_{\Bb}(\boldsymbol{y'})~\wrt{\boldsymbol{y'}}.
\end{align*}
\end{proof}

Finally, recall the definition of $\Ee$ from Section~\ref{secConstructShrinkTargets}.  We now compute the volume of $\Ss_{T} \Ee$ (also see Theorem~\ref{thmHaarMeaWidetildeSE} for a generalization).

\begin{theo}\label{thmHaarMeaSE}  We have that  \begin{align*}\mu(\Ss_{T} \Ee) =\frac{d-1}{\zeta(d)} \int \int \int \In_{\left\{\Ss_{Tc^{-d/(d-1)}} n_+(\boldsymbol{z'}):  \boldsymbol{z} \in \Dd \right\}}\left(M \Phi^{-s}n_+(\widetilde{\boldsymbol{x}})\right)~\wrt \mu_H(M) e^{-d(d-1)s}~\wrt s~\wrt{\widetilde{\boldsymbol{x}}}.
\end{align*}
\end{theo}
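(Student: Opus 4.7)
The plan is to mirror the proof of Corollary~\ref{coroHaarMeaSTN} almost verbatim, with the single additional ingredient being Lemma~\ref{lemmMMyCoordinatesForSE}. Specifically, I would start by applying that lemma to replace the spherical thickening with a union of translated sections:
\[\Ss_T\Ee \;=\; \left\{\Ss_{Tc^{-d/(d-1)}}\,n_+(\boldsymbol{z}') : \boldsymbol{z}\in\Dd\right\}.\]
This moves all of the ``spherical complication'' into the $\boldsymbol{z}$-dependent lower section bound $T c(\boldsymbol{z})^{-d/(d-1)}$ and a translate $n_+(\boldsymbol{z}')$ in the stable horospherical direction, which is exactly the shape needed to match the right-hand side of the theorem.

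Next, I would compute $\mu(\Ss_T\Ee) = \int_{\Gamma\backslash G}\In_{\Ss_T\Ee}(g)\,\wrt\mu(g)$ using the parametrization $g = M\,M_{\boldsymbol{y}}$ from (\ref{eqnParametrizationForG}) and the Haar measure formula $\wrt\mu = \zeta(d)^{-1}\,\wrt\mu_H\,\wrt\boldsymbol{y}$ from (\ref{eqnHaarInMMyCoord}). The hypothesis $c(\overline{\Dd})>0$ in (\ref{eqnRestrictionsOnD}) together with the constraint $s\geq \tfrac 1 d\log(Tc^{-d/(d-1)})>0$ (which is inherited from the definition of the section $\Ss_{Tc^{-d/(d-1)}}$) ensures that every point of $\Ss_T\Ee$ lies in the $y_d>0$ branch of the parametrization, so only that case of (\ref{eqnParametrizationForG}) is relevant. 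On that branch, a direct matrix computation shows
\[M_{\boldsymbol{y}} \;=\; \Phi^{-s}\,n_+(\tilde{\boldsymbol{x}}), \qquad y_d = e^{-(d-1)s}, \qquad \boldsymbol{y}' = y_d\,\tilde{\boldsymbol{x}}.\]

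The routine step is the Jacobian: from $y_d = e^{-(d-1)s}$ one gets $|\wrt y_d| = (d-1)e^{-(d-1)s}\,\wrt s$, and from $\boldsymbol{y}' = y_d\,\tilde{\boldsymbol{x}}$ one gets $\wrt\boldsymbol{y}' = y_d^{d-1}\,\wrt\tilde{\boldsymbol{x}} = e^{-(d-1)^2 s}\,\wrt\tilde{\boldsymbol{x}}$, so
\[\wrt\boldsymbol{y} \;=\; (d-1)\,e^{-d(d-1)s}\,\wrt s\,\wrt\tilde{\boldsymbol{x}}.\]
Combining this with (\ref{eqnHaarInMMyCoord}) gives $\wrt\mu = \frac{d-1}{\zeta(d)}\,e^{-d(d-1)s}\,\wrt\mu_H(M)\,\wrt s\,\wrt\tilde{\boldsymbol{x}}$ on the $y_d>0$ region, and substituting the rewritten form of $\Ss_T\Ee$ into the indicator yields the stated identity.

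The only conceptual point to check carefully is the first step, namely that one is allowed to push the action of $\Ee$ through the section by relocating it into the bound $Tc^{-d/(d-1)}$ and a translate $n_+(\boldsymbol{z}')$; this is precisely the content of Lemma~\ref{lemmMMyCoordinatesForSE}, which already did the heavy lifting via the identity (\ref{eqnDecompSOintoN}) together with the $\det=c^{-1}$ normalization (\ref{eqnSOMinNDet1}) that kills the $H$-part. Everything else is a change of variables identical in spirit to the one used for Corollary~\ref{coroHaarMeaSTN}, which corresponds to the special case $\Ee=\{I_d\}$ thickened by $N^+(\Bb)$ instead.
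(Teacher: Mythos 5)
Your proposal is correct and matches the paper's proof essentially step for step: both invoke Lemma~\ref{lemmMMyCoordinatesForSE} to rewrite $\Ss_T\Ee$ as $\{\Ss_{Tc^{-d/(d-1)}}n_+(\boldsymbol{z'}):\boldsymbol{z}\in\Dd\}$, then apply the parametrization (\ref{eqnParametrizationForG}) and the Haar measure formula (\ref{eqnHaarInMMyCoord}), and finally change variables to $(s,\widetilde{\boldsymbol{x}})$ (the paper does this in two one-variable steps, $y_d\mapsto y^{-1}$ then $y=e^{(d-1)s}$, which yields the identical Jacobian factor $(d-1)e^{-d(d-1)s}$). Your observation that (\ref{eqnRestrictionsOnD}) keeps everything in the $y_d>0$ branch is implicit in the paper but worth making explicit, as you did.
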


\begin{proof}
Using (\ref{eqnFormulaDy}), (\ref{eqnHaarInMMyCoord}) and Lemma~\ref{lemmMMyCoordinatesForSE} and changing variables $y_d \mapsto y^{-1}$, we have \begin{align*} \mu(\Ss_{T} \Ee) =\zeta(d)^{-1} \int \int \int \In_{\left\{\Ss_{Tc^{-d/(d-1)}} n_+(\boldsymbol{z'}):  \boldsymbol{z} \in \Dd \right\}}\left(M \begin{pmatrix}
y^{1/(d-1)} I_{d-1}&\tensor*[^t]{\boldsymbol{0}}{}  \\ \boldsymbol{0} & y^{-1}   \end{pmatrix}\begin{pmatrix}
 I_{d-1}&\tensor*[^t]{\boldsymbol{0}}{}  \\ y\boldsymbol{y'}/y & 1   \end{pmatrix}\right) & \\  y^{-d}~\wrt \mu_H \frac{\wrt y}{y}y^{d-1}&\frac{\wrt{\boldsymbol{y'}}}{y^{d-1}}.
   \end{align*}

  Changing variables $y = e^{(d-1)s},$ we obtain  \begin{align*} \mu(\Ss_{T} \Ee) &  =\frac{d-1}{\zeta(d)} &\int \int \int \In_{\left\{\Ss_{Tc^{-d/(d-1)}} n_+(\boldsymbol{z'}):  \boldsymbol{z} \in \Dd \right\}}\left(M \Phi^{-s}\begin{pmatrix}
 I_{d-1}&\tensor*[^t]{\boldsymbol{0}}{}  \\ \boldsymbol{y'} & 1   \end{pmatrix}\right)~\wrt \mu_H e^{-d(d-1)s}~\wrt s~\wrt{\boldsymbol{y'}}, 
   \end{align*} giving the desired result.
  \end{proof}
  
We also have an analog of Corollary~\ref{coroHaarMeaCTN} for $\Ss_{T} \Ee$ (also see Theorem~\ref{thnHaarMeaSTEvsST0ECuspidalVersion} for a generalization).  First note that we can choose a convenient parametrization of $G$ using full section coordinates (see (\ref{eqnFundDomCoord})) and the Iwasawa decomposition of $G$.  Namely, any element of $G$ can be expressed as \begin{align}\label{eqnFundDomCoordForG}  \begin{pmatrix}  1&x_{ij} &\\ &\ddots& \\& & 1\end{pmatrix}y^{-1/2} \begin{pmatrix}y_{d-1}y_{d-2} \cdots y_{1}&&&& \\ &\ddots&&&&\\ && y_{d-1} y_{d-2}&&\\ &&&y_{d-1}&\\ &&&& 1   \end{pmatrix}\begin{pmatrix} k  &  \tensor*[^t]{\boldsymbol{0}}{} \\ \boldsymbol{0} & 1\end{pmatrix} \widehat{k}\end{align} where $\widehat{k} \in  K' \backslash \SO(d, \RR).$  Recall $\wrt k$ is the probability Haar measure on $\SO(d, \RR)$.  It is well-known that $\rho(a)~\wrt n~\wrt a~\wrt k$ is a Haar measure on $G$~\cite[Chapter V, Lemma~2.5]{BM00}.  Since $ \SO(d, \RR)$ and $K'$ are both unimodular, there exists (\cite[Corollary~B.1.7]{BHV08}) a right $K$-invariant regular Borel measure $\wrt k'$ (unique up to positive scalar multiple) on $K' \backslash \SO(d, \RR)$ such that \begin{align}\label{eqnComponentdk}
\wrt k =\wrt {\widetilde{k}}~\wrt k'. \end{align}

\begin{theo}\label{thnHaarMeaSTEvsST0E}
We have that \begin{align*}\mu(\Ss_{T} \Ee)& = \frac {T_0^{d-1}} {T^{d-1}} \mu(\Ss_{T_0} \Ee).\end{align*}
 
\end{theo}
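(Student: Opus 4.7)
The plan is to derive a closed-form expression $\mu(\Ss_T \Ee) = C_\Ee/T^{d-1}$, where $C_\Ee$ depends on $\Ee$ but not on $T$; the theorem then follows at once by taking the ratio of this formula at $T$ and at $T_0$.

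To obtain such a formula I will start from Theorem~\ref{thmHaarMeaSE}, which expresses $\mu(\Ss_T\Ee)$ as an integral over $(M,s,\widetilde{\boldsymbol{x}}) \in (\Ga_H\backslash H)\times \RR \times \RR^{d-1}$ against the measure $\frac{d-1}{\zeta(d)}\,d\mu_H\,e^{-d(d-1)s}\,ds\,d\widetilde{\boldsymbol{x}}$. Invoking the factorization (\ref{eqnDecompNintoSO}) together with the disjointness proved in Section~\ref{secDisjTransNeigST}, I will argue that the indicator $\In_{\{\Ss_{Tc^{-d/(d-1)}}n_+(\boldsymbol{z'}):\boldsymbol{z}\in\Dd\}}(M\Phi^{-s}n_+(\widetilde{\boldsymbol{x}}))$ equals $1$ precisely when $\widetilde{\boldsymbol{x}}\in\boldsymbol{z'}(\Dd)\subset\RR^{d-1}$ and $s \geq \frac{1}{d}\log(Tc(\boldsymbol{z}(\widetilde{\boldsymbol{x}}))^{-d/(d-1)})$, where $\boldsymbol{z}(\widetilde{\boldsymbol{x}})$ is the unique preimage of $\widetilde{\boldsymbol{x}}$ under the injective map $\boldsymbol{z}\mapsto\boldsymbol{z'}$ (injectivity follows from $c(\overline{\Dd})>0$ combined with the injectivity of $\boldsymbol{z}\mapsto e_d E(\boldsymbol{z})^{-1}$ assumed in (\ref{eqnInducedDiffeoUsinged})).

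I will then change variables $\widetilde{\boldsymbol{x}}=\boldsymbol{z'}(\boldsymbol{z})$ with Jacobian $|J(\boldsymbol{z})|:=|\det(d\boldsymbol{z'}/d\boldsymbol{z})|$, perform the inner $s$-integral to obtain $\frac{c(\boldsymbol{z})^d}{d(d-1)T^{d-1}}$, and note that the $M$-integral equals $1$ by the normalization $\mu_H(\Ga_H\backslash H)=1$. Combining these yields
\[
\mu(\Ss_T\Ee) \;=\; \frac{1}{d\,\zeta(d)\,T^{d-1}}\int_{\Dd} c(\boldsymbol{z})^d\,|J(\boldsymbol{z})|\,d\boldsymbol{z},
\]
and the theorem follows by dividing the formulas at $T$ and at $T_0$.

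The main obstacle I anticipate is the support characterization: one must verify that no non-trivial $\gamma\in\Ga\setminus\Ga_H$ produces additional support in the parametrization by $(M,s,\widetilde{\boldsymbol{x}})$ with $M\in\Ga_H\backslash H$. This is precisely the content of the disjointness arguments in Section~\ref{secDisjTransNeigST}, which exclude such identifications deep in the cusp; once disjointness is in hand, the only $\gamma$-representative contributing is the trivial one, and the analysis via (\ref{eqnDecompNintoSO}) proceeds cleanly.
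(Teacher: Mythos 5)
Your proposal is correct, and it takes a genuinely different route from the paper. The paper's proof works entirely in full section coordinates $NAK'\cdot(K'\backslash K)$: it factors $\wrt k = \wrt\widetilde{k}\,\wrt k'$, observes that the indicator of $\Cc^{\widetilde{T}}\Ee$ splits as a product of the indicator of $\Cc^{\widetilde{T}}$ in the $NAK'$-part and the indicator of $[\Ee]$ in the $K'\backslash K$-part, and then applies the purely group-theoretic scaling identity of Lemma~\ref{lemmRatioHaarMeasOnSections} (the ratio $\rho(\widetilde{a})/\rho(a) = T^{-d}$ under the diagonal flow) to conclude. No explicit integration is performed; the scaling argument alone produces the factor $T_0^{d-1}/T^{d-1}$.

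Your approach instead takes the already-established integral formula of Theorem~\ref{thmHaarMeaSE} and evaluates it: after characterizing the support of the integrand, the inner $s$-integral gives $\frac{c(\boldsymbol{z})^d}{d(d-1)\,T^{d-1}}$ (where $e^{-d(d-1)s_0}=T^{-(d-1)}c^d$ for $s_0=\tfrac1d\log(Tc^{-d/(d-1)})$), the $\mu_H$-integral gives $1$, and a change of variables $\widetilde{\boldsymbol{x}}=\boldsymbol{z'}(\boldsymbol{z})$ produces the closed form $\mu(\Ss_T\Ee)=\frac{1}{d\zeta(d)T^{d-1}}\int_{\Dd}c(\boldsymbol{z})^d|J(\boldsymbol{z})|\,\wrt\boldsymbol{z}$. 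Your injectivity argument for $\boldsymbol{z}\mapsto\boldsymbol{z'}$ is correct: since $\|\boldsymbol{z'}\|_2^2=c^{-2}-1$ and $c>0$ on $\overline{\Dd}$, equality of $\boldsymbol{z'}$ forces equality of $(\boldsymbol{v},c)$, which forces equality of $\boldsymbol{z}$ by the assumed injectivity of (\ref{eqnInducedDiffeoUsinged}). The part that requires some care is the ``only if'' direction of your support characterization — that no $\gamma\in\Ga\setminus\Ga_H$ produces additional support in the $(M,s,\widetilde{\boldsymbol{x}})$ parametrization. The tool you want is really the \emph{proof} (not just the statement) of Theorem~\ref{thmUpperHemiThickening}: it shows that equality (\ref{eqnThickeningRelation2}) with $\boldsymbol{z}_1=\boldsymbol{z}_2$ and $T>\h_0$ forces $\boldsymbol{p}=\boldsymbol{0}$, $q=\det N=1$, i.e.\ $\gamma\in\Ga_H$. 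That is the same implicit restriction on the range of $T$ (via the Grenier-domain footnote) that makes the paper's identification $\Cc\leftrightarrow\Ga\backslash\Ga\Cc$ work, so your route incurs no extra cost there. What your approach buys is an explicit formula for $\mu(\Ss_T\Ee)$ in terms of $\Dd$ and $c$, which the paper's scaling argument deliberately avoids computing; what the paper's approach buys is that it sidesteps the support analysis entirely by working with the indicator factorization in Iwasawa coordinates.
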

\begin{proof}
We use full section coordinates and, in particular, Lemma~\ref{lemmRatioHaarMeasOnSections}.  Let $\Cc := \Ss_{T_0}$ and \[\widetilde{T}:= \left(\frac{T}{T_0} \right)^{\frac{d-1}d}.\]  Then, by (\ref{eqnRelCTAndC}), we have that $\Cc^{\widetilde{T}}= \Ss_T.$  Since $\Ss_{T_0}$ contains all of $\Ga \backslash \Ga H$, then, using (\ref{eqnDecompsitionOfEleOfH}), we have that $\widetilde{K}$ is all of $K'$ where $\widetilde{K}$ is the set defined in (\ref{eqnDefofTildeK}).  

For $g \in \SO(d, \RR)$, let us denote its image under the natural projection map $\SO(d, \RR) \rightarrow K' \backslash \SO(d, \RR)$ by $[g]$ and, similarly for subsets $\widetilde{\Ee} \subset \SO(d, \RR)$, let us denote its image by $[\widetilde{\Ee}] := \{[g] : g \in \widetilde{\Ee}\}.$  We note that $[\widetilde{K} \Ee] = [\Ee]$.   Using (\ref{eqnComponentdk}) and Lemma~\ref{lemm:FullSecCoordLeftHaarHPhi}, we have \begin{align*}\int\int\int \In_{\Cc^{\widetilde{T}}\Ee}(n a k) \rho(a)&~\wrt n~\wrt a~\wrt {{k}}  \\\nonumber & =  
 \int\int\int\int \In_{\Cc^{\widetilde{T}}[\Ee]}(n a \widetilde{k}k') \rho(a)~\wrt n~\wrt a~\wrt {\widetilde{k}}~\wrt {{k'}}  \\\nonumber & =  \left(\int\int\int \In_{\Cc^{\widetilde{T}}}(n a \widetilde{k}) \rho(a)~\wrt n~\wrt a~\wrt {\widetilde{k}}\right) \left(\int \In_{[\Ee]}(k')~\wrt {{k'}}\right)\\\nonumber &= \frac 1 {\widetilde{T}^d}\left(\int\int\int \In_{\Cc}(n a \widetilde{k})\rho(a)~\wrt n~\wrt a~\wrt {\widetilde{k}}\right)\left(\int \In_{[\Ee]}(k')~\wrt {{k'}}\right)  \\\nonumber & =  \frac{T_0^{d-1}}{T^{d-1}}\int\int\int\int \In_{\Cc[\Ee]}(n a \widetilde{k}k') \rho(a)~\wrt n~\wrt a~\wrt {\widetilde{k}}~\wrt {{k'}}\\\nonumber & =  \frac{T_0^{d-1}}{T^{d-1}}\int\int\int \In_{\Cc\Ee}(n a k) \rho(a)~\wrt n~\wrt a~\wrt {{k}}, \end{align*} where the third equality follows by an application of Lemma~\ref{lemmRatioHaarMeasOnSections} with $\Cc$ and $\Cc^{\widetilde{T}}$.  Since $\rho(a)~\wrt n~\wrt a~\wrt {{k}}$ is a positive constant multiple of the Haar measure $\mu$, the desired result is immediate. 
 \end{proof}

\subsection{Proof of Theorem~\ref{thmShrinkCuspNeigh} for $L=I_d$}\label{subsecProofMainTheoremFullCase}  Recall the definition of $\Cc$ from Section~\ref{secConstructShrinkTargets} and that, in particular, it has lower height $T_-$ (which is a fixed constant).  Set \begin{align*} T_0:= T_-^{d/2(d-1)}. 
\end{align*}  Now consider the neighborhood $\Cc^{(TT_0^{-1})^{(d-1)/d}}$, which, by (\ref{eqnRelCTAndC}), is \begin{align}\label{eqnPullingBackOfSectionalNeigh}
\widetilde{\Cc}_T:=\Cc^{(TT_0^{-1})^{(d-1)/d}} = \Cc \Phi^{-\frac 1 d \log(T/T_0)} \end{align} and, using (\ref{eqnFundDomCoord}), has lower height $T^{2(d-1)/d}$.  Note that the lower height of $\Ss_{T_0}$ is the same as that of $\Cc$ and the lower height of $S_T$ is the same as that of $\widetilde{\Cc}_T$.

Recall that $\widetilde{\Cc}_T \subset \Ss_1$ and we can (translate and) thicken this neighborhood by $N^+_\varepsilon (\boldsymbol{\widetilde{y}})$, namely we consider the set $\widetilde{\Cc}_T N^+_\varepsilon(\boldsymbol{\widetilde{y}})$.  As in the proof of Theorem~\ref{thmNANResultRankOneFlow}, if the point $\Ga\tensor*[^t]{\left(n_-(\boldsymbol{r}) \Phi^t\right)}{}^{-1} $ intersects the neighborhood $\widetilde{\Cc}_T$, then the $N^+$-volume of the horosphere through this unique intersection point and meeting the thickening is given by \[\widehat{f}_{T, \varepsilon}(\boldsymbol{r}, n_-(\boldsymbol{r}) \Phi^t) := \int_{N^+} \In_{\Aa \times\widetilde{\Cc}_T N^+_\varepsilon(\boldsymbol{\widetilde{y}})}\left(\boldsymbol{r}, \tensor*[^t]{\left(n_-(\boldsymbol{r}) \Phi^t\right)}{}^{-1} n_+(\boldsymbol{\widetilde{x}})\right) ~\wrt {\widetilde{n}(\boldsymbol{\widetilde{x}})}.\] 

Recall that we are given $T \in [T_-^{d/2(d-1)},e^{dt \eta}]$.  We have the following lemma:

 \begin{lemm}\label{lemmUnstableVolumes2}  Let \[\widetilde{Q} = e^{(d-1)\left(t-\frac 1 d \log(T/T_0)\right)}.\]  
We have that 
 \[\frac {T^{d-1}} {Q^d} \sum_{\boldsymbol{r} \in \F_Q} \widehat{f}_{T, \varepsilon}(\boldsymbol{r}, n_-(\boldsymbol{r}) \Phi^t) = \frac {T_0^{d-1}} {\widetilde{Q}^{d}}\sum_{\boldsymbol{r} \in \F_{\widetilde{Q}}} \widehat{f}_{T_0, \varepsilon}\left(\boldsymbol{r}, n_-(\boldsymbol{r}) \Phi^{t- \frac 1 d \log(T/T_0)}\right).\] 
\end{lemm}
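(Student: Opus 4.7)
The plan is to follow exactly the argument used in the proof of Lemma~\ref{lemmUnstableVolumes}, replacing the role of $\Ss_T$ by $\widetilde{\Cc}_T$ and the role of $\Ss_{T_0}$ by $\Cc$. The renormalization identity that drives the earlier argument was $\Ss_T = \Ss_{T_0} \Phi^{-\frac 1 d \log(T/T_0)}$; here it is replaced by $\widetilde{\Cc}_T = \Cc \Phi^{-\frac 1 d \log(T/T_0)}$, which is (\ref{eqnPullingBackOfSectionalNeigh}) and follows from (\ref{eqnRelCTAndC}) once the exponent $(TT_0^{-1})^{(d-1)/d}$ is in place (this is precisely why $T_0$ was set to $T_-^{d/2(d-1)}$).

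First, I would use right-multiplication by $n_+(\boldsymbol{\widetilde{y}})$ together with (\ref{eqnIndictorTranslation}) and the right-invariance of $\wrt{\widetilde{n}}$ to reduce to the centered case $\boldsymbol{\widetilde{y}} = \boldsymbol{0}$, exactly as at the start of the proof of Lemma~\ref{lemmUnstableVolumes}. I would then define the intersections
\[
\I(\boldsymbol{r}, t, T) := \Ga \tensor*[^t]{\left(n_-(\boldsymbol{r}) \Phi^{t}\right)}{}^{-1}  \cap \widetilde{\Cc}_T, \qquad \J(\boldsymbol{r}, t, T) := \Ga \tensor*[^t]{\left(n_-(\boldsymbol{r}) \Phi^{t - \frac 1 d \log(T/T_0)}\right)}{}^{-1}  \cap \Cc,
\]
and observe, using (\ref{eqnPullingBackOfSectionalNeigh}), that the map $z \mapsto z \Phi^{-\frac 1 d \log(T/T_0)}$ restricts to a bijection $\varphi : \J(\boldsymbol{r}, t, T) \to \I(\boldsymbol{r}, t, T)$, with non-emptiness for large $t$ guaranteed by (\ref{eqnChangeOfRate}) exactly as before.

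Next, for $z \in \J(\boldsymbol{r}, t, T)$, the conjugation identity (\ref{eqnConjy1}) gives $z N^+_\delta(\boldsymbol{0}) \Phi^{-\frac 1 d \log(T/T_0)} = \varphi(z) N^+_{\delta T/T_0}(\boldsymbol{0})$, so matching the piece against the $\varepsilon$-thickening at $\widetilde{\Cc}_T$ forces $\delta = \varepsilon T_0/T$. The explicit $N^+$-integration then produces the factor $\varepsilon^{d-1} T_0^{d-1}/T^{d-1}$ in exactly the same manner as (\ref{eqnCompareExpandingHaarVol}), yielding
\[
\widehat{f}_{T_0, \delta}\!\left(\boldsymbol{r}, n_-(\boldsymbol{r}) \Phi^{t-\frac 1 d \log(T/T_0)}\right) = \frac{T_0^{d-1}}{T^{d-1}} \widehat{f}_{T_0, \varepsilon}\!\left(\boldsymbol{r}, n_-(\boldsymbol{r}) \Phi^{t-\frac 1 d \log(T/T_0)}\right).
\]

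Finally, I would sum over $\boldsymbol{r} \in \F_Q$ and restrict to those $\boldsymbol{r}$ for which $\I(\boldsymbol{r}, t, T) \neq \emptyset$; the bijection $\varphi$ and the height-compatibility $\widetilde{\Cc}_T = \Cc \Phi^{-\frac 1 d \log(T/T_0)}$ imply that this subsum equals the corresponding sum over $\F_{\widetilde Q}$, giving the analog of (\ref{eqnRatioHaarMeasuresNANCase}) after accounting for the ratio $Q^d/\widetilde{Q}^d$ of Haar volumes on $N^-$. Multiplying through by $T^{d-1}$ and applying the identity above completes the argument. The step that requires external input is the disjointness of the horosphere pieces through distinct right-translated Farey points $\Ga \tensor*[^t]{(n_-(\boldsymbol{r}) \Phi^t)}{^{-1}}$ inside the thickened neighborhood; this is the only nontrivial geometric obstacle, since $\widetilde{\Cc}_T$ does not carry the same canonical transversal structure as $\Ss_T$, and it is exactly the content of Lemma~\ref{lemmDisjointnessTransSections}, whose proof is deferred to Section~\ref{secDisjTransNeigST}. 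Once disjointness is available, the remainder of the argument is a direct transcription of the proof of Lemma~\ref{lemmUnstableVolumes}.
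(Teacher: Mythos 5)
Your proposal is correct and matches the paper's proof, which is the terse instruction to rerun the proof of Lemma~\ref{lemmUnstableVolumes} with $\Ss_{T_0}$ replaced by $\Cc$, $\Ss_T$ replaced by $\widetilde{\Cc}_T$, and (\ref{eqnPullingBackOfSection}) replaced by the renormalization identity $\widetilde{\Cc}_T = \Cc \Phi^{-\frac 1 d \log(T/T_0)}$. One small misemphasis: since $\widetilde{\Cc}_T$ has the same lower height as $\Ss_T$ and hence $\widetilde{\Cc}_T \subset \Ss_T$, the disjointness situation is \emph{not} different from the $\Ss_T$ case (this is exactly how Section~\ref{secDisjTransNeigST} reduces to considering $\Ss_T$), and Lemma~\ref{lemmDisjointnessTransSections} is not an additional input specific to this lemma but the same ingredient already implicit in Lemma~\ref{lemmUnstableVolumes}.
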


\begin{proof}
Replace $\Ss_{T_0}$ with $\Cc$ and $\Ss_T$ with $\widetilde{C}_T$ and use (\ref{eqnPullingBackOfSectionalNeigh}) in place of (\ref{eqnPullingBackOfSection}) in the proof of Lemma~\ref{lemmUnstableVolumes}.  We note that the set \[\Ga \tensor*[^t]{\left(n_-(\boldsymbol{r}) \Phi^{t - \frac 1 d \log(T/T_0)}\right)}{}^{-1}  \cap \Cc\] is nonempty for all $t$ large enough by (\ref{eqnChangeOfRate}) and Theorem~\ref{thmMarklof}.  This yields the desired result.
\end{proof}

Using Lemma~\ref{lemmUnstableVolumes2} in place of  Lemma~\ref{lemmUnstableVolumes} in (\ref{eqnEquiOnSectTransA}) yields 

 \begin{align}\label{eqnEquiOnSectTransB}\lim_{t \rightarrow \infty} &\frac {T^{d-1}} {|\F_Q|} \sum_{\boldsymbol{r} \in \F_Q}  \widehat{f}_{T, \varepsilon}(\boldsymbol{r}, n_-(\boldsymbol{r}) \Phi^t)  \\ \nonumber&
  = d(d-1)T_0^{d-1}\varepsilon^{d-1} \int_0^\infty \int_{\TT^{d-1} \times \Ga_H \backslash H} \In_{\Aa \times \Cc}(\boldsymbol{x},  M \Phi^{-s})~\wrt{\boldsymbol{x}}~\wrt \mu_H(M) e^{-d(d-1)s}~\wrt s 
  \\ \nonumber&
  = T_0^{d-1} d \zeta(d) \mu\left(\Cc N^+_\varepsilon(\boldsymbol{\widetilde{y}})\right)\int_{\TT^{d-1}} \In_{\Aa}(\boldsymbol{x})~\wrt{\boldsymbol{x}}.
  \end{align}  

Since \[\varepsilon^{d-1} =  \int_{\RR^{d-1}} \In_{N^+_\varepsilon(\boldsymbol{\widetilde{y}})}(\boldsymbol{y'})~\wrt{\boldsymbol{y'}},\] the second equality follows by changing variables $y = e^{(d-1)s}$ in Theorem~\ref{thmHaarMeaCN}.  Consequently, we have that \begin{align*}\lim_{t \rightarrow \infty} \frac {T^{d-1}} {Q^d} \sum_{\boldsymbol{r} \in \F_Q}  \widehat{f}_{T, \varepsilon}(\boldsymbol{r}, n_-(\boldsymbol{r}) \Phi^t)    
  = T_0^{d-1}\mu\left(\Cc N^+_\varepsilon(\boldsymbol{\widetilde{y}})\right)\int_{\TT^{d-1}} \In_{\Aa}(\boldsymbol{x})~\wrt{\boldsymbol{x}}.
\end{align*}

Now following the rest of the proof of Theorem~\ref{thmNANResultRankOneFlow}, we have that

\[T^{d-1} \int_{\Aa} \In_{\widetilde{\Cc}_T N^+_\varepsilon (\boldsymbol{\widetilde{y}})}({n_-(\boldsymbol{x}) \Phi^t})~\wrt {\boldsymbol{x}} \xrightarrow[]{t \rightarrow \infty} T_0^{d-1}\mu\left(\Cc N^+_\varepsilon(\boldsymbol{\widetilde{y}})\right)\int_{\TT^{d-1}} \In_{\Aa}(\boldsymbol{x})~\wrt{\boldsymbol{x}},\] which completes the proof of Theorem~\ref{thmShrinkCuspNeigh} (when $L=I_d$) for $\Bb = N^+_\varepsilon(\boldsymbol{\widetilde{y}})$.

Finally, note that if we replace $N^+_\varepsilon(\boldsymbol{\widetilde{y}})$ with $N^+(\Bb)$, the proof works provided that the thickening given by $\widetilde{\Cc}_T N^+(\Bb)$ does not self-intersect on $\Ga \backslash G$.  By Lemma~\ref{lemmDisjointnessTransSections}, the latter condition is satisfied by choosing $\Bb \subset \left(\frac 1 {\sqrt{d}}\left( \frac 3 4 \right)^{(d-1)/2} T \right) \ZZ^{d-1} \backslash \RR^{d-1}$.  This proves Theorem~\ref{thmShrinkCuspNeigh} for $L=I_d$.

 \section{Disjointness of translates of neighborhoods of $\Ss_T$}\label{secDisjTransNeigST}  In this section, we give conditions for the disjointness of translates for stable directions and for spherical directions.  Our aim is to understand how much we can thicken without overlaps.  Pick a $T \geq 1$. Recall that  the lower height of $\widetilde{\Cc}_T$ is the same as that of $S_T$ and, thus, $\widetilde{\Cc}_T \subset S_T$.  Consequently, it suffices to consider the disjointness of translates of $\Ss_T$.  
 
\subsection{Stable directions} 
Consider elements of the set \[\Ga \backslash \Ga H \{\Phi^{-s}: s \in \RR_{\geq \frac 1 d \log T}\} n_+(\widetilde{\boldsymbol{x}}_1) \bigcap \Ga \backslash \Ga H \{\Phi^{-s}: s \in \RR_{\geq \frac 1 d \log T}\}  n_+(\widetilde{\boldsymbol{x}}_2).\]  Without loss of generality, by setting $\widetilde{\boldsymbol{x}} := \widetilde{\boldsymbol{x}}_1 - \widetilde{\boldsymbol{x}}_2$, we may consider instead elements of the set \[\Ga \backslash \Ga H \{\Phi^{-s}: s \in \RR_{\geq \frac 1 d \log T}\} n_+(\widetilde{\boldsymbol{x}}) \bigcap \Ga \backslash \Ga H \{\Phi^{-s}: s \in \RR_{\geq \frac 1 d \log T}\}.\] Such an element can be expressed as the following equality \begin{align}\label{eqnThickeningRelation}
 \gamma M \Phi^{-s} n_+(\widetilde{x}) = \widehat{\gamma} \widehat{M} \Phi^{-\widehat{s}}, \end{align} for some $\gamma, \widehat{\gamma} \in \Ga$, $M, \widehat{M} \in H$ and $s, \widehat{s} \geq \log T/d$.  Let \begin{align}\label{eqnDefGammaMMHat}
 \gamma^{-1} \widehat{\gamma}  := \begin{pmatrix}  N &  \tensor*[^t]{\boldsymbol{m}}{} \\ \boldsymbol{p} & q \end{pmatrix} \quad M := \begin{pmatrix} B &  \tensor*[^t]{\boldsymbol{b}}{} \\ \boldsymbol{0} & 1\end{pmatrix} \quad  \widehat{M} := \begin{pmatrix} \widehat{B} &  \tensor*[^t]{\widehat{\boldsymbol{b}}}{} \\ \boldsymbol{0} & 1\end{pmatrix}
  \end{align}
 
 Computing using (\ref{eqnThickeningRelation}), we obtain the following:  \begin{align}\label{eqnxtildeBhat}\widetilde{\boldsymbol{x}} = e^{(d-1)s + \widehat{s}}\boldsymbol{p} \widehat{B}.
  \end{align}

\noindent Note that, by (\ref{eqnFundDomCoord}), $\widehat{M}$ is an element in the Grenier domain of height 1.

\begin{lemm}\label{lemmDisjointnessTransSections}  We have that \[\Ga \backslash \Ga H \{\Phi^{-s}: s \in \RR_{\geq \frac 1 d \log T}\} n_+(\widetilde{\boldsymbol{x}}_1) \bigcap \Ga \backslash \Ga H \{\Phi^{-s}: s \in \RR_{\geq \frac 1 d \log T}\}  n_+(\widetilde{\boldsymbol{x}}_2) = \emptyset\] for all $\widetilde{\boldsymbol{x}}_1 \neq \widetilde{\boldsymbol{x}}_2 \mod (C_d T) \ZZ^{d-1},$ where $C_d$ is a constant depending only on $\Gamma$ and $d$ and, for which, we have the following bounds:   

\begin{center}\begin{tabular}{ll} $C_d = 1$ & $\textrm{ for } d=2$ \\ $C_d \geq \frac 1 {\sqrt{d}}\left( \frac 3 4 \right)^{(d-1)/2}$ & $ \textrm{ for } d>2.$
\end{tabular}\end{center}

\end{lemm}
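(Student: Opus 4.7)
The plan is to use the already-derived identity~(\ref{eqnxtildeBhat}) to bound $\|\widetilde{\boldsymbol{x}}\|_\infty$ from below whenever the intersection is non-empty. Since $s,\widehat{s}\geq \frac{1}{d}\log T$ forces $e^{(d-1)s+\widehat{s}}\geq T$, and~(\ref{eqnxtildeBhat}) reads $\widetilde{\boldsymbol{x}}=e^{(d-1)s+\widehat{s}}\boldsymbol{p}\widehat{B}$ with $\boldsymbol{p}\in\ZZ^{d-1}$, everything reduces to lower-bounding $\|\boldsymbol{p}\widehat{B}\|$ when $\boldsymbol{p}\neq \boldsymbol{0}$. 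If instead $\boldsymbol{p}=\boldsymbol{0}$, then $\widetilde{\boldsymbol{x}}=\boldsymbol{0}$ and the conclusion is trivial.

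Assume $\boldsymbol{p}\neq \boldsymbol{0}$. Replacing $(\widehat{\gamma},\widehat{M})$ by $(\widehat{\gamma}\gamma_H^{-1},\gamma_H\widehat{M})$ with $\gamma_H=\begin{pmatrix}A&\tensor*[^t]{\boldsymbol{b}}{}\\ \boldsymbol{0}&1\end{pmatrix}\in \Ga_H$ preserves~(\ref{eqnThickeningRelation}) and effects $\widehat{B}\mapsto A\widehat{B}$, $\boldsymbol{p}\mapsto \boldsymbol{p}A^{-1}$, leaving $\boldsymbol{p}\widehat{B}$ invariant; this freedom lets us place $\widehat{B}$ in the Grenier fundamental domain for $\SL(d-1,\ZZ)\backslash\SL(d-1,\RR)$. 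Write its Iwasawa decomposition $\widehat{B}=\widehat{U}\widehat{D}\widehat{k}$ with $\widehat{D}=\diag(\widehat{d}_1,\ldots,\widehat{d}_{d-1})$ in the Grenier parametrization of~(\ref{eqnFundDomCoord}); the analog of~\cite[Chapter~1, Lemma~4.1]{JL05} for $\SL(d-1)$ yields $\widehat{\alpha}_j^2\geq 3/4$ and thus $\widehat{d}_k\geq (3/4)^{(d-1-k)/2}$ for every $k$.

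For $d=2$, $\widehat{B}\in\SL(1,\RR)=\{1\}$, so $|\widetilde{x}|\geq T|p|\geq T$, giving $C_2=1$. For $d\geq 3$, let $k^*$ be the smallest index with $p_{k^*}\neq 0$; upper-triangularity of $\widehat{U}$ gives $(\boldsymbol{p}\widehat{U})_{k^*}=p_{k^*}$, and orthogonality of $\widehat{k}$ yields
\[
\|\boldsymbol{p}\widehat{B}\|_2 = \|\boldsymbol{p}\widehat{U}\widehat{D}\|_2 \geq |p_{k^*}|\,\widehat{d}_{k^*} \geq (3/4)^{(d-2)/2},
\]
the worst case being $k^*=1$. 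Combining with $\|\widetilde{\boldsymbol{x}}\|_\infty\geq \|\widetilde{\boldsymbol{x}}\|_2/\sqrt{d-1}$ and the elementary estimate $\sqrt{d/(d-1)}\geq \sqrt{3}/2$ (which holds for all $d\geq 2$), one gets $\|\widetilde{\boldsymbol{x}}\|_\infty\geq T(3/4)^{(d-1)/2}/\sqrt{d}=C_dT$, contradicting the distinctness of $\widetilde{\boldsymbol{x}}_1,\widetilde{\boldsymbol{x}}_2$ modulo $(C_dT)\ZZ^{d-1}$ once representatives are taken in a centered fundamental domain of the lattice. The main obstacle is confirming that the $\Ga_H$-freedom on the representative pair $(\widehat{\gamma},\widehat{M})$ really does permit an \emph{independent} Grenier reduction of $\widehat{B}$ at the $\SL(d-1)$-level (rather than only at the full $\Ga\backslash G$ level), together with justifying that the smallest-nonzero-index trick genuinely extracts a component of $\boldsymbol{p}\widehat{U}\widehat{D}$ controlled by a Grenier-bounded diagonal entry; the remaining steps are routine inequalities.
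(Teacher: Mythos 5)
The approach is structurally the same as the paper's: exploit identity~(\ref{eqnxtildeBhat}), use the $\Ga_H$-freedom to reduce the $\SL(d-1,\RR)$-part, and read off a lower bound from Grenier coordinates. But the precise \emph{level} at which you Grenier-reduce matters, and this is exactly where your proof has a genuine gap.

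You use the $\Ga_H$-freedom to place $\widehat{B}$ in the Grenier domain for $\SL(d-1,\ZZ)\backslash\SL(d-1,\RR)$ and then claim $\widehat{d}_k\geq(3/4)^{(d-1-k)/2}$ for \emph{every} $k$ (in particular $\widehat{d}_{d-1}\geq 1$). This is not what $\SL(d-1)$-level Grenier reduction gives you. The $\SL(d-1)$ parametrization (apply (\ref{eqnFullIwasawa}) with $d\mapsto d-1$ and $\varphi$) writes the Iwasawa diagonal of $\widehat B$ as $\widehat y^{-1/2}\diag(\widehat y_{d-2}\cdots\widehat y_1,\ldots,\widehat y_{d-2},1)$, and the Grenier conditions bound below only $\widehat y_1,\ldots,\widehat y_{d-2}$; the extra factor $\widehat y^{-1/2}$, which equals the \emph{last} Iwasawa diagonal entry, is \emph{not} bounded below at the $\SL(d-1)$-level. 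Worse, the $\SL(d-1)$ Grenier condition specifically \emph{minimizes} $\widehat Z[e_{d-1}]=\widehat y^{-1}$ over the orbit, i.e.\ it minimizes $\widehat y^{-1/2}$, so there is no a priori lower bound on $\widehat d_{d-1}$ at all, and if $\boldsymbol p$ is supported only on its last coordinate, your lower bound on $\|\boldsymbol p\widehat B\|_2$ collapses. Your own flagged ``main obstacle'' is thus a real one, and your resolution (reduce at $\SL(d-1)$) does not repair it.

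The paper closes this hole by working one level up: it chooses the representative so that $\widehat M\in H$, viewed via (\ref{eqnFundDomCoord}) as a height-$1$ Grenier point of $\SL(d)$, lies in $\Ff_d'$. Because the $\SL(d)$-level Grenier conditions constrain all $d-1$ of the $y_k$ (in particular $y_{d-1}^2\geq 3/4$, which is exactly the quantity $\widehat y^{-1}$ your argument leaves uncontrolled), every Iwasawa-diagonal entry of the $\SL(d-1)$-block of $\widehat M$ is bounded below. The paper then augments $\boldsymbol n$ with $\alpha=-\boldsymbol n\,\tensor*[^t]{\widehat{\boldsymbol b}}{}$ so that $(\boldsymbol n,\alpha)\widehat M=(\boldsymbol n\widehat B,\boldsymbol 0)$, letting the $\SL(d)$ Grenier bounds be applied directly. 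So to fix your argument, you should Grenier-reduce $\widehat M$ (not $\widehat B$) and invoke the $\SL(d)$-level constraint $y_{d-1}^2\geq 3/4$; the rest of your computation (smallest nonzero index, $\ell^2$ vs.\ $\ell^\infty$, the case $\boldsymbol p=\boldsymbol 0$, and the case $d=2$) is fine.
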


\begin{proof}

 For $d=2$, we have that $\widehat{B} =1$ by the definition of $H$, and, using this in (\ref{eqnxtildeBhat}), the desired result is immediate.
 
 For $d>2$, let $\boldsymbol{n} \in \ZZ^{d-1}\backslash \{\boldsymbol{0}\}$ and $\alpha \in \RR$.  Using (\ref{eqnFundDomCoord}) and the observation that  $\widehat{M}$ has height 1, we have that \begin{align*}\begin{pmatrix}  \boldsymbol{n}, \alpha\end{pmatrix}& \widehat{M} \\ \nonumber &=\begin{pmatrix}  \boldsymbol{n}, \alpha \end{pmatrix} \begin{pmatrix}  1&x_{ij} &\\ &\ddots& \\& & 1\end{pmatrix}\begin{pmatrix}y_{d-1}y_{d-2} \cdots y_{1}&&&& \\ &\ddots&&&&\\ && y_{d-1} y_{d-2}&&\\ &&&y_{d-1}&\\ &&&& 1   \end{pmatrix}\begin{pmatrix} k  &  \tensor*[^t]{\boldsymbol{0}}{} \\ \boldsymbol{0} & 1\end{pmatrix}.
  \end{align*}
  
Since $\boldsymbol{n} \in \ZZ^{d-1}\backslash \{\boldsymbol{0}\}$, multiplying the matrices gives us that \[\left\| \begin{pmatrix}  \boldsymbol{n}, \alpha \end{pmatrix} \begin{pmatrix}  1&x_{ij} &\\ &\ddots& \\& & 1\end{pmatrix} \right\|_\infty \geq 1.\]  Recalling that all the $y_{d-k}^2 \geq 3/4$ (\cite[Chapter 1,~Lemma~4.1]{JL05}) and that the rotation \[\begin{pmatrix} k  &  \tensor*[^t]{\boldsymbol{0}}{} \\ \boldsymbol{0} & 1\end{pmatrix}\] is invariant under $\|\cdot\|_2$, we obtain our desired result when we set $\alpha =  -\boldsymbol{n} \tensor*[^t]{\widehat{\boldsymbol{b}}}{} $, set $\boldsymbol{p} = \boldsymbol{n}$ in (\ref{eqnxtildeBhat}), and note that \[\begin{pmatrix}  \boldsymbol{p}, -\boldsymbol{p} \tensor*[^t]{\widehat{\boldsymbol{b}}}{} \end{pmatrix} \widehat{M}  = \begin{pmatrix}  \boldsymbol{p}  \widehat{B}, \boldsymbol{0} \end{pmatrix}.\]

\end{proof}

\begin{rema}
Note that $C_d \leq 1$ in the lemma because we have that \[\Phi^{-\frac 1 d \log T} n_+(\boldsymbol{e}_j T) =  \begin{pmatrix}  I_{d-1} &  \tensor*[^t]{\boldsymbol{0}}{} \\ \boldsymbol{e}_j & 1 \end{pmatrix}\Phi^{-\frac 1 d \log T}\] for all $j$ where $\{\boldsymbol{e}_j \}$ is the set of standard basis vectors of $\ZZ^{d-1}$.

\end{rema}

\subsection{Spherical directions}\label{subsecSpherDirectionsDisjoint}   Recall the definitions of $\boldsymbol{z}, E, \Dd,$ and $\Ee$ from Section~\ref{secConstructShrinkTargets}.  Recall that \[\h_0 := \begin{cases} 1 & \textrm{ if } d =2 \\ \sqrt{d} \left(\frac 4 3\right)^{\frac{d-1}2} &\textrm{ if } d \geq3\end{cases}.\]  Pick $T > \h_0$.  Using Iwasawa decomposition and (\ref{eqnFundDomCoord}), we have that every element of \begin{align}\label{eqnHPhiSO}
 H \{\Phi^{-s}: s \in \RR_{\geq \frac 1 d \log T}\}  \SO(d, \RR) \end{align} can be uniquely written as  \begin{align}\label{eqnFundDomCoord2} \begin{pmatrix}  1&x_{ij} &\\ &\ddots& \\& & 1\end{pmatrix}y^{-1/2} \begin{pmatrix}y_{d-1}y_{d-2} \cdots y_{1}&&&& \\ &\ddots&&&&\\ && y_{d-1} y_{d-2}&&\\ &&&y_{d-1}&\\ &&&& 1   \end{pmatrix}\begin{pmatrix} k  &  \tensor*[^t]{\boldsymbol{0}}{} \\ \boldsymbol{0} & 1\end{pmatrix} \widehat{k}\end{align} where $k \in \SO(d-1, \RR)$ and $\widehat{k} \in K' \backslash \SO(d, \RR)$.  Now (\ref{eqnInducedDiffeoUsinged}) yields a diffeomorphism, call it $\psi$, between $\Dd$ and a submanifold of $S^{d-1}$ because it yields an injective immersion immediately and is, in fact, a proper map.  To see that $\psi$ is a proper map, one notes that any compact subset $\Kk$ of $\psi(\Dd)$ is closed and, hence, so is $\psi^{-1}(\Kk)$, which because it also bounded must be compact. Consequently, we have made the first part of following observation:  
 
 \begin{lemm}\label{lemmInducedDiffeoDeterByvAndc}  
Given $(\widetilde{\boldsymbol{v}}, \widetilde{c}) \in e_dE(\Dd)^{-1}$, there exists a unique $\boldsymbol{z} \in \Dd$ and its corresponding element \[E(\boldsymbol{z})^{-1} := \begin{pmatrix}  A(\boldsymbol{z}) &  \tensor*[^t]{\boldsymbol{w}}{}(\boldsymbol{z}) \\ \boldsymbol{v}(\boldsymbol{z}) & c(\boldsymbol{z})\end{pmatrix}\] such that \[(\widetilde{\boldsymbol{v}}, \widetilde{c}) = (\boldsymbol{v}(\boldsymbol{z}), c(\boldsymbol{z})).\]  Moreover, for each $\boldsymbol{z} \in \Dd$, $E(\boldsymbol{z})^{-1}$ lies in a unique equivalence class of $K' \backslash \SO(d, \RR)$.
\end{lemm}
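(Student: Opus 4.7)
The plan is to derive both assertions by combining the injectivity hypotheses on the mapping $E$ from (\ref{eqn:DefnSmoothMapE})--(\ref{eqnInducedDiffeoUsinged}) with the identification $S^{d-1} = K' \backslash K$ supplied in (\ref{eqn:SphereIsKPrimeModK}) and the explicit block form of $E(\boldsymbol{z})^{-1}$ recorded in (\ref{eqnDefnEInv}).

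For the first assertion, I would begin by reading off the $d$-th row of $E(\boldsymbol{z})^{-1}$ directly from (\ref{eqnDefnEInv}), which gives $e_d E(\boldsymbol{z})^{-1} = (\boldsymbol{v}(\boldsymbol{z}), c(\boldsymbol{z}))$. Consequently the smooth map $\psi$ introduced in the paragraph immediately preceding the lemma coincides with $\boldsymbol{z} \mapsto (\boldsymbol{v}(\boldsymbol{z}), c(\boldsymbol{z}))$. Since $\psi$ was shown there to be a diffeomorphism from $\Dd$ onto its image $e_d E(\Dd)^{-1} \subset S^{d-1}$ (using that it is an injective immersion by hypothesis and is proper by the closedness-plus-boundedness argument given there), it is, in particular, a bijection onto that image. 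For any $(\widetilde{\boldsymbol{v}}, \widetilde{c}) \in e_d E(\Dd)^{-1}$ this produces exactly one $\boldsymbol{z} \in \Dd$ satisfying $(\widetilde{\boldsymbol{v}}, \widetilde{c}) = (\boldsymbol{v}(\boldsymbol{z}), c(\boldsymbol{z}))$, and its associated matrix $E(\boldsymbol{z})^{-1}$ is then uniquely determined as well.

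For the ``moreover'' part, I would invoke (\ref{eqn:SphereIsKPrimeModK}) explicitly: the identification $K' \backslash \SO(d, \RR) \cong S^{d-1}$ is realized by sending a left coset $K' g$ to the bottom row $e_d g$ of any representative, which is well defined because left multiplication by an element of $K'$ acts as an $\SO(d-1,\RR)$-block on the first $d-1$ coordinates and fixes the last one, leaving $e_d g$ unchanged. Applying this with $g = E(\boldsymbol{z})^{-1}$ sends the coset $K' E(\boldsymbol{z})^{-1}$ to $(\boldsymbol{v}(\boldsymbol{z}), c(\boldsymbol{z}))$. Since the first assertion already gives pairwise distinct bottom rows for distinct $\boldsymbol{z} \in \Dd$, the associated cosets in $K' \backslash \SO(d, \RR)$ must also be pairwise distinct, so each $\boldsymbol{z}$ determines (and is determined by) exactly one equivalence class.

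No step presents a serious obstacle; the lemma is essentially a bookkeeping consequence of the standing injectivity assumption on $E$ and the bottom-row identification of $K' \backslash K$ with $S^{d-1}$. The only care needed is in making the correspondence between the $d$-th row data $(\boldsymbol{v}(\boldsymbol{z}), c(\boldsymbol{z}))$ and the left coset $K' E(\boldsymbol{z})^{-1}$ fully explicit, which is why I would spell out the invariance of $e_d g$ under the $K'$-action before concluding.
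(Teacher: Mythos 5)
Your proof is correct and follows essentially the same route as the paper: the first assertion is obtained from the diffeomorphism $\psi$ set up in the paragraph before the lemma (which is precisely $\boldsymbol{z}\mapsto(\boldsymbol{v}(\boldsymbol{z}),c(\boldsymbol{z}))$), and the second assertion is the observation that the bottom-row map $g\mapsto e_d g$ descends to $K'\backslash\SO(d,\RR)\cong S^{d-1}$, so distinct bottom rows force distinct cosets. The paper's proof of the ``moreover'' part is a compressed version of the same computation (write $E(\boldsymbol{z})^{-1}=\begin{pmatrix}k&\tensor*[^t]{\boldsymbol{0}}{}\\\boldsymbol{0}&1\end{pmatrix}E(\widetilde{\boldsymbol{z}})^{-1}$ and hit both sides with $e_d$ on the left), so there is no substantive difference.
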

\begin{proof}
For the second part, note that if $E(\boldsymbol{z})$ and $E(\widetilde{\boldsymbol{z}})$ lie in the same equivalence class, then \[E(\boldsymbol{z}) = \begin{pmatrix} k  &  \tensor*[^t]{\boldsymbol{0}}{} \\ \boldsymbol{0} & 1\end{pmatrix}E(\widetilde{\boldsymbol{z}})\] for some  $k \in \SO(d-1, \RR)$.  Applying $e_d$ on the left yields the desired result.
\end{proof}

\begin{rema}

Hence, the lemma implies that for any element of $\Ee \subset \SO(d, \RR)$, we may identify the equivalence class $\widehat{E(\boldsymbol{z})^{-1}}$ with the element $E(\boldsymbol{z})^{-1}$ in (\ref{eqnFundDomCoord2}).  This continues to hold true even if, in (\ref{eqnHPhiSO}), we replace $H$ by a subset of $H$ comprised of elements expressed uniquely as in (\ref{eqnDecompsitionOfEleOfH}) for a proper subset $\widetilde{K}$ from (\ref{eqnDefofTildeK}).  In particular, if we replace $\widehat{k}$ by $E(\boldsymbol{z})^{-1}$, the decomposition (\ref{eqnFundDomCoord2}) yields unique coordinates for the elements of \[H \{\Phi^{-s}: s \in \RR_{\geq \frac 1 d \log T}\}  \Ee.\]

\end{rema}

\subsubsection{The hemispherical case}

We will first consider the case in which \begin{align} \label{eqnUpperHemiDiffCond} c(\Dd)>0 
  \end{align} hold.  We note that in this case the induced diffeomorphism (\ref{eqnInducedDiffeoUsinged}) is really into a hemisphere.

\begin{theo}\label{thmUpperHemiThickening} Let (\ref{eqnUpperHemiDiffCond}) hold and $T > \h_0$.  For $\boldsymbol{z}_1, \boldsymbol{z}_2 \in \Dd$, we have that \[\Ga \backslash \Ga H \{\Phi^{-s}: s \in \RR_{\geq \frac 1 d \log T}\} E(\boldsymbol{z}_1)^{-1} \bigcap \Ga \backslash \Ga H \{\Phi^{-s}: s \in \RR_{\geq \frac 1 d \log T}\}  E(\boldsymbol{z}_2)^{-1} = \emptyset\] whenever $\boldsymbol{z}_1\neq \boldsymbol{z}_2$.
 
\end{theo}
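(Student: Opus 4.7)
I will argue by contradiction, mirroring the proof of Lemma~\ref{lemmDisjointnessTransSections}. Suppose the intersection is non-empty, so there exist $\gamma \in \Gamma$, $M_1, M_2 \in H$ and $s_1, s_2 \geq \tfrac{1}{d}\log T$ with $\gamma M_1 \Phi^{-s_1} E(\boldsymbol{z}_1)^{-1} = M_2 \Phi^{-s_2} E(\boldsymbol{z}_2)^{-1}$ in $G$. Using the $\Gamma$-action, I may assume $M_1 \Phi^{-s_1}$ lies in the Grenier fundamental domain so that the decomposition (\ref{eqnFundDomCoord}) applies with the Grenier bounds $y_k^2 \geq 3/4$ and $|x_{ij}| \leq 1/2$. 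Let $(\boldsymbol{p}, q) \in \Z^d$ denote the primitive bottom row of $\gamma$. Taking bottom rows on both sides, and using $e_d M_i = e_d$ together with $e_d \Phi^{-s_i} E(\boldsymbol{z}_i)^{-1} = e^{-(d-1)s_i}(\boldsymbol{v}(\boldsymbol{z}_i), c(\boldsymbol{z}_i))$, I obtain the key identity
\[
(\boldsymbol{p},q)\, M_1\Phi^{-s_1} \;=\; e^{-(d-1)s_2}\,(\boldsymbol{v}(\boldsymbol{z}_2),c(\boldsymbol{z}_2))\, E(\boldsymbol{z}_1).
\]

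Next I split on the vanishing of $\boldsymbol{p}$. If $\boldsymbol{p} = \boldsymbol{0}$, primitivity forces $q = \pm 1$ and the identity collapses to $\pm e^{-(d-1)s_1} e_d = e^{-(d-1)s_2}(\boldsymbol{v}(\boldsymbol{z}_2), c(\boldsymbol{z}_2)) E(\boldsymbol{z}_1)$. Comparing $\ell^2$-norms forces $s_1 = s_2$, and right-multiplying by $E(\boldsymbol{z}_1)^{-1}$ yields $(\boldsymbol{v}(\boldsymbol{z}_2), c(\boldsymbol{z}_2)) = \pm(\boldsymbol{v}(\boldsymbol{z}_1), c(\boldsymbol{z}_1))$. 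The hemispherical hypothesis (\ref{eqnUpperHemiDiffCond}) rules out the minus sign, and Lemma~\ref{lemmInducedDiffeoDeterByvAndc} then forces $\boldsymbol{z}_1 = \boldsymbol{z}_2$, contradicting the standing assumption.

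If $\boldsymbol{p} \neq \boldsymbol{0}$, I plan to derive a contradiction with $T > \h_0$ by bounding both sides of the key identity. Expanding $M_1 \Phi^{-s_1}$ via its Grenier decomposition and tracking the first non-zero coordinate $p_{j^*}$ of $(\boldsymbol{p},q)$ through the upper-unipotent and diagonal factors, exactly as in the proof of Lemma~\ref{lemmDisjointnessTransSections}, the Grenier bounds $y_k^2 \geq 3/4$ together with $|p_{j^*}| \geq 1$ produce a lower bound of the form
\[
\|(\boldsymbol{p},q)\, M_1 \Phi^{-s_1}\|_\infty \;\geq\; \frac{e^{-(d-1)s_1}}{\h_0},
\]
where $\h_0 = \sqrt{d}(4/3)^{(d-1)/2}$ arises from combining the Grenier factor $(\sqrt{3}/2)^{d-1} = (3/4)^{(d-1)/2}$ with the $\sqrt{d}$-loss incurred in converting $\ell^2$ to $\ell^\infty$ after the $K'$-rotation (which fixes the $d$-th coordinate). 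On the other hand, the right-hand side of the key identity is $e^{-(d-1)s_2}$ times a unit vector, so its $\ell^\infty$-norm is at most $e^{-(d-1)s_2}$. Combining these estimates with $s_1, s_2 \geq \tfrac{1}{d}\log T$ and the hypothesis $T > \h_0$ yields the required contradiction; the case $d=2$ is especially transparent because then $\h_0 = 1$ and $B_1 = 1$, so the lower bound reduces to $|p|\,e^{s_1}$ and the inequality $|p| \leq e^{-s_1-s_2} \leq 1/T < 1$ contradicts $|p| \geq 1$ immediately.

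The delicate point is the sharp Grenier lower bound in the second case: one must calibrate the $\ell^\infty$/$\ell^2$ conversion through the $K'$-rotation so that the Grenier constant $(3/4)^{(d-1)/2}$ and the $1/\sqrt{d}$ from the conversion combine to give precisely $1/\h_0$, making the threshold $T > \h_0$ sharp. A secondary subtlety is ensuring that the Grenier-domain hypothesis on $M_1 \Phi^{-s_1}$ can genuinely be imposed after absorbing $\gamma$-factors, which is legitimate because $H\{\Phi^{-s}\}$ meets every $\Gamma$-orbit of $\Ss_1$ in a unique Grenier representative.
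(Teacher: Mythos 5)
Your overall strategy is the same as the paper's: the paper factors out $k:=E(\boldsymbol{z}_1)^{-1}E(\boldsymbol{z}_2)$, writes $\gamma M\Phi^{-s}k=\widehat{\gamma}\widehat{M}\Phi^{-\widehat{s}}$, and derives the analog of (\ref{eqnxtildeBhat}); you take bottom rows of the intersection identity directly, which is the same computation in slightly different dress. Your handling of the case $\boldsymbol{p}=\boldsymbol{0}$ (norm comparison forces $s_1=s_2$, then $(\boldsymbol{v}(\boldsymbol{z}_2),c(\boldsymbol{z}_2))=\pm(\boldsymbol{v}(\boldsymbol{z}_1),c(\boldsymbol{z}_1))$, the minus sign excluded by (\ref{eqnUpperHemiDiffCond}), and Lemma~\ref{lemmInducedDiffeoDeterByvAndc} finishing) is correct and agrees with the paper's steps after it concludes $\boldsymbol{p}=\boldsymbol{v}=\boldsymbol{w}=\boldsymbol{0}$.

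There is, however, a genuine error in the case $\boldsymbol{p}\neq\boldsymbol{0}$. You state the lower bound $\|(\boldsymbol{p},q)M_1\Phi^{-s_1}\|_\infty \geq e^{-(d-1)s_1}/\h_0$, but this has the wrong exponent. Writing $M_1 = \begin{pmatrix} B_1 & \tensor*[^t]{\boldsymbol{b}}{}_1 \\ \boldsymbol{0} & 1 \end{pmatrix}$, we have $(\boldsymbol{p},q)M_1\Phi^{-s_1} = \bigl(e^{s_1}\boldsymbol{p}B_1,\ e^{-(d-1)s_1}(\boldsymbol{p}\tensor*[^t]{\boldsymbol{b}}{}_1+q)\bigr)$: the flow $\Phi^{-s_1}$ \emph{stretches} the first $d-1$ coordinates by $e^{s_1}$, and only the last coordinate is compressed by $e^{-(d-1)s_1}$. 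Since $\boldsymbol{p}$ lives in the first $d-1$ coordinates, the Grenier argument from Lemma~\ref{lemmDisjointnessTransSections} gives $\|\boldsymbol{p}B_1\|_\infty\geq 1/\h_0$ and hence $\|(\boldsymbol{p},q)M_1\Phi^{-s_1}\|_\infty \geq e^{s_1}/\h_0$, not $e^{-(d-1)s_1}/\h_0$. This matters for the conclusion: your stated bound against $\|\text{RHS}\|_\infty \leq e^{-(d-1)s_2}$ yields only $e^{(d-1)(s_2-s_1)}\leq\h_0$, which involves the unconstrained difference $s_2-s_1$ and gives no contradiction for any $T$. With the corrected bound one gets $e^{s_1+(d-1)s_2}\leq\h_0$, and since $s_1,s_2\geq\frac{1}{d}\log T$ this forces $T\leq\h_0$, the required contradiction. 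Your $d=2$ sentence at the very end already uses the correct factor $e^{s_1}$, so this reads as a slip rather than a conceptual gap, but as written the argument for general $d$ does not close.
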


\begin{proof}

The set \[\Ga \backslash \Ga H \{\Phi^{-s}: s \in \RR_{\geq \frac 1 d \log T}\} E(\boldsymbol{z}_1)^{-1} \bigcap \Ga \backslash \Ga H \{\Phi^{-s}: s \in \RR_{\geq \frac 1 d \log T}\}  E(\boldsymbol{z}_2)^{-1}\]  is equivalent to the set \[\Ga \backslash \Ga H \{\Phi^{-s}: s \in \RR_{\geq \frac 1 d \log T}\} \widehat{k} \bigcap \Ga \backslash \Ga H \{\Phi^{-s}: s \in \RR_{\geq \frac 1 d \log T}\}\] where \begin{align}\label{eqnkEqualsE1InvE2}
 k =   E(\boldsymbol{z}_1)^{-1}  E(\boldsymbol{z}_2). \end{align}  Recalling (\ref{eqnDefGammaMMHat}), we note that such an element can be expressed as the following equality \begin{align}\label{eqnThickeningRelation2}
 \gamma M \Phi^{-s} k = \widehat{\gamma} \widehat{M} \Phi^{-\widehat{s}}, \end{align} for some $\gamma, \widehat{\gamma} \in \Ga$, $M, \widehat{M} \in H$ and $s, \widehat{s} \geq \log T/d$. 
 
Setting \[k = \begin{pmatrix} A &  \tensor*[^t]{\boldsymbol{w}}{} \\ \boldsymbol{v} & c\end{pmatrix}\] (where $A$ is a $d-1 \times d-1$ matrix and $c$ is a real number) and using the same computation for the derivation of (\ref{eqnxtildeBhat}), we obtain its analog for spherical directions:   \begin{align}\label{eqnxtildeBhatSphere}{\boldsymbol{v}} = e^{(d-1)s + \widehat{s}}\boldsymbol{p} \widehat{B}.
  \end{align}
  
 Since $T > \h_0$, we have that $ e^{(d-1)s + \widehat{s}} > \h_0$ and the proof of Lemma~\ref{lemmDisjointnessTransSections} implies that any nonzero entry of $\boldsymbol{v}$ is strictly greater than $1$, which is not possible for $k \in \SO(d, \RR)$.  Consequently, we have that \[\boldsymbol{v}  = \boldsymbol{p} = \boldsymbol{w} =\boldsymbol{0}\] and \[c =  q = \det(N) =\pm1.\]

 Let us write \begin{align*}
&E(\boldsymbol{z}_1)^{-1} = \begin{pmatrix}  A_1 &  \tensor*[^t]{\boldsymbol{w}}{}_1 \\ \boldsymbol{v}_1 & c_1  \end{pmatrix} := \begin{pmatrix}  A(\boldsymbol{z}_1) &  \tensor*[^t]{\boldsymbol{w}}{}(\boldsymbol{z}_1) \\ \boldsymbol{v}(\boldsymbol{z}_1) & c(\boldsymbol{z}_1)  \end{pmatrix} \\  &E(\boldsymbol{z}_2)^{-1} = \begin{pmatrix}  A_2 &  \tensor*[^t]{\boldsymbol{w}}{}_2 \\ \boldsymbol{v}_2 & c_2  \end{pmatrix} := \begin{pmatrix}  A(\boldsymbol{z}_2) &  \tensor*[^t]{\boldsymbol{w}}{}(\boldsymbol{z}_2) \\ \boldsymbol{v}(\boldsymbol{z}_2) & c(\boldsymbol{z}_2)  \end{pmatrix}.  \end{align*}  Using (\ref{eqnkEqualsE1InvE2}) and the fact that $E(\boldsymbol{z}_2) = \tensor*[^t]{\left(E(\boldsymbol{z}_2)\right)}{^{-1}}$, we have that \[\boldsymbol{v}_1 \tensor*[^t]{\boldsymbol{v}}{}_2 + c_1 c_2 = 1,\] which implies that \begin{align}\label{eqnEqualSphereVectorsHemiCase}
 (\boldsymbol{v}_1, c_1)= (\boldsymbol{v}_2, c_2), \end{align}  or \[\boldsymbol{v}_1 \tensor*[^t]{\boldsymbol{v}}{}_2 + c_1 c_2 = -1,\] which implies that \begin{align}\label{eqnEqualSphereVectorsAntiHemiCase}
 (\boldsymbol{v}_1, c_1)= (-\boldsymbol{v}_2, -c_2).  \end{align} Both implications follow from the fact that the vectors are unit.  The second implication is, however, not allowed by  (\ref{eqnUpperHemiDiffCond}).  Consequently, only (\ref{eqnEqualSphereVectorsHemiCase}) holds and we, moreover, must have that \begin{align}\label{eqncqdetNIs1}
 c =  q = \det(N) =1 \end{align}
 Lemma~\ref{lemmInducedDiffeoDeterByvAndc}  now implies that \begin{align*} \boldsymbol{z}_1 &= \boldsymbol{z}_2 \\ E(\boldsymbol{z}_1)^{-1} &= E(\boldsymbol{z}_2)^{-1},
  \end{align*} from which the desired result is immediate.

\end{proof}

\begin{rema}
Theorem~\ref{thmUpperHemiThickening} will remain true if we replace the condition (\ref{eqnUpperHemiDiffCond}) by the condition  $c(\Dd) <0$ because the only time we used (\ref{eqnUpperHemiDiffCond}) is to exclude the second implication from the proof of the theorem.  

\end{rema}
\begin{rema}

 Let \begin{align*}
\partial_{NS} \Dd := \{\boldsymbol{v} \in \overline{\Dd}\backslash\Dd : &\textrm{ there exists an open neighborhood of } \boldsymbol{v} \\ &\textrm{ on which the induced diffeomorphism  (\ref{eqnInducedDiffeoUsinged}) has nonsingular differential}\}.
  \end{align*}

If, in Theorem~\ref{thmUpperHemiThickening}, we replace the condition (\ref{eqnUpperHemiDiffCond}) by the condition  \begin{align*} c(\Dd)>0  \textrm{ or } c(\partial_{NS} \Dd)\geq0,
  \end{align*} then the proof shows that the only elements that may intersect are those for which $c(\partial_{NS} \Dd) = 0$ holds.  Note that, by making this change, we are allowing some points on the boundary of the hemisphere.  Moreover, for each such element $\boldsymbol{z} \in \partial_{NS} \Dd)$ for which $c(\boldsymbol{z})=0$, there is at most one other element $\boldsymbol{z'} \in \partial_{NS} \Dd)$ that it intersects with.  This one other element is its antipodal point (see \ref{defn:antipodalpoint}) and also lies on the boundary of the hemisphere.  This observation about antipodal points will generalize to the complete spherical case.
 \end{rema}

  \subsubsection{The spherical case}\label{subsubsecSphericalCase}  This case is the general case.  There are two natural ways to handle the general case and, since they are both potentially useful for further applications, we give both.  In either way, we handle even and odd $d$ differently.  In particular, for even $d$, there is a canonical choice, which will become apparent later, and, for odd $d$, there is a number of natural choices, which will also become apparent later.  (Our way of handling the odd case will work for the even case too, but the canonical choice for the even case simplifies the proofs and may be useful for later applications.)  We choose one of the natural choices, namely using \[\begin{pmatrix}  \widetilde{I}_{d-1} &  \tensor*[^t]{\boldsymbol{0}}{} \\ \boldsymbol{0} & -1  \end{pmatrix}\] for $d\geq 3$ and odd.  Recall that $\widetilde{I}_{d-1}$ is defined in (\ref{eqnDefTildeIdMin1}).   Other choices would give results analogous to ours.

Let us assume that we still have the mapping $E$ from the hemispherical case; in particular, it satisfies the condition (\ref{eqnUpperHemiDiffCond}).  The first way to handle the general case is to use $E$ to define two related charts on $S^{d-1}$, which allows us to define the following subset of $S^{d-1}$: \begin{align}\label{eqnTwoChartsUpperLowerHemi}
  \widetilde{\Ee} := 
  \begin{cases} 
   \{E(\boldsymbol{z})^{-1} : \boldsymbol{z} \in \Dd\} \sqcup  \{- E(\boldsymbol{z})^{-1} : \boldsymbol{z} \in \Dd\} & \text{if } d \textrm{ is even,}\\
    \{E(\boldsymbol{z})^{-1} : \boldsymbol{z} \in \Dd\} \sqcup  \left\{\begin{pmatrix}  \widetilde{I}_{d-1} &  \tensor*[^t]{\boldsymbol{0}}{} \\ \boldsymbol{0} & -1  \end{pmatrix}E(\boldsymbol{z})^{-1} : \boldsymbol{z} \in \Dd\right\} & \text{if } d \textrm{ is odd.}
  \end{cases} \end{align}
  The fact that the two unions are disjoint follows by Lemma~\ref{lemmInducedDiffeoDeterByvAndc}.  Note that we will not need to consider points for which $c=0$ in this first way, as such points form a null set and does not affect any of our other results in this paper.  For $\boldsymbol{z} \in \Dd$, let us also define \[\widetilde{\Ee} (\boldsymbol{z}) :=  \begin{cases}  \left\{E(\boldsymbol{z})^{-1},- E(\boldsymbol{z})^{-1}\right\} & \text{if } d \textrm{ is even,}\\
\left\{E(\boldsymbol{z})^{-1}, \begin{pmatrix}  \widetilde{I}_{d-1} &  \tensor*[^t]{\boldsymbol{0}}{} \\ \boldsymbol{0} & -1  \end{pmatrix}E(\boldsymbol{z})^{-1}\right\} & \text{if } d \textrm{ is odd.}  
\end{cases}\]

\begin{lemm}\label{lemmSkewIdConjH}
We have that \[\begin{pmatrix}  \widetilde{I}_{d-1} &  \tensor*[^t]{\boldsymbol{0}}{} \\ \boldsymbol{0} & -1  \end{pmatrix} H \begin{pmatrix}  \widetilde{I}_{d-1} &  \tensor*[^t]{\boldsymbol{0}}{} \\ \boldsymbol{0} & -1  \end{pmatrix} = H.\]  In particular, given \[\begin{pmatrix} A &  \tensor*[^t]{\boldsymbol{b}}{} \\ \boldsymbol{0} & 1  \end{pmatrix} \in H,\] we have that \[\begin{pmatrix}  \widetilde{I}_{d-1} &  \tensor*[^t]{\boldsymbol{0}}{} \\ \boldsymbol{0} & -1  \end{pmatrix}  \begin{pmatrix} A &  \tensor*[^t]{\boldsymbol{b}}{} \\ \boldsymbol{0} & 1  \end{pmatrix} \begin{pmatrix}  \widetilde{I}_{d-1} &  \tensor*[^t]{\boldsymbol{0}}{} \\ \boldsymbol{0} & -1  \end{pmatrix} = \begin{pmatrix} \widetilde{I}_{d-1} A \widetilde{I}_{d-1}&  -\widetilde{I}_{d-1}\tensor*[^t]{\boldsymbol{b}}{} \\ \boldsymbol{0} & 1  \end{pmatrix}.\]
\end{lemm}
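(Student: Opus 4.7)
The statement is a direct block–matrix computation together with a determinant check, so my plan is to prove the ``In particular'' assertion first and then deduce the set equality from it.

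Let me denote $J := \begin{pmatrix}  \widetilde{I}_{d-1} &  \tensor*[^t]{\boldsymbol{0}}{} \\ \boldsymbol{0} & -1  \end{pmatrix}$. First I would record the key elementary fact that $\widetilde{I}_{d-1}^2 = I_{d-1}$ (since $\widetilde{I}_{d-1} = \diag(1,\ldots,1,-1)$), which immediately gives $J^2 = I_d$, so $J$ is its own inverse. I would also note that $\det(\widetilde{I}_{d-1}) = -1$.

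Next, for the ``In particular'' identity, I would just multiply the three block matrices. Multiplying the left two factors yields
\[\begin{pmatrix}  \widetilde{I}_{d-1} &  \tensor*[^t]{\boldsymbol{0}}{} \\ \boldsymbol{0} & -1  \end{pmatrix}  \begin{pmatrix} A &  \tensor*[^t]{\boldsymbol{b}}{} \\ \boldsymbol{0} & 1  \end{pmatrix} = \begin{pmatrix} \widetilde{I}_{d-1} A &  \widetilde{I}_{d-1}\tensor*[^t]{\boldsymbol{b}}{} \\ \boldsymbol{0} & -1  \end{pmatrix},\]
and then multiplying on the right by $J$ produces the claimed block form. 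This is routine and is the only computation in the proof.

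Then, to see that this conjugated matrix lies in $H$, the only thing to check is that $\widetilde{I}_{d-1} A \widetilde{I}_{d-1} \in \SL(d-1, \RR)$, which follows from $\det(\widetilde{I}_{d-1} A \widetilde{I}_{d-1}) = \det(\widetilde{I}_{d-1})^2 \det(A) = 1$. Together with the fact that $-\widetilde{I}_{d-1}\tensor*[^t]{\boldsymbol{b}}{} \in \RR^{d-1}$ (viewed as a column), this establishes the inclusion $J H J \subseteq H$.

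Finally, for the reverse inclusion $H \subseteq J H J$, I would use that $J^2 = I_d$ to write any $M \in H$ as $M = J(JMJ)J$; since the forward inclusion applied to $M \in H$ shows $JMJ \in H$, this gives $M \in J H J$, completing the equality. There is no real obstacle here — the only mild thing to keep track of is the sign coming from $\det(\widetilde{I}_{d-1}) = -1$, which squares away in the conjugation, so the $\SL$–condition on the upper–left block is preserved.
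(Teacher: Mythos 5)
Your proof is correct and is exactly the computation that the paper's proof (which consists solely of the word ``Compute.'') intends; the block-multiplication, the determinant check via $\det(\widetilde{I}_{d-1})^2 = 1$, and the use of $J^2 = I_d$ to get the reverse inclusion are all standard and fill in the paper's omitted details faithfully.
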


\begin{proof}
 Compute.
\end{proof}

\begin{theo}\label{thmSphericalThickeningConjCharts} Let $T > \h_0$.  For $\boldsymbol{z}_1, \boldsymbol{z}_2 \in \Dd$ and \[E_1^{-1} \in \widetilde{\Ee} (\boldsymbol{z}_1)\quad E_2^{-1} \in \widetilde{\Ee} (\boldsymbol{z}_2),\] we have that \[\Ga \backslash \Ga H \{\Phi^{-s}: s \in \RR_{\geq \frac 1 d \log T}\} E_1^{-1} \bigcap \Ga \backslash \Ga H \{\Phi^{-s}: s \in \RR_{\geq \frac 1 d \log T}\}  E_2^{-1} = \emptyset\] if and only if $\boldsymbol{z}_1\neq \boldsymbol{z}_2$.  

Moreover, if $\boldsymbol{z}_1= \boldsymbol{z}_2=:\boldsymbol{z}$, then \[\Ga \backslash \Ga H \{\Phi^{-s}: s \in \RR_{\geq \frac 1 d \log T}\} E_1^{-1} = \Ga \backslash \Ga H \{\Phi^{-s}: s \in \RR_{\geq \frac 1 d \log T}\}  E_2^{-1}.\]  If $d$ is even, we have the even stronger assertion that, for any point \[\frak{p} \in \Ga \backslash \Ga H \{\Phi^{-s}: s \in \RR_{\geq \frac 1 d \log T}\},\]  the set \[\frak{p} \widetilde{\Ee} (\boldsymbol{z})\] consists of exactly one point in $\Gamma \backslash G$.

\end{theo}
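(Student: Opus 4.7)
The plan is to reduce both halves of the theorem to the hemispherical result Theorem~\ref{thmUpperHemiThickening} by proving a chart-identification lemma: the ``flipped'' representative in $\widetilde{\Ee}(\boldsymbol{z})$ gives the same coset of $\Ga H \{\Phi^{-s}: s\geq \frac 1 d \log T\}$ as $E(\boldsymbol{z})^{-1}$. The auxiliary element producing the flip is $-I_d$ (for $d$ even, where $\det(-I_d)=1$) and $\gamma_0 := \diag(\widetilde{I}_{d-1}, -1)$ (for $d$ odd, where $\det(\gamma_0)=(-1)^2=1$); in both parities this auxiliary element lies in $\Ga$.

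For $d$ even, I will use that $-I_d$ is central in $G$ and lies in $\Ga$ to push it across $H\{\Phi^{-s}\}$ and absorb it into $\Ga$:
\[\Ga H \{\Phi^{-s}\} (-E(\boldsymbol{z})^{-1}) \;=\; \Ga (-I_d) H \{\Phi^{-s}\} E(\boldsymbol{z})^{-1} \;=\; \Ga H \{\Phi^{-s}\} E(\boldsymbol{z})^{-1}.\]
For $d$ odd, $\gamma_0$ is diagonal and therefore commutes with $\Phi^{-s}$, and it normalizes $H$ by Lemma~\ref{lemmSkewIdConjH} (noting $\gamma_0^{-1}=\gamma_0$), so
\[\Ga H \{\Phi^{-s}\} \gamma_0 E(\boldsymbol{z})^{-1} \;=\; \Ga \gamma_0 (\gamma_0^{-1} H \gamma_0) \{\Phi^{-s}\} E(\boldsymbol{z})^{-1} \;=\; \Ga H \{\Phi^{-s}\} E(\boldsymbol{z})^{-1},\]
again using $\gamma_0 \in \Ga$. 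Thus in either parity, every set $\Ga H \{\Phi^{-s}\} E_i^{-1}$ with $E_i^{-1} \in \widetilde{\Ee}(\boldsymbol{z}_i)$ equals $\Ga H \{\Phi^{-s}\} E(\boldsymbol{z}_i)^{-1}$.

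With this identification in hand, the disjointness statement (for $\boldsymbol{z}_1 \neq \boldsymbol{z}_2$) reduces to the assertion $\Ga H \{\Phi^{-s}\} E(\boldsymbol{z}_1)^{-1} \cap \Ga H \{\Phi^{-s}\} E(\boldsymbol{z}_2)^{-1} = \emptyset$, which is exactly Theorem~\ref{thmUpperHemiThickening} applied in the original hemispherical chart $\Ee$ satisfying (\ref{eqnUpperHemiDiffCond}); the converse direction (intersecting when $\boldsymbol{z}_1=\boldsymbol{z}_2$) is immediate from the same identification, which also yields the set equality $\Ga H\{\Phi^{-s}\} E_1^{-1} = \Ga H\{\Phi^{-s}\} E_2^{-1}$ in that case.

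For the stronger assertion (c) for $d$ even, the identification actually upgrades from sets to points: writing $\frak{p} = \Ga g$, centrality of $-I_d$ and $-I_d \in \Ga$ give $\frak{p}(-E(\boldsymbol{z})^{-1}) = \Ga g (-I_d) E(\boldsymbol{z})^{-1} = \Ga (-I_d) g E(\boldsymbol{z})^{-1} = \frak{p} E(\boldsymbol{z})^{-1}$, so $\frak{p} \widetilde{\Ee}(\boldsymbol{z})$ collapses to a single point of $\Ga \backslash G$. The reason this pointwise version fails for $d$ odd is that $\gamma_0$ is not central, so the argument requires moving it across $H\{\Phi^{-s}\}$, which only succeeds at the level of the full orbit, not at an individual point. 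There is no substantial obstacle beyond verifying these commutation and normalization identities for $\gamma_0$; the whole point of the two-chart construction (\ref{eqnTwoChartsUpperLowerHemi}) is to arrange that the flip can always be absorbed into $\Ga$.
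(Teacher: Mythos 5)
Your proof is correct, and its structure is a clean reorganization of the paper's argument. The key ingredients are the same — the normalization identity Lemma~\ref{lemmSkewIdConjH} and $\gamma_0 := \operatorname{diag}(\widetilde{I}_{d-1},-1) \in \Ga$ for $d$ odd, centrality of $-I_d \in \Ga$ for $d$ even, and the hemispherical disjointness Theorem~\ref{thmUpperHemiThickening} — but the order is inverted. The paper proves disjointness first by re-running the computation inside the proof of Theorem~\ref{thmUpperHemiThickening}: it sets up the equation $\gamma M \Phi^{-s} k = \widehat{\gamma}\widehat{M}\Phi^{-\widehat{s}}$ with $k = E_1^{-1}E_2$, reduces to $\boldsymbol{v}_1\tensor*[^t]{\boldsymbol{v}}{}_2 + c_1c_2 = \pm 1$, splits into (\ref{eqnEqualSphereVectorsHemiCase}) and (\ref{eqnEqualSphereVectorsAntiHemiCase}), and then invokes Lemma~\ref{lemmInducedDiffeoDeterByvAndc} after chart-flipping within each case; only afterwards does it derive the ``moreover'' set-equality using the same absorption-into-$\Ga$ observation you use. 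You instead isolate that absorption as a chart-identification lemma up front, which lets you use Theorem~\ref{thmUpperHemiThickening} purely as a black box and makes the whole biconditional and the set-equality fall out simultaneously. What the paper's route buys is the explicit normal form ($E_1^{-1}=\pm E_2^{-1}$ or $E_1^{-1}=\gamma_0 E_2^{-1}$) and the fact $c=q=\det(N)=1$, which is not needed here but echoes the analysis in Theorem~\ref{thmSphericalThickeningGeneral}; what your route buys is brevity and a reusable identity. Your diagnosis of why the pointwise collapse $\frak{p}\,\widetilde{\Ee}(\boldsymbol{z})$ to a single point only holds for $d$ even is also exactly right: $-I_d$ is central so the $\gamma$-conjugation never touches the $H\{\Phi^{-s}\}$ factor, whereas $\gamma_0$ must be dragged across $H\{\Phi^{-s}\}$, which relabels the representative and so works only at the level of the whole orbit, not a single point.
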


\begin{proof}
 Write  \begin{align*}
&E_1^{-1} = \begin{pmatrix}  A_1 &  \tensor*[^t]{\boldsymbol{w}}{}_1 \\ \boldsymbol{v}_1 & c_1  \end{pmatrix} := \begin{pmatrix}  A(\boldsymbol{z}_1) &  \tensor*[^t]{\boldsymbol{w}}{}(\boldsymbol{z}_1) \\ \boldsymbol{v}(\boldsymbol{z}_1) & c(\boldsymbol{z}_1)  \end{pmatrix} \\  &E_2^{-1} = \begin{pmatrix}  A_2 &  \tensor*[^t]{\boldsymbol{w}}{}_2 \\ \boldsymbol{v}_2 & c_2  \end{pmatrix} := \begin{pmatrix}  A(\boldsymbol{z}_2) &  \tensor*[^t]{\boldsymbol{w}}{}(\boldsymbol{z}_2) \\ \boldsymbol{v}(\boldsymbol{z}_2) & c(\boldsymbol{z}_2)  \end{pmatrix}.  \end{align*}   Following the proof of Theorem~\ref{thmUpperHemiThickening}, we have either (\ref{eqnEqualSphereVectorsHemiCase}) or (\ref{eqnEqualSphereVectorsAntiHemiCase}).  

The case (\ref{eqnEqualSphereVectorsHemiCase}) yields that $c_1=c_2$ (and does not equal zero by assumption).  If $c_1 >0$, then we are exactly in the same case as in Theorem~\ref{thmUpperHemiThickening} and we obtain the same answer, namely $E_1 = E_2$.  If $c_1 < 0$, then, by construction, we have that $-E_1^{-1}, -E_2^{-1}$, when $d$ is even, or  \[\begin{pmatrix}  \widetilde{I}_{d-1} &  \tensor*[^t]{\boldsymbol{0}}{} \\ \boldsymbol{0} & -1  \end{pmatrix}E_2^{-1} \quad \begin{pmatrix}  \widetilde{I}_{d-1} &  \tensor*[^t]{\boldsymbol{0}}{} \\ \boldsymbol{0} & -1  \end{pmatrix}E_2^{-1},\] when $d$ is odd, are in the chart defined by $c >0$.  Lemma~\ref{lemmInducedDiffeoDeterByvAndc} now implies that $\boldsymbol{z}_1 = \boldsymbol{z}_2$ and $E_1 = E_2$.

The case (\ref{eqnEqualSphereVectorsAntiHemiCase}) yields that $c_1 = - c_2$ (and does not equal zero by assumption).  Without loss of generality we may assume that $c_1 >0$.  This implies that $E_1$ and $E_2$ are in different charts.  Moreover, by construction, we have that $E^{-1}_1$ and $-E_2^{-1}$, when $d$ is even, or  \[\begin{pmatrix}  \widetilde{I}_{d-1} &  \tensor*[^t]{\boldsymbol{0}}{} \\ \boldsymbol{0} & -1  \end{pmatrix}E_2^{-1},\] when $d$ is odd, are in the chart defined by $c>0$.  Lemma~\ref{lemmInducedDiffeoDeterByvAndc} now implies that $ \boldsymbol{z}_1 = \boldsymbol{z}_2 $ and that \begin{align*}E_1^{-1} = \begin{cases} - E_2^{-1} & \textrm{ if } d \textrm{ is even}, \\ \begin{pmatrix}  \widetilde{I}_{d-1} &  \tensor*[^t]{\boldsymbol{0}}{} \\ \boldsymbol{0} & -1  \end{pmatrix}E_2^{-1} & \textrm{ if } d \textrm{ is odd}.\end{cases}
  \end{align*}  The first assertion is now immediate.

We now show the second assertion.  We have that $E_1^{-1}, E_2^{-1} \in \widetilde{\Ee} (\boldsymbol{z})$.  When $d$ is odd, we may, without loss of generality, assume that \[E_1^{-1} =\begin{pmatrix}  \widetilde{I}_{d-1} &  \tensor*[^t]{\boldsymbol{0}}{} \\ \boldsymbol{0} & -1  \end{pmatrix}E_2^{-1}.\]  Since \[\begin{pmatrix}  \widetilde{I}_{d-1} &  \tensor*[^t]{\boldsymbol{0}}{} \\ \boldsymbol{0} & -1  \end{pmatrix}\] commutes with $\Phi^{-s}$, is an element of $\Gamma$, and is its own inverse, the desired result follows by an application of Lemma~\ref{lemmSkewIdConjH}.

When $d$ is even, we may, without loss of generality, assume that $E_1^{-1} =-E_2^{-1}$.  This case of the second assertion and the third assertion follow because $-I_d$ commutes with all elements of $\SL(d, \RR)$, which implies that $\frak{p} E(\boldsymbol{z})^{-1}$ and $\frak{p} \left(-E(\boldsymbol{z})^{-1}\right)$ lie in the same equivalence class in $\Gamma \backslash G$.
\end{proof}

Let us now assume that our mapping $E$ is \textit{no longer} constrained by the condition (\ref{eqnUpperHemiDiffCond}).  The second way to handle the general case is to only use the original chart, but we note that a function (defined in Proposition~\ref{propConjAntipodalPoints}) with values in $\SO(d-1, \RR)$ depending on the smooth mapping $E$ arises.  In the special case of the exponential map, which allows us to use one chart for all of the sphere but a single point, the function is particularly simple and, potentially, useful for later applications.

Let us introduce some notation.  Given a point $\boldsymbol{z} \in \Dd$, its \textit{antipodal point} $\boldsymbol{z}_{ap}$ is the unique point in $\Dd$, should it exist, for which \begin{align}\label{defn:antipodalpoint}
 (\boldsymbol{v}(\boldsymbol{z}_{ap}), c(\boldsymbol{z}_{ap})) = (-\boldsymbol{v}(\boldsymbol{z}), -c(\boldsymbol{z})). \end{align}  The uniqueness follows immediately from Lemma~\ref{lemmInducedDiffeoDeterByvAndc}.  Clearly, we also have that $(\boldsymbol{z}_{ap})_{ap} = \boldsymbol{z}$, and hence we say $\boldsymbol{z},\boldsymbol{z}_{ap}$ are \textit{antipodal}.  Using Lemma~\ref{lemmInducedDiffeoDeterByvAndc}, we, likewise, refer to $E(\boldsymbol{z})^{-1}$ and $E(\boldsymbol{z}_{ap})^{-1}$ as \textit{antipodal}.  Let us define  \[\widehat{\Ee} (\boldsymbol{z}) :=  \begin{cases}  \left\{E(\boldsymbol{z})^{-1},E(\boldsymbol{z}_{ap})^{-1}\right\} & \text{if the antipode of } \boldsymbol{z}  \textrm{ exists in } \Dd,
\\\left\{E(\boldsymbol{z})^{-1}\right\} & \text{if the antipode of } \boldsymbol{z}  \textrm{ does not exist in } \Dd.\\
\end{cases}\] 

\begin{prop}\label{propConjAntipodalPoints}
For any antipodal pair $\boldsymbol{z}, \boldsymbol{z}_{ap} \in \Dd$, the mapping \begin{align*} E^{-1}(\boldsymbol{z}) E(\boldsymbol{z}_{ap}): \Ga \backslash \Ga H \{\Phi^{-s}: s \in \RR_{\geq \frac 1 d \log T}\} &\rightarrow \Ga \backslash \Ga H \{\Phi^{-s}: s \in \RR_{\geq \frac 1 d \log T}\} \\ \frak{p} \mapsto \frak{p}E^{-1}(\boldsymbol{z}) E(\boldsymbol{z}_{ap}) =: \frak{q}
  \end{align*} is a smooth automorphism.  This automorphism is solely a function of $\boldsymbol{z}.$  Moreover, the heights of $\frak{p}$ and $\frak{q}$ as elements of the Grenier fundamental domain are both equal.

\end{prop}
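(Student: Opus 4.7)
My plan is to exploit the antipodal relation to pin down the precise block form of $k := E(\boldsymbol{z})^{-1} E(\boldsymbol{z}_{ap})$, and then absorb the resulting element into the form $H\{\Phi^{-s}\}$ by multiplying on the left by a carefully chosen element of $\Ga$, whose choice will depend on the parity of $d$.

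First I would identify the form of $k$. By the antipodal condition (\ref{defn:antipodalpoint}), $e_d E(\boldsymbol{z}_{ap})^{-1} = -e_d E(\boldsymbol{z})^{-1}$, so right-multiplying by $E(\boldsymbol{z}_{ap})$ yields $e_d k = -e_d$. Since $k \in \SO(d,\RR)$, orthonormality of the columns then forces all other entries of the last column to vanish, giving
\[
k = \begin{pmatrix} R & \tensor*[^t]{\boldsymbol{0}}{} \\ \boldsymbol{0} & -1 \end{pmatrix}, \qquad R \in \Oo(d-1,\RR), \quad \det R = -1.
\]
Because $k$ and $\Phi^{-s}$ are both block-diagonal with the same $(d-1)+1$ block pattern, they commute.

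Next, I would show that the section is preserved with no change in the scaling parameter. Writing a generic point as $\frak{p} = \Ga h \Phi^{-s}$ with $h = \begin{pmatrix} B & \tensor*[^t]{\boldsymbol{b}}{} \\ \boldsymbol{0} & 1 \end{pmatrix} \in H$, commutativity gives $\frak{p}k = \Ga(hk)\Phi^{-s}$, and a direct computation shows $hk$ fails to lie in $H$ only through the sign of its bottom row. I would repair this by multiplying on the left by
\[
\gamma = \begin{cases} -I_d, & d \text{ even}, \\ \begin{pmatrix} \widetilde{I}_{d-1} & \tensor*[^t]{\boldsymbol{0}}{} \\ \boldsymbol{0} & -1 \end{pmatrix}, & d \text{ odd}, \end{cases}
\]
with $\widetilde{I}_{d-1}$ as in (\ref{eqnDefTildeIdMin1}). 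A short determinant calculation, using $\det R = -1$ and the parity of $d$, confirms $\gamma \in \Ga = \SL(d,\ZZ)$ and $\gamma h k \in H$ in each case. Therefore
\[
\frak{q} = \frak{p}k = \Ga(\gamma h k) \Phi^{-s}
\]
lies in the same section with the \emph{same} scaling parameter $s$. Since in first-order section coordinates (\ref{eqnFundDomCoord}) the height is determined entirely by $s$, height preservation is immediate.

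Finally, smoothness and bijectivity come essentially for free: right multiplication by a fixed group element is a smooth map on $G$ that commutes with the left $\Ga$-action and therefore descends to a smooth self-map of the quotient, and its inverse is right multiplication by $k^{-1} = E(\boldsymbol{z}_{ap})^{-1} E(\boldsymbol{z})$, which preserves the section by precisely the same argument using the symmetry $(\boldsymbol{z}_{ap})_{ap} = \boldsymbol{z}$. The claim that the automorphism is solely a function of $\boldsymbol{z}$ follows from Lemma~\ref{lemmInducedDiffeoDeterByvAndc}, which gives uniqueness of the antipode. The only genuinely subtle point will be the parity split in the choice of $\gamma$, but this amounts to little more than a determinant check.
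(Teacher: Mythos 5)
Your approach matches the paper's in all essentials: identify the block form of $k := E(\boldsymbol{z})^{-1}E(\boldsymbol{z}_{ap})$, observe it commutes with $\Phi^{-s}$, and absorb the sign defect by left-multiplication by an element of $\Gamma$ chosen according to parity. Your derivation of the block form via $e_d k = -e_d$ and orthogonality is a nice shortcut; the paper instead computes $E^{-1}(\boldsymbol{z})E(\boldsymbol{z}_{ap})$ explicitly and identifies the $(d-1)\times(d-1)$ block as $A(\boldsymbol{z})\tensor*[^t]{A}{_{ap}}(\boldsymbol{z})+\tensor*[^t]{\boldsymbol{w}}{}(\boldsymbol{z})\boldsymbol{w}_{ap}(\boldsymbol{z})$, but the result is the same. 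One minor imprecision: $hk=\begin{pmatrix} BR & -\tensor*[^t]{\boldsymbol{b}}{}\\ \boldsymbol{0}&-1\end{pmatrix}$ fails to lie in $H$ not only because the bottom-right entry is $-1$ but also because $\det(BR)=-1$; your $\gamma$ repairs both simultaneously, and your determinant check handles this, but the phrasing "only through the sign of its bottom row" undersells what needs fixing.

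The genuine gap is in the height assertion. You produce representatives $\Gamma h\Phi^{-s}$ and $\Gamma(\gamma hk)\Phi^{-s}$ of $\frak{p}$ and $\frak{q}$ sharing the same $s$, and conclude height equality "since height is determined entirely by $s$." But the proposition concerns the heights of the \emph{Grenier fundamental domain representatives} of $\frak{p}$ and $\frak{q}$, and it is not automatic that the $s$ in your chosen representative agrees with the $s$ in the Grenier one — that requires knowing that $s$ is a well-defined invariant of the point of $\Ss_1$, independent of which representative of the form $\gamma' M'\Phi^{-s'}$ ($\gamma'\in\Gamma$, $M'\in H$) is chosen. This uniqueness of $s$ is in fact true (if $\gamma'M_1\Phi^{-s_1}=M_2\Phi^{-s_2}$ then comparing bottom rows forces $(\boldsymbol{0},1)\gamma'=(\boldsymbol{0},q)$ with $q=e^{(d-1)(s_1-s_2)}$ a positive integer unit, hence $q=1$ and $s_1=s_2$), and the paper's proof effectively establishes exactly this in the present situation by computing the $(d,d)$-entry of $k$: from $\boldsymbol{p}=\boldsymbol{0}$ it deduces $q=\det(N)=-1$, whence $e^{(d-1)(s-\widehat{s})}q=-1$ forces $s=\widehat{s}$. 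You should either cite or prove this well-definedness of $s$ on the section before invoking "height is determined by $s$"; as written, the step is asserted rather than justified.
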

\begin{proof}
Since $k:=E^{-1}(\boldsymbol{z}) E(\boldsymbol{z}_{ap})$ is invertible, it suffices, for the first assertion, to show that $\frak{q}$ is in \[\Ga \backslash \Ga H \{\Phi^{-s}: s \in \RR_{\geq \frac 1 d \log T}\}.\]  Write  \begin{align*}
&E(\boldsymbol{z})^{-1} = \begin{pmatrix}  A(\boldsymbol{z}) &  \tensor*[^t]{\boldsymbol{w}}{}(\boldsymbol{z}) \\ \boldsymbol{v}(\boldsymbol{z}) & c(\boldsymbol{z})  \end{pmatrix} \quad E(\boldsymbol{z}_{ap})^{-1} = \begin{pmatrix}  A_{ap}(\boldsymbol{z}) &  \tensor*[^t]{\boldsymbol{w}}{_{ap}}(\boldsymbol{z}) \\ -\boldsymbol{v}(\boldsymbol{z}) & -c(\boldsymbol{z})  \end{pmatrix}.\end{align*}  Since $\boldsymbol{v}(\boldsymbol{z})  \tensor*[^t]{\boldsymbol{v}}{} (\boldsymbol{z})+ \left(c(\boldsymbol{z})\right)^2 =1$, we have that \begin{align*}
k &= \begin{pmatrix}  A(\boldsymbol{z})  \tensor*[^t]{A}{_{ap}} (\boldsymbol{z}) +  \tensor*[^t]{\boldsymbol{w}}{}(\boldsymbol{z}) \boldsymbol{w}_{ap}(\boldsymbol{z}) &  \tensor*[^t]{\boldsymbol{0}}{}\\ \boldsymbol{0} & -1  \end{pmatrix} \\ &=\begin{cases}-\begin{pmatrix}  -\left(A(\boldsymbol{z})  \tensor*[^t]{A}{_{ap}} (\boldsymbol{z}) +  \tensor*[^t]{\boldsymbol{w}}{}(\boldsymbol{z}) \boldsymbol{w}_{ap}(\boldsymbol{z}) \right) &  \tensor*[^t]{\boldsymbol{0}}{}\\ \boldsymbol{0} & 1  \end{pmatrix} & \textrm{ if } d \textrm{ is even}, \\ \begin{pmatrix}  \widetilde{I}_{d-1} &  \tensor*[^t]{\boldsymbol{0}}{} \\ \boldsymbol{0} & -1  \end{pmatrix}  \begin{pmatrix}  \widetilde{I}_{d-1} \left(A(\boldsymbol{z})  \tensor*[^t]{A}{_{ap}} (\boldsymbol{z}) +  \tensor*[^t]{\boldsymbol{w}}{}(\boldsymbol{z}) \boldsymbol{w}_{ap}(\boldsymbol{z}) \right) &  \tensor*[^t]{\boldsymbol{0}}{}\\ \boldsymbol{0} & 1  \end{pmatrix} & \textrm{ if } d \textrm{ is odd},\end{cases} \end{align*}  where $-\left(A(\boldsymbol{z})  \tensor*[^t]{A}{_{ap}} (\boldsymbol{z}) +  \tensor*[^t]{\boldsymbol{w}}{}(\boldsymbol{z}) \boldsymbol{w}_{ap}(\boldsymbol{z}) \right) $, if $d$ is even, and $\widetilde{I}_{d-1} \left(A(\boldsymbol{z})  \tensor*[^t]{A}{_{ap}} (\boldsymbol{z}) +  \tensor*[^t]{\boldsymbol{w}}{}(\boldsymbol{z}) \boldsymbol{w}_{ap}(\boldsymbol{z}) \right)$ are in $\SO(d-1, \RR)$.  Since \[H \begin{pmatrix} R &  \tensor*[^t]{\boldsymbol{0}}{}\\ \boldsymbol{0} & 1  \end{pmatrix} = H\] for all $R \in \SO(d-1, \RR)$ and $k$ commutes with $\Phi^{-s}$, Lemma~\ref{lemmSkewIdConjH} gives the first assertion.

We now show the second assertion.  Let $\gamma M \Phi^{-s}$ be the element of the equivalence class $\frak{p}$ and $\widehat{\gamma} \widehat{M} \Phi^{-\widehat{s}}$ be the element of the equivalence class $\frak{q}$ in the Grenier fundamental domain.   Using (\ref{eqnThickeningRelation2}) with (\ref{eqnDefGammaMMHat}) and solving for $k$, we have the analog of (\ref{eqnxtildeBhatSphere}):  \begin{align*}{\boldsymbol{0}} = e^{(d-1)s + \widehat{s}}\boldsymbol{p} \widehat{B},
  \end{align*} from which we may deduce that $\boldsymbol{p} = \boldsymbol{0}$ as $\widehat{B}$ is invertible.  This implies that \begin{align*}
 q = \det(N) =-1, \end{align*} which further implies that $s = \widehat{s}$, which is the desired result.

Since $\boldsymbol{z}_{ap}$ depends only on $\boldsymbol{z}$, the automorphism depends only on $\boldsymbol{z}.$
\end{proof}

\begin{rema}
We will refer to the elements $\frak{p}, \frak{	q} \in \Ga \backslash \Ga H \{\Phi^{-s}: s \in \RR_{\geq \frac 1 d \log T}\}$ as \textit{associated elements} for the antipodal pair $\boldsymbol{z}, \boldsymbol{z}_{ap}$.

\end{rema}

\begin{theo}\label{thmSphericalThickeningGeneral} Let $T > \h_0$.  For $\boldsymbol{z}_1, \boldsymbol{z}_2 \in \Dd$ and \[E_1^{-1} \in \widehat{\Ee} (\boldsymbol{z}_1)\quad E_2^{-1} \in \widehat{\Ee} (\boldsymbol{z}_2),\] we have that \[\Ga \backslash \Ga H \{\Phi^{-s}: s \in \RR_{\geq \frac 1 d \log T}\} E_1^{-1} \bigcap \Ga \backslash \Ga H \{\Phi^{-s}: s \in \RR_{\geq \frac 1 d \log T}\}  E_2^{-1} = \emptyset\] if and only if $\boldsymbol{z}_1\neq \boldsymbol{z}_2$.  

Moreover, if $\boldsymbol{z}_1= \boldsymbol{z}_2=:\boldsymbol{z}$, then \[\Ga \backslash \Ga H \{\Phi^{-s}: s \in \RR_{\geq \frac 1 d \log T}\} E(\boldsymbol{z})^{-1} = \Ga \backslash \Ga H \{\Phi^{-s}: s \in \RR_{\geq \frac 1 d \log T}\}  E(\boldsymbol{z}_{ap})^{-1}.\]

\end{theo}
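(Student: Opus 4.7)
The plan is to follow the template established in the proofs of Theorem~\ref{thmUpperHemiThickening} and Theorem~\ref{thmSphericalThickeningConjCharts}, but with the antipodal structure now handled via Proposition~\ref{propConjAntipodalPoints}. Assume the intersection is nonempty, producing $\gamma, \widehat{\gamma} \in \Gamma$, $M, \widehat{M} \in H$, and $s, \widehat{s} \geq \frac{1}{d}\log T$ with $\gamma M \Phi^{-s} (E_1^{-1} E_2) = \widehat{\gamma} \widehat{M} \Phi^{-\widehat{s}}$. Set $k := E_1^{-1} E_2 \in \SO(d,\RR)$ and write it in block form with bottom row $(\boldsymbol{v}, c)$; the same computation that produced (\ref{eqnxtildeBhatSphere}) yields $\boldsymbol{v} = e^{(d-1)s + \widehat{s}} \boldsymbol{p} \widehat{B}$. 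Because $T > \h_0$ gives $e^{(d-1)s+\widehat{s}} > \h_0$, the lower-bound argument from the proof of Lemma~\ref{lemmDisjointnessTransSections} forces $\boldsymbol{p} = \boldsymbol{0}$; otherwise $\|\boldsymbol{v}\|_\infty > 1$, contradicting $k \in \SO(d,\RR)$. Consequently $\boldsymbol{v} = \boldsymbol{w} = \boldsymbol{0}$ and $c = q = \det N = \pm 1$.

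Next, writing each $E_i^{-1}$ in block form with bottom row $(\boldsymbol{v}_i, c_i)$, the identity $\boldsymbol{v}_1 \tensor*[^t]{\boldsymbol{v}}{_2} + c_1 c_2 = c = \pm 1$ combined with $(\boldsymbol{v}_i, c_i)$ being unit vectors in $\RR^d$ forces $(\boldsymbol{v}_1, c_1) = \pm(\boldsymbol{v}_2, c_2)$. Applying Lemma~\ref{lemmInducedDiffeoDeterByvAndc}, each $E_i^{-1} \in \widehat{\Ee}(\boldsymbol{z}_i)$ corresponds to a unique preimage $\boldsymbol{z}_i' \in \{\boldsymbol{z}_i, (\boldsymbol{z}_i)_{ap}\} \cap \Dd$. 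The plus sign yields $\boldsymbol{z}_1' = \boldsymbol{z}_2'$; the minus sign yields $\boldsymbol{z}_2' = (\boldsymbol{z}_1')_{ap}$. Either way $\{\boldsymbol{z}_1, (\boldsymbol{z}_1)_{ap}\} \cap \{\boldsymbol{z}_2, (\boldsymbol{z}_2)_{ap}\} \neq \emptyset$, so that $\widehat{\Ee}(\boldsymbol{z}_1) = \widehat{\Ee}(\boldsymbol{z}_2)$, i.e. $\boldsymbol{z}_1$ and $\boldsymbol{z}_2$ agree modulo antipodes; this is exactly the case $\boldsymbol{z}_1 = \boldsymbol{z}_2$ in the sense made precise by the definition of $\widehat{\Ee}$. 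The converse is trivial: choosing $\gamma = \widehat{\gamma}$, $M = \widehat{M}$, $s = \widehat{s}$, and $E_1 = E_2$ makes the two sets coincide.

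For the moreover clause, I invoke Proposition~\ref{propConjAntipodalPoints}, which shows that right-multiplication by the element $E(\boldsymbol{z})^{-1} E(\boldsymbol{z}_{ap})$ is a smooth, height-preserving automorphism of $\Ga \backslash \Ga H \{\Phi^{-s} : s \in \RR_{\geq \frac{1}{d} \log T}\}$. Hence
\[
\Ga \backslash \Ga H \{\Phi^{-s}\} E(\boldsymbol{z})^{-1} \cdot \bigl[E(\boldsymbol{z})^{-1} E(\boldsymbol{z}_{ap})\bigr] = \Ga \backslash \Ga H \{\Phi^{-s}\} E(\boldsymbol{z}_{ap})^{-1},
\]
and the left-hand side is the same coset family as $\Ga \backslash \Ga H \{\Phi^{-s}\} E(\boldsymbol{z})^{-1}$ after reparameterization, giving the desired set equality.

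The main obstacle is purely organizational: one must track the two-fold ambiguity in each $\widehat{\Ee}(\boldsymbol{z}_i)$ together with the two sign cases from $(\boldsymbol{v}_1, c_1) = \pm (\boldsymbol{v}_2, c_2)$, while making sure the essential analytic input---namely that $T > \h_0$ is what rules out $\boldsymbol{p} \neq \boldsymbol{0}$ via the Grenier-domain lower bound $y_k^2 \geq 3/4$---is invoked exactly as in the hemispherical case. Beyond this bookkeeping and the clean invocation of Proposition~\ref{propConjAntipodalPoints} to dispense with the antipodal coincidence, no new ingredient is required.
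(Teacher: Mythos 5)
Your overall approach matches the paper exactly: for the first assertion, follow the disjointness argument of Theorem~\ref{thmUpperHemiThickening} (with $T > \h_0$ and Lemma~\ref{lemmDisjointnessTransSections}) to force $\boldsymbol{p}=\boldsymbol{0}$ and hence $(\boldsymbol{v}_1, c_1) = \pm(\boldsymbol{v}_2, c_2)$, then invoke Lemma~\ref{lemmInducedDiffeoDeterByvAndc}; for the ``moreover'' clause, invoke Proposition~\ref{propConjAntipodalPoints}. Your handling of the first assertion is in fact slightly more careful than the paper's terse one, since you explicitly track the two-fold ambiguity in each $\widehat{\Ee}(\boldsymbol{z}_i)$ and arrive at the antipodally invariant formulation ($\widehat{\Ee}(\boldsymbol{z}_1) = \widehat{\Ee}(\boldsymbol{z}_2)$), which is what the statement really asserts given that $\widehat{\Ee}$ is invariant under $\boldsymbol{z} \mapsto \boldsymbol{z}_{ap}$.

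However, the displayed equation in your ``moreover'' argument is algebraically wrong. You wrote
\[
\Ga \backslash \Ga H \{\Phi^{-s}\}\, E(\boldsymbol{z})^{-1} \cdot \bigl[E(\boldsymbol{z})^{-1} E(\boldsymbol{z}_{ap})\bigr] = \Ga \backslash \Ga H \{\Phi^{-s}\}\, E(\boldsymbol{z}_{ap})^{-1},
\]
but the left-hand side equals $\Ga \backslash \Ga H \{\Phi^{-s}\}\, E(\boldsymbol{z})^{-2} E(\boldsymbol{z}_{ap})$, which is not the right-hand side: the two differ by a factor of $E(\boldsymbol{z}_{ap})^{-1} E(\boldsymbol{z})^{2} E(\boldsymbol{z}_{ap})^{-1}$, and Proposition~\ref{propConjAntipodalPoints} says nothing about that ($E(\boldsymbol{z}_{ap})$ and $E(\boldsymbol{z})^{-1}$ need not commute). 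Furthermore, the proposition gives an automorphism of $\Ga \backslash \Ga H \{\Phi^{-s}\}$ itself, not of the right-translate $\Ga \backslash \Ga H \{\Phi^{-s}\}\, E(\boldsymbol{z})^{-1}$, so the subsequent ``same coset family after reparameterization'' step is unsupported. The fix is shorter than what you wrote: apply the proposition directly to the section to get $\Ga \backslash \Ga H \{\Phi^{-s}\}\, E(\boldsymbol{z})^{-1} E(\boldsymbol{z}_{ap}) = \Ga \backslash \Ga H \{\Phi^{-s}\}$, then right-multiply both sides by $E(\boldsymbol{z}_{ap})^{-1}$ to obtain $\Ga \backslash \Ga H \{\Phi^{-s}\}\, E(\boldsymbol{z})^{-1} = \Ga \backslash \Ga H \{\Phi^{-s}\}\, E(\boldsymbol{z}_{ap})^{-1}$.
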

\begin{proof}
 Write  \begin{align*}
&E_1^{-1} = \begin{pmatrix}  A_1 &  \tensor*[^t]{\boldsymbol{w}}{}_1 \\ \boldsymbol{v}_1 & c_1  \end{pmatrix} := \begin{pmatrix}  A(\boldsymbol{z}_1) &  \tensor*[^t]{\boldsymbol{w}}{}(\boldsymbol{z}_1) \\ \boldsymbol{v}(\boldsymbol{z}_1) & c(\boldsymbol{z}_1)  \end{pmatrix} \\  &E_2^{-1} = \begin{pmatrix}  A_2 &  \tensor*[^t]{\boldsymbol{w}}{}_2 \\ \boldsymbol{v}_2 & c_2  \end{pmatrix} := \begin{pmatrix}  A(\boldsymbol{z}_2) &  \tensor*[^t]{\boldsymbol{w}}{}(\boldsymbol{z}_2) \\ \boldsymbol{v}(\boldsymbol{z}_2) & c(\boldsymbol{z}_2)  \end{pmatrix}.  \end{align*}  Following the proof of Theorem~\ref{thmUpperHemiThickening}, we have either (\ref{eqnEqualSphereVectorsHemiCase}) or (\ref{eqnEqualSphereVectorsAntiHemiCase}).  

The case (\ref{eqnEqualSphereVectorsHemiCase}) yields that $c_1=c_2$.  Here, we also allow $c_1 = c_2 =0$.  The first assertion for the case (\ref{eqnEqualSphereVectorsHemiCase}) follows as in Theorem~\ref{thmSphericalThickeningConjCharts}.  The case (\ref{eqnEqualSphereVectorsAntiHemiCase}) yields that $c_1 = - c_2$.  Here, we also allow $c_1 = c_2 =0$.  Likewise, the first assertion for the case (\ref{eqnEqualSphereVectorsAntiHemiCase}) follows as in Theorem~\ref{thmSphericalThickeningConjCharts}.

The second assertion follows from Proposition~\ref{propConjAntipodalPoints}.

\end{proof}

\section{Spherical directions:  the general case} \label{secSdTGCatpothmNAK}

The general case has two independent aspects.  We first consider thickenings of subsets of $\Ss_T$ defined by lower bounds for the $y_i$-coordinates on the Grenier domain.  Then we consider thickenings by general subsets of the sphere, including all of the sphere.  Recall the definitions of $\Dd, E, \Ee, \boldsymbol{v},$ and $c$ from Section~\ref{secConstructShrinkTargets} and $\boldsymbol{z'}$ and $c_{\min}$ from Section~\ref{secSdatpothmNAK}.  Before we show either aspect, we consider measurable sets in $\SO(d,\RR)$ induced by our smooth mapping $E$.

\subsection{Subsets of $\SO(d, \RR)$ induced by $E$}\label{subsec:SubsetsOfRotationsInduced}  Recall, from Section~\ref{subsecSpherDirectionsDisjoint}, that $E$ induces a diffeomorphism $\psi$ from $\Dd$ onto a submanifold of $S^{d-1} = K' \backslash \SO(d, \RR)$.  Let $\Pp: \SO(d, \RR) \rightarrow K' \backslash \SO(d, \RR)$ be the natural projection.  Then Lemma~\ref{lemmInducedDiffeoDeterByvAndc} implies that \[\Pp \vert_{\Ee}:\Ee \rightarrow \psi(\Dd)\] is a bijection.  We say a subset $U \subset \SO(d, \RR)$ is \textit{induced by $E$} if \[\Pp(U) = \psi(\Dd).\]  For any $U \subset \SO(d, \RR)$ induced by $E$, define the \textit{slice of $U$ through $E^{-1}(\boldsymbol{z})$} as the following:\[\widetilde{K}_U({\boldsymbol{z}}):=K'E^{-1}(\boldsymbol{z}) \cap U.\]

\begin{lemm}\label{lemmDisjKSlices} For any subset $U$ induced by $E$, we have that \[\bigsqcup_{\boldsymbol{z} \in \Dd} \widetilde{K}_U({\boldsymbol{z}}) = U\] 
\end{lemm}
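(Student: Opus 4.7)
The plan is to establish the set equality in two parts, together with the disjointness of the union, by unpacking the definition of ``induced by $E$'' and invoking the injectivity of the induced map $\psi : \boldsymbol{z} \mapsto e_d E(\boldsymbol{z})^{-1} = \Pp(E(\boldsymbol{z})^{-1})$ established in Section~\ref{subsecSpherDirectionsDisjoint}.

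First I would check the inclusion $\bigcup_{\boldsymbol{z} \in \Dd} \widetilde{K}_U(\boldsymbol{z}) \subseteq U$, which is immediate since each slice is defined as an intersection with $U$. For the reverse inclusion, take any $u \in U$. By hypothesis $\Pp(U) = \psi(\Dd)$, so $\Pp(u) \in \psi(\Dd)$ and there exists $\boldsymbol{z} \in \Dd$ with $\Pp(u) = \psi(\boldsymbol{z}) = \Pp(E(\boldsymbol{z})^{-1})$. Unwinding the definition of the quotient projection, this means that $u$ and $E(\boldsymbol{z})^{-1}$ lie in the same left $K'$-coset, i.e.\ $u \in K' E(\boldsymbol{z})^{-1}$. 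Combined with $u \in U$, this places $u$ in $\widetilde{K}_U(\boldsymbol{z})$.

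For disjointness, suppose some $u$ lies in $\widetilde{K}_U(\boldsymbol{z}_1) \cap \widetilde{K}_U(\boldsymbol{z}_2)$. Then $u$ lies in both $K' E(\boldsymbol{z}_1)^{-1}$ and $K' E(\boldsymbol{z}_2)^{-1}$, which forces these two cosets to coincide; applying $\Pp$ yields $\psi(\boldsymbol{z}_1) = \psi(\boldsymbol{z}_2)$. Since $\psi$ is injective (indeed, a diffeomorphism onto its image, as noted in Section~\ref{subsecSpherDirectionsDisjoint}), we conclude $\boldsymbol{z}_1 = \boldsymbol{z}_2$, so distinct parameters give disjoint slices.

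There is no real obstacle here: the lemma is a formal consequence of the definition of $U$ being induced by $E$ combined with the injectivity of (\ref{eqnInducedDiffeoUsinged}), and the proof should take only a few lines in the final write-up.
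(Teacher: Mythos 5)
Your proof is correct and takes essentially the same route as the paper: the reverse inclusion comes from $\Pp(U) = \psi(\Dd)$ and identifying $u$ with $E(\boldsymbol{z})^{-1}$ in the same $K'$-coset, while disjointness is exactly Lemma~\ref{lemmInducedDiffeoDeterByvAndc} (the injectivity of the induced map $\psi$). The only cosmetic difference is that you spell out the easy inclusion $\bigcup \widetilde{K}_U(\boldsymbol{z}) \subseteq U$ explicitly, which the paper leaves tacit.
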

\begin{proof}
 The disjointness follows by Lemma~\ref{lemmInducedDiffeoDeterByvAndc}.  Let $g \in U$.  Then $\Pp(g) = \psi(\boldsymbol{z})$ for some $\boldsymbol{z} \in \Dd$.  Now $\Pp^{-1}(\psi(\boldsymbol{z})) = K' h$ for some $h \in \SO(d, \RR)$.  Consequently, both $g$ and $E^{-1}(\boldsymbol{z})$ are in $K' h$, which implies the desired result.
\end{proof}

We note that the largest set induced by $E$ is $K' \Ee$, as it contains every set induced by $E$.  The largest set is also an open subset of $\SO(d, \RR)$ because $\Dd$ is open and $K' \Ee = \Pp^{-1}(\psi(\Dd)).$

\subsection{Thickenings of subsets of $\Ss_T$ and Cholesky factorization}\label{subsecThickSubsetST}

To state the result precisely, recall from Section~\ref{secConstructShrinkTargets} our definition of the neighborhood \[\Cc:=\Cc_{T_-, T_+}:=\Cc(\boldsymbol{\alpha}, \boldsymbol{\gamma}, \widetilde{K},  \beta^-_{ij}, \beta^+_{ij})\] in the fundamental domain $\Ff_d'$.  Let us restrict our consideration of such neighborhoods of $\Ff_d'$ to those for which the values for any $x_{ij}$ are only constrained because they are the coordinates of a point in $\Ff_d'$:  \[\widetilde{\Ss}:=\widetilde{\Ss}(\boldsymbol{z}):=\widetilde{\Ss}_{T_-, T_+}(\boldsymbol{z}):=\widetilde{\Ss}\left(\boldsymbol{\alpha}, \boldsymbol{\gamma}, \widetilde{K}(\boldsymbol{z})\right)\] where $T_-, T_+, \boldsymbol{\alpha},$ and $\boldsymbol{\gamma}$ are as in Section~\ref{secConstructShrinkTargets}, and, for any $\boldsymbol{z}$, $\widetilde{K}(\boldsymbol{z})$ is a subset of $K'$.\footnote{As an aside, which is not necessary for our proof, note that we have precisely the following.  Set  \[\widetilde{\Ss}(\boldsymbol{\alpha}, \boldsymbol{\gamma}, \widetilde{K}(\boldsymbol{z})) = \Cc(\boldsymbol{\alpha}, \boldsymbol{\gamma}, \widetilde{K}(\boldsymbol{z}),  \beta^-_{ij}, \beta^+_{ij})\] with  \begin{align*} \begin{cases}  \begin{cases}   \beta^-_{1j}=0 \textrm{ and } \beta^+_{1j}= 1/2  &\textrm {for } j = 2,\cdots, d-1 \\  \beta^-_{1j} = -1/2 \textrm{ and } \beta^+_{1j} = 1/2 &\textrm {for } j =  d \\ \beta^-_{ij}= -1/2 \textrm{ and } \beta^+_{ij} = 1/2 &\textrm {for } 2\leq i < j \leq  d \end{cases} & \textrm{ if } d \textrm{ is even,} \\
\begin{cases}    \beta^-_{1j} = 0 \textrm { and } \beta^+_{1j} =  1/2 &\textrm {for } j = 2,\cdots, d  \\  \beta^-_{ij} = -1/2 \textrm{ and } \beta^+_{ij} = 1/2  &\textrm {for } 2\leq i < j \leq  d \end{cases} & \textrm{ if } d \textrm{ is odd.}\end{cases}
  \end{align*} See Section~\ref{secConstructShrinkTargets}.}  Set \begin{align}\label{eqnRelationLowerHeightToFlow} T_0:= T_-^{d/2(d-1)}. 
\end{align}  Now apply the flow to these neighborhoods using (\ref{eqnRelCTAndC}):  \begin{align}\label{eqnPullingBackOfSectionalNeigh2}
\widetilde{\Ss}_T:=\widetilde{\Ss}_T(\boldsymbol{z}):=\widetilde{\Ss}^{(TT_0^{-1})^{(d-1)/d}} = \widetilde{\Ss} \Phi^{-\frac 1 d \log(T/T_0)} \end{align} and, using (\ref{eqnFundDomCoord}), has lower height $T^{2(d-1)/d}$.  Note that the lower height of $\Ss_{T_0}$ is the same as that of $\widetilde{\Ss}$ and the lower height of $\Ss_T$ is the same as that of $\widetilde{\Ss}_T$.  Also note that, for any $L \in G$, the set $ \Aa_L$ is as defined in Section~\ref{subsec:GammaDuplicates}.

\begin{theo}\label{thmThickenSubsetSectionSpherical}  Let $0<\eta <1$ be a fixed constant, $L \in G$, $T_+\geq T_- > \h_0^{2(d-1)/d}$, $\Aa \subset \Aa_L$ be a bounded subset with measure zero boundary with respect to $\wrt {\boldsymbol{x}}$, $U \subset \SO(d, \RR)$ be a subset measurable with respect to $\wrt k$ and induced by $E$, and let (\ref{eqnRestrictionsOnD}) hold.  Let \[\widetilde{\Ss}(\boldsymbol{z}):=\widetilde{\Ss}_{T_-, T_+}(\boldsymbol{z}):=\widetilde{\Ss}\left(\boldsymbol{\alpha}, \boldsymbol{\gamma}, \widetilde{K}_{U}(\boldsymbol{z})E(\boldsymbol{z})\right),\] and $\widetilde{\Ss}_T$ be defined using (\ref{eqnRelationLowerHeightToFlow} , \ref{eqnPullingBackOfSectionalNeigh2}).  Then \begin{align}\label{eqnthmNAKResultRankOneFlowSubsetThickTransHoro}
 {T^{d-1}}\int_{\RR^{d-1}} \In_{\Aa \times \{\widetilde{S}_T(\boldsymbol{z}) E^{-1}(\boldsymbol{z}): \boldsymbol{z}\in \Dd\}}(\boldsymbol{x}, {Ln_-(\boldsymbol{x}) \Phi^t})~\wrt {\boldsymbol{x}} \xrightarrow[]{t \rightarrow \infty}T_-^{d/2}\mu(\{\widetilde{S}(\boldsymbol{z}) E^{-1}(\boldsymbol{z}):  \boldsymbol{z} \in \Dd\})\left(\int_{\RR^{d-1}} \In_{\Aa}(\boldsymbol{x})~\wrt{\boldsymbol{x}}\right) \end{align} uniformly for all $T \in [T_-^{d/2(d-1)}, e^{dt \eta}].$

\end{theo}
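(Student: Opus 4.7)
The plan is to adapt the argument from Section~\ref{secSdatpothmNAK} (which proved Theorem~\ref{thmNAKResultRankOneFlow}) to this more general setup where the section subset is a Grenier box $\widetilde{\Ss}(\boldsymbol{z})$ rather than all of $\Ss_T$, and the spherical thickening is by an arbitrary measurable $U$ induced by $E$. I would first dispose of the case $L=I_d$, leaving the general $L$ for the translated Farey sequence machinery of Section~\ref{sec:proofGeneralMarklof} and Section~\ref{subsecUniEquiForTransHoro}. Under the standing hypothesis (\ref{eqnRestrictionsOnD}), Theorem~\ref{thmUpperHemiThickening} guarantees that distinct $\boldsymbol{z}\in\Dd$ contribute disjointly to $\{\widetilde{\Ss}_T(\boldsymbol{z})E^{-1}(\boldsymbol{z}):\boldsymbol{z}\in\Dd\}$, since the assumption $T_->\h_0^{2(d-1)/d}$ ensures the flowed lower height $T^{2(d-1)/d}$ exceeds $\h_0$ throughout the range $T\in[T_-^{d/2(d-1)},e^{dt\eta}]$.

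The first and most delicate step is an analog of Lemma~\ref{lemmMMyCoordinatesForSE} giving $N^+$-coordinates for $\widetilde{\Ss}_T(\boldsymbol{z})E^{-1}(\boldsymbol{z})$. An element of $\widetilde{\Ss}_T(\boldsymbol{z})$ has the form $m\tilde k\Phi^{-\sigma}$ with $m$ in the Iwasawa $NA$-piece controlled by $(\boldsymbol{\alpha},\boldsymbol{\gamma})$, $\tilde k\in\widetilde{K}_U(\boldsymbol{z})E(\boldsymbol{z})\subset K'$, and $\sigma\geq\tfrac{2(d-1)}{d^{2}}\log T$. Since $\tilde k$ commutes with $\Phi^{-\sigma}$, one may apply (\ref{eqnDecompSOintoN}) to $\Phi^{-\sigma}E^{-1}(\boldsymbol{z})$, producing an $H$-factor followed by $\Phi^{-\sigma+\log c/(d-1)}n_+(\boldsymbol{z'})$. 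To put the resulting $H$-element back into the form (\ref{eqnFundDomCoord}), i.e.\ to read off its $x_{ij}$, $y_k$, and $K'$-data, I would apply a reverse outer product Cholesky factorization to the $\SL(d-1,\RR)$-block of the modified $H$-factor; this writes it as $Zk'$ with $Z$ in the Grenier Iwasawa form and $k'\in\SO(d-1,\RR)$. This yields a displayed description $\widetilde{\Ss}_T(\boldsymbol{z})E^{-1}(\boldsymbol{z})=\widetilde{\Ss}^{\,\flat}_{T,\boldsymbol{z}}\,n_+(\boldsymbol{z'})$ with explicit, $\boldsymbol{z}$-dependent deformations of the box parameters.

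With this coordinate description in hand, the remaining steps essentially mirror Section~\ref{secSdatpothmNAK}: define $\widetilde{f}_{T,\Dd}(\boldsymbol{r},n_-(\boldsymbol{r})\Phi^t)$ measuring the $N^+$-volume of the expanding horosphere through $\Ga\tensor*[^t]{(n_-(\boldsymbol{r})\Phi^t)}{^{-1}}$ meeting the thickened set; prove a renormalization lemma analogous to Lemma~\ref{lemmUnstableVolumesNAKCase} using the conjugations (\ref{eqnConjy1})--(\ref{eqnConjy2}) and $\widetilde{\Ss}_T=\widetilde{\Ss}\,\Phi^{-\frac 1 d\log(T/T_0)}$ from (\ref{eqnPullingBackOfSectionalNeigh2}), with $T_0=T_-^{d/2(d-1)}$; and apply Theorem~\ref{thmMarklof} to the renormalized sum. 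Uniform convergence on $T\in[T_-^{d/2(d-1)},e^{dt\eta}]$ holds exactly because $t-\frac 1 d\log(T/T_0)\to\infty$ in this range. Identifying the limit with $T_-^{d/2}\mu(\{\widetilde{\Ss}(\boldsymbol{z})E^{-1}(\boldsymbol{z}):\boldsymbol{z}\in\Dd\})$ then requires the promised generalization of Theorem~\ref{thmHaarMeaSE}, which follows by integrating the Cholesky-factorization identity against the Haar measure $\rho(a)\,dn\,da\,d\widetilde k$ from Lemma~\ref{lemm:FullSecCoordLeftHaarHPhi}.

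The main obstacle is the reverse outer product Cholesky factorization: one must show that right-multiplying a Grenier box in full section coordinates by the explicit $H$-factor coming from (\ref{eqnDecompSOintoN}) yields a set whose new full section coordinates are explicit enough to track through the Haar measure computation, and whose resulting $(x_{ij},y_k,k')$-ranges deform continuously in $\boldsymbol{z}$ and $\sigma$. Once this factorization and the resulting Haar volume formula are established, the rest of the argument--the renormalization lemma, the application of Theorem~\ref{thmMarklof}, and the limit identification--transfers essentially verbatim from Section~\ref{secSdatpothmNAK}, since the lower-height threshold $T_->\h_0^{2(d-1)/d}$ ensures disjointness throughout the flow regime and the hypothesis (\ref{eqnRestrictionsOnD}) places us in the upper-hemisphere case of Theorem~\ref{thmUpperHemiThickening}.
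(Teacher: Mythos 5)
Your proposal is correct and follows essentially the same route as the paper's proof. You have correctly identified the two new technical ingredients the paper develops: (i) the analog of Lemma~\ref{lemmMMyCoordinatesForSE} giving explicit $N^+$-coordinates for $\{\widetilde{\Ss}_T(\boldsymbol{z})E^{-1}(\boldsymbol{z}):\boldsymbol{z}\in\Dd\}$ (the paper's Lemma~\ref{lemmMMyCoordinatesForSECuspNeigh}), whose proof rests on the reverse outer product Cholesky factorization (Theorem~\ref{thmSymmetricFactorCholesky} and Corollary~\ref{coroSymmetricFactorCholesky}) that expresses $\widetilde{k}(A-\tensor*[^t]{\boldsymbol{w}}{}\boldsymbol{z'})$ as an upper triangular factor times a new rotation in $\SO(d-1,\RR)$, and (ii) the generalization of Theorem~\ref{thmHaarMeaSE} to these Grenier box neighborhoods (the paper's Theorem~\ref{thmHaarMeaWidetildeSE}). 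The renormalization step, the application of Theorem~\ref{thmMarklof} for $L=I_d$ with the translated Farey machinery handling general $L$, and the disjointness justification via $T_->\h_0^{2(d-1)/d}$ all match the paper. One small slip: for an element of $\widetilde{\Ss}_T(\boldsymbol{z})$ written as $M\Phi^{-\sigma}$ with $M\in H$, the flow parameter satisfies $\sigma\geq\frac 1d\log T$ (so that the height $y=e^{2(d-1)\sigma}$ is at least $T^{2(d-1)/d}$), not $\sigma\geq\frac{2(d-1)}{d^2}\log T$; these agree only when $d=2$. This does not affect the structure of your argument.
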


\begin{rema}  Without loss of generality, we may assume that $T_-^{d/2(d-1)} \leq e^{dt \eta}$.  By Theorem~\ref{thnHaarMeaSTEvsST0ECuspidalVersion}, the right-hand side is also equal to \[ \lim_{t \rightarrow \infty} {T^{d-1}} \mu\left(\left\{\widetilde{S}_T(\boldsymbol{z}) E^{-1}(\boldsymbol{z}): \boldsymbol{z}\in \Dd\right\}\right) \left(\int_{\TT^{d-1}} \In_{\Aa}(\boldsymbol{x})~\wrt{\boldsymbol{x}}\right).\]  Also note that one could obtain an explicit formula for $\mu\left(\{\widetilde{S}_T(\boldsymbol{z}) E^{-1}(\boldsymbol{z}): \boldsymbol{z}\in \Dd\}\right)$ in terms of the variables $x_{ij}$, $y_\ell$ from (\ref{eqnFundDomCoord}) using Grenier coordinates and the induced subset $U \subset \SO(d, \RR)$.

\end{rema}

The proof for the case $L =I_d$ will be a modification of the proof of Theorem~\ref{thmNAKResultRankOneFlow} for the case $L=I_d$ and will be given in Section~\ref{secProofthmThickenSubsetSectionSpherical}.  The proof for generic $L \in G$ is given in Section~\ref{subsecUniEquiForTransHoro}.

\subsubsection{Preliminaries.}\label{subsubsecPrelimForThickSubsetST}  An important precursor to the proof, for $d \geq3$, is to recognize that the left-multiplication of the expression $A - \tensor*[^t]{\boldsymbol{w}}{}\boldsymbol{z'}$ from (\ref{eqnDecompSOintoN}) with any $\widetilde{k} \in \SO(d-1, \RR)$ is, heuristically and non-rigorously speaking, a ``rotation'' up to a symmetric factor.  We now make this precise and explicitly determine the factor to precisely understand its affect on the subset of $\Ss_T$.  Also, note that we are requiring $c >0$ because (\ref{eqnRestrictionsOnD}) holds.  Let us assume in this section (Section~\ref{subsubsecPrelimForThickSubsetST}) that $d \geq3$ unless we explicitly state that $d=2$ is under consideration.

\begin{lemm}\label{lemmAlmostRotationSymFac}  Let $d \geq3$ and $\widetilde{k} \in \SO(d-1, \RR)$.  
We have that \begin{align*}\widetilde{k}\left(A - \tensor*[^t]{\boldsymbol{w}}{}\boldsymbol{z'}\right) \tensor*[^t]{\left(A - \tensor*[^t]{\boldsymbol{w}}{}\boldsymbol{z'}\right)}{} \tensor*[^t]{\widetilde{k}}{}&= I_{d-1} + \frac{\widetilde{k}\tensor*[^t]{\boldsymbol{w}}{}\boldsymbol{w}\tensor*[^t]{\widetilde{k}}{}}{c^2} \\ \tensor*[^t]{\left(A - \tensor*[^t]{\boldsymbol{w}}{}\boldsymbol{z'}\right)}{} \left(A - \tensor*[^t]{\boldsymbol{w}}{}\boldsymbol{z'} \right)&= I_{d-1} + \frac{\tensor*[^t]{\boldsymbol{v}}{}\boldsymbol{v}}{c^2}. 
  \end{align*}  Moreover, these products are invertible:  \[\det\left(I_{d-1} + \frac{\widetilde{k}\tensor*[^t]{\boldsymbol{w}}{}\boldsymbol{w}\tensor*[^t]{\widetilde{k}}{}}{c^2} \right) = c^{-2/(d-1)}= \det\left(I_{d-1} + \frac{\tensor*[^t]{\boldsymbol{v}}{}\boldsymbol{v}}{c^2} \right).\]
\end{lemm}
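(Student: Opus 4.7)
The plan is to unpack the full orthogonality of $E(\boldsymbol{z})^{-1}\in\SO(d,\RR)$. Beyond (\ref{eqnElemPropSOd}), this yields the companion identities
\begin{align*}
A\tensor*[^t]{A}{} + \tensor*[^t]{\boldsymbol{w}}{}\boldsymbol{w} = I_{d-1},\ \
\tensor*[^t]{A}{}A + \tensor*[^t]{\boldsymbol{v}}{}\boldsymbol{v} = I_{d-1},\ \
A\tensor*[^t]{\boldsymbol{v}}{} + c\tensor*[^t]{\boldsymbol{w}}{} = \boldsymbol{0},\ \
\boldsymbol{w}\tensor*[^t]{\boldsymbol{w}}{} = \boldsymbol{v}\tensor*[^t]{\boldsymbol{v}}{} = 1-c^2.
\end{align*}
Substituting $\boldsymbol{z'}=c^{-1}\boldsymbol{v}$ and using the ``mixed'' relations produces the two key collapses $A\tensor*[^t]{\boldsymbol{z'}}{} = -\tensor*[^t]{\boldsymbol{w}}{}$ and, dually, $\tensor*[^t]{A}{}\tensor*[^t]{\boldsymbol{w}}{} = -c\tensor*[^t]{\boldsymbol{v}}{}$, which will drive both the conjugated and un-conjugated computations in symmetric fashion.

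For the first identity, I expand $(A - \tensor*[^t]{\boldsymbol{w}}{}\boldsymbol{z'})\tensor*[^t]{(A - \tensor*[^t]{\boldsymbol{w}}{}\boldsymbol{z'})}{}$ into four summands; the two cross terms each simplify to $+\tensor*[^t]{\boldsymbol{w}}{}\boldsymbol{w}$ via $A\tensor*[^t]{\boldsymbol{z'}}{} = -\tensor*[^t]{\boldsymbol{w}}{}$, while the quadratic term $\tensor*[^t]{\boldsymbol{w}}{}\boldsymbol{z'}\tensor*[^t]{\boldsymbol{z'}}{}\boldsymbol{w}$ reduces to $(c^{-2}-1)\tensor*[^t]{\boldsymbol{w}}{}\boldsymbol{w}$ via $\boldsymbol{v}\tensor*[^t]{\boldsymbol{v}}{} = 1-c^2$. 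Combining these with $A\tensor*[^t]{A}{} = I_{d-1} - \tensor*[^t]{\boldsymbol{w}}{}\boldsymbol{w}$ yields $I_{d-1} + c^{-2}\tensor*[^t]{\boldsymbol{w}}{}\boldsymbol{w}$, and conjugation by $\widetilde{k}$ completes the first equality. The second identity is the mirror computation: expand $\tensor*[^t]{(A - \tensor*[^t]{\boldsymbol{w}}{}\boldsymbol{z'})}{}(A - \tensor*[^t]{\boldsymbol{w}}{}\boldsymbol{z'})$, use $\tensor*[^t]{A}{}\tensor*[^t]{\boldsymbol{w}}{} = -c\tensor*[^t]{\boldsymbol{v}}{}$ to reduce the two cross terms to $+\tensor*[^t]{\boldsymbol{v}}{}\boldsymbol{v}$, treat the quadratic term the same way, and combine with $\tensor*[^t]{A}{}A = I_{d-1} - \tensor*[^t]{\boldsymbol{v}}{}\boldsymbol{v}$.

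For invertibility and the determinant, I apply the Weinstein--Aronszajn identity $\det(I + XY) = \det(I + YX)$ to the rank-one factorizations $\widetilde{k}\tensor*[^t]{\boldsymbol{w}}{}\boldsymbol{w}\tensor*[^t]{\widetilde{k}}{} = (\widetilde{k}\tensor*[^t]{\boldsymbol{w}}{})(\boldsymbol{w}\tensor*[^t]{\widetilde{k}}{})$ and $\tensor*[^t]{\boldsymbol{v}}{}\boldsymbol{v}$. Using $\tensor*[^t]{\widetilde{k}}{}\widetilde{k}=I_{d-1}$, each determinant collapses to $1 + c^{-2}\boldsymbol{w}\tensor*[^t]{\boldsymbol{w}}{} = 1 + c^{-2}(1-c^2)$ and $1 + c^{-2}\boldsymbol{v}\tensor*[^t]{\boldsymbol{v}}{} = 1 + c^{-2}(1-c^2)$ respectively, matching the square of $\det(A - \tensor*[^t]{\boldsymbol{w}}{}\boldsymbol{z'}) = c^{-1}$ from (\ref{eqnSOMinNDet1}) and confirming nonsingularity. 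The main obstacle is nothing more than careful bookkeeping: the quantities $\boldsymbol{v}, \boldsymbol{w}, \boldsymbol{z'}$ are rows while their transposes are columns, so one must consistently distinguish the scalar $\boldsymbol{w}\tensor*[^t]{\boldsymbol{w}}{}$ from the rank-one matrix $\tensor*[^t]{\boldsymbol{w}}{}\boldsymbol{w}$ and pair the available identities with the correct shapes throughout.
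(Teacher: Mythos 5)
Your proof of the two matrix identities is correct and follows essentially the paper's route: the paper expands $(A - \tensor*[^t]{\boldsymbol{w}}{}\boldsymbol{z'})\tensor*[^t]{(A - \tensor*[^t]{\boldsymbol{w}}{}\boldsymbol{z'})}{}$ using the relations from $E(\boldsymbol{z})^{-1}\tensor*[^t]{\left(E(\boldsymbol{z})^{-1}\right)}{}=I_d$ and then declares the second identity ``analogous,'' whereas you make the second computation explicit by bringing in the companion relations from $\tensor*[^t]{\left(E(\boldsymbol{z})^{-1}\right)}{}E(\boldsymbol{z})^{-1}=I_d$. That symmetric treatment is cleaner to read, but it is the same underlying calculation.

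The determinant part is where you need to pause. Your Sylvester/Weinstein--Aronszajn computation correctly gives
\[
\det\Bigl(I_{d-1} + \tfrac{\widetilde{k}\,\tensor*[^t]{\boldsymbol{w}}{}\boldsymbol{w}\,\tensor*[^t]{\widetilde{k}}{}}{c^2}\Bigr)
= 1 + \tfrac{\boldsymbol{w}\tensor*[^t]{\boldsymbol{w}}{}}{c^2} = 1 + \tfrac{1-c^2}{c^2} = c^{-2},
\]
which also equals $\left(\det(A - \tensor*[^t]{\boldsymbol{w}}{}\boldsymbol{z'})\right)^2 = (c^{-1})^2$ by (\ref{eqnSOMinNDet1}). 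But the lemma asserts the determinant is $c^{-2/(d-1)}$, which disagrees with this for every $d>2$; as written, your proof therefore silently contradicts the claim rather than confirming it. The discrepancy is in fact an error in the lemma's displayed formula: a direct check at $d=3$ with $E(\boldsymbol{z})^{-1}$ a planar rotation through $\phi$ (so $c=\cos\phi$ and $I_2 + c^{-2}\tensor*[^t]{\boldsymbol{w}}{}\boldsymbol{w} = \diag(\sec^2\phi,1)$, determinant $\sec^2\phi=c^{-2}\neq c^{-1}$) confirms $c^{-2}$, and the paper's own one-line argument (``follows immediately from (\ref{eqnDecompSOintoN})'') likewise yields $(\det(A - \tensor*[^t]{\boldsymbol{w}}{}\boldsymbol{z'}))^2=c^{-2}$ via (\ref{eqnSOMinNDet1}). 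You should say this explicitly: state that the determinant is $c^{-2}$ and that the exponent $-2/(d-1)$ in the statement (which then propagates into the exponent of $c$ in Lemma~\ref{lemmLowerHeightWidehatST}) appears to be a misprint, rather than just asserting a value that is inconsistent with the claim you are supposedly proving.
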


\begin{proof}
We prove the first assertion.  The second is proved in an analogous manner.
 Recalling (\ref{eqnDefnEInv}), we have that \[\begin{pmatrix}  A &  \tensor*[^t]{\boldsymbol{w}}{} \\ \boldsymbol{v} & c  \end{pmatrix} \tensor*[^t]{\begin{pmatrix}  A &  \tensor*[^t]{\boldsymbol{w}}{} \\ \boldsymbol{v} & c  \end{pmatrix}}{} = I_d,\] which yields \begin{align}\label{eqnRotEquivalence} A \tensor*[^t]{A}{} + \tensor*[^t]{\boldsymbol{w}}{}\boldsymbol{w} &= I_{d-1} \\\nonumber A \tensor*[^t]{\boldsymbol{v}}{} +\tensor*[^t]{ \boldsymbol{w}}{} c &= \tensor*[^t]{\boldsymbol{0}}{} \\\nonumber \boldsymbol{v} \tensor*[^t]{A}{} + c \boldsymbol{w} &= \boldsymbol{0} \\\nonumber \boldsymbol{v}\tensor*[^t]{ \boldsymbol{v} }{} +c^2 &=1.
  \end{align}

Simplifying, we have that  \begin{align*}\left(A - \tensor*[^t]{\boldsymbol{w}}{}\boldsymbol{z'}\right) \tensor*[^t]{\left(A - \tensor*[^t]{\boldsymbol{w}}{}\boldsymbol{z'}\right)}{} & = A \tensor*[^t]{A}{} - \frac 1 c A \tensor*[^t]{\boldsymbol{v}}{} \boldsymbol{w} - \frac 1 c \tensor*[^t]{\boldsymbol{w}}{} \boldsymbol{v} \tensor*[^t]{A}{} + \frac {\|\boldsymbol{v}\|_2^2}{c^2} \tensor*[^t]{\boldsymbol{w}}{}\boldsymbol{w} \\ &= I_{d-1} + \tensor*[^t]{\boldsymbol{w}}{}\boldsymbol{w}\left(1 +  \frac {\|\boldsymbol{v}\|_2^2}{c^2}\right) \\ &= I_{d-1} + \frac{\tensor*[^t]{\boldsymbol{w}}{}\boldsymbol{w}}{c^2},
  \end{align*} where the last two equalities follow by applying (\ref{eqnRotEquivalence}).  Since $\widetilde{k} \tensor*[^t]{\widetilde{k}}{} = I_{d-1}$, we have shown the first assertion.  
  
  The final assertion concerning determinants follows immediately from (\ref{eqnDecompSOintoN}) and our requirement that $c>0$.
  \end{proof}
  
To compute the factor, we wish to find a $d-1 \times d-1$ upper triangular matrix $B$ satisfying the following equation \begin{align}\label{eqnSymmetricFactor}
B \tensor*[^t]{B}{}= I_{d-1} + \frac{\widetilde{k}\tensor*[^t]{\boldsymbol{w}}{}\boldsymbol{w}\tensor*[^t]{\widetilde{k}}{}}{c^2}.\end{align}

Let $\boldsymbol{0}_i$ denote the $i$-vector with all zero entries.

\begin{theo}\label{thmSymmetricFactorCholesky} Let $d \geq3$, $\widetilde{k} \in \SO(d-1, \RR)$, and $(\widetilde{w}_1, \cdots, \widetilde{w}_{d-1}) := {\boldsymbol{w}}\tensor*[^t]{\widetilde{k}}{}$.  The solution to (\ref{eqnSymmetricFactor}) is the matrix \[B =  \begin{pmatrix}  \beta_{d-1}& {\tensor*[^t]{ \boldsymbol{s}}{_{d-2}}}{} & {\tensor*[^t]{ \boldsymbol{s}}{_{d-3}}}{}& \cdots &{\tensor*[^t]{ \boldsymbol{s}}{_1}}{} \\ \boldsymbol{0}_{1} & \beta_{d-2} & & \\ \boldsymbol{0}_{2} & & \beta_{d-3} & & \\ && &\ddots& \\ \boldsymbol{0}_{d-2} &&&&\beta_1 \end{pmatrix}\] where \begin{align*} &\beta_k^2:= \begin{cases} 1 +  \frac{\widetilde{w}_{d-1}^2}{c^2} &\textrm{ for } k=1 \\
1+\frac{\widetilde{w}_{d-k}^2}{c^2+\widetilde{w}_{d +1 -k}^2+ \cdots + \widetilde{w}_{d-1}^2} &\textrm{ for } 2\leq k \leq d-1 \end{cases} \end{align*} and  \begin{align*} &\boldsymbol{s}_k:= \begin{cases} \frac{\widetilde{w}_{d-1}}{c\sqrt{c^2+\widetilde{w}_{d-1}^2}} (\widetilde{w}_1, \cdots, \widetilde{w}_{d-2})&\textrm{ for } k=1 \\
\frac{\widetilde{w}_{d-k}}{\sqrt{\left(c^2+\widetilde{w}_{d +1 -k}^2+ \cdots + \widetilde{w}_{d-1}^2\right)\left(c^2+\widetilde{w}_{d-k}^2+ \cdots + \widetilde{w}_{d-1}^2\right)}} (\widetilde{w}_1, \cdots, \widetilde{w}_{d-1-k})&\textrm{ for } 2\leq k \leq d-2 \end{cases}.\end{align*}This is the only $d-1 \times d-1$ upper triangular matrix that is a solution. 
\end{theo}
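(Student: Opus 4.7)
The plan is to view the equation $B \tensor*[^t]{B}{} = I_{d-1} + \widetilde{k}\tensor*[^t]{\boldsymbol{w}}{}\boldsymbol{w}\tensor*[^t]{\widetilde{k}}{}/c^2$ as an upper Cholesky factorization problem. Set $M := I_{d-1} + \tensor*[^t]{\widetilde{\boldsymbol{w}}}{}\widetilde{\boldsymbol{w}}/c^2$, where $\widetilde{\boldsymbol{w}} = \boldsymbol{w}\tensor*[^t]{\widetilde{k}}{} = (\widetilde{w}_1,\ldots,\widetilde{w}_{d-1})$. Since $M$ is a rank-one perturbation of the identity by a symmetric positive semidefinite matrix, $M$ is itself symmetric positive definite, so standard linear algebra guarantees a unique upper triangular $B$ with positive diagonal satisfying $B\tensor*[^t]{B}{} = M$. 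Uniqueness of such a factor is classical (compare entries of $B\tensor*[^t]{B}{}$ from the bottom-right upward), so the remaining task is to identify it with the formula in the statement.

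The identification is naturally an induction on $d$ via a Schur complement peeled off from the \emph{last} row and column (appropriate for \emph{upper} Cholesky). Writing
\[
M = \begin{pmatrix} M' & \tensor*[^t]{\boldsymbol{b}}{} \\ \boldsymbol{b} & d_0 \end{pmatrix}, \qquad B = \begin{pmatrix} B' & \tensor*[^t]{\boldsymbol{v}}{} \\ \boldsymbol{0} & \beta_1 \end{pmatrix},
\]
with $M' = I_{d-2} + \tensor*[^t]{\widetilde{\boldsymbol{w}}'}{}\widetilde{\boldsymbol{w}}'/c^2$ for $\widetilde{\boldsymbol{w}}' := (\widetilde{w}_1,\ldots,\widetilde{w}_{d-2})$, $\boldsymbol{b} = (\widetilde{w}_{d-1}/c^2)\widetilde{\boldsymbol{w}}'$, and $d_0 = 1 + \widetilde{w}_{d-1}^2/c^2$, the block-Cholesky identity $B \tensor*[^t]{B}{} = M$ forces $\beta_1 = \sqrt{d_0}$, $\tensor*[^t]{\boldsymbol{v}}{} = \tensor*[^t]{\boldsymbol{b}}{}/\beta_1$, and $B'\tensor*[^t]{B'}{} = M' - \tensor*[^t]{\boldsymbol{b}}{}\boldsymbol{b}/d_0$. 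The first two immediately recover the claimed $\beta_1^2 = 1 + \widetilde{w}_{d-1}^2/c^2$ and, after simplification, $\tensor*[^t]{\boldsymbol{v}}{} = \frac{\widetilde{w}_{d-1}}{c\sqrt{c^2+\widetilde{w}_{d-1}^2}}(\widetilde{w}_1,\ldots,\widetilde{w}_{d-2}) = \tensor*[^t]{\boldsymbol{s}_1}{}$.

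The essential algebraic miracle, and what makes the induction go through, is the identity
\[
M' - \tensor*[^t]{\boldsymbol{b}}{}\boldsymbol{b}/d_0 \;=\; I_{d-2} + \frac{\tensor*[^t]{\widetilde{\boldsymbol{w}}'}{}\widetilde{\boldsymbol{w}}'}{c^2 + \widetilde{w}_{d-1}^2},
\]
which I would verify by a one-line computation: factoring out $\tensor*[^t]{\widetilde{\boldsymbol{w}}'}{}\widetilde{\boldsymbol{w}}'$ in the subtracted term and reducing $\tfrac{1}{c^2} - \tfrac{\widetilde{w}_{d-1}^2}{c^2(c^2+\widetilde{w}_{d-1}^2)} = \tfrac{1}{c^2+\widetilde{w}_{d-1}^2}$. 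Thus the Schur complement is a matrix of exactly the same form as $M$, with $c^2$ replaced by $c^2 + \widetilde{w}_{d-1}^2$ and $\widetilde{\boldsymbol{w}}$ truncated to its first $d-2$ entries. Invoking the induction hypothesis on this smaller problem produces the diagonal $\beta_2,\ldots,\beta_{d-1}$ (with each denominator receiving the additional $\widetilde{w}_{d-1}^2$ that explains the telescoping sums in the statement) and the vectors $\boldsymbol{s}_2,\ldots,\boldsymbol{s}_{d-2}$ in the corresponding superdiagonal slots, matching the formulas verbatim. The base case $d = 3$ is a direct $2\times 2$ computation.

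The main obstacle — really the only computational step that requires care — is the Schur-complement simplification above, since it is what aligns the denominators $c^2 + \widetilde{w}_{d+1-k}^2 + \cdots + \widetilde{w}_{d-1}^2$ appearing in $\beta_k$ and $\boldsymbol{s}_k$ with the iterative reduction. Once that identity is secured, everything else is bookkeeping, and the uniqueness clause follows from the standard uniqueness of upper-triangular-positive-diagonal Cholesky.
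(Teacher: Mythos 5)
Your argument is the same outer-product-from-the-bottom Cholesky peeling that the paper carries out, but you identify a cleaner organizing principle that the paper only encodes implicitly: after removing the last row and column, the Schur complement $M' - \tensor*[^t]{\boldsymbol{b}}{}\boldsymbol{b}/d_0$ is again a matrix of \emph{the same form} $I_{d-2} + \tensor*[^t]{\widetilde{\boldsymbol{w}}'}{}\widetilde{\boldsymbol{w}}'/(c^2 + \widetilde{w}_{d-1}^2)$, with $c^2$ replaced by $c^2 + \widetilde{w}_{d-1}^2$ and the last entry of $\widetilde{\boldsymbol{w}}$ dropped. The paper (Lemma~\ref{lemmRecursiveCkBlocks}) obtains the equivalent content by explicitly tracking every entry of the intermediate blocks $C_k$ through a $k$-induction, and then needs a separate accumulation step (Lemma~\ref{lemmSquareRoundComm} together with Lemma~\ref{lemmMNFactoring}) to reassemble the peeled-off triangular factors; your structural induction on the matrix size absorbs both of those steps at once, since the inductive hypothesis applied to the self-similar smaller problem directly supplies the rest of the triangular factor, and the telescoping denominators $c^2 + \widetilde{w}_{d+1-k}^2 + \cdots + \widetilde{w}_{d-1}^2$ in $\beta_k$ and $\boldsymbol{s}_k$ emerge automatically from the running substitution $c^2 \mapsto c^2 + \widetilde{w}_{d-1}^2$. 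Your key identity is correct: it reduces to $\frac{1}{c^2} - \frac{\widetilde{w}_{d-1}^2}{c^2(c^2 + \widetilde{w}_{d-1}^2)} = \frac{1}{c^2 + \widetilde{w}_{d-1}^2}$, and the positive-definiteness (via Lemma~\ref{lemmAlmostRotationSymFac}, as the paper also notes) plus the standard uniqueness of the upper-triangular-positive-diagonal factor cover the rest. In short: same factorization scheme, but your proof is shorter and more transparent because it makes the self-similarity of the Schur complement explicit instead of re-deriving the entry formulas at every stage and then stitching the steps together.
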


The symmetric factor is mentioned above is, indeed, $B$ and the rotation mentioned above is the following.

\begin{coro}\label{coroSymmetricFactorCholesky}  Let $d \geq3$ and $\widetilde{k} \in \SO(d-1, \RR)$. 
We have that \[B^{-1} \widetilde{k} \left(A - \tensor*[^t]{\boldsymbol{w}}{}\boldsymbol{z'}\right) \in \SO(d-1, \RR).\]  Moreover, given an open, respectively measurable, subset $\widehat{K} \subset \SO(d-1, \RR)$, we have a smooth, respectively measurable, mapping \[\Rr:  \widehat{K} \times \Dd \rightarrow \begin{pmatrix}  \SO(d-1, \RR) & \tensor*[^t]{ \boldsymbol{0}}{} \\  \boldsymbol{0} & 1\end{pmatrix} \quad (\widetilde{k}, \boldsymbol{z}) \mapsto \begin{pmatrix} B^{-1} \widetilde{k} \left(A - \tensor*[^t]{\boldsymbol{w}}{}\boldsymbol{z'}\right) & \tensor*[^t]{ \boldsymbol{0}}{} \\  \boldsymbol{0} & 1 \end{pmatrix}\]  such that $\Rr(\cdot, \boldsymbol{z})$ is injective for any fixed $\boldsymbol{z} \in \Dd$.

\end{coro}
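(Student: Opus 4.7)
The plan is to prove the first assertion by combining the factorization of Lemma~\ref{lemmAlmostRotationSymFac} with a determinant count, and then to verify the regularity and injectivity claims in turn. Writing $M := \widetilde{k}(A - \tensor*[^t]{\boldsymbol{w}}{}\boldsymbol{z'})$, Lemma~\ref{lemmAlmostRotationSymFac} gives $M\tensor*[^t]{M}{} = I_{d-1} + \widetilde{k}\tensor*[^t]{\boldsymbol{w}}{}\boldsymbol{w}\tensor*[^t]{\widetilde{k}}{}/c^2$, which by (\ref{eqnSymmetricFactor}) equals $B\tensor*[^t]{B}{}$; hence $(B^{-1}M)\tensor*[^t]{(B^{-1}M)}{} = I_{d-1}$, so $B^{-1}M$ is orthogonal. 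For the determinant, $\det(B)^2 = \det(B\tensor*[^t]{B}{}) = \det(M\tensor*[^t]{M}{}) = \det(M)^2 = 1/c^2$, where I use $\det(\widetilde{k}) = 1$ and $\det(A - \tensor*[^t]{\boldsymbol{w}}{}\boldsymbol{z'}) = 1/c$ from (\ref{eqnSOMinNDet1}). Since each diagonal entry $\beta_k$ of the upper triangular $B$ from Theorem~\ref{thmSymmetricFactorCholesky} is a positive square root, $\det(B) > 0$, so $\det(B) = 1/c$, giving $\det(B^{-1}M) = 1$ and thus $B^{-1}M \in \SO(d-1,\RR)$.

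For the regularity of $\Rr$, the formulas in Theorem~\ref{thmSymmetricFactorCholesky} express the entries of $B$ via ratios, sums of squares, and positive square roots of $c = c(\boldsymbol{z})$, $\boldsymbol{w} = \boldsymbol{w}(\boldsymbol{z})$, and $\widetilde{k}$. Hypothesis (\ref{eqnRestrictionsOnD}) bounds $c$ away from zero on $\overline{\Dd}$, so the denominators in those formulas are bounded below by $c_{\min}^2 > 0$ and the radicands stay $\geq 1$. Consequently $B$, and therefore $B^{-1}$ (matrix inversion being smooth on the open set of invertible matrices), depends smoothly (respectively measurably) on $(\widetilde{k},\boldsymbol{z})$, and $\Rr$ inherits the corresponding regularity.

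For injectivity, I fix $\boldsymbol{z}\in\Dd$ and suppose $\Rr(\widetilde{k}_1,\boldsymbol{z})=\Rr(\widetilde{k}_2,\boldsymbol{z})$. Comparing upper-left blocks and cancelling the invertible factor $A - \tensor*[^t]{\boldsymbol{w}}{}\boldsymbol{z'}$ gives $B_1^{-1}\widetilde{k}_1 = B_2^{-1}\widetilde{k}_2$, i.e.\ $\widetilde{k}_2\widetilde{k}_1^{-1} = B_2 B_1^{-1}$. The left-hand side lies in $\SO(d-1,\RR)$, while the right-hand side is upper triangular with positive diagonal entries (the class of upper triangular matrices with positive diagonal is closed under inversion and multiplication, and both $B_1$ and $B_2$ lie in it). The only orthogonal matrix that is upper triangular with positive diagonal is $I_{d-1}$, so $\widetilde{k}_1=\widetilde{k}_2$. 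I expect this injectivity step to be the main conceptual subtlety: although $B$ depends on $\widetilde{k}$, the essentially Cholesky-type factorization is rigid enough that $B^{-1}\widetilde{k}$ can only agree if $\widetilde{k}$ itself does, and it is the positivity of the diagonal of $B$ --- guaranteed by the positive square root in the definition of each $\beta_k$ --- that closes the uniqueness argument.
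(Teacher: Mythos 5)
Your proof is correct and proceeds by essentially the same route as the paper's: orthogonality of $B^{-1}\widetilde{k}(A - \tensor*[^t]{\boldsymbol{w}}{}\boldsymbol{z'})$ follows from Lemma~\ref{lemmAlmostRotationSymFac} and equation~(\ref{eqnSymmetricFactor}), and injectivity rests on the rigidity of the upper-triangular-times-orthogonal factorization (the paper phrases this via uniqueness of the Iwasawa/QR decomposition, which is equivalent to your observation that the only upper triangular orthogonal matrix with positive diagonal is $I_{d-1}$). You also supply the determinant check and the smoothness/measurability verification explicitly, which the paper leaves implicit.
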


\begin{proof}
 Apply Theorem~\ref{thmSymmetricFactorCholesky} and Lemma~\ref{lemmAlmostRotationSymFac} to (\ref{eqnSymmetricFactor}).  Note that the invertibility of $B$ follows from the fact that, for any $\boldsymbol{z} \in \Dd$ and $\widetilde{k} \in \SO(d-1, \RR)$, we have that $\beta_k \geq 1$ for every $k =1, \cdots, d-1$.
 
The proof of the injectivity statement is as follows.  Recall that $B$ depends on $\widetilde{k}$.  Let $B_1$ and $B_2$ be as in the theorem for $ \widetilde{k}_1$ and $ \widetilde{k}_2$, respectively.  Note that $B^{-1}_1=n_1a_1$ and $B^{-1}_2 = n_2 a_2$ for some upper triangular unipotent matrices $n_1$ and $n_2$ and some diagonal matrices $a_1$ and $a_2$ with all positive diagonal entries.  Hence, \[\Rr(\widetilde{k}_1, \boldsymbol{z}) = \Rr(\widetilde{k}_2, \boldsymbol{z})\iff B_1^{-1} \widetilde{k}_1 = B_2^{-1} \widetilde{k}_2\ \iff n_1 a_1 \widetilde{k}_1  = n_2 a_2 \widetilde{k}_2.\]  The latter expression is in Iwasawa coordinates for $\SL(d-1, \RR)$ and, hence, it follows that $n_1 =n_2$, $a_1 =a_2$, and $\widetilde{k}_1 =\widetilde{k}_2$, which gives the desired result concerning injectivity.  This proves the corollary.
\end{proof}

\subsubsection{Proof of Theorem~\ref{thmSymmetricFactorCholesky}}\label{subsubsecProofCholesksyFactor}   Let us continue to assume in this section (Section~\ref{subsubsecProofCholesksyFactor}) that $d \geq3$ unless we explicitly state that $d=2$ is under consideration.  First, we make a simple observation to be used later.  Let us use the notation $0_{i \times j}$ to denote the $i \times j$ matrix of all zero entries, and recall that the notation $[\cdot]$ is defined in (\ref{defnofSquareBraket}).

\begin{lemm}\label{lemmMNFactoring}
Let $\widehat{M}, \widehat{N}$ be square matrices of the same dimension $i$.  We have that \[\begin{pmatrix}  [\widehat{M}] \widehat{N} & 0_{i\times j} \\ 0_{j\times i}& I_j\end{pmatrix} = \begin{bmatrix} \widehat{M} & 0_{i\times j} \\ 0_{j\times i}& I_j  \end{bmatrix} \begin{pmatrix}\widehat{N} & 0_{i\times j} \\ 0_{j\times i}& I_j  \end{pmatrix}\] 
\end{lemm}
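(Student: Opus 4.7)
The plan is a direct block-matrix computation using only the definition (\ref{defnofSquareBraket}) of the square-bracket action $[g]\widetilde{Z} = g\widetilde{Z}\tensor*[^t]{g}{}$. There is no geometric content here; the lemma is purely bookkeeping, so the only ``obstacle'' is being careful about where the transpose lands when one block is the identity.

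First I would expand the right-hand side by unfolding the outer bracket. By definition of $[\cdot]$,
\[
\begin{bmatrix} \widehat{M} & 0_{i\times j} \\ 0_{j\times i}& I_j  \end{bmatrix} \begin{pmatrix}\widehat{N} & 0_{i\times j} \\ 0_{j\times i}& I_j  \end{pmatrix}
= \begin{pmatrix} \widehat{M} & 0_{i\times j} \\ 0_{j\times i}& I_j  \end{pmatrix}\begin{pmatrix}\widehat{N} & 0_{i\times j} \\ 0_{j\times i}& I_j  \end{pmatrix}\tensor*[^t]{\begin{pmatrix} \widehat{M} & 0_{i\times j} \\ 0_{j\times i}& I_j  \end{pmatrix}}{}.
\]

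Next I would use the fact that the transpose of a block-diagonal matrix is block-diagonal with each block transposed (so the identity block stays an identity), together with block multiplication:
\[
= \begin{pmatrix} \widehat{M}\widehat{N} & 0_{i\times j} \\ 0_{j\times i}& I_j  \end{pmatrix} \begin{pmatrix} \tensor*[^t]{\widehat{M}}{} & 0_{i\times j} \\ 0_{j\times i}& I_j  \end{pmatrix}
= \begin{pmatrix} \widehat{M}\widehat{N}\tensor*[^t]{\widehat{M}}{} & 0_{i\times j} \\ 0_{j\times i}& I_j  \end{pmatrix}.
\]

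Finally, I would rewrite the $(1,1)$-block using the definition once more, $\widehat{M}\widehat{N}\tensor*[^t]{\widehat{M}}{} = [\widehat{M}]\widehat{N}$, to recover the left-hand side. The whole argument is one display; the only minor subtlety is confirming that $\tensor*[^t]{I_j}{} = I_j$ and that the off-diagonal zero blocks remain zero after each block multiplication, which is immediate from the shape of the blocks.
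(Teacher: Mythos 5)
Your computation is correct and matches the paper's proof exactly (the paper says only ``Compute'', and the one-display block computation you give is the intended computation). The only thing to note is that the paper's bracket notation $[\cdot]$ is extended here beyond its original domain $\SP$ in (\ref{defnofSquareBraket}) to arbitrary square $\widehat{N}$, which your proof implicitly (and correctly) treats as purely formal.
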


\begin{proof}
 Compute.
\end{proof}

Also, we note that Lemma~\ref{lemmAlmostRotationSymFac} applied to the general theory of positive definite matrices (see~\cite[Theorem~4.2.1]{vanGo} for example) implies that \[C_1:=I_{d-1} + \frac{\widetilde{k}\tensor*[^t]{\boldsymbol{w}}{}\boldsymbol{w}\tensor*[^t]{\widetilde{k}}{}}{c^2}\] is invertible and symmetric positive definite.  Consequently, we are guaranteed that there exists a unique decomposition of the form $U \tensor*[^t]{U}{}$ where $U$ is upper triangular using Cholesky factorization (see~\cite[Theorem~4.2.5]{vanGo} for example).  
 
The proof will be a computation. We will follow the general idea of the outer product Cholesky factorization~\cite[Section~4.2.5]{vanGo}, except the block decomposition of the matrices will be different to allows us to obtain a $U \tensor*[^t]{U}{}$ decomposition directly.  The details of this \textit{reverse outer product Cholesky factorization} are as follows.

Recall that we are in the case for which $d \geq 3$.  Set $\ell := d-1$ and $\boldsymbol{u}:= (u_1, \cdots, u_\ell):= (\boldsymbol{w}\tensor*[^t]{\widetilde{k}}{})/c$.  Note that we have \[\tensor*[^t]{\boldsymbol{u}}{} \boldsymbol{u} = \begin{pmatrix}  u_1^2 & u_1 u_2 & u_1 u_3 &\cdots & u_1 u_\ell \\ u_2u_1 & u_2^2 & u_2 u_3 &\cdots & u_2 u_\ell \\ &  &\ddots  &  & \\  u_\ell u_1 & u_\ell u_2 & \cdots & u_\ell u_{\ell-1} &u_\ell^2 \end{pmatrix}.\]  Let us distinguish the upper $\ell-1 \times \ell -1$ block and add it to $I_{\ell-1}$:  \[D_1 := \begin{pmatrix}  1+u_1^2 & u_1 u_2 & u_1u_3& \cdots & u_1 u_{\ell-1} \\ u_2u_1 & 1+u_2^2 & u_2 u_3&\cdots & u_2 u_{\ell-1} \\ &  & \ddots&   & \\  u_{\ell-1} u_1 & u_{\ell-1} u_2 & \cdots& u_{\ell-1} u_{\ell-2}& 1+u_{\ell-1}^2 \end{pmatrix}.\]  Let us also distinguish the last element of the last row and add one to it \begin{align}\label{eqnDefBeta1Cholesky}
  \beta_1^2:= 1+u_\ell^2, \end{align} and let us denote the rest of that the last row \begin{align}\label{eqnDefBoldr1Cholesky}
 \boldsymbol{r}_1 :=\begin{pmatrix}  u_1 u_\ell & u_2 u_\ell &\cdots & u_{\ell-1} u_\ell \end{pmatrix}.  \end{align} Consequently, we have that \[C_1 = \begin{pmatrix}  D_1 &  \tensor*[^t]{\boldsymbol{r}}{_1} \\  \boldsymbol{r}_1 &\beta_1^2 \\ \end{pmatrix}.\]  Recalling the definition of the notation $[\cdot]$ in (\ref{defnofSquareBraket}), a direct computation gives us the following decomposition:  \begin{align}\label{eqnCholeskyStep1Factor}
 C_1 = \begin{bmatrix}  I_{\ell-1} &\frac{\tensor*[^t]{ \boldsymbol{r}}{_1}}{\beta_1} \\ \boldsymbol{0} & \beta_1 \end{bmatrix} \begin{pmatrix}  D_1 - \frac{\tensor*[^t]{ \boldsymbol{r}}{_1} \boldsymbol{r}_1} {\beta_1^2} &  \tensor*[^t]{ \boldsymbol{0}}{}  \\  \boldsymbol{0} & 1\end{pmatrix}. \end{align}  This is the first step in the reverse outer product Cholesky factorization.  

If $d=3$, this first step will suffice.  Note that we have \[\beta_1^2 = 1+u_2^2  \quad\quad \boldsymbol{r}_1 = u_1 u_2 \quad\quad D_1 = 1+u_1^2,\] from which it follows that \[D_1 - \frac{\tensor*[^t]{ \boldsymbol{r}}{_1} \boldsymbol{r}_1} {\beta_1^2} =  1+ \frac{u_1^2}{1+u_2^2}=:\beta_2^2.\]  Applying Lemma~\ref{lemmMNFactoring} to (\ref{eqnCholeskyStep1Factor}) yields \[C_1 = \begin{bmatrix}  \beta_2 &  \frac{\boldsymbol{r}_1}{\beta_1} \\ 0 & \beta_1\end{bmatrix},\] from which Theorem~\ref{thmSymmetricFactorCholesky} for $d=3$ immediately follows.   
 
Consequently, from this point forth, we may assume $d \geq 4$ and thus $\ell \geq 3$.  We now use recursion.  Let us distinguish the upper $\ell-1 \times \ell -1$ block of the second matrix from (\ref{eqnCholeskyStep1Factor}): \[C_2 := D_1 - \frac{\tensor*[^t]{ \boldsymbol{r}}{_1} \boldsymbol{r}_1} {\beta_1^2}.\]  Note that \[\tensor*[^t]{ \boldsymbol{r}}{_1} \boldsymbol{r}_1 = \begin{pmatrix} u_1^2 u_\ell^2 & u_1 u_2 u_\ell^2 & u_1 u_3 u_\ell^2 &\cdots& u_1 u_{\ell-1} u_{\ell}^2 \\  u_2 u_1 u_\ell^2 & u_2^2 u_\ell^2 & u_2 u_3 u_\ell^2 &\cdots& u_2 u_{\ell-1} u_{\ell}^2\\&  & \ddots&   & \\   u_{\ell-1} u_1 u_\ell^2 & u_{\ell-1}u_2 u_\ell^2 & \cdots & u_{\ell-1} u_{\ell-2} u_\ell^2 & u_{\ell-1}^2 u_{\ell}^2\\\end{pmatrix}\] and that $D_1$ and $\frac{\tensor*[^t]{ \boldsymbol{r}}{_1} \boldsymbol{r}_1} {\beta_1^2}$ are symmetric.  Consequently, for each row of $C_2$, we only need specify the elements at and to the right of the principle diagonal.

Let $1 \leq i \leq \ell-1$ and $1 \leq j \leq\ell-1$ such that $i<j$.  Computing the elements on the principle diagonal, we have that \[(C_2)_{ii} = 1+ \frac{u_i^2}{1+u_\ell^2}\geq 1.\]  Computing the elements to the right of the principle diagonal, we have that \[(C_2)_{ij} = \frac{u_iu_j}{1+u_\ell^2}.\]  Thus, we have \[C_2 = \begin{pmatrix} 1+\frac{u_1^2}{1+u_\ell^2} & \frac{u_1u_2}{1+u_\ell^2} &  \frac{u_1u_3}{1+u_\ell^2} & \cdots &  \frac{u_1u_{\ell-1}}{1+u_\ell^2} \\   \frac{u_2u_1}{1+u_\ell^2} &  1+\frac{u_2^2}{1+u_\ell^2} & \frac{u_2u_3}{1+u_\ell^2} & \cdots &  \frac{u_2u_{\ell-1}}{1+u_\ell^2} \\&  & \ddots&   & \\  \frac{u_{\ell-1}u_1}{1+u_\ell^2}& \frac{u_{\ell-1}u_2}{1+u_\ell^2}& \cdots & \frac{u_{\ell-1}u_{\ell-2}}{1+u_\ell^2}&1+\frac{u_{\ell-1}^2}{1+u_\ell^2}\end{pmatrix} =: \begin{pmatrix}  D_2 &\tensor*[^t]{ \boldsymbol{r}}{_2} \\ \boldsymbol{r}_2  & \beta_2^2 \end{pmatrix}\] where we have, analogous to the previous step, denoted the upper $\ell-2 \times \ell-2$ block by $D_2$, the last element of the last row by $\beta_2^2$, and the rest of the last row by $\boldsymbol{r}_2$.  The analogous direct computation from the previous step gives us the following:   \begin{align}\label{eqnCholeskyStep2Factor}
 C_2 = \begin{bmatrix}  I_{\ell-2} &\frac{\tensor*[^t]{ \boldsymbol{r}}{_2}}{\beta_2} \\ \boldsymbol{0} & \beta_2 \end{bmatrix} \begin{pmatrix}  D_2 - \frac{\tensor*[^t]{ \boldsymbol{r}}{_2} \boldsymbol{r}_2} {\beta_2^2} &  \tensor*[^t]{ \boldsymbol{0}}{}  \\  \boldsymbol{0} & 1\end{pmatrix}. \end{align}

If $d=4$, two steps will suffice.  Note that we have \[\beta_1^2 = 1+u_3^2\quad \quad \beta_2^2 = 1+\frac{u_{2}^2}{1+u_3^2} \quad\quad \boldsymbol{r}_1 = (u_1 u_3, u_2u_3) \quad\quad \boldsymbol{r}_2 = \frac{u_2 u_1}{1+u_3^2} \quad\quad D_2 = 1+\frac{u_1^2}{1+u_3^2},\] from which it follows that \[D_2 - \frac{\tensor*[^t]{ \boldsymbol{r}}{_2} \boldsymbol{r}_2} {\beta_2^2} =  1+ \frac{u_1^2}{1+u_2^2+u_3^2}=:\beta_3^2.\]  (Note that this computation is direct and is similar to the computation for $(C_{k+1})_{11}$ in Lemma~\ref{lemmRecursiveCkBlocks} below.)  Applying Lemma~\ref{lemmMNFactoring} to (\ref{eqnCholeskyStep1Factor},~\ref{eqnCholeskyStep2Factor}) yields \begin{align*}
C_1 &= \begin{bmatrix}  1 & 0 & \frac{u_1 u_3} {\sqrt{1 + u_3^2}}\\ 0 &1 &\frac{u_2 u_3} {\sqrt{1 + u_3^2}}\\ 0 & 0 & \beta_1 \end{bmatrix} \begin{bmatrix}  1 &\frac{u_1 u_2}{\sqrt{\left(1 + u_3^2\right)\left(1 + u_2^2+u_3^2\right)}} & 0\\ 0 & \beta_2& 0 \\ 0 & 0 & 1 \end{bmatrix}  \begin{bmatrix}  \beta_3 & 0 & 0 \\ 0 & 1 & 0 \\ 0 & 0 & 1 \end{bmatrix} \\ &=  \begin{bmatrix}  \beta_3 & \frac{u_1 u_2}{\sqrt{\left(1 + u_3^2\right)\left(1 + u_2^2+u_3^2\right)}} & \frac{u_1 u_3} {\sqrt{1 + u_3^2}}\\ 0 &\beta_2 &\frac{u_2 u_3} {\sqrt{1 + u_3^2}}\\ 0 & 0 & \beta_1 \end{bmatrix},\end{align*} from which Theorem~\ref{thmSymmetricFactorCholesky} for $d=4$ immediately follows.   

Consequently, from this point forth, we may assume $d \geq 5$ and thus $\ell \geq 4$.  Setting $C_3$ to be the upper $\ell-2 \times \ell -2$ block of the second matrix \[C_3 := D_2 - \frac{\tensor*[^t]{ \boldsymbol{r}}{_2} \boldsymbol{r}_2} {\beta_2^2},\] we recursively repeat the decomposition in step $2$ until we arrive at $C_\ell$.  We now show that this recursion is valid and that $C_\ell$ is a number.

\begin{lemm}\label{lemmRecursiveCkBlocks} Let $2 \leq k \leq \ell-1$.  Let $1 \leq i \leq \ell +1 -k$ and $1 \leq j \leq \ell +1 -k$ such that $i < j$.  We have that $C_k$ is an $\ell + 1 -k \times \ell +1 -k$ symmetric matrix with entries \begin{align}\label{eqnC2ii}
 (C_k)_{ii} = 1+ \frac{u_i^2}{1+u_{\ell +2 -k}^2+ \cdots + u_\ell^2}\geq 1 \quad \textrm{ and } \quad (C_k)_{ij} = \frac{u_iu_j}{1+u_{\ell +2 -k}^2+ \cdots + u_\ell^2}. \end{align}  Moreover, we have that \begin{align*}
C_k = \begin{pmatrix} 1+\frac{u_1^2}{1+u_{\ell +2 -k}^2+ \cdots + u_\ell^2} & \frac{u_1u_2}{1+u_{\ell +2 -k}^2+ \cdots + u_\ell^2} &  \frac{u_1u_3}{1+u_{\ell +2 -k}^2+ \cdots + u_\ell^2} & \cdots &  \frac{u_1u_{\ell +1 -k}}{1+u_{\ell +2 -k}^2+ \cdots + u_\ell^2} \\   \frac{u_2u_1}{1+u_{\ell +2 -k}^2+ \cdots + u_\ell^2} &  1+\frac{u_2^2}{1+u_{\ell +2 -k}^2+ \cdots + u_\ell^2} & \frac{u_2u_3}{1+u_{\ell +2 -k}^2+ \cdots + u_\ell^2} & \cdots &  \frac{u_2u_{\ell +1 -k}}{1+u_{\ell +2 -k}^2+ \cdots + u_\ell^2} \\&  & \ddots&   & \\  \frac{u_{\ell +1 -k}u_1}{1+u_{\ell +2 -k}^2+ \cdots + u_\ell^2}& \frac{u_{\ell +1 -k}u_2}{1+u_{\ell +2 -k}^2+ \cdots + u_\ell^2}& \cdots & \frac{u_{\ell +1 -k}u_{\ell-k}}{1+u_{\ell +2 -k}^2+ \cdots + u_\ell^2}&1+\frac{u_{\ell+1-k}^2}{1+u_{\ell +2 -k}^2+ \cdots + u_\ell^2}\end{pmatrix} \\ =: \begin{pmatrix}  D_k &\tensor*[^t]{ \boldsymbol{r}}{_k} \\ \boldsymbol{r}_k  & \beta_k^2 \end{pmatrix} \end{align*} where $D_k$ is an $\ell -k \times \ell -k$ matrix and $\beta_k^2$ is a real number.
 
\end{lemm}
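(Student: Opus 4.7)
The plan is to prove the lemma by induction on $k$, with the base cases $k=2$ already established in the discussion just preceding the lemma (and, more explicitly, the small cases $d=3,4$ computed there verify the pattern). The inductive step has to show only that the recursive definition $C_{k+1} := D_k - \frac{\tensor*[^t]{\boldsymbol{r}}{_k}\boldsymbol{r}_k}{\beta_k^2}$ preserves the claimed form. To streamline bookkeeping, I will introduce the abbreviation $S_k := 1 + u_{\ell+2-k}^2 + \cdots + u_\ell^2$ (with $S_1 := 1$), so that the claim becomes
\[
(C_k)_{ii} = 1 + \frac{u_i^2}{S_k}, \qquad (C_k)_{ij} = \frac{u_i u_j}{S_k} \ (i<j),
\]
and, in particular, $\beta_k^2 = 1 + u_{\ell+1-k}^2/S_k = S_{k+1}/S_k$, while $(\boldsymbol{r}_k)_j = u_{\ell+1-k}u_j/S_k$ for $1 \leq j \leq \ell-k$.

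Assuming the form for $C_k$, I would first observe that $D_k$ is symmetric (read off from the claimed entries) and $\tensor*[^t]{\boldsymbol{r}}{_k}\boldsymbol{r}_k$ is a rank-one symmetric matrix, so $C_{k+1}$ is automatically symmetric and $(\ell-k)\times(\ell-k)$, matching the expected size $(\ell+1-(k+1))\times(\ell+1-(k+1))$. The diagonal and off-diagonal identities to check are
\[
1 + \frac{u_i^2}{S_k} - \frac{u_{\ell+1-k}^2 u_i^2}{\beta_k^2 S_k^2} = 1 + \frac{u_i^2}{S_{k+1}}, \qquad \frac{u_i u_j}{S_k} - \frac{u_{\ell+1-k}^2 u_i u_j}{\beta_k^2 S_k^2} = \frac{u_i u_j}{S_{k+1}},
\]
both of which collapse under the elementary telescoping identity $S_{k+1} = S_k + u_{\ell+1-k}^2$ together with $\beta_k^2 S_k = S_{k+1}$.

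Positivity (hence invertibility) at every step of the recursion is transparent since $S_k \geq 1$, so every $\beta_k^2 \geq 1$ and the division in the recursion is legitimate; this also tracks the fact, recorded already in the proof of Corollary~\ref{coroSymmetricFactorCholesky}, that each $\beta_k \geq 1$. Terminating the recursion at $k = \ell-1$ produces a $2 \times 2$ matrix $C_{\ell-1}$ whose bottom-right entry is $\beta_{\ell-1}^2$ and whose upper-left entry, after one more outer-product step, is $\beta_\ell^2 = 1 + u_1^2/S_{\ell-1}$, matching the top of $B$ in Theorem~\ref{thmSymmetricFactorCholesky}. There is no genuine obstacle here: the content of the lemma is essentially bookkeeping for the rank-one update, and the only thing that requires care is aligning indices $(i,j,k)$ with the shifted windows $\{u_{\ell+2-k},\ldots,u_\ell\}$ so that the telescoping identity $S_{k+1} - u_{\ell+1-k}^2 = S_k$ is applied in the correct slot.
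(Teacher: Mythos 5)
Your proof is correct and follows the paper's argument exactly: induction on $k$, with the rank-one-update computation organized via the abbreviation $S_k$ (which coincides with the paper's $\alpha = 1 + u_{\ell+2-k}^2 + \cdots + u_\ell^2$) and the telescoping identity $S_{k+1} = S_k + u_{\ell+1-k}^2$ together with $\beta_k^2 S_k = S_{k+1}$. One small slip in your closing verification remark: it should read $\beta_\ell^2 = 1 + u_1^2/S_\ell$, not $1 + u_1^2/S_{\ell-1}$, but that concerns Lemma~\ref{lemmCell} rather than the inductive step of the present lemma, which is sound.
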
  
\begin{proof}
The proof is by induction on $k$.  We have already shown the case $k=2$.  Assume that the case $k$ holds.  We have that the (recursive) definition of $C_{k+1}$ is \[C_{k+1} := D_k - \frac{\tensor*[^t]{ \boldsymbol{r}}{_k} \boldsymbol{r}_k} {\beta_k^2}.\]  By the induction hypothesis, $C_k$ and thus $D_k$ are symmetric, which further implies that $C_{k+1}$ is symmetric.  The size of $C_{k+1}$ is the same as that of $D_k$, namely $\ell-k \times \ell-k$.

Let us set \[n:= \ell + 1 -k \quad \textrm{ and } \quad  \alpha:= 1 + u_{\ell+2 -k}^2 + \cdots + u_{\ell}^2 = 1 + u_{n+1}^2 + \cdots + u_{\ell}^2\] for convenience of exposition.  This notation and the induction hypothesis gives us that \[\alpha^2 \beta_k^2 = \alpha (\alpha + u_n^2)\] and that \[\frac{\tensor*[^t]{ \boldsymbol{r}}{_k} \boldsymbol{r}_k} {\beta_k^2} = \frac 1 {\alpha (\alpha + u_n^2)} \begin{pmatrix} u_1^2 u_n^2 & u_1 u_2 u_n^2 & u_1 u_3 u_n^2 &\cdots& u_1 u_{n-1} u_{n}^2 \\  u_2 u_1 u_n^2 & u_2^2 u_n^2 & u_2 u_3 u_n^2 &\cdots& u_2 u_{n-1} u_{n}^2\\&  & \ddots&   & \\   u_{n-1} u_1 u_n^2 & u_{n-1}u_2 u_n^2 & \cdots & u_{n-1} u_{n-2} u_n^2 & u_{n-1}^2 u_{n}^2\\\end{pmatrix}.\]

Let $1 \leq i \leq n-1$ and $1 \leq j \leq n-1$ such that $i <j$.  (Recalling that we are now in the case that $d\geq5$ and thus $\ell \geq 4$, we have that $n-1\geq2$.)  We have that \[\left(\frac{\tensor*[^t]{ \boldsymbol{r}}{_k} \boldsymbol{r}_k} {\beta_k^2} \right)_{ii} = \frac{u_i^2 u_n^2}{\alpha (\alpha + u_n^2)} \quad \textrm{ and } \quad \left(\frac{\tensor*[^t]{ \boldsymbol{r}}{_k} \boldsymbol{r}_k} {\beta_k^2} \right)_{ij} = \frac{u_i u_j u_n^2}{\alpha (\alpha + u_n^2)}.\]  The induction hypothesis further gives \[\left(D_k\right)_{ii} = 1 + \frac{u_i^2}{\alpha} \quad \textrm{ and } \quad \left(D_k\right)_{ij} = \frac{u_i u_j}{\alpha}.\]  Consequently, we have that \[\left(C_{k+1}\right)_{ii} = 1 + \frac{u_i^2}{1 + u_{\ell+1 -k}^2 + \cdots + u_{\ell}^2 } \quad \textrm{ and } \quad \left(C_{k+1}\right)_{ij} = \frac{u_i u_j}{1 + u_{\ell+1 -k}^2 + \cdots + u_{\ell}^2 },\] which yields the desired result.
\end{proof}

\begin{lemm}\label{lemmCell}
We have that \[\beta_\ell^2:=C_\ell = 1 + \frac{u_1^2}{1 + u_{2}^2 + \cdots + u_{\ell}^2 }.\]
\end{lemm}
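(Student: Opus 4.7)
\medskip
\noindent\textbf{Proof proposal.} The plan is to extend the induction of Lemma~\ref{lemmRecursiveCkBlocks} one more step, to $k = \ell$, noting that when $k = \ell$ the block matrix $C_\ell$ collapses to a $1 \times 1$ matrix (i.e.\ a scalar), so the only entry to verify is the $(1,1)$-entry, whose value should by pattern be $1 + \frac{u_1^2}{1 + u_2^2 + \cdots + u_\ell^2}$.

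The concrete step is as follows. Apply the recursion $C_{k+1} := D_k - \frac{{}^t\boldsymbol{r}_k \boldsymbol{r}_k}{\beta_k^2}$ from Section~\ref{subsubsecProofCholesksyFactor} with $k = \ell - 1$. By the $k=\ell-1$ case of Lemma~\ref{lemmRecursiveCkBlocks}, the matrix $C_{\ell-1}$ is $2 \times 2$ and, writing $\alpha := 1 + u_3^2 + \cdots + u_\ell^2$ for brevity, one reads off
\[
D_{\ell-1} = 1 + \frac{u_1^2}{\alpha}, \qquad \boldsymbol{r}_{\ell-1} = \frac{u_1 u_2}{\alpha}, \qquad \beta_{\ell-1}^2 = 1 + \frac{u_2^2}{\alpha} = \frac{\alpha + u_2^2}{\alpha}.
\]
Substituting into the recursion gives
\[
C_\ell = 1 + \frac{u_1^2}{\alpha} - \frac{u_1^2 u_2^2}{\alpha^2} \cdot \frac{\alpha}{\alpha + u_2^2} = 1 + \frac{u_1^2(\alpha + u_2^2) - u_1^2 u_2^2}{\alpha(\alpha + u_2^2)} = 1 + \frac{u_1^2}{\alpha + u_2^2},
\]
which is precisely $1 + \frac{u_1^2}{1 + u_2^2 + \cdots + u_\ell^2}$, as claimed.

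There is essentially no obstacle here — the argument is a one-line specialization of the induction step already carried out in the proof of Lemma~\ref{lemmRecursiveCkBlocks}. The only thing to be careful about is the edge case: the general formula for $(C_k)_{ij}$ with $i < j$ becomes vacuous when $k = \ell$ because there are no strictly-above-diagonal entries in a $1 \times 1$ matrix, so only the diagonal formula $(C_k)_{11} = 1 + \frac{u_1^2}{1 + u_{\ell+2-k}^2 + \cdots + u_\ell^2}$ specializes, and it yields exactly $\beta_\ell^2$ as stated.
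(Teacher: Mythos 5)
Your proof is correct and takes essentially the same approach as the paper: both extend the recursion one step past the range $2 \le k \le \ell-1$ covered by Lemma~\ref{lemmRecursiveCkBlocks}, applying the diagonal-entry computation from that lemma's inductive step with $k = \ell-1$. The paper states this more tersely by citing the computation in the proof of Lemma~\ref{lemmRecursiveCkBlocks}; you carry out the $2\times 2 \to 1\times 1$ specialization explicitly, which is a useful clarification but not a different argument.
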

\begin{proof}

We note that the proof of Lemma~\ref{lemmRecursiveCkBlocks} (namely, the induction hypothesis) yielding \[\left(C_{k+1}\right)_{ii} = 1 + \frac{u_i^2}{1 + u_{\ell+1 -k}^2 + \cdots + u_{\ell}^2 }\] for $1 \leq i \leq n-1$ is still valid when $k= \ell-1$.  This yields the desired result.
\end{proof}

\begin{rema}  Lemma~\ref{lemmCell} also holds for $\ell=2$ and $\ell=3$, as we have shown when we considered the cases $d=3$ and $d=4$, respectively.
\end{rema}

\begin{prop}\label{propSymmetricDecom1}
For $1 \leq k \leq \ell -1$, we have that  \[C_k = \begin{bmatrix}  I_{\ell-k} &\frac{\tensor*[^t]{ \boldsymbol{r}}{_k}}{\beta_k} \\ \boldsymbol{0} & \beta_k \end{bmatrix} \begin{pmatrix}  D_k - \frac{\tensor*[^t]{ \boldsymbol{r}}{_k} \boldsymbol{r}_k} {\beta_k^2} &  \tensor*[^t]{ \boldsymbol{0}}{}  \\  \boldsymbol{0} & 1\end{pmatrix}.\] 
\end{prop}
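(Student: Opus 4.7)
The plan is a direct block-matrix computation, using the definition of $[\cdot]$ from \eqref{defnofSquareBraket} and the block decomposition of $C_k$ already established in Lemma~\ref{lemmRecursiveCkBlocks}. Writing
\[
M := \begin{pmatrix} I_{\ell-k} & \tfrac{\tensor*[^t]{\boldsymbol{r}}{_k}}{\beta_k} \\ \boldsymbol{0} & \beta_k \end{pmatrix}, \qquad N := \begin{pmatrix} D_k - \tfrac{\tensor*[^t]{\boldsymbol{r}}{_k}\boldsymbol{r}_k}{\beta_k^2} & \tensor*[^t]{\boldsymbol{0}}{} \\ \boldsymbol{0} & 1 \end{pmatrix},
\]
I would first unfold $[M]N = MN\tensor*[^t]{M}{}$. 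Note that $\beta_k \geq 1$ by Lemma~\ref{lemmRecursiveCkBlocks}, so division by $\beta_k$ is well-defined.

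Next I would multiply out $MN$ in blocks, which is immediate because $N$ has $1$ in the bottom-right and zero off-diagonal blocks: the result is $\begin{pmatrix} D_k - \tfrac{\tensor*[^t]{\boldsymbol{r}}{_k}\boldsymbol{r}_k}{\beta_k^2} & \tfrac{\tensor*[^t]{\boldsymbol{r}}{_k}}{\beta_k} \\ \boldsymbol{0} & \beta_k \end{pmatrix}$. Then I would right-multiply by $\tensor*[^t]{M}{} = \begin{pmatrix} I_{\ell-k} & \tensor*[^t]{\boldsymbol{0}}{} \\ \tfrac{\boldsymbol{r}_k}{\beta_k} & \beta_k \end{pmatrix}$; the key observation is that the $(1,1)$-block becomes
\[
\left(D_k - \tfrac{\tensor*[^t]{\boldsymbol{r}}{_k}\boldsymbol{r}_k}{\beta_k^2}\right) \cdot I_{\ell-k} + \tfrac{\tensor*[^t]{\boldsymbol{r}}{_k}}{\beta_k} \cdot \tfrac{\boldsymbol{r}_k}{\beta_k} = D_k,
\]
so the subtracted rank-one piece exactly cancels with the contribution coming from the last row/column of $M$ and $\tensor*[^t]{M}{}$. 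The remaining blocks are read off directly: the $(1,2)$-block is $\tensor*[^t]{\boldsymbol{r}}{_k}$, the $(2,1)$-block is $\boldsymbol{r}_k$, and the $(2,2)$-block is $\beta_k^2$. By Lemma~\ref{lemmRecursiveCkBlocks} this is precisely the block form of $C_k$, completing the proof. There is no real obstacle: the whole content is the algebraic identity that the congruence by $M$ adds back the rank-one correction that was subtracted, so the only thing to verify is the block arithmetic.
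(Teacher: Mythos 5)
Your block-matrix computation is correct and is essentially the paper's own argument made explicit: the paper's proof simply says to redo for $C_k$, using Lemma~\ref{lemmRecursiveCkBlocks}, the direct computation already carried out for $C_1$ at (\ref{eqnCholeskyStep1Factor}). One small citation remark: Lemma~\ref{lemmRecursiveCkBlocks} is stated for $2 \leq k \leq \ell-1$, so for $k=1$ the block identification of $C_1$ with blocks $D_1$, $\boldsymbol{r}_1$, $\beta_1^2$ and the bound $\beta_1 \geq 1$ should instead be read off from (\ref{eqnDefBeta1Cholesky}) and (\ref{eqnDefBoldr1Cholesky}).
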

\begin{proof}
 Applying Lemma~\ref{lemmRecursiveCkBlocks} to the analogous computation as for $C_1$ gives the desired result.
\end{proof}

We now finish the proof of Theorem~\ref{thmSymmetricFactorCholesky}.  We will use zero vectors of different lengths.  For clarity, we use the notation $\boldsymbol{0}_i$ to denote the $i$-vector with all zero entries.  Also, note that the vector $\boldsymbol{r}_k$ is an $\ell -k$-vector and that $C_k$ is an $\ell+1-k \times \ell+1-k$ matrix.

\begin{lemm}\label{lemmSquareRoundComm}
We have that\footnote{Note that the matrix in the square brackets is upper triangular and the matrix in the round brackets is symmetric.} \[C_1 = \begin{bmatrix}  I_{\ell-k} & \frac{\tensor*[^t]{ \boldsymbol{r}}{_k}}{\beta_k} & \frac{\tensor*[^t]{ \boldsymbol{r}}{_{k-1}}}{\beta_{k-1}}& \cdots &\frac{\tensor*[^t]{ \boldsymbol{r}}{_1}}{\beta_1} \\ \boldsymbol{0}_{\ell-k} & \beta_k & & \\ \boldsymbol{0}_{\ell-(k-1)} & & \beta_{k-1} & & \\ && &\ddots& \\ \boldsymbol{0}_{\ell-1} &&&&\beta_1 \end{bmatrix}\begin{pmatrix}  C_{k+1} & \tensor*[^t]{\boldsymbol{0}}{_{\ell-k}} & \tensor*[^t]{\boldsymbol{0}}{_{\ell-(k-1)}} & \cdots  & \tensor*[^t]{\boldsymbol{0}}{_{\ell-1}}\\  \boldsymbol{0}_{\ell-k} & 1 & & \\ \boldsymbol{0}_{\ell-(k-1)} & & 1 & & \\ && &\ddots& \\ \boldsymbol{0}_{\ell-1} &&&&1\end{pmatrix}\] for $1 \leq k \leq \ell -1$.  
\end{lemm}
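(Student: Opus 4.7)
The plan is to prove the lemma by induction on $k$, with the base case $k=1$ handed to us by Proposition~\ref{propSymmetricDecom1} and the inductive step reducing to a block matrix bookkeeping exercise.

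For the base case $k=1$, I would apply Proposition~\ref{propSymmetricDecom1} directly with $k=1$. The upper triangular factor in the proposition is already in the claimed shape (a single $\beta_1$ on the diagonal, a single $\frac{\tensor*[^t]{\boldsymbol{r}_1}}{\beta_1}$ column, and $I_{\ell-1}$ in the upper-left block), and by the recursive definition of $C_2$, we have $D_1 - \frac{\tensor*[^t]{\boldsymbol{r}_1}\boldsymbol{r}_1}{\beta_1^2} = C_2$, so the symmetric factor matches too.

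For the inductive step, suppose the formula holds at some $k$ with $1 \le k \le \ell-2$. I would apply Proposition~\ref{propSymmetricDecom1} to $C_{k+1}$ (which is legal since $k+1 \le \ell-1$) and, using the recursion $D_{k+1} - \frac{\tensor*[^t]{\boldsymbol{r}_{k+1}}\boldsymbol{r}_{k+1}}{\beta_{k+1}^2} = C_{k+2}$, embed the resulting $(\ell-k)\times(\ell-k)$ identity into the $\ell \times \ell$ block structure by padding with $I_k$ in the bottom-right:
\[
\begin{pmatrix} C_{k+1} & \boldsymbol{0} \\ \boldsymbol{0} & I_k \end{pmatrix}
= \begin{bmatrix} I_{\ell-(k+1)} & \frac{\tensor*[^t]{\boldsymbol{r}_{k+1}}}{\beta_{k+1}} & \boldsymbol{0} \\ \boldsymbol{0} & \beta_{k+1} & \boldsymbol{0} \\ \boldsymbol{0} & \boldsymbol{0} & I_k \end{bmatrix}
\begin{pmatrix} C_{k+2} & \boldsymbol{0} & \boldsymbol{0} \\ \boldsymbol{0} & 1 & \boldsymbol{0} \\ \boldsymbol{0} & \boldsymbol{0} & I_k \end{pmatrix}.
\]
This identity is a routine block computation. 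Substituting into the induction hypothesis expresses $C_1$ as a product of three matrices: the upper triangular matrix from the hypothesis, the upper triangular matrix above, and the symmetric matrix with $C_{k+2}$ in the upper-left.

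The final step is to multiply the two upper triangular matrices on the left. By associativity of matrix multiplication, this product is again upper triangular. The key calculation is that, because the first $\ell-k$ columns of the outer upper triangular matrix are $\begin{pmatrix} I_{\ell-k} \\ \boldsymbol{0} \end{pmatrix}$, multiplying by the inner upper triangular matrix on the right simply copies the new column $\frac{\tensor*[^t]{\boldsymbol{r}_{k+1}}}{\beta_{k+1}}$ and the new diagonal entry $\beta_{k+1}$ into the $(\ell-k)$-th column position, leaving all previously placed $\frac{\tensor*[^t]{\boldsymbol{r}_j}}{\beta_j}$ and $\beta_j$ (for $j \le k$) undisturbed. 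This yields exactly the upper triangular matrix claimed at level $k+1$, completing the induction.

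The main (and only) obstacle is keeping the block sizes straight: the vectors $\boldsymbol{r}_j$ have varying lengths ($\ell-j$), and the identity blocks shrink as $k$ grows, so one must be careful to verify that the zero-padding in the embedding step is the correct size and that the column of $\frac{\tensor*[^t]{\boldsymbol{r}_{k+1}}}{\beta_{k+1}}$ lands in the correct slot after multiplication. No analytic content is involved—this is pure linear algebra bookkeeping—so the proof is short once the indexing is nailed down.
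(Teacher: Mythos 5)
Your proof is correct and follows essentially the same route as the paper's: induction on $k$, base case from Proposition~\ref{propSymmetricDecom1}, and inductive step that decomposes $C_{k+1}$ via Proposition~\ref{propSymmetricDecom1}, embeds the resulting $(\ell-k)\times(\ell-k)$ factorization into the full $\ell\times\ell$ block structure, and then multiplies the two upper triangular factors. The only cosmetic difference is that you verify the block-embedding identity directly (calling it "a routine block computation"), whereas the paper invokes Lemma~\ref{lemmMNFactoring} for exactly that step; these are the same thing.

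One caution on phrasing: the square-bracket matrices here are not ordinary factors but denote the action $[M]Z := MZ\,\tensor*[^t]{M}{}$ from~(\ref{defnofSquareBraket}), so "$C_1$ as a product of three matrices" and "by associativity" are slightly loose. The justification for collapsing the two upper triangular matrices is the identity $[A][B]=[AB]$, not ordinary associativity, and the fact that the product of upper triangular matrices is upper triangular ensures the collapsed object is still a legitimate square-bracket factor. Your final column-by-column argument showing $U_kV_{k+1}=U_{k+1}$ is correct and matches what the paper does when it says "multiplying the matrices in the square brackets yields the desired result."
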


\begin{proof}

 We induct on $k$.  The case $k=1$ is Proposition~\ref{propSymmetricDecom1}.  Assume that the case $k$ holds.  Applying Proposition~\ref{propSymmetricDecom1} with Lemma~\ref{lemmMNFactoring} and directly multiplying yields \begin{align*}
 \begin{bmatrix}  I_{\ell-k} & \frac{\tensor*[^t]{ \boldsymbol{r}}{_k}}{\beta_k} & \frac{\tensor*[^t]{ \boldsymbol{r}}{_{k-1}}}{\beta_{k-1}}& \cdots &\frac{\tensor*[^t]{ \boldsymbol{r}}{_1}}{\beta_1} \\ \boldsymbol{0}_{\ell-k} & \beta_k & & \\ \boldsymbol{0}_{\ell-(k-1)} & & \beta_{k-1} & & \\ && &\ddots& \\ \boldsymbol{0}_{\ell-1} &&&&\beta_1 \end{bmatrix}\begin{bmatrix}   \widetilde{C}_{k+1} & \tensor*[^t]{\boldsymbol{0}}{_{\ell-k}} & \tensor*[^t]{\boldsymbol{0}}{_{\ell-(k-1)}} & \cdots  & \tensor*[^t]{\boldsymbol{0}}{_{\ell-1}}\\  \boldsymbol{0}_{\ell-k} & 1 & & \\ \boldsymbol{0}_{\ell-(k-1)} & & 1 & & \\ && &\ddots& \\ \boldsymbol{0}_{\ell-1} &&&&1 \end{bmatrix}\\
 \begin{pmatrix}  C_{k+2} & \tensor*[^t]{\boldsymbol{0}}{_{\ell-(k+1)}} & \tensor*[^t]{\boldsymbol{0}}{_{\ell-k}} & \cdots  & \tensor*[^t]{\boldsymbol{0}}{_{\ell-1}}\\  \boldsymbol{0}_{\ell-(k+1)} & 1 & & \\ \boldsymbol{0}_{\ell-k} & & 1 & & \\ && &\ddots& \\ \boldsymbol{0}_{\ell-1} &&&&1\end{pmatrix} \end{align*} where $\widetilde{C}_{k+1}$ is the $\ell-k \times \ell -k$ block \[\widetilde{C}_{k+1}:=  \begin{pmatrix}  I_{\ell-(k+1)} &\frac{\tensor*[^t]{ \boldsymbol{r}}{_{(k+1)}}}{\beta_{(k+1)}} \\ \boldsymbol{0}_{\ell-(k+1)} & \beta_{(k+1)} \end{pmatrix}.\]  Multiplying the matrices in the square brackets yields the desired result.
\end{proof}

\begin{prop}
We have that \[C_1 = \begin{bmatrix}  \beta_\ell& \frac{\tensor*[^t]{ \boldsymbol{r}}{_{\ell-1}}}{\beta_{\ell-1}} & \frac{\tensor*[^t]{ \boldsymbol{r}}{_{\ell-2}}}{\beta_{\ell-2}}& \cdots &\frac{\tensor*[^t]{ \boldsymbol{r}}{_1}}{\beta_1} \\ \boldsymbol{0}_{1} & \beta_{\ell-1} & & \\ \boldsymbol{0}_{2} & & \beta_{\ell-2} & & \\ && &\ddots& \\ \boldsymbol{0}_{\ell-1} &&&&\beta_1 \end{bmatrix}.\]
\end{prop}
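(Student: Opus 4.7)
The plan is to obtain the desired identity by specializing the recursion from Lemma~\ref{lemmSquareRoundComm} to $k=\ell-1$ and absorbing the Cholesky scalar $\beta_\ell$ provided by Lemma~\ref{lemmCell} into the upper triangular factor. Setting $k=\ell-1$ in Lemma~\ref{lemmSquareRoundComm} gives
\[
C_1 = \begin{bmatrix} 1 & \tfrac{\tensor*[^t]{\boldsymbol{r}}{_{\ell-1}}}{\beta_{\ell-1}} & \tfrac{\tensor*[^t]{\boldsymbol{r}}{_{\ell-2}}}{\beta_{\ell-2}} & \cdots & \tfrac{\tensor*[^t]{\boldsymbol{r}}{_{1}}}{\beta_{1}} \\ \boldsymbol{0}_{1} & \beta_{\ell-1} & & & \\ \boldsymbol{0}_{2} & & \beta_{\ell-2} & & \\ & & & \ddots & \\ \boldsymbol{0}_{\ell-1} & & & & \beta_1 \end{bmatrix} \begin{pmatrix} C_\ell & \tensor*[^t]{\boldsymbol{0}}{_{1}} & \cdots & \tensor*[^t]{\boldsymbol{0}}{_{\ell-1}} \\ \boldsymbol{0}_{1} & 1 & & \\ \vdots & & \ddots & \\ \boldsymbol{0}_{\ell-1} & & & 1 \end{pmatrix},
\]
and Lemma~\ref{lemmCell} replaces $C_\ell$ by the scalar $\beta_\ell^{2}$.

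Next, I would apply Lemma~\ref{lemmMNFactoring} to the round--bracketed factor, taking $\widehat{M}=\beta_\ell$ and $\widehat{N}=1$ (both $1\times 1$), with $j=\ell-1$. This rewrites the round--bracketed factor as
\[
\begin{bmatrix} \beta_\ell & \tensor*[^t]{\boldsymbol{0}}{_{\ell-1}} \\ \boldsymbol{0}_{\ell-1} & I_{\ell-1} \end{bmatrix} I_\ell,
\]
so that $C_1$ is now expressed as the action of two square--bracketed upper triangular matrices on $I_\ell$. The key point is the composition identity $[L]\bigl([D]Z\bigr)=[LD]Z$, which is immediate from $[g]Z=gZg^{T}$; I would use this to collapse the two square--bracket factors into $[LD]I_\ell$, where $L$ is the upper triangular matrix displayed above and $D=\mathrm{diag}(\beta_\ell,1,\dots,1)$.

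Finally, because $D$ is diagonal, the product $LD$ is obtained from $L$ by rescaling only its first column by $\beta_\ell$; all other entries of $L$ are unaffected, and in particular $(LD)_{11}=1\cdot\beta_\ell=\beta_\ell$. The resulting matrix $LD$ is exactly the upper triangular matrix in the statement of the proposition, and $[LD]I_\ell=(LD)(LD)^{T}$ is then the claimed Cholesky factorization of $C_1$.

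There is no real obstacle here; the step is the termination of the reverse outer product Cholesky recursion. The only care required is bookkeeping with the bracket conventions — specifically, keeping track that $C_\ell$ is a scalar, that its Cholesky factor $\beta_\ell$ can be pushed through via Lemma~\ref{lemmMNFactoring}, and that $[L][D]=[LD]$ allows the scalar to be absorbed into the $(1,1)$ entry of the upper triangular Cholesky factor without disturbing any other entry.
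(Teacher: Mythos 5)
Your proposal is correct and follows essentially the same route as the paper: specialize Lemma~\ref{lemmSquareRoundComm} at $k=\ell-1$, substitute $C_\ell=\beta_\ell^2$ via Lemma~\ref{lemmCell}, split the round-bracketed scalar factor with Lemma~\ref{lemmMNFactoring}, and then collapse the two square-bracketed factors using $[L][D]=[LD]$. The paper states this tersely (``multiplying the matrices in the square brackets''), while you make the composition identity and the column-rescaling bookkeeping explicit, but the argument is the same.
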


\begin{proof}
 Apply Lemma~\ref{lemmSquareRoundComm} with $k=\ell-1$ and, to the result, apply Lemmas~\ref{lemmCell} and~\ref{lemmMNFactoring}.  This yields the desired result.
\end{proof}

Consequently, using the proposition, we have proved Theorem~\ref{thmSymmetricFactorCholesky}.  Note that the values of $\beta_k$ and $\boldsymbol{s}_k$ come from (\ref{eqnDefBeta1Cholesky},~\ref{eqnDefBoldr1Cholesky}) and Lemmas~\ref{lemmRecursiveCkBlocks} and~\ref{lemmCell}.

\subsubsection{Proof of Theorem~\ref{thmThickenSubsetSectionSpherical} for $L = I_d$}\label{secProofthmThickenSubsetSectionSpherical}  We adapt the proof of Theorem~\ref{thmNAKResultRankOneFlow} for the case $L = I_d$ to give a proof of Theorem~\ref{thmThickenSubsetSectionSpherical} for the case $L = I_d$.  The key equation to use is (\ref{eqnDecompSOintoN}), which we now restate in a more convenient form:

\begin{align}\label{eqnDecompSOintoNVerII}
\Phi^{-s} E(\boldsymbol{z})^{-1} n_+(-\boldsymbol{z'}) = \Phi^{-s}  \begin{pmatrix}  I_{d-1}& c^{-1}  ( \tensor*[^t]{\boldsymbol{w}}{})\\  0 & 1\end{pmatrix}\begin{pmatrix}  c^{1/(d-1)}(A - \tensor*[^t]{\boldsymbol{w}}{}\boldsymbol{z'}) & \tensor*[^t]{\boldsymbol{0}}{}\\  0 & 1\end{pmatrix} \Phi^{\log c/ (d-1)}
  \end{align}

Define the natural projection map \[\iota:K' \rightarrow \SO(d-1, \RR), \quad  \begin{pmatrix}  k &\tensor*[^t]{\boldsymbol{0}}{}  \\ \boldsymbol{0} & 1 \end{pmatrix} \mapsto k\]  The analog of Lemma~\ref{lemmMMyCoordinatesForSE} is

\begin{lemm}\label{lemmMMyCoordinatesForSECuspNeigh}  We have that \begin{align*}
\left\{\widetilde{\Ss}_T(\boldsymbol{z}) E^{-1}(\boldsymbol{z}): \boldsymbol{z} \in \Dd\right\} &= \left\{ \widetilde{\Ss}\left(\frac{T}{c^2 T_-}\alpha_{1}, \frac{T}{c^2 T_-}\gamma_{d-1}, 1 \right)n_+(\boldsymbol{z}'):  \boldsymbol{z} \in \Dd\right\} \end{align*} if $d = 2$ and \begin{align*} &\left\{\widetilde{\Ss}_T(\boldsymbol{z}) E^{-1}(\boldsymbol{z}): \boldsymbol{z} \in \Dd\right\} = \left\{ \bigsqcup_{ \widehat{k} \in \widetilde{K}_U(\boldsymbol{z})}\widetilde{\Ss}\left(\left(\frac{\beta_{d-1}}{\beta_{d-2}}\alpha_1, \cdots, \frac{\beta_{2}}{\beta_{1}}\alpha_{d-2},\frac{\beta_1T}{c T_-^{d/2(d-1)}}\alpha_{d-1}\right), \right.\right. \\ &\quad\quad\left.\left. \left(\frac{\beta_{d-1}}{\beta_{d-2}}\gamma_1, \cdots, \frac{\beta_{2}}{\beta_{1}}\gamma_{d-2},\frac{\beta_1T}{c T_-^{d/2(d-1)}}\gamma_{d-1}\right), \Rr\left( \iota\left(\widehat{k}E(\boldsymbol{z})\right), \boldsymbol{z}\right) \right)n_+(\boldsymbol{z}'): \boldsymbol{z} \in \Dd\right\}   \end{align*}  if $d \geq 3$.  Here, $\beta_\ell$ is as in Theorem~\ref{thmSymmetricFactorCholesky}.
 
\end{lemm}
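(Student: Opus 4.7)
The plan is to take an arbitrary element $M \Phi^{-s_0} E^{-1}(\boldsymbol{z})$ of the left-hand side (with $M \in \widetilde{\Ss}(\boldsymbol{z})$ and $s_0 := \tfrac{1}{d}\log(T/T_-^{d/2(d-1)})$) and, step by step, rewrite it in the form displayed on the right-hand side. The two key ingredients are the decomposition~(\ref{eqnDecompSOintoNVerII}), which already produces the desired $n_+(\boldsymbol{z}')$ on the far right, and the Cholesky factorization from Corollary~\ref{coroSymmetricFactorCholesky}, which handles the resulting block-diagonal factor with top block $c^{1/(d-1)}(A - \tensor*[^t]{\boldsymbol{w}}{}\boldsymbol{z}')$.

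First I would apply (\ref{eqnDecompSOintoNVerII}), then commute $\Phi^{-s_0}$ past $n_-(c^{-1}\boldsymbol{w})$ via $\Phi^{-s_0} n_-(\boldsymbol{a}) = n_-(e^{ds_0}\boldsymbol{a})\Phi^{-s_0}$ and past the block-diagonal factor (with which $\Phi^{-s_0}$ commutes, its bottom-right block being a scalar). The resulting $n_-(c^{-1}e^{ds_0}\boldsymbol{w})$ lies in $H$, so right-multiplying $M$ by it only modifies the $\boldsymbol{b}$-component and keeps us inside $\widetilde{\Ss}(\boldsymbol{z})$ modulo $\Gamma$. Writing $M = U_x D \begin{pmatrix} k & 0 \\ 0 & 1\end{pmatrix}$ in full section coordinates~(\ref{eqnFundDomCoord}) with $k = \iota(\widehat{k}E(\boldsymbol{z}))$, factoring $k(A-\tensor*[^t]{\boldsymbol{w}}{}\boldsymbol{z}') = B k^*$ via Corollary~\ref{coroSymmetricFactorCholesky}, and moving the block-embedded $k^*$ to the right past the remaining $\Phi^{-s_0+\log c/(d-1)}$ (they commute), one reaches
\[M\Phi^{-s_0}E^{-1}(\boldsymbol{z}) \equiv U_x \cdot D \begin{pmatrix} c^{1/(d-1)} B & 0 \\ 0 & 1 \end{pmatrix}\Phi^{-s_0+\log c/(d-1)} \cdot \Rr(\iota(\widehat{k}E(\boldsymbol{z})),\boldsymbol{z})\, n_+(\boldsymbol{z}') \pmod{\Gamma}.\]

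The main computation is to re-express the central factor in Grenier form. Splitting $c^{1/(d-1)}B$ into its unipotent and diagonal pieces and commuting the unipotent past $D$ (which only rescales off-diagonal entries and so is absorbed into a new upper-triangular $x$-block), the new diagonal becomes $D\cdot\operatorname{diag}(c^{1/(d-1)}\beta_{d-1},\ldots,c^{1/(d-1)}\beta_1,1)\cdot\Phi^{-s_0+\log c/(d-1)}$. Matching this entrywise against $y'^{-1/2}\operatorname{diag}(y'_{d-1}\cdots y'_1,\ldots,y'_{d-1},1)$ yields $y'_k = (\beta_{d-k}/\beta_{d-k-1})y_k$ for $1\leq k\leq d-2$ and $y'_{d-1} = (T\beta_1/(cT_-^{d/2(d-1)}))y_{d-1}$, which are exactly the modified $\boldsymbol{\alpha}', \boldsymbol{\gamma}'$ of the statement; self-consistency $y'^d = \prod_k (y'_{d-k})^{2(d-k)}$ follows from $\prod_i \beta_i = \det B = 1/c$. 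Varying $\widehat{k}$ over $\widetilde{K}_U(\boldsymbol{z})$ produces the disjoint union on the right, disjointness holding because distinct $\widehat{k}$'s yield distinct Cholesky factors and hence distinct parameter tuples.

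The case $d=2$ is a direct specialization: $B$ reduces to the scalar $\beta_1 = 1/c$, $K'$ is trivial (so $\widetilde{K}_U(\boldsymbol{z})$ is a singleton), $T_-^{d/2(d-1)} = T_-$, and the formula collapses to $\alpha'_1 = T\beta_1\alpha_1/(cT_-) = T\alpha_1/(c^2T_-)$, matching the stated expression. The main obstacle will be careful bookkeeping of the indexing of $\beta_1,\ldots,\beta_{d-1}$ through the entrywise comparison of diagonals, together with verifying that right-multiplication of the $x_{ij}$-block by the unipotent piece of $c^{1/(d-1)}B$ stays in the Grenier fundamental domain after a suitable $\Gamma$-translation---which is guaranteed precisely because the $x_{ij}$-constraints in $\widetilde{\Ss}$ are by definition only those of $\Ff_d'$.
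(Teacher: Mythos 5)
Your proposal follows essentially the same route as the paper's proof: apply (\ref{eqnDecompSOintoNVerII}), absorb the unipotent factor $n_-(c^{-1}\boldsymbol{w})$ into $H$ (which the paper does by absorbing directly into the $x_{ij}$'s via the conjugation action of the diagonal, while you keep $\Phi^{-s_0}$ explicit and commute it through first --- a purely cosmetic difference), invoke the reverse Cholesky factorization of Theorem~\ref{thmSymmetricFactorCholesky} via Corollary~\ref{coroSymmetricFactorCholesky} to write $k(A-\tensor*[^t]{\boldsymbol{w}}{}\boldsymbol{z'}) = B k^*$, absorb the strictly upper part of $B$ into the $x_{ij}$ coordinates, and let $\operatorname{diag}(\beta_{d-1},\ldots,\beta_1)$ rescale the $y_i$-coordinates. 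Your entrywise computation of the rescaled $\boldsymbol{\alpha}',\boldsymbol{\gamma}'$ parameters and the self-consistency check $\prod_i \beta_i = \det B = c^{-1}$ are correct, and your $d=2$ specialization is also right.

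One reasoning slip should be corrected. It is not true that ``distinct $\widehat{k}$'s yield distinct Cholesky factors'': by Theorem~\ref{thmSymmetricFactorCholesky}, the factor $B$ depends on $\widehat{k}$ only through the row vector $\boldsymbol{w}\,\tensor*[^t]{\bigl(\iota(\widehat{k}E(\boldsymbol{z}))\bigr)}{}$, so distinct $\widehat{k}$'s can certainly produce the same $B$ --- for instance at $\boldsymbol{z}=\boldsymbol{0}$ one has $\boldsymbol{w}=\boldsymbol{0}$ and $B=I_{d-1}$ for every $\widehat{k}$. The disjointness of the union in the lemma therefore cannot rest on distinctness of the $B$'s (equivalently, of the rescaled $y_i$-ranges). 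It rests instead on the injectivity assertion of Corollary~\ref{coroSymmetricFactorCholesky}: for fixed $\boldsymbol{z}$, the $K'$-coordinate $\Rr\bigl(\iota(\widehat{k}E(\boldsymbol{z})),\boldsymbol{z}\bigr)$ is distinct for distinct $\widehat{k}$, a fact whose proof goes through uniqueness of the Iwasawa decomposition of $\SL(d-1,\RR)$ rather than uniqueness of $B$. This is exactly what the paper invokes.
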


\begin{rema}
Note that application of the flow $\Phi^{-s}$ does not affect the $\widetilde{K}$ coordinate.

\end{rema}

\begin{proof}

Applying (\ref{eqnDecompSOintoNVerII},~\ref{eqnFundDomCoord},~\ref{eqnRelCTAndC},~\ref{eqnPullingBackOfSectionalNeigh2}), we have that an arbitrary element $g \in \widetilde{\Ss}_{T}\Ee$ is of the form \begin{align*} g =  \begin{pmatrix}  1&x_{ij} &\\ &\ddots& \\& & 1\end{pmatrix}&y^{-1/2} \begin{pmatrix}y_{d-1}y_{d-2} \cdots y_{1}&&&& \\ &\ddots&&&&\\ && y_{d-1} y_{d-2}&&\\ &&&y_{d-1}&\\ &&&& 1   \end{pmatrix}\begin{pmatrix} k  &  \tensor*[^t]{\boldsymbol{0}}{} \\ \boldsymbol{0} & 1\end{pmatrix} \\\nonumber & \begin{pmatrix}  I_{d-1}& c^{-1}  ( \tensor*[^t]{\boldsymbol{w}}{})\\  0 & 1\end{pmatrix}\begin{pmatrix}  c^{1/(d-1)}(A - \tensor*[^t]{\boldsymbol{w}}{}\boldsymbol{z'}) & \tensor*[^t]{\boldsymbol{0}}{}\\  0 & 1\end{pmatrix} \Phi^{\log c/ (d-1)} n_+(\boldsymbol{z'})  \end{align*} where $\boldsymbol{z} \in \Dd$, $\gamma_i \geq y_i \geq \alpha_i$ for $i = 1, \cdots, d-2$, $T_-^{-d/2(d-1)}T \gamma_{d-1} \geq y_{d-1} \geq T_-^{-d/2(d-1)}T\alpha_{d-1}$, and $k \in \widetilde{K}_U(\boldsymbol{z})E(\boldsymbol{z})$.  (The only constraint on $x_{ij}$ is that $g$ lies in $\Ff'_d$.)  Since \[\begin{pmatrix}  k & \tensor*[^t]{\boldsymbol{0}}{}\\ \boldsymbol{0} & 1 \end{pmatrix} \begin{pmatrix} I_{d-1} & c^{-1} (\tensor*[^t]{\boldsymbol{w}}{}) \\    \boldsymbol{0} & 1   \end{pmatrix} = \begin{pmatrix} I_{d-1} & k c^{-1} (\tensor*[^t]{\boldsymbol{w}}{}) \\    \boldsymbol{0} & 1   \end{pmatrix} \begin{pmatrix}  k & \tensor*[^t]{\boldsymbol{0}}{}\\ \boldsymbol{0} & 1 \end{pmatrix}\] and the diagonal element \begin{align}\label{eqnDiagonalElementConj}
 y^{-1/2} \begin{pmatrix}y_{d-1}y_{d-2} \cdots y_{1}&&&& \\ &\ddots&&&&\\ && y_{d-1} y_{d-2}&&\\ &&&y_{d-1}&\\ &&&& 1   \end{pmatrix} \end{align} acts by conjugation on the subgroup of upper triangular unipotent matrices, we have that  \begin{align*} g =  \begin{pmatrix}  1&\widetilde{x}_{ij} &\\ &\ddots& \\& & 1\end{pmatrix}&y^{-1/2} \begin{pmatrix}y_{d-1}y_{d-2} \cdots y_{1}&&&& \\ &\ddots&&&&\\ && y_{d-1} y_{d-2}&&\\ &&&y_{d-1}&\\ &&&& 1   \end{pmatrix}\begin{pmatrix} k  &  \tensor*[^t]{\boldsymbol{0}}{} \\ \boldsymbol{0} & 1\end{pmatrix} \\\nonumber & \begin{pmatrix}  c^{1/(d-1)}(A - \tensor*[^t]{\boldsymbol{w}}{}\boldsymbol{z'}) & \tensor*[^t]{\boldsymbol{0}}{}\\  0 & 1\end{pmatrix} \Phi^{\log c/ (d-1)} n_+(\boldsymbol{z'}).\end{align*}

Consider the case $d=2$ first.  Equation (\ref{eqnDecompSOintoNVerII}) implies that $ c^{1/(d-1)}\left(A - \tensor*[^t]{\boldsymbol{w}}{}\boldsymbol{z'}\right) = 1$, from which the desired result follows by an application of (\ref{eqnRelCTAndC}).

Now consider the case $d \geq 3$.  Multiplying matrices, we have \begin{align*} g =  \begin{pmatrix}  1&\widetilde{x}_{ij} &\\ &\ddots& \\& & 1\end{pmatrix}&y^{-1/2} \begin{pmatrix}y_{d-1}y_{d-2} \cdots y_{1}&&&& \\ &\ddots&&&&\\ && y_{d-1} y_{d-2}&&\\ &&&y_{d-1}&\\ &&&& 1   \end{pmatrix} \\\nonumber & \begin{pmatrix}  B & \tensor*[^t]{\boldsymbol{0}}{}\\  0 & c\end{pmatrix} \begin{pmatrix} B^{-1} k \left(A - \tensor*[^t]{\boldsymbol{w}}{}\boldsymbol{z'}\right) & \tensor*[^t]{ \boldsymbol{0}}{} \\  \boldsymbol{0} & 1 \end{pmatrix}  n_+(\boldsymbol{z'})\end{align*} where $B$ is as in Theorem~\ref{thmSymmetricFactorCholesky}.  Also we have \[\begin{pmatrix}  B & \tensor*[^t]{\boldsymbol{0}}{}\\  0 & c\end{pmatrix}=  n(B)  \begin{pmatrix}\beta_{d-1}&&&& \\ &\ddots&&&&\\ && \beta_{2}&&\\ &&&\beta_{1}&\\ &&&& c   \end{pmatrix}\] where $n(B)$ is an upper triangular unipotent matrix.  Using the conjugation action of the diagonal element from (\ref{eqnDiagonalElementConj}) yields the desired result.

Note that the disjointness of the union follows from the injectivity assertion in Corollary~\ref{coroSymmetricFactorCholesky}.

\end{proof}
 
For convenience of notation, let us denote some of the sets in the lemma by \begin{align}\label{eqn:SphericalSectionCoords}
\widehat{\Ss}_T(\boldsymbol{z}):= \begin{cases} \widetilde{\Ss}\left(\frac{T}{c^2 T_-}\alpha_{1}, \frac{T}{c^2 T_-}\gamma_{d-1}, 1 \right) &\textrm{ if } d=2, \\ \bigsqcup_{ \widehat{k} \in \widetilde{K}_U(\boldsymbol{z})}\widetilde{\Ss}\left(\left(\frac{\beta_{d-1}}{\beta_{d-2}}\alpha_1, \cdots, \frac{\beta_{2}}{\beta_{1}}\alpha_{d-2},\frac{\beta_1T}{c T_-^{d/2(d-1)}}\alpha_{d-1}\right), \right. \\ \quad\quad\left. \left(\frac{\beta_{d-1}}{\beta_{d-2}}\gamma_1, \cdots, \frac{\beta_{2}}{\beta_{1}}\gamma_{d-2},\frac{\beta_1T}{c T_-^{d/2(d-1)}}\gamma_{d-1}\right), \Rr\left( \iota \left(\widehat{k}E(\boldsymbol{z})\right), \boldsymbol{z}\right) \right)   &\textrm{ if } d \geq 3.\end{cases}  \end{align}

\begin{lemm}\label{lemmLowerHeightWidehatST}
 For any $\boldsymbol{z} \in \Dd$, the lower height of $\widehat{\Ss}_T(\boldsymbol{z})$ is \[\frac{T^{2(d-1)/d}} {c^{2(d^2 -2d +2)/d(d-1)}}.\]
\end{lemm}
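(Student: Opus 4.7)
The plan is to directly substitute the lower bounds on the Grenier $y$-coordinates appearing in the definition \eqref{eqn:SphericalSectionCoords} of $\widehat{\Ss}_T(\boldsymbol{z})$ into the height formula \eqref{eqnHeightinGrenierCoords} and simplify, with the main nontrivial ingredient being the Cholesky identity $(\det B)^2 = c^{-2/(d-1)}$ supplied by Lemma~\ref{lemmAlmostRotationSymFac} and Theorem~\ref{thmSymmetricFactorCholesky}.

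For $d=2$, there is only one $y$-coordinate, with lower bound $\tfrac{T}{c^2 T_-}\alpha_1$. Since $T_-=\alpha_1$ in this case (as $\prod_{k=1}^{d-1}\alpha_{d-k}^{2(d-k)}=T_-^d$ reduces to $\alpha_1^2=T_-^2$), the height formula $y^2=y_1^2$ immediately yields the lower height $T/c^2$, which agrees with the claimed exponents $2(d-1)/d=1$ and $2(d^2-2d+2)/(d(d-1))=2$.

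For $d\geq 3$, the lower bound on $y_i$ read off from \eqref{eqn:SphericalSectionCoords} is $\tfrac{\beta_{d-i}}{\beta_{d-i-1}}\alpha_i$ for $i=1,\dots,d-2$ and $\tfrac{\beta_1 T}{c\,T_-^{d/2(d-1)}}\alpha_{d-1}$ for $i=d-1$. Calling the lower height $T_{\min}$ and applying \eqref{eqnHeightinGrenierCoords}, the computation splits into four independent factors:
\begin{align*}
T_{\min}^d
&= \Bigl(\tfrac{\beta_1 T}{c\,T_-^{d/2(d-1)}}\alpha_{d-1}\Bigr)^{2(d-1)}
\prod_{i=1}^{d-2}\Bigl(\tfrac{\beta_{d-i}}{\beta_{d-i-1}}\alpha_i\Bigr)^{2i}\\
&= \underbrace{T^{2(d-1)}}_{\text{$T$-part}}\cdot\underbrace{c^{-2(d-1)}T_-^{-d}}_{\text{prefactor}}\cdot\underbrace{\Bigl(\prod_{i=1}^{d-1}\alpha_i^{2i}\Bigr)}_{\alpha\text{-part}}\cdot\underbrace{\beta_1^{2(d-1)}\prod_{i=1}^{d-2}\Bigl(\tfrac{\beta_{d-i}}{\beta_{d-i-1}}\Bigr)^{2i}}_{\beta\text{-part}}.
\end{align*}
The $\alpha$-part equals $T_-^d$ by the very definition of $T_-$ in Section~\ref{secCuspNeigh}, cancelling the $T_-^{-d}$ prefactor.

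The main bookkeeping step is the $\beta$-part. Reindexing $k=d-i$ turns the product into $\prod_{k=2}^{d-1}(\beta_k/\beta_{k-1})^{2(d-k)}$. Tracking the exponent with which each $\beta_k$ appears shows that after telescoping the $\beta_1^{2(d-1)}$ factor in front exactly compensates for the denominator contribution at $\beta_1$, while for $2\leq k\leq d-1$ the net exponent of $\beta_k$ is $2$. Hence the $\beta$-part collapses to $\prod_{k=1}^{d-1}\beta_k^2 = (\det B)^2$, which by Lemma~\ref{lemmAlmostRotationSymFac} equals $c^{-2/(d-1)}$.

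Combining everything,
\[
T_{\min}^d = T^{2(d-1)}\,c^{-2(d-1)-2/(d-1)} = \frac{T^{2(d-1)}}{c^{2(d^2-2d+2)/(d-1)}},
\]
and taking $d$-th roots yields the claim. The only nontrivial step is the telescoping of the $\beta$-product combined with the Cholesky identity; everything else is direct substitution.
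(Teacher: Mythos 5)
Your computation is correct and follows the same route as the paper: the paper's (very terse) proof also substitutes the lower $y_i$-bounds from the definition of $\widehat{\Ss}_T(\boldsymbol{z})$ into the height formula to obtain $\left(\frac{T^{2(d-1)}}{c^{2(d-1)}}\beta_1^2\cdots\beta_{d-1}^2\right)^{1/d}$ and then invokes Theorem~\ref{thmSymmetricFactorCholesky}, (\ref{eqnSymmetricFactor}), and Lemma~\ref{lemmAlmostRotationSymFac} to identify $\prod\beta_k^2=c^{-2/(d-1)}$. You have simply spelled out the bookkeeping (the cancellation of the $\alpha$- and $T_-$-parts and the telescoping of the $\beta$-ratios) that the paper labels ``a direct computation.''
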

\begin{proof}
The result for $d=2$ is a direct computation.  For $d \geq3$, a direct computation yields that the lower height is \[\left(\frac{T^{2(d-1)}}{c^{2(d-1)}} \beta_1^2 \cdots \beta_{d-1}^2\right)^{1/d}.\]  Applying Theorem~\ref{thmSymmetricFactorCholesky}, (\ref{eqnSymmetricFactor}), and Lemma~\ref{lemmAlmostRotationSymFac} yields the desired result. 
\end{proof}

\begin{coro}\label{coroLowerHeightWidehatST}
We have that \[\bigcup_{\boldsymbol{z} \in \Dd}\widehat{\Ss}_T(\boldsymbol{z}) \subset \Ss_T \quad \textrm{ and } \quad \bigcup_{\boldsymbol{z} \in \Dd}\widehat{\Ss}_{T_0}(\boldsymbol{z}) \subset \Ss_{T_0}.\]
\end{coro}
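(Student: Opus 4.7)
\emph{Proof plan for Corollary~\ref{coroLowerHeightWidehatST}.} The plan is to read the inclusion directly off Lemma~\ref{lemmLowerHeightWidehatST} once we check a sign condition on the exponent and a range condition on $c(\boldsymbol{z})$. First, by Lemma~\ref{lemmLowerHeightWidehatST}, the lower height of $\widehat{\Ss}_T(\boldsymbol{z})$ equals
\[
\frac{T^{2(d-1)/d}}{c(\boldsymbol{z})^{2(d^2-2d+2)/d(d-1)}}.
\]
I would observe that $c(\boldsymbol{z})\in(0,1]$: positivity is exactly the hypothesis (\ref{eqnRestrictionsOnD}), and the bound $c(\boldsymbol{z})\le 1$ follows from the orthogonality relation $\boldsymbol{v}\tensor*[^t]{\boldsymbol{v}}{}+c^2=1$ in (\ref{eqnElemPropSOd}). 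Since for every $d\ge 2$ the exponent $2(d^2-2d+2)/d(d-1)$ is strictly positive, we obtain $c(\boldsymbol{z})^{2(d^2-2d+2)/d(d-1)}\le 1$, and therefore
\[
\frac{T^{2(d-1)/d}}{c(\boldsymbol{z})^{2(d^2-2d+2)/d(d-1)}}\;\ge\; T^{2(d-1)/d},
\]
which is exactly the lower height of $\Ss_T$ recorded after (\ref{eqnPullingBackOfSectionalNeigh2}).

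Next I would convert the lower-height inequality into a containment statement. By its definition in (\ref{eqn:SphericalSectionCoords}), the set $\widehat{\Ss}_T(\boldsymbol{z})$ is a (disjoint union of) Grenier box(es), hence every point of it has the form (\ref{eqnFundDomCoord}) for some choice of parameters $x_{ij},y_k,k$; writing such a point as $M\Phi^{-s}$ in first-order section coordinates identifies $s$ with $\tfrac{\log y}{2(d-1)}$, where $y$ is the height from (\ref{eqnHeightinGrenierCoords}). So the condition $y\ge T^{2(d-1)/d}$, which is guaranteed by the lower-height bound above, is equivalent to $s\ge \tfrac{1}{d}\log T$, and this is exactly the defining condition of $\Ss_T$ in (\ref{eqnSectionForT}). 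Hence $\widehat{\Ss}_T(\boldsymbol{z})\subset \Ss_T$ for every $\boldsymbol{z}\in\Dd$, and taking the union over $\boldsymbol{z}$ gives the first inclusion.

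For the second inclusion I would simply repeat the argument with $T$ replaced by $T_0$, noting that $T_0^{2(d-1)/d}=T_-$ by the defining relation (\ref{eqnRelationLowerHeightToFlow}), so that $T_-$ is indeed the lower height of $\Ss_{T_0}$, which matches the lower height of $\widetilde{\Ss}$ (as stated just after (\ref{eqnPullingBackOfSectionalNeigh2})). There is no real obstacle here: the work has been absorbed entirely into Lemma~\ref{lemmLowerHeightWidehatST}, and what remains is checking the sign of the exponent, the range of $c(\boldsymbol{z})$, and the elementary translation between height in the Grenier domain and the parameter $s$ in the definition of $\Ss_T$.
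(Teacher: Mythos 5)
Your proposal is correct and follows the same approach as the paper's own (very terse) proof, which simply says to apply Lemma~\ref{lemmLowerHeightWidehatST} together with the fact that $0<c\le 1$. You have merely spelled out the details: where the bound on $c$ comes from, why the exponent is positive, and how the resulting lower-height inequality translates into the stated containment.
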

\begin{proof}
Apply the lemma with the fact that $0 < c \leq 1$.
\end{proof}

We now follow the proof of Theorem~\ref{thmNAKResultRankOneFlow} for the case $L = I_d$, but replacing \begin{align}\label{eqnUniReplaceSToWidtildeS}
 \Ss_{Tc^{-d/(d-1)}} \textrm{ with } \widehat{\Ss}_T(\boldsymbol{z}) \textrm{ and } \Ss_{T_0c^{-d/(d-1)}} \textrm{ with } \widehat{\Ss}_{T_0}(\boldsymbol{z}). \end{align}  We have the analogs of \[\widehat{f}_{T, \Dd}(\boldsymbol{r}, n_-(\boldsymbol{r}) \Phi^t) \textrm{ and } \widehat{f}_{T, \Dd, \alpha}(\boldsymbol{r},  n_-(\boldsymbol{r}) \Phi^t).\]  Letting \[\widetilde{u} \in \bigcup_{\boldsymbol{z} \in \Dd}\widehat{\Ss}_T(\boldsymbol{z}) \textrm{ and } {u} \in \bigcup_{\boldsymbol{z} \in \Dd}\widehat{\Ss}_{T_0}(\boldsymbol{z}),\] we have the analog of (\ref{eqnSphereHoroEquivalence2}) also.

Also, the statement of Lemma~\ref{lemmUnstableVolumesNAKCase} is unchanged.  Its proof is changed as follows.  First, we also make the changes in (\ref{eqnUniReplaceSToWidtildeS}).  Next, the definitions of $\I(\boldsymbol{r}, t, T)$ and $\J(\boldsymbol{r}, t, T)$ are slightly altered as follows :   \begin{align*}
\I(\boldsymbol{r}, t, T) &:= \Ga\tensor*[^t]{\left(n_-(\boldsymbol{r}) \Phi^{t}\right)}{}^{-1}  \cap  \bigcup_{\boldsymbol{z} \in \Dd}\widehat{\Ss}_T(\boldsymbol{z})  \\ \J\left(\boldsymbol{r}, t ,T\right) &:=\Ga \tensor*[^t]{\left(n_-(\boldsymbol{r}) \Phi^{t - \frac 1 d \log(T/T_0)}\right)}{}^{-1}  \cap \bigcup_{\boldsymbol{z} \in \Dd}\widehat{\Ss}_{T_0}(\boldsymbol{z}).\end{align*}  The reason that $ \J\left(\boldsymbol{r}, t ,T\right)$ is nonempty for all $t$ large enough is exactly the same as in the proof of Lemma~\ref{lemmUnstableVolumes2}.  The mapping $\varphi$ is the same and a direct computation using (\ref{eqnRelCTAndC}) shows that it is a bijection.  The proof up to and including (\ref{eqnCompareExpandingHaarVolNAKCase}) is the same.  To prove the analog of (\ref{eqnRatioHaarMeasuresNAKCase}), we use the same proof along with Corollary~\ref{coroLowerHeightWidehatST}.  The rest of the proof of the lemma is the same. 

The proof of the analog of (\ref{eqnMainDerivUsingFrobNAKCase}) is also the same, except we replace Theorem~\ref{thmHaarMeaSE} with Theorem~\ref{thmHaarMeaWidetildeSE}.  The remaining proof to obtain the analog of (\ref{eqnNAKMainResultConclusion}), which is also the desired result, is the same.  This concludes the proof of Theorem~\ref{thmThickenSubsetSectionSpherical} for the case $L = I_d$.

\subsection{Thickenings by subsets of $\SO(d, \RR)$}  In this section, we generalize Theorem~\ref{thmThickenSubsetSectionSpherical} by removing the restriction (\ref{eqnRestrictionsOnD}).  As discussed in Section~\ref{subsubsecSphericalCase}, there are two different ways of removing this restriction, leading to two slightly different generalizations (Theorems~\ref{thmTwoChartGeneralSpherical} and~\ref{thmThickenSubsetSectionSphericalGeneral}).

\subsubsection{Two charts with $c(\boldsymbol{z})$ bounded away from zero}  The first generalization is in the context of Theorem~\ref{thmSphericalThickeningConjCharts}, namely for the induced diffeomorphism $\psi$ (i.e., the mapping from (\ref{eqnInducedDiffeoUsinged})) to still satisfy the restriction (\ref{eqnRestrictionsOnD}), but from which we construct two charts $\widetilde{\Ee}$ for $S^{d-1}$ (see (\ref{subsubsecSphericalCase})), one for one hemisphere and the other for the other.  For this context of the two charts, we generalize the notion of an induced subset of $\SO(d, \RR)$ (see Section~\ref{subsec:SubsetsOfRotationsInduced}).  First note that, in order for a subset $U \subset \SO(d, \RR)$ to satisfy the constraint given by Theorem~\ref{thmSphericalThickeningConjCharts}, we require \begin{align}\label{eqnConstUTwoChartsCase}
 U = \begin{cases} - U  & \textrm{ if } d \textrm{ is even,} \\ U \begin{pmatrix}  \widetilde{I}_{d-1} &  \tensor*[^t]{\boldsymbol{0}}{} \\ \boldsymbol{0} & -1  \end{pmatrix}  & \text{ if } d \textrm{ is odd,}\end{cases}\end{align}  where $\widetilde{I}_{d-1}$ is defined in (\ref{eqnDefTildeIdMin1}).  A subset $U \subset \SO(d, \RR)$ is \textit{induced by $\widetilde{\Ee}$} if it satisfies (\ref{eqnConstUTwoChartsCase}) and if \[\Pp(U) =  \psi(\Dd) \sqcup \left(- \psi(\Dd)\right).\]  For any $U \subset \SO(d, \RR)$ induced by $\widetilde{\Ee}$, define the \textit{slice of $U$ through $E^{-1}(\boldsymbol{z})$} as the following:\[\widetilde{K}^{+}_U({\boldsymbol{z}}):=K'E^{-1}(\boldsymbol{z}) \cap U\] and \textit{slice of $U$ through $\begin{cases}  -E^{-1}(\boldsymbol{z}) & \textrm{ if } d \textrm{ is even,} \\  \begin{pmatrix}  \widetilde{I}_{d-1} &  \tensor*[^t]{\boldsymbol{0}}{} \\ \boldsymbol{0} & -1  \end{pmatrix}E(\boldsymbol{z})^{-1} & \text{ if } d \textrm{ is odd}\end{cases}$} as the following:\[\widetilde{K}^{-}_U({\boldsymbol{z}}):=\begin{cases} K'\left(-E^{-1}(\boldsymbol{z}) \right)\cap U & \text{ if } d \textrm{ is even,} \\   K'\begin{pmatrix}  \widetilde{I}_{d-1} &  \tensor*[^t]{\boldsymbol{0}}{} \\ \boldsymbol{0} & -1  \end{pmatrix}E(\boldsymbol{z})^{-1} \cap U & \text{ if } d \textrm{ is odd.}\end{cases}\]

We also have the analog of Lemma~\ref{lemmDisjKSlices}.
\begin{lemm} For any subset $U$ induced by $\widetilde{\Ee}$, we have that \[\bigsqcup_{\boldsymbol{z} \in \Dd} \left(\widetilde{K}^+_U({\boldsymbol{z}})\sqcup  \widetilde{K}^-_U({\boldsymbol{z}}) \right)= U\] 
\end{lemm}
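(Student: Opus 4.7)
The plan is to mirror the proof of Lemma~\ref{lemmDisjKSlices} while tracking the extra chart carefully, using Lemma~\ref{lemmInducedDiffeoDeterByvAndc} as the main tool. First I would verify that the two charts of $\widetilde{\Ee}$ project under $\Pp$ to disjoint hemispheres of $S^{d-1}$. For $d$ even the second chart at $\boldsymbol{z}$ is $-E(\boldsymbol{z})^{-1}$, and since $-I_d$ is central, $\Pp(-E(\boldsymbol{z})^{-1}) = -\psi(\boldsymbol{z})$. For $d$ odd the second chart element is $\begin{pmatrix} \widetilde{I}_{d-1} & \tensor*[^t]{\boldsymbol{0}}{} \\ \boldsymbol{0} & -1  \end{pmatrix}E(\boldsymbol{z})^{-1}$, and a direct computation gives $e_d \begin{pmatrix} \widetilde{I}_{d-1} & \tensor*[^t]{\boldsymbol{0}}{} \\ \boldsymbol{0} & -1  \end{pmatrix} = -e_d$, so again the projection is $-\psi(\boldsymbol{z})$. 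Since $c(\overline{\Dd})>0$ by (\ref{eqnRestrictionsOnD}), the points of $\psi(\Dd)$ have strictly positive last coordinate while the points of $-\psi(\Dd)$ have strictly negative last coordinate, making the two images disjoint.

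Next I would prove coverage. Let $g \in U$. Then $\Pp(g) \in \psi(\Dd) \sqcup (-\psi(\Dd))$ by hypothesis. In the first case, by Lemma~\ref{lemmInducedDiffeoDeterByvAndc} there exists a unique $\boldsymbol{z} \in \Dd$ with $\Pp(g) = \psi(\boldsymbol{z})$, and since $\Pp^{-1}(\psi(\boldsymbol{z})) = K' E(\boldsymbol{z})^{-1}$, we have $g \in K'E(\boldsymbol{z})^{-1} \cap U = \widetilde{K}^+_U(\boldsymbol{z})$. In the second case, the preceding paragraph yields a unique $\boldsymbol{z} \in \Dd$ such that $\Pp(g) = -\psi(\boldsymbol{z})$, which coincides with $\Pp$ of the second-chart representative at $\boldsymbol{z}$, so $g \in \widetilde{K}^-_U(\boldsymbol{z})$.

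For disjointness there are two pieces. First, $\widetilde{K}^+_U(\boldsymbol{z}) \cap \widetilde{K}^-_U(\boldsymbol{z}') = \emptyset$ for any $\boldsymbol{z}, \boldsymbol{z}' \in \Dd$, because any element of $\widetilde{K}^+_U(\boldsymbol{z})$ projects into $\psi(\Dd)$ while any element of $\widetilde{K}^-_U(\boldsymbol{z}')$ projects into $-\psi(\Dd)$, and these two sets are disjoint by the first paragraph. Second, for fixed sign, if $\boldsymbol{z}_1 \neq \boldsymbol{z}_2$ in $\Dd$, then $\psi(\boldsymbol{z}_1) \neq \psi(\boldsymbol{z}_2)$ by injectivity of $\psi$; hence $K'E(\boldsymbol{z}_1)^{-1}$ and $K'E(\boldsymbol{z}_2)^{-1}$ are distinct cosets and so disjoint, giving $\widetilde{K}^+_U(\boldsymbol{z}_1) \cap \widetilde{K}^+_U(\boldsymbol{z}_2) = \emptyset$, and the same reasoning applied to the second chart gives $\widetilde{K}^-_U(\boldsymbol{z}_1) \cap \widetilde{K}^-_U(\boldsymbol{z}_2) = \emptyset$. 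The only potentially delicate step is the case distinction between even and odd $d$ when identifying the projection of the second chart, which is handled by the explicit computation above; everything else is the standard coset decomposition argument.
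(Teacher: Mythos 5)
Your proof is correct and takes essentially the same route as the paper's (which invokes the disjointness of the two hemispheres and Lemma~\ref{lemmInducedDiffeoDeterByvAndc}, then defers to the argument of Lemma~\ref{lemmDisjKSlices}); you have simply filled in the details that the paper leaves implicit, including the explicit computation showing that the second-chart representative projects to $-\psi(\boldsymbol{z})$ in both the even and odd cases.
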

\begin{proof}
 The disjointness follows by Lemma~\ref{lemmInducedDiffeoDeterByvAndc} and the fact that the charts are in different hemispheres and thus correspond to disjoint sets.  Let $g \in U$.  Then $\Pp(g) = \psi(\boldsymbol{z})$ or $- \psi(\boldsymbol{z})$ for some $\boldsymbol{z} \in \Dd$ and the rest of the proof is similar to that of Lemma~\ref{lemmDisjKSlices}.

\end{proof}

\begin{theo}\label{thmTwoChartGeneralSpherical} Let $0<\eta <1$ be a fixed constant, $L \in G$, $T_+\geq T_- > \h_0^{2(d-1)/d}$, $\Aa \subset \Aa_L$ be a bounded subset with measure zero boundary with respect to $\wrt {\boldsymbol{x}}$, and $U \subset \SO(d, \RR)$ be a subset measurable with respect to $\wrt k$ and induced by $\widetilde{\Ee}$.  Let \begin{align*}
\widetilde{\Ss}^+(\boldsymbol{z})&:=\widetilde{\Ss}^+_{T_-, T_+}(\boldsymbol{z}):=\widetilde{\Ss}\left(\boldsymbol{\alpha}, \boldsymbol{\gamma}, \widetilde{K}^+_{U}(\boldsymbol{z})E(\boldsymbol{z})\right) \\ \widetilde{\Ss}^-(\boldsymbol{z})&:=\widetilde{\Ss}^-_{T_-, T_+}(\boldsymbol{z}):=\begin{cases}  \widetilde{\Ss}\left(\boldsymbol{\alpha}, \boldsymbol{\gamma}, \widetilde{K}^-_{U}(\boldsymbol{z})\left(-E(\boldsymbol{z})\right)\right) & \textrm{ if } d \textrm{ is even,} \\  \widetilde{\Ss}\left(\boldsymbol{\alpha}, \boldsymbol{\gamma}, \widetilde{K}^-_{U}(\boldsymbol{z}) E(\boldsymbol{z})\begin{pmatrix}  \widetilde{I}_{d-1} &  \tensor*[^t]{\boldsymbol{0}}{} \\ \boldsymbol{0} & -1  \end{pmatrix}\right) & \textrm{ if } d \textrm{ is odd,} \end{cases}  \end{align*} and $\widetilde{\Ss}^+_T$, $\widetilde{\Ss}^-_T$ be defined using $\widetilde{\Ss}:= \widetilde{\Ss}^+(\boldsymbol{z})$,  $\widetilde{\Ss}:= \widetilde{\Ss}^-(\boldsymbol{z})$ in (\ref{eqnRelationLowerHeightToFlow} , \ref{eqnPullingBackOfSectionalNeigh2}), respectively.  Let \[\widetilde{\Ss}^\pm (\boldsymbol{z}) \widetilde{\Ee}(\boldsymbol{z}):=\begin{cases}  \left\{\widetilde{S}^+_T(\boldsymbol{z}) E^{-1}(\boldsymbol{z}): \boldsymbol{z}\in \Dd\right\} \sqcup \left\{\widetilde{S}^-_T(\boldsymbol{z}) \left(-E^{-1}(\boldsymbol{z})\right): \boldsymbol{z}\in \Dd \right\} & \textrm{ if } d \textrm{ is even,} \\   \left\{\widetilde{S}^+_T(\boldsymbol{z}) E^{-1}(\boldsymbol{z}): \boldsymbol{z}\in \Dd\right\} \sqcup \left\{\widetilde{S}^-_T(\boldsymbol{z}) \left(\begin{pmatrix}  \widetilde{I}_{d-1} &  \tensor*[^t]{\boldsymbol{0}}{} \\ \boldsymbol{0} & -1  \end{pmatrix}E^{-1}(\boldsymbol{z})\right): \boldsymbol{z}\in \Dd \right\} & \textrm{ if } d \textrm{ is odd,}\end{cases}\]Then \begin{align}\label{eqnthmNAKResultRankOneFlowSubsetThickTwoChartTransHoro}
 {T^{d-1}}\int_{\RR^{d-1}} \In_{\Aa \times \left(\widetilde{\Ss}^\pm (\boldsymbol{z}) \widetilde{\Ee}(\boldsymbol{z})\right)}(\boldsymbol{x}, {Ln_-(\boldsymbol{x}) \Phi^t})~\wrt {\boldsymbol{x}} \xrightarrow[]{t \rightarrow \infty}  T_-^{d/2}\mu\left(\widetilde{\Ss}^\pm (\boldsymbol{z}) \widetilde{\Ee}(\boldsymbol{z})\right)\left(\int_{\RR^{d-1}} \In_{\Aa}(\boldsymbol{x})~\wrt{\boldsymbol{x}}\right) \end{align} uniformly for all $T \in [T_-^{d/2(d-1)}, e^{dt \eta}].$

\end{theo}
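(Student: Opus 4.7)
The plan is to decompose the thickening $\widetilde{\Ss}^\pm(\boldsymbol{z})\widetilde{\Ee}(\boldsymbol{z})$ into its two hemispherical pieces and apply Theorem~\ref{thmThickenSubsetSectionSpherical} (possibly after a reduction via a $\Gamma$-element) to each, then combine the resulting asymptotics. For the ``$+$'' piece $\{\widetilde{\Ss}^+_T(\boldsymbol{z})E^{-1}(\boldsymbol{z}) : \boldsymbol{z} \in \Dd\}$, observe that the set $U^+ := \bigsqcup_{\boldsymbol{z}\in\Dd}\widetilde{K}^+_U(\boldsymbol{z}) \subset \SO(d,\RR)$ is measurable with respect to $\wrt k$ and induced by $E$ (since $\Pp(U^+) = \psi(\Dd)$), so Theorem~\ref{thmThickenSubsetSectionSpherical}, applied with $U$ replaced by $U^+$, gives the asymptotic for this piece directly.

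For the ``$-$'' piece, the plan is to reduce to the framework of Theorem~\ref{thmThickenSubsetSectionSpherical} using an element of $\Gamma$. When $d$ is even, $-I_d \in \Gamma$ is central, so in $\Gamma \backslash G$ we have $\Gamma w \cdot (-E^{-1}(\boldsymbol{z})) = \Gamma w \cdot E^{-1}(\boldsymbol{z})$ for any $w \in G$; combined with the invariance $U = -U$ from (\ref{eqnConstUTwoChartsCase}), this identifies the ``$-$'' piece with the ``$+$'' piece in $\Gamma \backslash G$, reducing the problem to a single application of Theorem~\ref{thmThickenSubsetSectionSpherical}. When $d$ is odd, the element $\gamma_0 := \begin{pmatrix}\widetilde{I}_{d-1} & \tensor*[^t]{\boldsymbol{0}}{} \\ \boldsymbol{0} & -1\end{pmatrix} \in \Gamma$ is not central, and I would use the identity $\Gamma \gamma_0 = \Gamma$ together with $\gamma_0 M \gamma_0 \in H$ (Lemma~\ref{lemmSkewIdConjH}), $\gamma_0 \Phi^{-s}\gamma_0 = \Phi^{-s}$, and $\gamma_0 k' \gamma_0 \in K'$ (by direct computation) to rewrite a generic element $\Gamma M\Phi^{-s}k'\gamma_0 E^{-1}(\boldsymbol{z})$ of the ``$-$'' piece as $\Gamma M'\Phi^{-s}k'' E^{-1}(\boldsymbol{z})$ with $M' = \gamma_0 M\gamma_0 \in H$ and $k'' = \gamma_0 k'\gamma_0 \in K'$; the invariance $U = U\gamma_0$ from (\ref{eqnConstUTwoChartsCase}) then guarantees that the resulting $K'$-subsets assemble into an $E$-induced measurable subset $V \subset \SO(d,\RR)$, and Theorem~\ref{thmThickenSubsetSectionSpherical} applied with $U$ replaced by $V$ yields the asymptotic for the ``$-$'' piece.

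Disjointness of the two pieces in $\Gamma \backslash G$ (required for odd $d$) follows from Theorem~\ref{thmSphericalThickeningConjCharts}, which applies because the hypothesis $T_- > \h_0^{2(d-1)/d}$ ensures $T > \h_0$ throughout the range of integration. Summing the two contributions yields (\ref{eqnthmNAKResultRankOneFlowSubsetThickTwoChartTransHoro}) for $L = I_d$; the right-hand side matches via an analog of Theorem~\ref{thnHaarMeaSTEvsST0ECuspidalVersion} computing $\mu(\widetilde{\Ss}^\pm(\boldsymbol{z})\widetilde{\Ee}(\boldsymbol{z}))$ as a sum of the two individual measures. The extension from $L = I_d$ to generic $L \in G$ follows the argument of Section~\ref{subsecUniEquiForTransHoro} using Theorem~\ref{thmGeneralMarklof} (equidistribution of translated Farey sequences) in place of Theorem~\ref{thmMarklof}. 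The main obstacle will be the odd-$d$ reduction: the non-centrality of $\gamma_0$ forces careful bookkeeping of its conjugation action on $H$ and $K'$, and verifying that the invariance $U = U\gamma_0$ produces a well-defined $E$-induced measurable subset compatible with the Cholesky decomposition (\ref{eqnDecompSOintoN}) is the key technical step.
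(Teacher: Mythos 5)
Your approach essentially tracks the paper's: both decompose the two-chart spherical thickening into its hemispherical pieces, handle the ``$-$'' piece by reducing to the ``$+$'' piece via $-I_d$ (even $d$) or conjugation by $\gamma_0 := \begin{pmatrix}\widetilde{I}_{d-1} & \tensor*[^t]{\boldsymbol{0}}{} \\ \boldsymbol{0} & -1\end{pmatrix}$ together with Lemma~\ref{lemmSkewIdConjH} (odd $d$), and apply Theorem~\ref{thmThickenSubsetSectionSpherical} chart by chart, with the $L\neq I_d$ case absorbed by the already-established generality of Theorem~\ref{thmThickenSubsetSectionSpherical}.

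One concrete point to fix: you cite Theorem~\ref{thmSphericalThickeningConjCharts} for ``disjointness of the two pieces in $\Gamma\backslash G$,'' but that theorem asserts the \emph{opposite} when $\boldsymbol{z}_1 = \boldsymbol{z}_2$: the translated sections through $E_1^{-1}\in\widetilde{\Ee}(\boldsymbol{z})$ and $E_2^{-1}\in\widetilde{\Ee}(\boldsymbol{z})$ are \emph{equal} in $\Gamma\backslash G$. Disjointness in that theorem is only between distinct $\boldsymbol{z}$'s. The paper's proof for odd $d$ exploits this identification directly: it establishes the conjugation invariance (\ref{eqn:Spherical:ConjST}) of $\widetilde{S}_T(\boldsymbol{z})$ and then produces, for each $\frak{p}$, a unique $\frak{q}\in\widetilde{S}_T(\boldsymbol{z})$ with $\frak{p}E^{-1}(\boldsymbol{z}) = \frak{q}\gamma_0 E^{-1}(\boldsymbol{z})$ in $\Gamma\backslash G$. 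So the two charts are matched, not disjoint, and ``summing the two contributions'' is not the right bookkeeping — the theorem's indicator function is taken of a single subset of $\Gamma\backslash G$, so if the ``$+$'' and ``$-$'' pieces coincide (as your own even-$d$ reduction already shows), both the LHS integral and the RHS $\mu$-measure count the set once, not twice. You should rewrite the combination step in terms of the identification rather than a sum; the structure of your reduction to Theorem~\ref{thmThickenSubsetSectionSpherical} in each chart is otherwise sound.
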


\begin{proof}  There are two charts whose points are given in pairs by (\ref{eqnTwoChartsUpperLowerHemi}).   Fix a point $\boldsymbol{z} \in \Dd$.  Let $\pf \in \widetilde{S}_T(\boldsymbol{z})$.   
 
Consider $d$ even. Then Theorem~\ref{thmSphericalThickeningConjCharts} gives the points $\pf E^{-1}(\boldsymbol{z})$ and $\pf \left (-E^{-1}(\boldsymbol{z})\right)$ are the same point in $\Gamma \backslash G$.   Since $-I_d \in \Gamma$ commutes with all elements of $G$ and (\ref{eqnConstUTwoChartsCase}) holds, we can apply Theorem~\ref{thmThickenSubsetSectionSpherical} to both charts to obtain the desired result.

Finally consider $d$ odd.  Applying Lemmas~\ref{lemmeqnFundDomCoord} and~\ref{lemmSkewIdConjH} to $\widetilde{S}_T(\boldsymbol{z})$, we have that \begin{align}\label{eqn:Spherical:ConjST}
 \begin{pmatrix}  \widetilde{I}_{d-1} &  \tensor*[^t]{\boldsymbol{0}}{} \\ \boldsymbol{0} & -1  \end{pmatrix}\widetilde{S}_T(\boldsymbol{z}) \begin{pmatrix}  \widetilde{I}_{d-1} &  \tensor*[^t]{\boldsymbol{0}}{} \\ \boldsymbol{0} & -1  \end{pmatrix} = \widetilde{S}_T(\boldsymbol{z}). \end{align} Now (\ref{eqn:Spherical:ConjST}) and Theorem~\ref{thmSphericalThickeningConjCharts} give that there exists a unique point $\qf$ in the embedded submanifold $\widetilde{S}_T(\boldsymbol{z})$ such that  $\pf E^{-1}(\boldsymbol{z})$ and \[\qf \begin{pmatrix}  \widetilde{I}_{d-1} &  \tensor*[^t]{\boldsymbol{0}}{} \\ \boldsymbol{0} & -1  \end{pmatrix}E^{-1}(\boldsymbol{z})\] are the same point in $\Gamma \backslash G$.  Since (\ref{eqn:Spherical:ConjST}, \ref{eqnConstUTwoChartsCase}) hold, we can apply Theorem~\ref{thmThickenSubsetSectionSpherical} to both charts to obtain the desired result.\end{proof}

\begin{rema}  This proof works for both the case $L=I_d$ and the case of generic $L \in G$.  For $L \in G$, we apply Theorem~\ref{thmThickenSubsetSectionSpherical} for this $L$ and note that the proof of Theorem~\ref{thmThickenSubsetSectionSpherical} for generic $L$ is in Section~\ref{subsecUniEquiForTransHoro}.
\end{rema}

\subsubsection{The general case: removing the restriction on $c(\Dd)$}  In this section, we remove the restriction (\ref{eqnRestrictionsOnD}) and, in particular, we have that $-1 \leq c(\Dd) \leq 1$.  We will use one chart.  

Let us first show that, when $c=0$, there is no contribution.  We will show this in general.  Define \[K_0:=\left\{\begin{pmatrix}  A &  \tensor*[^t]{\boldsymbol{w}}{} \\ \boldsymbol{v} & c  \end{pmatrix}  \in \SO(d, \RR): c=0 \right\}.\]  

\begin{lemm}\label{lemm:HorosphereInSphereEdge}  There exists a null set $\Nn$ with respect to $\wrt {\boldsymbol{x}}$ such that, for every $t \in \RR$, we have that \[\{\boldsymbol{x} \in \RR^{d-1} : n_-(\boldsymbol{x}) \Phi^t \in \Ss_1 K_0 \} \subset \Nn.\]  Moreover, $\Nn$ can be chosen to be a countable union of affine hyperplanes in $\RR^{d-1}$ of dimension $d-2$.
 
\end{lemm}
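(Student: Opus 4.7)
The plan is to exhibit an explicit $\Nn$ (independent of $t$) as a countable union of affine hyperplanes coming from primitive integer vectors, and to verify the containment by analyzing the last row of the defining matrix equation.

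First, I would unpack membership in $\Ss_1 K_0$: the condition $n_-(\boldsymbol{x})\Phi^t \in \Ss_1 K_0$ means that there exist $\gamma \in \Gamma$, $M \in H$, $s \geq 0$, and $k \in K_0$ with
\[
\gamma^{-1} n_-(\boldsymbol{x})\Phi^t = M \Phi^{-s} k.
\]
Next, I would multiply both sides on the left by the row vector $e_d$. Three elementary observations drive the computation: every $M \in H$ has bottom row equal to $e_d$, so $e_d M = e_d$; $e_d \Phi^{-s} = e^{-(d-1)s} e_d$; and for $k \in K_0$ the bottom row of $k$ is $(\boldsymbol{v}(k),0)$ with $\|\boldsymbol{v}(k)\|=1$. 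Hence the right-hand side is $e^{-(d-1)s}(\boldsymbol{v}(k),0)$, whose last coordinate vanishes.

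Second, I would compute the left-hand side directly. Setting $\boldsymbol{a} := e_d \gamma^{-1} \in \widehat{\ZZ}^d$ (primitive, since $\gamma^{-1} \in \SL(d,\ZZ)$), and writing $\boldsymbol{a}=(\boldsymbol{a'},a_d)$ with $\boldsymbol{a'}\in\ZZ^{d-1}$, a two-line calculation using the definition of $n_-(\boldsymbol{x})$ and $\Phi^t$ yields
\[
\boldsymbol{a}\, n_-(\boldsymbol{x})\Phi^t = \bigl(e^{-t}\boldsymbol{a'},\; e^{(d-1)t}(\boldsymbol{a'}\cdot\boldsymbol{x}+a_d)\bigr).
\]
Vanishing of the last coordinate is therefore equivalent to the $t$-independent affine equation $\boldsymbol{a'}\cdot\boldsymbol{x}+a_d=0$. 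If $\boldsymbol{a'}=\boldsymbol{0}$, primitivity forces $a_d=\pm 1$ and the equation has no solution; so whenever the condition can hold, $\boldsymbol{a'}\neq\boldsymbol{0}$ and the zero set is a genuine affine hyperplane of dimension $d-2$ in $\RR^{d-1}$.

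Finally, I would take
\[
\Nn := \bigcup_{\boldsymbol{a}\in\widehat{\ZZ}^d,\ \boldsymbol{a'}\neq \boldsymbol{0}} \bigl\{\boldsymbol{x}\in\RR^{d-1} : \boldsymbol{a'}\cdot\boldsymbol{x}+a_d=0\bigr\},
\]
which is a countable union of $(d-2)$-dimensional affine hyperplanes and therefore $\wrt{\boldsymbol{x}}$-null; since the derivation of the affine equation is independent of $t$ (the factor $e^{(d-1)t}$ being nonzero), the containment holds uniformly in $t\in\RR$. There is no serious obstacle here: the argument is essentially forced once one notices that the $c=0$ condition defining $K_0$ interacts transparently with left multiplication by $e_d$, which annihilates the $H$- and $\Phi^{-s}$-factors; the main point is simply to recognize that $e_d\gamma^{-1}$ ranges over all primitive integer vectors as $\gamma$ varies in $\Gamma$.
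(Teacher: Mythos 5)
Your proof is correct and follows essentially the same route as the paper's: both multiply the defining equation on the left by the last standard basis row vector, exploit that $H$ fixes $e_d$, that $\Phi^{-s}$ scales it, and that $K_0$ has a zero in the bottom-right entry, and then read off the $t$-independent affine equation; the only cosmetic difference is your $\gamma$ is the paper's $\gamma^{-1}$, and you invoke primitivity of $e_d\gamma^{-1}$ where the paper observes $\gamma$ cannot have a zero row.
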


\begin{rema}
Note that $\Nn$ is independent of $t$.

\end{rema}

\begin{proof}
 Let $\boldsymbol{x} \in \RR^{d-1}$ and $t \in \RR$ be elements such that $n_-(\boldsymbol{x}) \Phi^t \in \Ss_1 K_0$.  Then there exists an $\gamma \in \Gamma$ such that \begin{align}\label{eqn:HorosphereInSphereEdgeMain}  \gamma^{-1}\tensor*[]{\begin{pmatrix} B &  \tensor*[^t]{\boldsymbol{b}}{} \\ \boldsymbol{0} & 1\end{pmatrix}}{}&\begin{pmatrix}y^{1/2(d-1)} I_{d-1}&\tensor*[^t]{\boldsymbol{0}}{}  \\ \boldsymbol{0} & y^{-1/2}   \end{pmatrix}  \begin{pmatrix}  A &  \tensor*[^t]{\boldsymbol{w}}{} \\ \boldsymbol{v} & 0  \end{pmatrix} =  n_-(\boldsymbol{x}) \Phi^t.\end{align}  Let \begin{align} \label{eqn:HorosphereInSphereEdgeMain2} (\boldsymbol{p}, q) = (\boldsymbol{0}, 1) \gamma, \end{align} and applying it on the left yields \[(y^{-1/2}\boldsymbol{v}, 0) = (e^{-t}\boldsymbol{p}, e^{(d-1)t}(\boldsymbol{p}\tensor*[^t]{\boldsymbol{x}}{} +q)).\]  In particular we have \begin{align}\label{eqn:HorosphereInSphereEdge} \boldsymbol{p}\tensor*[^t]{\boldsymbol{x}}{} +q=0.
  \end{align}  If $\boldsymbol{p}= \boldsymbol{0}$, then (\ref{eqn:HorosphereInSphereEdge}) implies that $q=0$ and, thus, $\gamma$ has a row with all entries zero, which is not possible.  Thus $\boldsymbol{p} \neq \boldsymbol{0}$, and there exists an entry $p_i \neq 0$.  We now have that $x_i $ is some linear combination of the other entries translated by $-q/p_i$ along the $i$-th coordinate, yielding an affine hyperplane in $\RR^{d-1}$ of dimension $d-2$.  As $q$ and the entries of $\boldsymbol{p}$ are integers, there are only a countable number of such affine hyperplanes.  This yields the desired result.
\end{proof}

We have the following generalization of Theorem~\ref{thmThickenSubsetSectionSpherical}.

\begin{theo}\label{thmThickenSubsetSectionSphericalGeneral} Let $0<\eta <1$ be a fixed constant, $L \in G$, $T_+\geq T_- > \h_0^{2(d-1)/d}$, $\Aa \subset \Aa_L$ be a bounded subset with measure zero boundary with respect to $\wrt {\boldsymbol{x}}$, and $U \subset \SO(d, \RR)$ be a subset measurable with respect to $\wrt k$ and induced by $E$.  Let \[\widetilde{\Ss}(\boldsymbol{z}):=\widetilde{\Ss}_{T_-, T_+}(\boldsymbol{z}):=\widetilde{\Ss}\left(\boldsymbol{\alpha}, \boldsymbol{\gamma}, \widetilde{K}_{U}(\boldsymbol{z})E(\boldsymbol{z})\right),\] and $\widetilde{\Ss}_T$ be defined using (\ref{eqnRelationLowerHeightToFlow} , \ref{eqnPullingBackOfSectionalNeigh2}).  Then \begin{align}\label{eqnthmNAKResultRankOneFlowSubsetThickSphericalGeneralTransHoro}
 {T^{d-1}}\int_{\RR^{d-1}} \In_{\Aa \times \{\widetilde{S}_T(\boldsymbol{z}) E^{-1}(\boldsymbol{z}): \boldsymbol{z}\in \Dd\}}(\boldsymbol{x}, {L n_-(\boldsymbol{x}) \Phi^t})~\wrt {\boldsymbol{x}} \xrightarrow[]{t \rightarrow \infty}T_-^{d/2}\mu(\{\widetilde{S}(\boldsymbol{z}) E^{-1}(\boldsymbol{z}):  \boldsymbol{z} \in \Dd\})\left(\int_{\RR^{d-1}} \In_{\Aa}(\boldsymbol{x})~\wrt{\boldsymbol{x}}\right) \end{align} uniformly for all $T \in [T_-^{d/2(d-1)}, e^{dt \eta}].$

\end{theo}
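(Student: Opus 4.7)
The plan is to reduce Theorem~\ref{thmThickenSubsetSectionSphericalGeneral} to Theorem~\ref{thmThickenSubsetSectionSpherical} by partitioning $\Dd$ according to the sign of $c(\boldsymbol{z})$. Set $\Dd^{+} := \{\boldsymbol{z} \in \Dd : c(\boldsymbol{z}) > 0\}$, $\Dd^{-} := \{\boldsymbol{z} \in \Dd : c(\boldsymbol{z}) < 0\}$, and $\Dd^{0} := \{\boldsymbol{z} \in \Dd : c(\boldsymbol{z}) = 0\}$. Because the induced map (\ref{eqnInducedDiffeoUsinged}) has nonsingular differential, $c$ has nonvanishing differential on $\Dd^{0}$, hence $\Dd^{0}$ is a smooth codimension-one submanifold of $\Dd$ and in particular a null set, while $\Dd^{\pm}$ are open. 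It therefore suffices to treat each piece separately and sum.

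For the $\Dd^{0}$ piece, the image $\{\widetilde{S}_T(\boldsymbol{z}) E^{-1}(\boldsymbol{z}) : \boldsymbol{z} \in \Dd^{0}\}$ lies in $\Ss_1 K_0$. A straightforward adaptation of Lemma~\ref{lemm:HorosphereInSphereEdge} (left-multiplying by $L$ in (\ref{eqn:HorosphereInSphereEdgeMain}) only translates the affine hyperplanes obtained from (\ref{eqn:HorosphereInSphereEdgeMain2}, \ref{eqn:HorosphereInSphereEdge})) shows that the set of $\boldsymbol{x} \in \RR^{d-1}$ for which $Ln_-(\boldsymbol{x})\Phi^t \in \Ss_1 K_0$ is contained in a countable union of $(d-2)$-dimensional affine hyperplanes, independently of $t$. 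This piece therefore contributes nothing to either side of (\ref{eqnthmNAKResultRankOneFlowSubsetThickSphericalGeneralTransHoro}).

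For the $\Dd^{+}$ piece, I would exhaust $\Dd^{+}$ by relatively compact open subsets $\Dd^{+}_{n} \Subset \Dd^{+}$ with $c \geq 1/n$ on $\overline{\Dd^{+}_{n}}$, so that $E|_{\Dd^{+}_{n}}$ satisfies hypothesis (\ref{eqnRestrictionsOnD}) of Theorem~\ref{thmThickenSubsetSectionSpherical}; applying that theorem on each $\Dd^{+}_{n}$ and passing to the limit by monotone convergence yields the desired formula for the full $\Dd^{+}$ contribution. For $\Dd^{-}$, I would introduce the symmetry
\[
\sigma := \begin{cases} -I_d & \text{if } d \text{ is even,}\\ \begin{pmatrix} \widetilde{I}_{d-1} & \tensor*[^t]{\boldsymbol{0}}{}\\ \boldsymbol{0} & -1\end{pmatrix} & \text{if } d \text{ is odd,}\end{cases}
\]
which lies in $\Gamma$, commutes with $\Phi^{-s}$, and conjugates $H$ to itself by Lemma~\ref{lemmSkewIdConjH}. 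Setting $\widetilde{E}(\boldsymbol{z})^{-1} := \sigma E(\boldsymbol{z})^{-1}$ produces a smooth chart on $\Dd^{-}$ with positive $c$-component, and the identity $\Gamma \widetilde{S}_T(\boldsymbol{z}) E^{-1}(\boldsymbol{z}) = \Gamma\, (\sigma^{-1} \widetilde{S}_T(\boldsymbol{z}) \sigma)\, \widetilde{E}^{-1}(\boldsymbol{z})$ in $\Gamma \backslash G$ reduces this piece to the previous $c > 0$ case, since $\sigma^{-1} \widetilde{S}_T(\boldsymbol{z}) \sigma$ again has Grenier-box form by Lemma~\ref{lemmSkewIdConjH}.

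Summing the three contributions and invoking the uniformity over $T \in [T_-^{d/2(d-1)}, e^{dt\eta}]$ furnished by Theorem~\ref{thmThickenSubsetSectionSpherical} produces (\ref{eqnthmNAKResultRankOneFlowSubsetThickSphericalGeneralTransHoro}). The main obstacle will be the bookkeeping in the $\Dd^{-}$ step: one must verify that $\sigma$-conjugation sends a Grenier box $\widetilde{\Ss}(\boldsymbol{\alpha}, \boldsymbol{\gamma}, \widetilde{K}_{U}(\boldsymbol{z}) E(\boldsymbol{z}))$ to another Grenier box of the same shape, with slice set still measurable and still inducing the appropriate subset of $S^{d-1}$, so that the hypotheses of Theorem~\ref{thmThickenSubsetSectionSpherical} genuinely apply with a legitimate replacement for $U$. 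A secondary technical point is the measure-zero boundary condition for each partitioned shrinking target, which reduces to the fact that $\partial \Dd^{\pm} \subset \Dd^{0}$ is null.
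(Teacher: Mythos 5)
Your proof follows the same strategy as the paper: partition $\Dd$ by the sign of $c$, dismiss the equatorial locus via a Lebesgue-null set in $\boldsymbol{x}$-space (your adaptation of Lemma~\ref{lemm:HorosphereInSphereEdge} is exactly Lemma~\ref{lemm:HorosphereInSphereEdgeGenL}), exhaust the two open pieces by subsets on which $|c|$ is uniformly bounded below (the paper writes these as $\Dd_m^{\pm}$), apply Theorem~\ref{thmThickenSubsetSectionSpherical} on each, and pass to the limit. The one genuine formulation difference is in the $c<0$ piece: you introduce the chart $\widetilde{E}:=\sigma E$ on $\Dd^{-}$ directly and reduce to the $c>0$ setting via the coset identity $\Gamma\,\widetilde{S}_T(\boldsymbol{z})E^{-1}(\boldsymbol{z})=\Gamma\,(\sigma^{-1}\widetilde{S}_T(\boldsymbol{z})\sigma)\,\widetilde{E}^{-1}(\boldsymbol{z})$, whereas the paper routes through the antipodal-point machinery of Proposition~\ref{propConjAntipodalPoints} and Theorem~\ref{thmSphericalThickeningGeneral}; your version is arguably cleaner since it does not require $\boldsymbol{z}_{ap}$ to exist in $\Dd$. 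The bookkeeping you flag as the main obstacle --- that conjugation by your matrix $\sigma$ sends $\widetilde{\Ss}_T(\boldsymbol{z})$ to a Grenier box of the same shape with an admissible slice set --- is precisely what the paper records as (\ref{eqn:Spherical:ConjST}) (trivial for even $d$ since $\sigma=-I_d$, and a consequence of Lemmas~\ref{lemmeqnFundDomCoord} and~\ref{lemmSkewIdConjH} for odd $d$), so you can simply cite it rather than redo that verification. The one place to be more careful is the phrase ``passing to the limit by monotone convergence'': the $t\to\infty$ convergence furnished by Theorem~\ref{thmThickenSubsetSectionSpherical} is a priori not uniform in the exhaustion index, so the double limit cannot be interchanged by fiat; the paper handles this with the $\varepsilon$--$\delta$ argument in (\ref{eqnthmNAKResultRankOneFlowSubsetThickRestricted2})--(\ref{eqnthmNAKResultRankOneFlowSubsetThickRestricted4}), exploiting monotonicity in $m$ of the truncated integrals at fixed $t$, and your writeup would need a comparable step.
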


\begin{proof}  We prove the case $L=I_d$ and defer the proof of the case of generic $L \in G$ to Section~\ref{subsecUniEquiForTransHoro}. We will partition the chart into a sequence of upper and lower charts as follows.  Let $m \geq 2$ be an integer.  Replace the open bounded set $\Dd$ with the  open bounded sets \begin{align*}
&\Dd_m^+:= \Dd \cap E^{-1}\left(\left\{ \begin{pmatrix}  A &  \tensor*[^t]{\boldsymbol{w}}{} \\ \boldsymbol{v} & c  \end{pmatrix} \in \SO(d, \RR): c > \frac 1 m\right\}\right) \\ &\Dd_m^-:= \Dd \cap E^{-1}\left(\left\{ \begin{pmatrix}  A &  \tensor*[^t]{\boldsymbol{w}}{} \\ \boldsymbol{v} & c  \end{pmatrix} \in \SO(d, \RR): c <- \frac 1 m\right\}\right).\end{align*}  The restriction of the chart to $\Dd_m^+$ and $\Dd_m^-$ give the upper and lower charts (for a given $m$), respectively.  Note that $E^{-1}$ in the definitions of $\Dd_m^+$ and $\Dd_m^-$ denotes the preimage of the smooth mapping $E$ (defined in (\ref{eqn:DefnSmoothMapE})) and should not be confused $E^{-1}(\boldsymbol{z})$, which is the inverse in $\SO(d, \RR)$ of $E(\boldsymbol{z})$.

The restrictions of the smooth mapping $E$ to $\Dd_m^+$ and $\Dd_m^-$ are smooth mappings.  Thus, replacing the chart given by $\Dd$ by the upper chart given by $\Dd_m^+$ allows us to apply Theorem~\ref{thmThickenSubsetSectionSpherical} 

For the lower chart given by  $\Dd_m^-$, we have two cases:  $d$ is even and $d$ is odd.  For both cases, we note that Theorem~\ref{thmSphericalThickeningGeneral} implies that the thickenings given by $E^{-1}(\boldsymbol{z})$ and $E^{-1}(\boldsymbol{z}_{ap})$ are the same set.  Moreover, given a point $\pf E^{-1}(\boldsymbol{z})$ in the thickening (thus in $\Gamma \backslash G$), there exists a unique point $\qf E^{-1}(\boldsymbol{z}_{ap})$ in the thickening of $E^{-1}(\boldsymbol{z}_{ap})$ such that $\pf E^{-1}(\boldsymbol{z})=\qf E^{-1}(\boldsymbol{z}_{ap})$ by Proposition~\ref{propConjAntipodalPoints}.  (Note that $\pf, \qf \in \widetilde{S}_T(\boldsymbol{z})$.)  Consider the case of even $d$ first.  Since $-I_d \in \Gamma$ commutes with all elements of $G$ , we can apply Theorem~\ref{thmThickenSubsetSectionSpherical} to lower chart  given by $\Dd_m^-$ by replacing $E^{-1}(\boldsymbol{z}_{ap})$ with $-E^{-1}(\boldsymbol{z}_{ap})$.  This concludes the case $d$ even.

Consider the $d$ odd.  As in the proof of Theorem~\ref{thmTwoChartGeneralSpherical} , we have (\ref{eqn:Spherical:ConjST}), which implies there exists a unique $\widetilde{\qf} \in \widetilde{S}_T(\boldsymbol{z})$ such that \[ \begin{pmatrix}  \widetilde{I}_{d-1} &  \tensor*[^t]{\boldsymbol{0}}{} \\ \boldsymbol{0} & -1  \end{pmatrix}\qf E^{-1}(\boldsymbol{z}_{ap}) = \widetilde{\qf} \begin{pmatrix}  \widetilde{I}_{d-1} &  \tensor*[^t]{\boldsymbol{0}}{} \\ \boldsymbol{0} & -1  \end{pmatrix}E^{-1}(\boldsymbol{z}_{ap}).\]  This allows us to apply Theorem~\ref{thmThickenSubsetSectionSpherical} to lower chart given by $\Dd_m^-$ by replacing $E^{-1}(\boldsymbol{z}_{ap})$ with \[\begin{pmatrix}  \widetilde{I}_{d-1} &  \tensor*[^t]{\boldsymbol{0}}{} \\ \boldsymbol{0} & -1  \end{pmatrix}E^{-1}(\boldsymbol{z}_{ap}).\]  This concludes the case $d$ odd.

Thus, we have the desired result \textit{restricted} to the upper and lower charts given by $\Dd_m^+$ and $\Dd_m^-$, respectively, for all $m \geq 2$.  Explicitly, we have \begin{align}\label{eqnthmNAKResultRankOneFlowSubsetThickRestricted} 
 {T^{d-1}}\int_{\TT^{d-1}} \In_{\Aa \times \{\widetilde{S}_T(\boldsymbol{z}) E^{-1}(\boldsymbol{z}): \boldsymbol{z}\in \Dd_m^+ \cup \Dd_m^-\}}&(\boldsymbol{x}, {n_-(\boldsymbol{x}) \Phi^t})~\wrt {\boldsymbol{x}} \xrightarrow[]{t \rightarrow \infty}  \\ & \nonumber T_-^{d/2}  \mu(\{\widetilde{S}(\boldsymbol{z}) E^{-1}(\boldsymbol{z}):  \boldsymbol{z} \in  \Dd_m^+ \cup \Dd_m^-\})\left(\int_{\TT^{d-1}} \In_{\Aa}(\boldsymbol{x})~\wrt{\boldsymbol{x}}\right) \end{align} for $m \geq2$.  To lift this restriction, what remains to be shown lies in the restriction of the (original) chart to the sequence of domains \[\Dd \backslash \left(\Dd_m^+ \cup \Dd_m^- \right) \supset \Dd \backslash \left(\Dd_{m+1}^+ \cup \Dd_{m+1}^- \right).\]  Note that \begin{align}\label{eqnTwoSubchartsApprox}\bigcup_{m \geq 2} \left( \Dd_m^+ \cup \Dd_m^- \right)=: \Dd_\infty.
  \end{align}

 We now use the following approximation technique.  First note that \[ \mu(\{\widetilde{S}(\boldsymbol{z}) E^{-1}(\boldsymbol{z}):  \boldsymbol{z} \in  \Dd_\infty\}) =  \mu(\{\widetilde{S}(\boldsymbol{z}) E^{-1}(\boldsymbol{z}):  \boldsymbol{z} \in  \Dd\})\] because  $K_0$ is a null set of $K$ (see~\cite[Pages 19-20]{Mec19}).  Recall that $T$ depends on $t$ and, for the duration of this proof, we will make this explicit by using the notation $T(t)$.  Using (\ref{eqnTwoSubchartsApprox}) with the continuity of $\mu$ from below and using (\ref{eqnthmNAKResultRankOneFlowSubsetThickRestricted}), we have that, for every $\varepsilon>0$, there exists $t_0>0$ and $m_0\geq2$ such that, for every $t \geq t_0$ and $m \geq m_0$, we have that \begin{align}\label{eqnthmNAKResultRankOneFlowSubsetThickRestricted2} & \left | {T(t)^{d-1}}\int_{\TT^{d-1}} \In_{\Aa \times \{\widetilde{S}_{T(t)}(\boldsymbol{z}) E^{-1}(\boldsymbol{z}): \boldsymbol{z}\in \Dd_m^+ \cup \Dd_m^-\}} (\boldsymbol{x}, {n_-(\boldsymbol{x}) \Phi^t})~\wrt {\boldsymbol{x}}  \right. \\ \nonumber & \quad\quad  \left. - T_-^{d/2}  \mu(\{\widetilde{S}(\boldsymbol{z}) E^{-1}(\boldsymbol{z}):  \boldsymbol{z} \in  \Dd\})\left(\int_{\TT^{d-1}} \In_{\Aa}(\boldsymbol{x})~\wrt{\boldsymbol{x}}\right) \right | < \varepsilon.  \end{align}

 Using (\ref{eqnTwoSubchartsApprox}) to apply the monotone convergence theorem, we have for every $t >0$, we have that \begin{align}\label{eqnTwoSubchartsApproxMCT}
\lim_{m \rightarrow \infty} {T(t)^{d-1}}\int_{\TT^{d-1}} \In_{\Aa \times \{\widetilde{S}_{T(t)}(\boldsymbol{z}) E^{-1}(\boldsymbol{z}): \boldsymbol{z}\in \Dd_m^+ \cup \Dd_m^-\}} (\boldsymbol{x}, {n_-(\boldsymbol{x}) \Phi^t})~\wrt {\boldsymbol{x}}  \\ \nonumber =  {T(t)^{d-1}}\int_{\TT^{d-1}} \In_{\Aa \times \{\widetilde{S}_{T(t)}(\boldsymbol{z}) E^{-1}(\boldsymbol{z}): \boldsymbol{z}\in \Dd_\infty\}} (\boldsymbol{x}, {n_-(\boldsymbol{x}) \Phi^t})~\wrt {\boldsymbol{x}}.\end{align}  Taking the limit as $m \rightarrow \infty$ in (\ref{eqnthmNAKResultRankOneFlowSubsetThickRestricted2}) and applying (\ref{eqnTwoSubchartsApproxMCT}) yields \begin{align} \label{eqnthmNAKResultRankOneFlowSubsetThickRestricted3}& \left | {T(t)^{d-1}}\int_{\TT^{d-1}} \In_{\Aa \times \{\widetilde{S}_{T(t)}(\boldsymbol{z}) E^{-1}(\boldsymbol{z}): \boldsymbol{z}\in \Dd_\infty\}} (\boldsymbol{x}, {n_-(\boldsymbol{x}) \Phi^t})~\wrt {\boldsymbol{x}}  \right. \\ \nonumber & \quad\quad  \left. - T_-^{d/2}  \mu(\{\widetilde{S}(\boldsymbol{z}) E^{-1}(\boldsymbol{z}):  \boldsymbol{z} \in  \Dd\})\left(\int_{\TT^{d-1}} \In_{\Aa}(\boldsymbol{x})~\wrt{\boldsymbol{x}}\right) \right | < \varepsilon \end{align} for every $t \geq t_0$.  Taking the limit $t \rightarrow \infty$, we obtain  \begin{align} \label{eqnthmNAKResultRankOneFlowSubsetThickRestricted4}& \left | \lim_{t \rightarrow \infty} {T(t)^{d-1}}\int_{\TT^{d-1}} \In_{\Aa \times \{\widetilde{S}_{T(t)}(\boldsymbol{z}) E^{-1}(\boldsymbol{z}): \boldsymbol{z}\in \Dd_\infty\}} (\boldsymbol{x}, {n_-(\boldsymbol{x}) \Phi^t})~\wrt {\boldsymbol{x}}  \right. \\ \nonumber & \quad\quad  \left. - T_-^{d/2}  \mu(\{\widetilde{S}(\boldsymbol{z}) E^{-1}(\boldsymbol{z}):  \boldsymbol{z} \in  \Dd\})\left(\int_{\TT^{d-1}} \In_{\Aa}(\boldsymbol{x})~\wrt{\boldsymbol{x}}\right) \right | < \varepsilon.\end{align}  As $\varepsilon$ is arbitrary, we have shown the desired result when $c\neq0$.  

For $c=0$, apply Lemma~\ref{lemm:HorosphereInSphereEdge}.  This yields the desired result for the case $L=I_d$. \end{proof}

\begin{rema}
Note that, as long as \[\{\boldsymbol{z}, \boldsymbol{z}_{ap} \} \subset  \Dd_m^+ \cup \Dd_m^-\] holds, we obtain the thickening given by $E^{-1}(\boldsymbol{z})$ for the upper and lower charts.  Thus, different open bounded sets $\Dd$ may yield the same thickenings and, as long as one of every antipodal pair is in the open bounded sets, then we obtain the same thickening.  

\end{rema}

\section{Translated horospheres}\label{secTranslatedHorospheres}  In this section, we consider STHE for translated horospheres.   All of the STHE results that we have proved for the horosphere can also be proved for any translated horosphere, as we show in Section~\ref{subsecUniEquiForTransHoro}.  First, we need to find the correct analog of the Farey points, which is done in Section~\ref{subsecTranFareySeq}, and to prove a suitable generalization of Theorem~\ref{thmMarklof}, namely Theorem~\ref{thmGeneralMarklof}.

The proofs of our STHE results for translated horospheres are essentially the same as that for the horosphere considered in the previous sections.  The changes are, briefly, checking  that (a piece of) a translated horosphere intersects $\Ss_1$ through a unique point along the $N^+$-directions and that our generalization of Theorem~\ref{thmMarklof} can be applied.  The details are as follows.

Recall that a translated horosphere corresponds to a fixed $L \in G$.  Checking the intersection property is easy.  Let us assume that $\Ga\tensor*[^t]{\left(Ln_-(\boldsymbol{x}) \Phi^{t}\right)}{}^{-1}$ intersects $\Ss_1$.  Then a nearby point $\Ga\tensor*[^t]{\left(Ln_-(\boldsymbol{y}) \Phi^{t}\right)}{}^{-1} = \Ga \tensor*[^t]{L}{^{-1}}n_+(-\boldsymbol{y}) \Phi^{-t}$ on the $\Phi^{t}$-translate of the translated horosphere is in the $N^+$-directions with respect to the intersection point $\Ga\tensor*[^t]{\left(Ln_-(\boldsymbol{x}) \Phi^{t}\right)}{}^{-1} = \Ga \tensor*[^t]{L}{^{-1}}n_+(-\boldsymbol{x}) \Phi^{-t}$, an observation which follows by applying (\ref{eqnConjy1}).  We now simply need to check that the $N^+$ directions do not locally lie in the section $\Ss_1$, namely that the intersection is transverse.

\begin{lemm}\label{lemmTransverseOnG}  
Let $L \in G$ and $n, n' \in N^+$.  If $Ln, L n' \in H\{\Phi^{-s}: s \in \RR\}$, then $n = n'$. 
\end{lemm}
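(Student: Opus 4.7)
The plan is to use the fact that $H\{\Phi^{-s}: s \in \RR\}$ is characterized by a simple condition on the last row: a matrix $g \in G$ lies in $H\{\Phi^{-s}: s \in \RR\}$ if and only if its last row has the form $(\boldsymbol{0}, c)$ for some $c > 0$. This follows immediately from the definition of $H$ (whose elements have last row $(\boldsymbol{0}, 1)$) together with the fact that right multiplication by $\Phi^{-s} = \diag(e^s I_{d-1}, e^{-(d-1)s})$ scales the last row to $(\boldsymbol{0}, e^{-(d-1)s})$ while leaving the first $d-1$ entries zero.

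With this criterion in hand, I would write $L$ in block form as
\[
L = \begin{pmatrix} A & \tensor*[^t]{\boldsymbol{c}}{} \\ \boldsymbol{r} & L_{dd} \end{pmatrix}
\]
with respect to the $(d-1) + 1$ splitting, and carry out the one-line block computation
\[
L\, n_+(\widetilde{\boldsymbol{x}}) = \begin{pmatrix} A + \tensor*[^t]{\boldsymbol{c}}{}\widetilde{\boldsymbol{x}} & \tensor*[^t]{\boldsymbol{c}}{} \\ \boldsymbol{r} + L_{dd}\widetilde{\boldsymbol{x}} & L_{dd} \end{pmatrix}.
\]
The assumption $Ln_+(\widetilde{\boldsymbol{x}}) \in H\{\Phi^{-s}: s \in \RR\}$ then forces $L_{dd} > 0$ (in particular nonzero) and $\boldsymbol{r} + L_{dd}\widetilde{\boldsymbol{x}} = \boldsymbol{0}$, so that $\widetilde{\boldsymbol{x}} = -L_{dd}^{-1}\boldsymbol{r}$ is uniquely determined by $L$ alone. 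The same calculation applied to $n'$ returns the identical value of $\widetilde{\boldsymbol{x}}$, yielding $n = n'$.

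There is essentially no obstacle: the content of the lemma is simply the transversality of $N^+$ to the subgroup $H\{\Phi^{-s}: s \in \RR\}$, made concrete in coordinates. The only subtle point worth checking is that $L_{dd}$ cannot vanish under the hypothesis (if $L_{dd} = 0$, the bottom-right entry of $Ln_+(\widetilde{\boldsymbol{x}})$ would be $0$, contradicting that the last row of any element of $H\{\Phi^{-s}\}$ ends in a positive number), so the horospherical coordinate is indeed pinned down uniquely by $L$.
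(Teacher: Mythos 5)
Your proof is correct and takes essentially the same route as the paper: both write $L$ in $(d-1)+1$ block form, multiply on the right by $n_+(\widetilde{\boldsymbol{x}})$, extract the constraint that the bottom-left block must vanish while the $(d,d)$-entry must be nonzero, and conclude that $\widetilde{\boldsymbol{x}} = -L_{dd}^{-1}\boldsymbol{r}$ is uniquely determined. Your version makes the last-row characterization of $H\{\Phi^{-s}:s\in\RR\}$ explicit (and notes the sign $c>0$, though $c\neq 0$ already suffices), but the computation and the key observation are identical to the paper's.
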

\begin{proof}
Let \[L := \begin{pmatrix}  A & B \\ C& D \end{pmatrix} \quad n := \begin{pmatrix} I_{d-1} & \tensor*[^t]{\boldsymbol{0}}{} \\ \boldsymbol{\widetilde{x}} & 1  \end{pmatrix} \quad n' := \begin{pmatrix} I_{d-1} & \tensor*[^t]{\boldsymbol{0}}{} \\ \boldsymbol{\widetilde{x}}' & 1  \end{pmatrix}.\]  Thus we have \[Ln = \begin{pmatrix}  A + B \boldsymbol{\widetilde{x}} & B \\ C + D \boldsymbol{\widetilde{x}} & D \end{pmatrix} \quad Ln' = \begin{pmatrix}  A + B \boldsymbol{\widetilde{x}}' & B \\ C + D \boldsymbol{\widetilde{x}}' & D \end{pmatrix}.\]  Since $Ln, L n' \in H\{\Phi^{-s}: s \in \RR\}$, we have that $C + D \boldsymbol{\widetilde{x}} = \boldsymbol{0}$ and $C + D \boldsymbol{\widetilde{x}}' = \boldsymbol{0}$ and that the diagonal blocks must not have determinant $0$.  In particular, we have that the number $D \neq 0$ and thus the desired result.

\end{proof}

\begin{lemm}\label{lemmTransverseOnS}  Let $L \in G$ and $n, n' \in N^+$ such that $n, n'$ lie in a small neighborhood. If $Ln, L n' \in \Ss_1$, then $n = n'$. 
\end{lemm}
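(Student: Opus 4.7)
The plan is to reduce to Lemma~\ref{lemmTransverseOnG} by lifting the problem from $\Gamma\backslash G$ to $G$ and exploiting proper discontinuity of the $\Gamma$-action together with the fact that $\Ss_1$ is a closed embedded submanifold of $\Gamma\backslash G$~\cite[Lemma~2]{Mar10}.

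First I would choose a lift. Since $Ln\in\Ss_1$, there exists $\gamma\in\Gamma$ such that $\gamma Ln\in H\{\Phi^{-s}:s\in\RR_{\geq 0}\}\subset H\{\Phi^{-s}:s\in\RR\}$. Similarly, $Ln'\in\Ss_1$ gives some $\gamma'\in\Gamma$ with $\gamma' Ln'\in H\{\Phi^{-s}:s\in\RR_{\geq 0}\}$. If one could arrange $\gamma'=\gamma$, then $\gamma Ln,\,\gamma Ln'\in H\{\Phi^{-s}:s\in\RR\}$, and Lemma~\ref{lemmTransverseOnG} (applied with $L$ replaced by $\gamma L$) would immediately yield $n=n'$.

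The hard part will be arranging $\gamma'=\gamma$ for $n,n'$ sufficiently close. By proper discontinuity of the $\Gamma$-action on $G$, there exists an open neighborhood $\mV$ of $\gamma Ln$ in $G$ such that $\tilde{\gamma}\mV\cap\mV=\emptyset$ for every $\tilde\gamma\in\Gamma\setminus\{\Id_d\}$ fixing no point of $\mV$. Shrinking the neighborhood of $n$ if necessary, we may assume $n'$ ranges over a set on which $\gamma Ln'\in\mV$. Since $\Ss_1$ is an embedded submanifold of $\Gamma\backslash G$, after possibly shrinking $\mV$ further we may arrange that the natural projection $\mV\cap H\{\Phi^{-s}:s\in\RR_{\geq 0}\}\to\Ss_1$ is a homeomorphism onto its image. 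Now $\gamma Ln'$ projects to the same point of $\Ss_1$ as $\gamma' Ln'\in H\{\Phi^{-s}:s\in\RR_{\geq 0}\}$, and $\gamma Ln'\in\mV$; injectivity of the projection on $\mV$ forces $\gamma Ln'\in H\{\Phi^{-s}:s\in\RR_{\geq 0}\}$, so that the lift $\gamma$ works for $n'$ as well.

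With $\gamma Ln,\gamma Ln'\in H\{\Phi^{-s}:s\in\RR\}$ established, I would apply Lemma~\ref{lemmTransverseOnG} to the element $\gamma L\in G$ (replacing $L$ in that lemma) and the two elements $n,n'\in N^+$, concluding $n=n'$. The main subtlety to verify carefully is the shrinking argument: one must confirm that the local homeomorphism property of the embedded submanifold $\Ss_1$ together with proper discontinuity is enough to rule out different coset representatives $\gamma'\neq\gamma$ appearing for arbitrarily nearby $n'$; this is standard but should be stated explicitly so that the uniformity of the neighborhood depends only on $L$ and $n$, not on $n'$.
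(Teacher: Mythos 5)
Your proof is correct and follows the same route as the paper: both arguments use proper discontinuity of the $\Gamma$-action (the paper phrases this as shrinking the neighborhood of $Ln$ until it lies in a single fundamental domain) to pick a common coset representative $\gamma$ for $Ln$ and $Ln'$, and then reduce to Lemma~\ref{lemmTransverseOnG} applied to $\gamma L$. Your write-up is more explicit about how the embedded-submanifold property of $\Ss_1$ combines with proper discontinuity to rule out distinct $\gamma, \gamma'$; the paper compresses this into one sentence, but the underlying mechanism is identical.
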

\begin{proof}
Since the left action by an element of $G$ is continuous, $Ln$ and $Ln'$ lie in a small neighborhood, which we may assume is small enough so that it lies in some fundamental domain for the $\Ga$ action.  Applying Lemma~\ref{lemmTransverseOnG} yields the desired result.

\end{proof}

\begin{theo}\label{thm:TranHoroInterSectS1AtAPoint}
The intersection of any left-translated horosphere with $\Ss_1$ is transverse.  Moreover, the local intersection is a point. 
\end{theo}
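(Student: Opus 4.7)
The plan is to reduce the theorem to Lemmas~\ref{lemmTransverseOnG} and~\ref{lemmTransverseOnS}, which are stated for the $N^+$-picture. I would first invoke the transpose-inverse isometry of $\Gamma \backslash G$ used throughout the preceding sections to convert the question about $\Gamma L N^-$ into one about $\Gamma \widetilde{L} N^+$ with $\widetilde{L} := \tensor*[^t]{L}{^{-1}}$; the two lemmas then apply directly to this transposed picture.

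For transversality, I would verify at the Lie algebra level that $\Lie(G) = \Lie(N^+) \oplus \Lie(H) \oplus \Lie(\Phi)$ as a direct sum of linear subspaces. A direct entry-by-entry check suffices: elements of $\Lie(N^+)$ vanish outside the bottom row strictly left of the diagonal; elements of $\Lie(H)$ have zero bottom row and trace-free upper-left $(d-1)\times(d-1)$ block; and $\Lie(\Phi) = \RR \cdot \diag(-1,\dots,-1,d-1)$ supplies the missing bottom-right direction. The dimension count $(d-1) + (d^2-d-1) + 1 = d^2-1 = \dim G$ together with pairwise trivial intersection confirm the direct sum, so the tangent spaces of $\Gamma \widetilde{L} N^+$ and $\Ss_1$ are complementary at every intersection point. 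Alternatively, transversality follows directly from Lemma~\ref{lemmTransverseOnS}: local uniqueness of the intersection combined with the dimension count forces transversality.

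For the ``intersection is a point'' assertion, Lemma~\ref{lemmTransverseOnS} applied at each intersection point $p$ shows that in a neighborhood of $p$ the orbit $\widetilde{L} N^+$ meets $\Ss_1$ only at $p$; together with transversality and the dimension count, the intersection is zero-dimensional and consists of isolated transverse points. This is the sense in which the fact is invoked in Sections~\ref{secNANResultRankOneFlow}--\ref{subsecProofMainTheoremFullCase}, where one speaks of ``the unique intersection point of the horosphere with the section.''

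The main obstacle is interpretive: the literal definition of the horosphere as $\Gamma L N^-$ places the orbit inside $\Gamma L H$ (since $N^- \subset H$), so its tangent space sits inside $\Lie(H)$ and the intersection with $\Ss_1$ is not transverse in the naive sense. The statement only becomes meaningful after the transpose-inverse isometry swaps $N^-$ and $N^+$, the convention used implicitly throughout the preceding sections. Once that bookkeeping is in place, the proof follows immediately from the two lemmas together with the Lie-algebra decomposition above.
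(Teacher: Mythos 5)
Your main route is correct and genuinely different from the paper's. The paper's proof is the single line ``Apply Lemma~\ref{lemmTransverseOnS}''; tracing that back, the transversality content is buried in the proof of Lemma~\ref{lemmTransverseOnG}, which writes out the constraint $C + D\boldsymbol{\widetilde{x}} = \boldsymbol{0}$ for $Ln\in H\{\Phi^{-s}\}$ and notes $D\neq 0$, so the constraint map $\boldsymbol{\widetilde{x}}\mapsto C+D\boldsymbol{\widetilde{x}}$ has full-rank differential. Your Lie-algebra decomposition $\gf = \Lie(N^+)\oplus\Lie(H)\oplus\Lie(\Phi)$ (dimensions $(d-1) + (d^2-d-1) + 1 = d^2-1$, pairwise trivial intersections via the block structure) establishes the same thing more cleanly, once one checks --- as you implicitly do --- that the tangent spaces of the $N^+$-orbit and of $\Ss_1$ at a common intersection point $p=\Gamma g$ with $g\in H\Phi^\RR$, identified with $\gf$ by parametrizing $X\mapsto\Gamma g\exp(X)$, are precisely $\Lie(N^+)$ and $\Lie(H)\oplus\Lie(\Phi)$; the subgroup structure of $H\Phi^\RR$ and the absorption $\Ad(\Phi^{s})\Lie(H)=\Lie(H)$ are what make that identification clean. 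Your reading of the statement via the transpose--inverse isometry is also correct and matches the discussion in the paper immediately preceding the theorem.

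One caveat: your ``alternative'' remark that transversality already follows from Lemma~\ref{lemmTransverseOnS} plus the dimension count is not valid as stated. Local uniqueness of an intersection point together with complementary dimensions does not imply transversality --- the $x$-axis and the parabola $\{(t,t^2)\}$ in $\RR^2$ meet in a single point with complementary dimensions, yet tangentially. To rule out tangential contact here one genuinely needs either the direct-sum decomposition you give as your main route, or (as the paper implicitly does) the observation that the constraint in Lemma~\ref{lemmTransverseOnG} is a \emph{linear} map in $\boldsymbol{\widetilde{x}}$ with $D\neq 0$, so its differential is surjective. Since your primary argument supplies exactly this, the flaw is confined to the throwaway ``alternatively'' sentence and does not undermine the proof.
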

\begin{proof}
 Apply Lemma~\ref{lemmTransverseOnS}.
\end{proof}

Consequently, the (local) \textit{geometry} of intersections between our original horosphere (i.e. when one sets $L = I_d$) and $\Ss_1$ and between a translated horosphere and $\Ss_1$ are the same for us.   Note that we can, using Lemma~\ref{lemmDisjointnessTransSections}, give a good estimate of what nearby means in this case, but it is not necessary for this proof.

\subsection{Translated Farey sequences} \label{subsecTranFareySeq}

Up to now, we have restricted the Farey sequence to lie in $\TT^{d-1}$ (see Section~\ref{secEotFs}), but it is more convenient in motivating the definition of translated Farey sequences to regard the Farey sequence as lying in $\RR^{d-1}$.  Thus, we consider the Farey sequence as the following subset of $\RR^{d-1}$:  \[\widetilde{\F}:=\bigcup_{Q=1}^\infty \widetilde{\F}_Q:=\bigcup_{Q=1}^\infty\left(\F_Q + \ZZ^{d-1}\right).\]  It is shown in \cite{Mar10} that $\widetilde{\F}_Q$ corresponds to the intersection points of the horosphere $\tensor*[^t]{\left(\Ga N^-\Phi^t\right)}{^{-1}}$ with the closed embedded submanifold $\Ga \backslash \Ga H\{\Phi^{-s}: s\geq \sigma\}$.  (Recall that $Q := e^{(d-1)(t - \sigma)}$ for a fixed $\sigma \in \RR$.)

We now use the analogous correspondence to define the elements of a translated Farey sequence on $\RR^{d-1}$.  The {\em translated Farey sequence (corresponding to $\Ga L  N^-$)} is the subset $\If$ of $\RR^{d-1}$ where $\If:=\If(L):=\cup_{Q=1}^\infty \If_Q $ and \begin{align} \label{eqn:DefnTranFareySeq}
 \If_Q := & \If_Q(L):= \\\nonumber &\left\{\left(\frac{\alpha_1}{\alpha_d}, \cdots,  \frac{\alpha_{d-1}}{\alpha_d}\right) \in \RR^{d-1} : \left(\alpha_1, \cdots, \alpha_d\right)=(\boldsymbol{0}, 1) \gamma \tensor*[^t]{L}{^{-1}}, \gamma \in \Ga_H \backslash \Ga, 0< \alpha_d\leq Q  \right\}. \end{align}

First we note that, using the bijection (\cite[(3.18)]{Mar10} or~\cite{Sie89}) \[\Ga_H \backslash \Ga \rightarrow \widehat{\ZZ}^d \quad \Ga_H \gamma \mapsto (\boldsymbol{0},1) \gamma,\] we have that $\{(\boldsymbol{0}, 1) \gamma \tensor*[^t]{L}{^{-1}}:  \gamma \in \Ga_H \backslash \Ga\} = \widehat{\ZZ}^d \tensor*[^t]{L}{^{-1}}$ is the set of primitive points of the lattice $\ZZ^d \tensor*[^t]{L}{^{-1}}$, or, equivalently, the set of the points of the lattice $\ZZ^d \tensor*[^t]{L}{^{-1}}$ visible from the origin of $\RR^d$~\cite[Section~2.4]{MS10}.  Let \[\left(\widehat{\ZZ}^d \tensor*[^t]{L}{^{-1}}\right)_Q^+ := \left\{(\alpha_1, \cdots, \alpha_d) \in \widehat{\ZZ}^d \tensor*[^t]{L}{^{-1}}: 0<\alpha_d \leq Q\right\}\] and \[\left(\widehat{\ZZ}^d \tensor*[^t]{L}{^{-1}}\right)^+ := \left\{(\alpha_1, \cdots, \alpha_d) \in \widehat{\ZZ}^d \tensor*[^t]{L}{^{-1}}: 0<\alpha_d \right\}.\] It is easy to see that $\If_Q$ and $\left(\widehat{\ZZ}^d \tensor*[^t]{L}{^{-1}}\right)^+_Q$ are in bijection for every $Q \in \NN$ and that $\If$ is in bijection with $\left(\widehat{\ZZ}^d \tensor*[^t]{L}{^{-1}}\right)^+$.

Let $A \in G$ and $\delta(A) = A_{d,d}$ be its $(d,d)-$th entry.  Recall that we have the parametrization (\ref{eqnParametrizationForG}).

\begin{lemm}\label{lemmUniqueDecomIfYdNot0}
Let $\gamma \in \Ga$.  We have that $\delta(\gamma A) > 0$ if and only if there exist $M \in H$ and $\boldsymbol{y}:=(\boldsymbol{y'}, y_d) \in \RR^{d-1} \times (0, \infty)$ such that \[\gamma A = M \begin{pmatrix}  y_d^{-1/(d-1)} I_{d-1} & \tensor*[^t]{\boldsymbol{0}}{} \\\boldsymbol{y'} &y_d   \end{pmatrix}\]  and that $\delta(\gamma A) < 0$ if and only if there exist $M \in H$ and $\boldsymbol{y}:=(\boldsymbol{y'}, y_d) \in \RR^{d-1} \times (-\infty, 0)$ such that \[\gamma A = \begin{cases} -I_d M \begin{pmatrix}  (-y_d)^{-1/(d-1)} I_{d-1} & \tensor*[^t]{\boldsymbol{0}}{} \\ -\boldsymbol{y'} & -y_d  \end{pmatrix}&\text{ if } d \text{ is even}\\ \begin{pmatrix}  \widetilde{I}_{d-1} &  \tensor*[^t]{\boldsymbol{0}}{} \\ \boldsymbol{0} & -1  \end{pmatrix} M \begin{pmatrix}  (-y_d)^{-1/(d-1)} I_{d-1} & \tensor*[^t]{\boldsymbol{0}}{} \\ -\boldsymbol{y'} &-y_d   \end{pmatrix} &\text{ if } d \text{ is odd.}\end{cases}\] Moreover, whenever it exists, the decomposition is unique.
\end{lemm}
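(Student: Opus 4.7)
The plan is to read off the lemma directly from the parametrization~(\ref{eqnParametrizationForG}). First I would recall that this parametrization is a bijection between $H \times (\RR^d \setminus (\RR^{d-1} \times \{0\}))$ and the open dense subset $G \setminus \{A \in G : \delta(A) = 0\}$. Since the hypothesis $\delta(\gamma A) \neq 0$ excludes precisely the null set, existence and uniqueness of a pair $(M, \boldsymbol{y})$ with $y_d \neq 0$ producing $\gamma A$ is automatic from this bijection; only the sign correspondence needs to be checked.

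For the sign correspondence, I would compute $\delta$ explicitly in each of the three cases. Since $M \in H$ has last row $(\boldsymbol{0},1)$, the last row of $M M_{\boldsymbol{y}}$ coincides with the last row of $M_{\boldsymbol{y}}$. When $y_d > 0$ this last row is $(\boldsymbol{y'}, y_d)$, giving $\delta(M M_{\boldsymbol{y}}) = y_d > 0$. When $y_d < 0$ and $d$ is even, the last row of $M M_{\boldsymbol{y}}$ is $(-\boldsymbol{y'}, -y_d)$, and left-multiplication by $-I_d$ flips its sign, yielding last entry $y_d < 0$. When $y_d < 0$ and $d$ is odd, left-multiplication by $\operatorname{diag}(\widetilde{I}_{d-1}, -1)$ affects only the last row by a global sign, again producing last entry $y_d < 0$. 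Thus the sign of $\delta(\gamma A)$ equals the sign of $y_d$ in every case.

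Combining these two observations yields both directions of each biconditional: if the displayed decomposition exists with $y_d > 0$ (respectively $y_d < 0$), the direct computation gives $\delta(\gamma A) > 0$ (respectively $<0$); conversely, given $\delta(\gamma A) > 0$ (respectively $<0$), the bijectivity of the parametrization supplies a unique $(M, \boldsymbol{y})$ with $y_d \neq 0$, and the sign correspondence forces $y_d$ to have the correct sign, so the corresponding case of~(\ref{eqnParametrizationForG}) applies. Uniqueness of the decomposition then follows immediately from injectivity of the parametrization.

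The only mildly delicate point is bookkeeping the three cases of~(\ref{eqnParametrizationForG}) and keeping track of which prefactor ($I_d$, $-I_d$, or $\operatorname{diag}(\widetilde{I}_{d-1},-1)$) is applied on the left; this is routine but easy to get wrong, so I would set up a small table summarising the last row of each candidate matrix before concluding. No deeper idea is required.
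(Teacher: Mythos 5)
Your proof is correct but follows a different route from the paper's. The paper argues constructively: it sets $\boldsymbol{y} = (\boldsymbol{0},1)\gamma A$, defines $M = \gamma A M_{\boldsymbol{y}}^{-1}$ (after applying the sign prefactor when $y_d<0$), verifies directly that $M \in H$, and deduces uniqueness from the trivial intersection of $H$ with the group of lower-triangular factors $\left\{M_{\boldsymbol{y}}\right\}$. You instead treat~(\ref{eqnParametrizationForG}) as a known bijection onto $G \setminus \{A \in G : \delta(A) = 0\}$ and read off which branch applies by computing the last row, and hence the sign of $\delta$, in each of the three cases. This is shorter and more modular, but it shifts the real content onto the bijectivity claim for~(\ref{eqnParametrizationForG}), which the paper asserts without a detailed proof; the paper's proof of this lemma can in fact be read as \emph{supplying} the surjectivity and injectivity of~(\ref{eqnParametrizationForG}) onto the complement of the null set, so citing the parametrization as given essentially reverses that logical dependency. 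Your sign computations are right, though note one small imprecision: for odd $d$, left-multiplication by $\diag(\widetilde{I}_{d-1},-1)$ negates the $(d-1)$-th row as well as the $d$-th, not only the last row; since $\delta$ depends only on the last row the conclusion is unaffected, but the phrasing should be corrected.
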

\begin{rema}
Recall that the use of \[\begin{pmatrix}  \widetilde{I}_{d-1} &  \tensor*[^t]{\boldsymbol{0}}{} \\ \boldsymbol{0} & -1  \end{pmatrix}\] is one of several natural possible choices.  We can regard this decomposition as coordinates and will refer to it \textit{$(H, \RR^d)$-coordinates}.  The \textit{$H$-coordinate} and \textit{$\RR^d$-coordinate} will refer to the $H$-part and the $\RR^d$-part, respectively, of the $(H, \RR^d)$-coordinate.  Note that, regardless of whether $\delta(\gamma A) > 0$ or $\delta(\gamma A)<0$, the $d$-th entry of the $\RR^d$-coordinate of $\gamma A$ is always $>0$.
\end{rema}
\begin{proof}
($\implies$).  Set $\boldsymbol{y} = (\boldsymbol{0}, 1) \gamma A.$ Thus we have that $y_d \neq 0$.  If $y_d >0$, then we may set \[M =  \gamma A \begin{pmatrix}  y_d^{-1/(d-1)} I_{d-1} & \tensor*[^t]{\boldsymbol{0}}{} \\\boldsymbol{y'} &y_d   \end{pmatrix}^{-1},\] which it is easy to see is in $H$.

If $y_d <0$, then set \[\begin{cases} (\widetilde{\boldsymbol{y}}', \widetilde{y}_d)=:\widetilde{\boldsymbol{y}}= (\boldsymbol{0}, 1) \left(-I_d \gamma A \right) & \text{ if } d \text{ is even} \\  (\widetilde{\boldsymbol{y}}', \widetilde{y}_d)=:\widetilde{\boldsymbol{y}}= (\boldsymbol{0}, 1)\begin{pmatrix}  \widetilde{I}_{d-1} &  \tensor*[^t]{\boldsymbol{0}}{} \\ \boldsymbol{0} & -1  \end{pmatrix} \gamma A & \text{ if } d \text{ is odd.} \end{cases}\]  Thus, we have $\widetilde{\boldsymbol{y}} = - \boldsymbol{y}$.  In particular, we have that $\widetilde{y}_d >0$ and applying the above proof for positive $d$-th coordinate with $\widetilde{\boldsymbol{y}}$ in place of $\boldsymbol{y}$ and \[\begin{cases} -I_d  \gamma & \text{ if } d \text{ is even} \\ \begin{pmatrix}  \widetilde{I}_{d-1} &  \tensor*[^t]{\boldsymbol{0}}{} \\ \boldsymbol{0} & -1  \end{pmatrix} \gamma & \text{ if } d \text{ is odd} \end{cases} \] in place of $\gamma$ yields the desired result.

($\impliedby$).  This is immediate.

To see uniqueness, we note that the intersection of the following subgroups of $\SL(d,\RR)$ is the trivial group:  \[H \cap \left\{ \begin{pmatrix}  y_d^{-1/(d-1)} I_{d-1} & \tensor*[^t]{\boldsymbol{0}}{} \\\boldsymbol{y'} &y_d   \end{pmatrix} : \boldsymbol{y'}  \in \RR^{d-1}, y_d > 0 \right\} = \{I_d\}.\]

\end{proof}

The lemma has an immediate corollary:

\begin{coro}\label{coroUniqueDecomIfYdNot0}
Let $\gamma \in \Ga$ and $t \in \RR$.  We have that  $\gamma A N^+\Phi^{-t}$ intersects \[  H \left\{ \begin{pmatrix}  y_d^{-1/(d-1)} I_{d-1} & \tensor*[^t]{\boldsymbol{0}}{} \\\boldsymbol{y'} &y_d   \end{pmatrix}:\boldsymbol{y}:=(\boldsymbol{y'}, y_d) \in \RR^{d-1} \times (0, \infty) \right\}\] if and only if $\delta(\gamma A) > 0$ and $\gamma A N^+\Phi^{-t}$ intersects \[\begin{cases}   -I_d H \left\{ \begin{pmatrix}  y_d^{-1/(d-1)} I_{d-1} & \tensor*[^t]{\boldsymbol{0}}{} \\\boldsymbol{y'} &y_d   \end{pmatrix}:\boldsymbol{y}:=(\boldsymbol{y'}, y_d) \in \RR^{d-1} \times (0, \infty) \right\} & \text{ when } d \text{ is even,} \\ \begin{pmatrix}  \widetilde{I}_{d-1} &  \tensor*[^t]{\boldsymbol{0}}{} \\ \boldsymbol{0} & -1  \end{pmatrix}  H \left\{ \begin{pmatrix}  y_d^{-1/(d-1)} I_{d-1} & \tensor*[^t]{\boldsymbol{0}}{} \\\boldsymbol{y'} &y_d   \end{pmatrix}:\boldsymbol{y}:=(\boldsymbol{y'}, y_d) \in \RR^{d-1} \times (0, \infty) \right\} & \text{ when } d \text{ is odd,}\end{cases}\] if and only if $\delta(\gamma A) < 0$.
\end{coro}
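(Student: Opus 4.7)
The plan is to reduce the corollary to Lemma~\ref{lemmUniqueDecomIfYdNot0} by observing that the sign of $\delta(\cdot)$ is invariant under right multiplication by elements of $N^+\{\Phi^{-t}:t\in\RR\}$.

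First I would record the effect of these right multiplications on the $(d,d)$-entry. Since $n_+(\widetilde{\boldsymbol{x}})=\begin{pmatrix} I_{d-1} & \tensor*[^t]{\boldsymbol{0}}{} \\ \widetilde{\boldsymbol{x}} & 1\end{pmatrix}$ has last column $\tensor*[^t]{(\boldsymbol{0},1)}{}$, right multiplication by $n_+(\widetilde{\boldsymbol{x}})$ preserves the last column of any matrix and in particular preserves the $(d,d)$-entry. Right multiplication by $\Phi^{-t}$ scales the last column by $e^{-(d-1)t}$. Hence for every $\widetilde{\boldsymbol{x}}\in\RR^{d-1}$ and every $t\in\RR$,
\[
 \delta\!\left(\gamma A\, n_+(\widetilde{\boldsymbol{x}})\,\Phi^{-t}\right)\;=\;e^{-(d-1)t}\,\delta(\gamma A),
\]
so the sign of $\delta$ is unchanged.

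Next I would apply Lemma~\ref{lemmUniqueDecomIfYdNot0} to the element $g:=\gamma A\, n_+(\widetilde{\boldsymbol{x}})\,\Phi^{-t}$. For the forward direction in the first equivalence, if $g$ lies in $H\{D(\boldsymbol{y}):y_d>0\}$ (writing $D(\boldsymbol{y})$ for the diagonal-plus-bottom-row block in the lemma), then by the lemma $\delta(g)>0$; by the sign computation above this forces $\delta(\gamma A)>0$. Conversely, if $\delta(\gamma A)>0$ then $\delta(g)>0$ for every $\widetilde{\boldsymbol{x}}$ and $t$, so by the lemma $g$ admits the decomposition $M\,D(\boldsymbol{y})$ with $y_d>0$, exhibiting a point of intersection (in fact one for each choice of $\widetilde{\boldsymbol{x}}$). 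The second equivalence, concerning $\delta(\gamma A)<0$, is handled identically by applying the $\delta<0$ half of Lemma~\ref{lemmUniqueDecomIfYdNot0}, with the extra left factor $-I_d$ (for $d$ even) or $\begin{pmatrix} \widetilde{I}_{d-1} & \tensor*[^t]{\boldsymbol{0}}{} \\ \boldsymbol{0} & -1\end{pmatrix}$ (for $d$ odd) unchanged since it is absorbed into the $H$-factor description exactly as in the lemma.

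There is no real obstacle here: the content is entirely in the bookkeeping that $N^+\Phi^{-t}$ acts on the last column by a positive scalar, so it cannot change the sign of $\delta$; the existence and uniqueness of the decomposition itself were established in Lemma~\ref{lemmUniqueDecomIfYdNot0}. I would keep the write-up to a single short paragraph stating the sign invariance and then quoting the lemma pointwise along the orbit $\gamma A N^+\Phi^{-t}$.
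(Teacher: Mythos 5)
Your proof is correct and is precisely the observation the paper treats as making the corollary ``immediate'': the last column of any $M\in G$ is preserved under right multiplication by $n_+(\widetilde{\boldsymbol{x}})$ and scaled by the positive number $e^{-(d-1)t}$ under right multiplication by $\Phi^{-t}$, so $\sgn\delta$ is constant along the orbit $\gamma A N^+\Phi^{-t}$, and Lemma~\ref{lemmUniqueDecomIfYdNot0} then applies pointwise. No gaps; this matches the paper's intended (unstated) reasoning.
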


\begin{lemm}\label{lemmChoiceLDeltaNonZero}
There exists an $\gamma \in \Ga$ such that $\delta (\tensor*[^t]{L}{}\gamma) >0$.
\end{lemm}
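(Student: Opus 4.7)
The plan is to unwind the definition of $\delta$ and reduce the claim to an elementary statement about primitive integer vectors. A direct computation shows that
\[
\delta(\tensor*[^t]{L}{}\gamma) \;=\; (\tensor*[^t]{L}{}\gamma)_{d,d} \;=\; \sum_{i=1}^{d} L_{i,d}\, \gamma_{i,d},
\]
that is, $\delta(\tensor*[^t]{L}{}\gamma)$ equals the standard inner product of the $d$-th column of $L$ with the $d$-th column of $\gamma$. So the task reduces to exhibiting $\gamma \in \Gamma$ whose last column has strictly positive inner product with the fixed nonzero vector $\boldsymbol{v} := (L_{1,d}, \ldots, L_{d,d})$.

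Next, I would observe that the last column of an element of $\Gamma = \SL(d,\ZZ)$ can be any primitive vector in $\ZZ^d$: every primitive vector $\boldsymbol{w} \in \widehat{\ZZ}^d$ extends to a $\ZZ$-basis of $\ZZ^d$, yielding an element of $\GL(d,\ZZ)$, and by negating one of the other basis vectors if necessary we can arrange the determinant to be $+1$. In particular, for any index $i$, the vector $\pm e_i$ is primitive and occurs as the $d$-th column of some $\gamma \in \Gamma$ (e.g.\ a signed permutation matrix).

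Since $L \in \SL(d,\RR)$ is invertible, $\boldsymbol{v} \neq \boldsymbol{0}$, so there exists an index $i$ with $L_{i,d} \neq 0$. Choose $\varepsilon = \sgn(L_{i,d}) \in \{\pm 1\}$ and pick any $\gamma \in \Gamma$ whose $d$-th column is $\varepsilon e_i$. Then
\[
\delta(\tensor*[^t]{L}{}\gamma) \;=\; \boldsymbol{v} \cdot (\varepsilon e_i) \;=\; |L_{i,d}| \;>\; 0,
\]
which yields the lemma. There is no real obstacle here; the only thing to be slightly careful about is the passage from ``primitive integer vector'' to ``last column of an $\SL(d,\ZZ)$ matrix'', which is handled by the standard completion-to-basis argument combined with a sign adjustment.
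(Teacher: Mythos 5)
Your argument is correct and a bit more direct than the paper's. You compute that $\delta(\tensor*[^t]{L}{}\gamma)$ is the inner product of the $d$-th column of $L$ with the $d$-th column of $\gamma$, note that the $d$-th column of $L$ is nonzero since $L$ is invertible, and then exploit the fact that the last column of an element of $\Gamma$ can be any primitive integer vector -- in particular any $\pm e_i$ -- to pick $\gamma$ realizing a strictly positive inner product in one shot. The paper instead proceeds in two steps: first it argues by contradiction that $\delta(\tensor*[^t]{L}{}\gamma)$ cannot vanish for all $\gamma$ (if it did, the $d$-th column of $L$ would be orthogonal to every standard basis vector, forcing $L$ singular), and then it corrects the sign of $\delta$ by right-multiplying $\gamma$ by $-I_d$ (when $d$ is even) or by $\operatorname{diag}(\widetilde{I}_{d-1},-1)$ (when $d$ is odd), both of which are in $\Gamma$. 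The content is essentially the same -- the last column of a matrix in $\SL(d,\ZZ)$ can be steered freely -- but you avoid the contradiction argument and the even/odd case split by allowing signed basis vectors from the start. The only step you should keep explicit is exactly the one you flagged: that $\pm e_i$ is the last column of some $\gamma\in\SL(d,\ZZ)$, which holds for $d\geq 2$ by completing to a basis and fixing the determinant by negating one of the remaining columns.
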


\begin{proof}
We first show that there exists an $\gamma \in \Ga$ such that $\delta (\tensor*[^t]{L}{}\gamma) \neq 0$.
 Assume not.  Then $\delta (\tensor*[^t]{L}{}\gamma) = 0$ for all $\gamma \in \Ga$.  Hence $\delta(\tensor*[^t]{L}{})=0$ and $(\boldsymbol{w}', 0) = (\boldsymbol{0}, 1) \tensor*[^t]{L}{}$.  Consequently, for every $\gamma \in \Ga$, we have that \[(\boldsymbol{0}, 1) \tensor*[^t]{L}{} \gamma = (\boldsymbol{w}', 0) \gamma= (\boldsymbol{u}, 0)\] for some $\boldsymbol{u}$ depending on $\gamma$.  Now, since the last column of an element of $\Ga$ can be any standard basis vector of $\RR^d$, it follows that $\RR^d \subset \tensor*[^t]{(\boldsymbol{w}', 0)}{^{\perp}}$, which implies that $\boldsymbol{w}' = \boldsymbol{0}$ and that $L$ is not invertible.  This yields a contradiction and shows that  there exists an $\gamma \in \Ga$ such that $\delta (\tensor*[^t]{L}{}\gamma) \neq 0$.
 
For this $\gamma \in \Ga$, if $\delta (\tensor*[^t]{L}{}\gamma) >0$, then we have shown the desired result.  Otherwise, we have $\delta (\tensor*[^t]{L}{}\gamma) < 0$ and, in this case, taking the $\gamma$ and replacing it by \[\begin{cases} \gamma (-I_d) & \text{ if } d \text{ is even} \\ \gamma \begin{pmatrix}  \widetilde{I}_{d-1} &  \tensor*[^t]{\boldsymbol{0}}{} \\ \boldsymbol{0} & -1  \end{pmatrix} &\text{ if } d \text{ is odd}  \end{cases}\] yields the desired result.

\end{proof}

For $i \in \{1, \cdots, d\}$, let $s_{i,d} \in \Ga$ be the matrix with all entries either $0, 1,$ or $-1$ such that left-multiplying by it performs the elementary row operation of replacing the $i$-th row with the $d$-th row and either replacing the $d$-th row with the $i$-th row or replacing the $d$-th row with the negation of the $i$-th row, depending on which choice corresponds to $\det  (s_{i,d})=1$. 

Now, for \[M := \begin{pmatrix} A_M & \tensor*[^t]{\boldsymbol{b}}{_M} \\\boldsymbol{c}_M &  a_M  \end{pmatrix} \in G,\] we make the following observation:  \begin{align}\label{eqnDecomp2InG}
 M= \begin{cases}  \begin{pmatrix}  I_{d-1} &\tensor*[^t]{\boldsymbol{0}}{}\\ \boldsymbol{c}_MA_M^{-1} &-\boldsymbol{c}_MA_M^{-1}\tensor*[^t]{\boldsymbol{b}}{_M}+ a_M \end{pmatrix}  \begin{pmatrix}  A_M &\tensor*[^t]{\boldsymbol{b}}{_M}\\ \boldsymbol{0} &1 \end{pmatrix}  & \textrm{ if } \det(A_M) \neq 0 \\  s_{i,d} \begin{pmatrix}  I_{d-1} &\tensor*[^t]{\boldsymbol{0}}{}\\ \widehat{\boldsymbol{c}}_M\widehat{A}_M^{-1} &-\widehat{\boldsymbol{c}}_M\widehat{A}_M^{-1}\tensor*[^t]{\widehat{\boldsymbol{b}}}{_M}+\widehat{a}_M \end{pmatrix}  \begin{pmatrix}  \widehat{A}_M &\tensor*[^t]{\widehat{\boldsymbol{b}}}{_M}\\ \boldsymbol{0} &1 \end{pmatrix}  & \textrm{ if } \det(A_M) = 0\end{cases} \end{align} where  \[\begin{pmatrix}  \widehat{A}_M &\tensor*[^t]{\widehat{\boldsymbol{b}}}{_M}\\ \widehat{\boldsymbol{c}}_M &\widehat{a}_M \end{pmatrix} := s_{i,d}^{-1} M \] and the index $i$ is chosen such that $\det(\widehat{A}_M) \neq 0$.  Since $M \in G$, we have that \begin{align}\label{Decomp2InG2} -\boldsymbol{c}_MA_M^{-1}\tensor*[^t]{\boldsymbol{b}}{_M}+ a_M = \frac 1 {\det(A_M)} \quad \textrm{ and } -\widehat{\boldsymbol{c}}_M\widehat{A}_M^{-1}\tensor*[^t]{\widehat{\boldsymbol{b}}}{_M}+\widehat{a}_M  = \frac 1 {\det(\widehat{A}_M)}.
  \end{align}

\begin{lemm}\label{lemmChoiceLDeltaNonZero2}
Let $M \in G$ such that $\delta (M) \neq 0$ holds.  Then there exists $i \in \{1, \cdots, d\}$ such that $\delta (s^{-1}_{i,d} M) \neq 0$ and $\delta (M^{-1} s_{i,d}) \neq 0$.
\end{lemm}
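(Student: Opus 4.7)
My plan is to translate the condition $\delta(s_{i,d}^{-1}M) \neq 0$ and $\delta(M^{-1}s_{i,d}) \neq 0$ into explicit statements about entries of $M$ and $M^{-1}$, and then produce the required $i$ from the identity $M^{-1}M = I_d$. The key observation is that $s_{i,d}$ is a signed permutation matrix whose only nonzero entries are $(s_{i,d})_{i,d}=1$, $(s_{i,d})_{d,i}=-1$, and $1$'s on the diagonal away from rows $i,d$ (with $s_{d,d}=I_d$). A direct computation then gives $s_{i,d}^{-1}e_d = -e_i$ and $s_{i,d}^{-1}e_i = e_d$, so the only nonzero entry in the $d$-th row of $s_{i,d}^{-1}$ is $(s_{i,d}^{-1})_{d,i}=1$, and the only nonzero entry in the $d$-th column of $s_{i,d}$ is $(s_{i,d})_{i,d}=1$.

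Carrying out the matrix multiplications at the $(d,d)$-entry, this yields
\[
\delta(s_{i,d}^{-1}M) = M_{i,d} \quad \text{and} \quad \delta(M^{-1}s_{i,d}) = (M^{-1})_{d,i},
\]
which holds uniformly for all $i\in\{1,\dots,d\}$ (the case $i=d$ being trivially $\delta(M)$ and $\delta(M^{-1})$ respectively).

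The problem therefore reduces to finding a single index $i$ such that both $M_{i,d}\neq 0$ and $(M^{-1})_{d,i}\neq 0$. For this I would invoke the identity
\[
1 = (M^{-1}M)_{d,d} = \sum_{k=1}^{d}(M^{-1})_{d,k}\,M_{k,d}.
\]
Since the right-hand side is nonzero, at least one summand is nonzero, and for that index $i$ both factors $(M^{-1})_{d,i}$ and $M_{i,d}$ are nonzero, giving the desired conclusion. I expect no real obstacle here: the argument is a short bookkeeping computation, and the only point requiring mild care is handling $i=d$ (where $s_{d,d}=I_d$ and one must verify compatibility with the hypothesis $\delta(M)\neq 0$, which is automatic since the $i=d$ summand above is $(M^{-1})_{d,d}\,M_{d,d}$).
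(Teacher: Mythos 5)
Your proof is correct, and it takes a genuinely different and cleaner route than the paper. The paper's proof works with the block decomposition (\ref{eqnDecomp2InG}), splitting into the cases $\det(A_M) \neq 0$ and $\det(A_M) = 0$, and then uses the determinant constraint to rule out a bad subcase; it is a somewhat indirect argument that passes through the explicit block formula for $M^{-1}$. Your argument instead reduces the problem immediately to the two formulas $\delta(s_{i,d}^{-1}M) = \pm M_{i,d}$ and $\delta(M^{-1}s_{i,d}) = \pm (M^{-1})_{d,i}$ (the signs depend on which entry of $s_{i,d}$ carries the $-1$, but are irrelevant here), and then reads off the needed index $i$ from
\[
1 = (M^{-1}M)_{d,d} = \sum_{k=1}^{d}(M^{-1})_{d,k}\,M_{k,d},
\]
since at least one summand must be nonzero. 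This is shorter, avoids the case split entirely, and — worth noting — does not use the hypothesis $\delta(M) \neq 0$ at all, so in fact it establishes the lemma for every $M \in G$. (The hypothesis is harmless to keep, since it matches the context in which the lemma is applied, but your argument shows it is not logically required.) Your handling of the boundary case $i=d$, where $s_{d,d} = I_d$, is also consistent: the formulas degenerate to $\delta(M)$ and $\delta(M^{-1})$ as expected.
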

\begin{proof}

Let \[M:= \begin{pmatrix} A_M & \tensor*[^t]{\boldsymbol{b}}{_M} \\\boldsymbol{c}_M &  a_M  \end{pmatrix}.\]  Using (\ref{eqnDecomp2InG}), we have two cases.

\textit{Case:  $\det(A_M) \neq 0$.}  We will show that the lemma holds with $i=d$.  It is enough to show that $\delta (M^{-1}) \neq 0.$  Note that we have \[M^{-1} =  \begin{pmatrix}  A_M^{-1} & -  A_M^{-1}\tensor*[^t]{\boldsymbol{b}}{_M}\\ \boldsymbol{0} &1 \end{pmatrix}  \begin{pmatrix}  I_{d-1} &\tensor*[^t]{\boldsymbol{0}}{}\\ \frac{-\boldsymbol{c}_MA_M^{-1}}{-\boldsymbol{c}_MA_M^{-1}\tensor*[^t]{\boldsymbol{b}}{_M}+ a_M} &\frac 1{-\boldsymbol{c}_MA_M^{-1}\tensor*[^t]{\boldsymbol{b}}{_M}+ a_M} \end{pmatrix}.\]  Now $-\boldsymbol{c}_MA_M^{-1}\tensor*[^t]{\boldsymbol{b}}{_M}+ a_M \neq 0$ for, otherwise, $\det(M)=0$, which leads to a contradiction.  Thus \begin{align}\label{eqnInverseMatrixddCoordInvertibleA}
 \delta (M^{-1}) = \frac 1{-\boldsymbol{c}_MA_M^{-1}\tensor*[^t]{\boldsymbol{b}}{_M}+ a_M} \neq 0 \end{align} is the desired result.

\textit{Case:  $\det(A_M) = 0$.}  If $\widehat{a}_M  \neq 0$, then replacing $M$ with $s_{i,d}^{-1} M$ in the previous case yields the desired result.  Now for any $j \in \{1, \cdots, d\}$, we can consider \[\begin{pmatrix}  \widehat{A}_j &\tensor*[^t]{\widehat{\boldsymbol{b}}}{_j}\\ \widehat{\boldsymbol{c}}_j &\widehat{a}_j \end{pmatrix} := s_{j,d}^{-1} M,\] and, if there does not exist $j$ such that $\det(\widehat{A}_j) \neq 0$ and $\widehat{a}_j  \neq 0$, then $\det(M) =0$, which leads to a contradiction.  Thus such a $j$ exists; setting $i=j$ gives the desired result.

This completes the proof of the lemma.

\end{proof}

\begin{rema}\label{remaTotIrrNonIntersect}  Note that we have $\gamma \tensor*[^t]{L}{^{-1}} N^+\Phi^{-t}$ intersects \[\Gamma H \left\{ \begin{pmatrix}  y_d^{-1/(d-1)} I_{d-1} & \tensor*[^t]{\boldsymbol{0}}{} \\\boldsymbol{y'} &y_d   \end{pmatrix}:\boldsymbol{y}:=(\boldsymbol{y'}, y_d) \in \RR^{d-1} \times (0, \infty) \right\}\] for every $\gamma \in \Gamma$ (i.e. regardless of the value of $\delta(\gamma \tensor*[^t]{L}{^{-1}})$.)  Corollary~\ref{coroUniqueDecomIfYdNot0} implies this for $\delta(\gamma \tensor*[^t]{L}{^{-1}}) \neq 0$.  If $\delta(\gamma \tensor*[^t]{L}{^{-1}}) = 0$, then, for some choice of $i$, $\delta(s^{-1}_{i,d}\gamma \tensor*[^t]{L}{^{-1}}) \neq 0$ and Corollary~\ref{coroUniqueDecomIfYdNot0} will again imply this.

\end{rema}

Given $A \in G$ such that $\delta(A) > 0$, let $A_H A_{\boldsymbol{y}}$ be the unique decomposition from Lemma~\ref{lemmUniqueDecomIfYdNot0} where $A_H \in H$ and $\boldsymbol{y} = (\boldsymbol{0},1) A$.  Note that, for all $M \in H$, we have that $(MA)_H = M A_H$ and $(MA)_{\boldsymbol{y}}=A_{\boldsymbol{y}}$.

\begin{theo}\label{thmTransFareyInterPointCorr}  Fix $\sigma, t \in \RR$ and let $Q := e^{(d-1)(t - \sigma)}$.
The set of intersection points of the translated horosphere $\tensor*[^t]{\left(\Ga L  N^- \Phi^t\right)}{^{-1}}$ with $\Ga \backslash \Ga H\{\Phi^{-s}: s\geq \sigma\}$ is \[\left\{\tensor*[^t]{\left(\Ga L  n_-\left({\boldsymbol{y'}}/{y_d}\right) \Phi^t\right)}{^{-1}} : (\boldsymbol{y'},y_d) \in \If_Q \right\} = \left\{\tensor*[^t]{\left(\Ga L  n_-\left({\boldsymbol{y'}}/{y_d}\right) \Phi^t\right)}{^{-1}} : (\boldsymbol{y'},y_d) \in \left(\widehat{\ZZ}^d \tensor*[^t]{L}{^{-1}}\right)_Q^+ \right\}.\]

\end{theo}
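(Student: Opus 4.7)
The plan is to directly compute when a point of the translated horosphere lies in the closed embedded submanifold $\Gamma \backslash \Gamma H \{\Phi^{-s} : s \geq \sigma\}$ and then to parametrize the resulting condition by the primitive lattice $\widehat{\ZZ}^d \tensor*[^t]{L}{^{-1}}$. I begin with the identity
\[\tensor*[^t]{\left(L n_-(\boldsymbol{x}) \Phi^t\right)}{^{-1}} = \tensor*[^t]{L}{^{-1}} n_+(-\boldsymbol{x}) \Phi^{-t},\]
which follows from $\tensor*[^t]{n_-(-\boldsymbol{x})}{} = n_+(-\boldsymbol{x})$ together with the fact that $\Phi^{-t}$ is diagonal. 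The point at parameter $\boldsymbol{x}$ lies in the intersection precisely when there exist $\gamma \in \Gamma$, $M \in H$, and $s \geq \sigma$ satisfying $\gamma \tensor*[^t]{L}{^{-1}} n_+(-\boldsymbol{x}) = M \Phi^{t-s}$.

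To extract scalar conditions, I would apply the row vector $(\boldsymbol{0}, 1)$ from the left. Because every $M \in H$ has last row $(\boldsymbol{0}, 1)$ and $\Phi^{t-s}$ is diagonal, the right-hand side collapses to $(\boldsymbol{0}, e^{(d-1)(t-s)})$. Writing $(\alpha_1, \ldots, \alpha_d) := (\boldsymbol{0}, 1) \gamma \tensor*[^t]{L}{^{-1}}$ and expanding $(\alpha_1, \ldots, \alpha_d) n_+(-\boldsymbol{x}) = (\boldsymbol{\alpha}' - \alpha_d \boldsymbol{x}, \alpha_d)$, one reads off the equations $\boldsymbol{x} = \boldsymbol{\alpha}'/\alpha_d$ and $\alpha_d = e^{(d-1)(t-s)}$; the range $s \geq \sigma$ translates to $0 < \alpha_d \leq Q$. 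For the converse direction I would recover $s$ from $\alpha_d$, set $\boldsymbol{x} := \boldsymbol{\alpha}'/\alpha_d$, and define $M := \gamma \tensor*[^t]{L}{^{-1}} n_+(-\boldsymbol{x}) \Phi^{s-t}$; the defining relation $(\alpha_1, \ldots, \alpha_d) = (\boldsymbol{0}, 1) \gamma \tensor*[^t]{L}{^{-1}}$ combined with the choices of $s$ and $\boldsymbol{x}$ guarantees that $M$ has last row $(\boldsymbol{0}, 1)$ and thus lies in $H$, producing a genuine intersection.

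Finally, I would invoke the standard bijection $\Gamma_H \backslash \Gamma \to \widehat{\ZZ}^d$, $\Gamma_H \gamma \mapsto (\boldsymbol{0}, 1) \gamma$, which is valid because every element of $\Gamma_H \subset H$ has last row $(\boldsymbol{0}, 1)$, so the $\alpha$-vector depends only on the coset of $\gamma$ and ranges over all of $\widehat{\ZZ}^d$; post-composing with right-multiplication by $\tensor*[^t]{L}{^{-1}}$ yields the parametrization by $\left(\widehat{\ZZ}^d \tensor*[^t]{L}{^{-1}}\right)_Q^+$ in the second set of the theorem. Equivalence with the $\If_Q$-description then follows from the injectivity of $\boldsymbol{y} \mapsto \boldsymbol{y}'/y_d$ on $\left(\widehat{\ZZ}^d \tensor*[^t]{L}{^{-1}}\right)^+$: if two such vectors have equal ratios, they are positive scalar multiples of each other, but primitivity of their $\tensor*[^t]{L}{}$-images in $\widehat{\ZZ}^d$ combined with both last coordinates being positive forces the scaling factor to equal $1$. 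The main subtlety will be the careful bookkeeping for the transpose-inverse and ensuring that the sign constraint $\alpha_d > 0$, dictated by positivity of $e^{(d-1)(t-s)}$, is precisely what is encoded in the definition of $\left(\widehat{\ZZ}^d \tensor*[^t]{L}{^{-1}}\right)^+$.
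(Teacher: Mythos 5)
Your proposal is correct and takes essentially the same route as the paper. The only surface difference is that you extract the key equations directly by applying $(\boldsymbol{0},1)$ on the left and then reconstruct $M \in H$ by hand, whereas the paper packages exactly this content into its Lemma~\ref{lemmUniqueDecomIfYdNot0} on the $(H,\RR^d)$-decomposition and then cites uniqueness of that decomposition; the underlying computation is the same.
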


\begin{proof}
Let $\widetilde{L} :=  \tensor*[^t]{L}{^{-1}}$. First, let us show that these points are intersection points.  Let $\boldsymbol{y}:=(y_1, \cdots, y_d) \in \If_Q$.  Then there exists $\gamma \in \Ga_H \backslash \Ga$ such that $\boldsymbol{y} = (\boldsymbol{0}, 1) \gamma \widetilde{L}$ and $0<y_d\leq Q$.  Thus $\delta(\gamma  \widetilde{L}) > 0$ and Lemma~\ref{lemmUniqueDecomIfYdNot0} gives the unique decomposition \[\gamma  \widetilde{L} = (\gamma  \widetilde{L})_H \begin{pmatrix}  y_d^{-1/(d-1)} I_{d-1} & \tensor*[^t]{\boldsymbol{0}}{} \\\boldsymbol{y'} &y_d   \end{pmatrix},\] which yields \[\gamma  \widetilde{L}n_+(-\boldsymbol{y'}/{y_d})\Phi^{-t} = (\gamma  \widetilde{L})_H \Phi^{-t+\frac 1{d-1}\log (y_d)} \in  H \{\Phi^{-s}: s\geq \sigma\}.\]  This shows that these are the desired intersection points.

Let us show all intersection points are of the desired form.  Let \[\tensor*[^t]{\left(\Ga L  N^- \Phi^t\right)}{^{-1}} \cap \Ga \backslash \Ga H\{\Phi^{-s}: s\geq \sigma\} \neq \emptyset.\]  Then there exists $\gamma \in \Ga$ and $\boldsymbol{x} \in \RR^{d-1}$ such that \[\gamma \widetilde{L}n_+(\boldsymbol{x})\Phi^{-t} = M \Phi^{-r}\] where $M \in H$ and $r \geq \sigma$.  Hence, we have \[\gamma \widetilde{L} = M \Phi^{-r+t}n_+(-\boldsymbol{x}).\]  A direct computation shows that $\delta(\gamma \widetilde{L}) = e^{(d-1)(t-r)}> 0.$  Lemma~\ref{lemmUniqueDecomIfYdNot0} gives that \[(\gamma  \widetilde{L})_H \begin{pmatrix}  y_d^{-1/(d-1)} I_{d-1} & \tensor*[^t]{\boldsymbol{0}}{} \\\boldsymbol{y'} &y_d   \end{pmatrix} = M \left(\Phi^{-r+t}n_+(-\boldsymbol{x})\right)\] where $(\boldsymbol{y'},y_d) = (\boldsymbol{0}, 1) \gamma \widetilde{L}$ and that this decomposition is unique.  This uniqueness implies that $M = (\gamma  \widetilde{L})_H$ and \[\begin{pmatrix}  y_d^{-1/(d-1)} I_{d-1} & \tensor*[^t]{\boldsymbol{0}}{} \\\boldsymbol{y'} &y_d   \end{pmatrix} =  \Phi^{-r+t}  n_+(-\boldsymbol{x}).\]  Thus $\boldsymbol{x} = - \boldsymbol{y'}/{y_d}$ and $y_d = e^{(d-1)(t-r)} \leq Q$.  This concludes the proof.

\end{proof}

\begin{rema}
Let $(\boldsymbol{y'},y_d) \in \If_Q$ and $\gamma_{(\boldsymbol{y'},y_d)}$ be an element of $\Gamma$ for which \begin{align}\label{eqnDefnOtherpq}
 (\boldsymbol{0},1) \gamma_{(\boldsymbol{y'},y_d)} \widetilde{L}=(\boldsymbol{y'},y_d). \end{align}  It is easy to see that the only elements of $\gamma \in \Gamma$ for which $ (\boldsymbol{0},1) \gamma \widetilde{L}=(\boldsymbol{y'},y_d)$ holds are exactly the elements $\gamma$ of the coset $\Gamma_H  \gamma_{(\boldsymbol{y'},y_d)}$.
Note that each element $(\boldsymbol{y'},y_d) \in \If_Q$ is in bijective correspondence to the following collection of intersection points of $\tensor*[^t]{\left(\Ga L  N^- \Phi^t\right)}{^{-1}}$ with $H\{\Phi^{-s}: s\geq \sigma\}$:  \[\left\{\gamma  \widetilde{L}n_+(-\boldsymbol{y'}/{y_d})\Phi^{-t}: \textit{ for all } \gamma \in \Gamma \textit{ such that } (\boldsymbol{0},1) \gamma \widetilde{L}=(\boldsymbol{y'},y_d)\right\} = \Gamma_H \gamma_{(\boldsymbol{y'},y_d)} \widetilde{L}n_+(-\boldsymbol{y'}/{y_d})\Phi^{-t}.\]  Consequently, each element $(\boldsymbol{y'},y_d) \in \If_Q$ is in bijective correspondence to exactly one intersection of the translated horosphere with $\Gamma_H \backslash H\{\Phi^{-s}: s\geq \sigma\}$.

\end{rema}

Let us consider the following finite subset of $\If_Q$, \begin{align}\label{eqn:subsetTransFareySeq}
 \widehat{\If}_Q :=\left\{\left(\frac{\alpha_1}{\alpha_d}, \cdots,  \frac{\alpha_{d-1}}{\alpha_d}\right) \in \If_Q : 0 \leq \alpha_i \leq Q \textrm{ for } i = 1, \cdots, d-1 \right\}, \end{align} for which $\If = \cup_{Q=1}^\infty \widehat{\If}_Q$.\footnote{Note that the sets $\widehat{\If}_Q$ are defined in a slightly different way than the sets $\F_Q$ from (\ref{eqn:DefnFareySeq}).}  Hence the set $\widehat{\If}_Q$ is comprised of the points of the set $\widehat{\ZZ}^d \tensor*[^t]{L}{^{-1}}$ in the region $[0,Q]^{d-1} \times (0, Q]$ and, using the Gauss circle problem or its various generalizations (see~\cite[Proposition~3.2]{MS14} for example), we have the following asymptotics: \begin{align}\label{eqnAsyHatIf}
 \#(\widehat{\If}_Q) \sim \frac{Q^d}{\zeta(d)}  \quad \textrm{ as }  \quad Q \rightarrow \infty. \end{align}

\subsection{Points on a translated horosphere in the same $\Gamma$-coset}\label{subsec:GammaDuplicates}  In this section, fix $t \in \RR$.  Let $\boldsymbol{x}$ and $\boldsymbol{y}$ be two distinct elements of $\RR^{d-1}$.  When the cosets $\Gamma L n_-(\boldsymbol{x})\Phi^t$ and $\Gamma L n_-(\boldsymbol{y})\Phi^t$ are equal, we refer to them as {\em $\Gamma$-duplicates}.  As $\Gamma$-duplicates are identified on $\Gamma \backslash G$, we wish to find a (maximal) region $\Aa_L$ in $\RR^{d-1}$ which corresponds to points of the translated horosphere with no $\Gamma$-duplicates.  As the following theorem shows, $\Aa_L$ is either all of $\RR^{d-1}$ or the fundamental domain of a torus lying in $\RR^{d-1}$.  Let $s_{j,d}$ and $\delta(\cdot)$ be as defined in Section~\ref{subsecTranFareySeq}.

\begin{theo}\label{thm:HorosphereDuplicates}  We have $\Aa_L = \RR^{d-1}$ for Haar-almost all $L \in G$.  Otherwise, for $L$ in the complementary subset of $G$, we have \[\Aa_L =\begin{cases} \TT^{d-1}\left(\frac{A^{-1}}{\det(A)}\right) & \text{ if } \det (A) \neq 0 \\ \TT^{d-1}\left(\frac{\widehat{A}^{-1}}{\det(\widehat{A})}\right) & \text{ if } \det (A) = 0\end{cases}.\]  Here, \[\begin{pmatrix}  A &  \tensor*[^t]{\boldsymbol{u}}{} \\ \boldsymbol{v} & c  \end{pmatrix} := L^{-1} \] and, if $\det(A) = 0$, \begin{align}\label{eqnHorosphereDuplicates}\begin{pmatrix}  \widehat{A} &\tensor*[^t]{\widehat{\boldsymbol{u}}}{}\\ \widehat{\boldsymbol{v}} &\widehat{c} \end{pmatrix} := L^{-1} s_{j,d} \end{align} where the index $j$ is chosen such that $\det(\widehat{A}) \neq 0$.

\end{theo}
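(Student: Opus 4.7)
The plan is to turn the coset identity into a matrix equation and exploit the resulting rank-one structure. Cancelling $\Phi^t$ on the right, the duplicate condition $\Ga L n_-(\boldsymbol{x})\Phi^t = \Ga L n_-(\boldsymbol{y})\Phi^t$ is equivalent to the existence of $\gamma \in \Ga$ with $\gamma = L n_-(\boldsymbol{y}-\boldsymbol{x})L^{-1}$. Setting $\boldsymbol{w} := \boldsymbol{y}-\boldsymbol{x}$, the displacements realising duplicates form the subgroup
\[
\Lambda_L := \bigl\{\boldsymbol{w} \in \RR^{d-1} : L n_-(\boldsymbol{w}) L^{-1} \in \Ga \bigr\} \;\cong\; \Ga \cap LN^-L^{-1},
\]
which is discrete (being $\Ga$ intersected with a closed subgroup of $G$) and depends only on the left coset $\Ga L$ (since conjugation by $\gamma \in \Ga$ preserves $\Ga$). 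Accordingly, $\Aa_L$ may be taken to be a fundamental domain of $\RR^{d-1}/\Lambda_L$, and $\Aa_L = \RR^{d-1}$ iff $\Lambda_L = \{0\}$.

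The key structural observation is that $L n_-(\boldsymbol{w}) L^{-1} - I_d$ is the rank-one outer product $(L\tensor*[^t]{\tilde{\boldsymbol{w}}}{})(\boldsymbol{v}, c)$, where $\tilde{\boldsymbol{w}} := (\boldsymbol{w}, 0) \in \RR^d$ and $(\boldsymbol{v}, c)$ is the last row of $L^{-1}$. Integrality in $\SL(d,\ZZ)$ forces every coordinate of $L\tensor*[^t]{\tilde{\boldsymbol{w}}}{}$ to lie in
\[
\Lambda_0 := \bigl\{r \in \RR : r(\boldsymbol{v}, c) \in \ZZ^d \bigr\},
\]
which by a short lemma equals $\{0\}$ unless all entries of $(\boldsymbol{v}, c)$ are mutually commensurable — equivalently, $(\boldsymbol{v}, c) = \tau^{-1}\boldsymbol{n}$ for some primitive $\boldsymbol{n} \in \widehat{\ZZ}^d$ and some $\tau \in \RR\setminus\{0\}$ — in which case $\Lambda_0 = \tau\ZZ$. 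The first assertion follows immediately: the commensurability locus $\{L : (\boldsymbol{v}, c) \in \RR\cdot\widehat{\ZZ}^d\}$ is a countable union (over primitive $\boldsymbol{n}$) of proper algebraic subvarieties of $G$, hence Haar-null, and off this locus $\Lambda_0 = \{0\}$ forces $L\tensor*[^t]{\tilde{\boldsymbol{w}}}{} = 0$, whence invertibility of $L$ gives $\boldsymbol{w} = 0$ and $\Aa_L = \RR^{d-1}$.

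For $L$ in the exceptional locus, writing $L\tensor*[^t]{\tilde{\boldsymbol{w}}}{} = \tau\boldsymbol{k}$ with $\boldsymbol{k} \in \ZZ^d$, the vanishing of $\tilde{w}_d$ reads $\boldsymbol{n}\cdot\boldsymbol{k} = 0$, and applying $L^{-1}$ and extracting the first $d-1$ coordinates yields
\[
\tensor*[^t]{\boldsymbol{w}}{} = \tau\bigl(A\tilde{\boldsymbol{k}} + k_d\tensor*[^t]{\boldsymbol{u}}{}\bigr), \qquad \boldsymbol{k} = (\tilde{\boldsymbol{k}}, k_d) \in \ZZ^d \cap \boldsymbol{n}^\perp.
\]
When $\det A \ne 0$, the linear map $\boldsymbol{k} \mapsto A\tilde{\boldsymbol{k}} + k_d\tensor*[^t]{\boldsymbol{u}}{}$ restricted to $\boldsymbol{n}^\perp$ is a linear isomorphism onto $\RR^{d-1}$: its unrestricted kernel is one-dimensional and, by the Schur identity $c - \boldsymbol{v}A^{-1}\tensor*[^t]{\boldsymbol{u}}{} = 1/\det A$ (coming from $\det L^{-1} = 1$), transverse to $\boldsymbol{n}^\perp$. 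Unwinding the normalisations — using the primitivity of $\boldsymbol{n}$ together with the Schur identity to eliminate the scalar $\tau$ in favour of $\det A$ — identifies $\Lambda_L$ with the claimed lattice and gives $\Aa_L = \TT^{d-1}(A^{-1}/\det A)$. The case $\det A = 0$ reduces to the previous one via the $\Ga$-coset invariance of $\Lambda_L$: replacing $L$ by $s_{j, d}^{-1}L$ for $s_{j, d} \in \Ga$ chosen as in~(\ref{eqnHorosphereDuplicates}) leaves $\Lambda_L$ unchanged and renders the top-left block $\widehat{A}$ of $(s_{j,d}^{-1}L)^{-1} = L^{-1} s_{j,d}$ invertible, whence the previous case yields the stated formula with $\widehat{A}$ in place of $A$.

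The main obstacle I anticipate is the bookkeeping in the $\det A \ne 0$ identification step: our parametrisation realises $\Lambda_L$ as the $\tau$-scaled image of $\ZZ^d \cap \boldsymbol{n}^\perp$ under the top-$(d-1)$-row projection of $L^{-1}$, and matching this concretely to the closed-form $\TT^{d-1}(A^{-1}/\det A)$ requires careful accounting of the scalar $\tau$ (via the primitivity of $\boldsymbol{n}$ and the Schur identity), together with consistency under left $\Ga$-multiplication in $L$ so that one may normalise — for example, to the case where $\boldsymbol{n}$ has nonzero last component — reducing the sublattice $\ZZ^d \cap \boldsymbol{n}^\perp$ to a manageable explicit form.
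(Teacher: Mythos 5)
Your rank-one factorization $\gamma - I_d = (L\tensor*[^t]{\tilde{\boldsymbol{w}}}{})(\boldsymbol{v}, c)$ is a genuinely different and arguably cleaner route than the paper's: the paper decomposes the block-matrix identity (\ref{eqn:GammaDuplicateCondMatForm}) and deduces from its top $d-1$ rows that the first $d-1$ rows of $L^{-1}$ lie in the eigenvalue-one eigenspace of $\tensor*[^t]{\gamma}{}$, then uses the scaling behaviour of (\ref{eqnHaarMeasAsMultOFLebMeas}); you read the integrality of $\gamma$ directly off an outer product with the last row $(\boldsymbol{v},c)$ of $L^{-1}$, reducing the generic case to the elementary fact that $\Lambda_0 = \{r : r(\boldsymbol{v},c) \in \ZZ^d\}$ is trivial unless $(\boldsymbol{v},c)$ is a real multiple of a primitive integer vector. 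Both approaches land the first assertion, and yours additionally gives an exact set-theoretic description of $\Lambda_L$ rather than only the necessary condition the paper extracts.

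The gap is the step you flag as ``bookkeeping,'' and it is not bookkeeping that will close as you expect. You correctly derive $\Lambda_L$ as the $\tau$-scaled image of $\ZZ^d \cap \boldsymbol{n}^\perp$ under $\boldsymbol{k} \mapsto A\tensor*[^t]{\tilde{\boldsymbol{k}}}{} + k_d\tensor*[^t]{\boldsymbol{u}}{}$, then assert without proof that this unwinds to $\ZZ^{d-1}\frac{A^{-1}}{\det A}$. Try it at $d = 2$ with $L^{-1} = \diag(2, 1/2)$, so $A = 2$, $(\boldsymbol{v},c) = (0, 1/2)$, $\boldsymbol{n} = (0,1)$, $\tau = 2$. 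Your parametrization forces $\boldsymbol{k} = (k_1, 0)$ and $w = \tau A k_1 = 4k_1$, giving $\Lambda_L = 4\ZZ$, and the direct check $L n_-(w)L^{-1} = \begin{pmatrix} 1 & w/4 \\ 0 & 1 \end{pmatrix}$ confirms it; but $\frac{A^{-1}}{\det A} = \frac{1}{4}$ produces the reciprocal lattice $\frac{1}{4}\ZZ$. So your (correct) derivation of $\Lambda_L$ and the closed form you want to match it to already disagree at $d = 2$. Tracing this back, the mismatch is visible in the paper itself: (\ref{eqn:GammaDuplicateCondMatForm}) has $\begin{pmatrix} I_{d-1} & \tensor*[^t]{\boldsymbol{0}}{} \\ \boldsymbol{s} & 1 \end{pmatrix}$, which is $n_+(\boldsymbol{s})$, on the right, whereas rewriting (\ref{eqn:GammaDuplicateCond}) produces $n_-(\boldsymbol{s})$, and the displayed formula for $\Aa_L$ inherits that transposition. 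You should carry out the identification rather than anticipate it as an obstacle; a small sanity check would have surfaced this immediately and told you the promised simplification cannot hold as stated.
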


\begin{rema}
Note that such a $j$ always exists.  Also, as the proof below is independent of $t$, the region $\Aa_L$ is independent of $t$.  

\end{rema}

\begin{proof}
Two points, $\Gamma L n_-(\boldsymbol{x})\Phi^t$ and $\Gamma L n_-(\boldsymbol{y})\Phi^t$, on the $\Phi^t$-translate of the translated horosphere are $\Gamma$-duplicates if and only if there exists a $\gamma \in \Gamma \backslash \{I_d\}$ such that\footnote{Note that, if we let $\gamma = I_d$ in (\ref{eqn:GammaDuplicateCond}), then we have that  $\boldsymbol{x} = \boldsymbol{y}$.  The converse of this statement also holds.} \begin{align} \label{eqn:GammaDuplicateCond}
 \gamma^{-1} L = L n_-(\boldsymbol{y} - \boldsymbol{x}). \end{align}  Letting \[L^{-1} = \begin{pmatrix}  A &  \tensor*[^t]{\boldsymbol{u}}{} \\ \boldsymbol{v} & c  \end{pmatrix} \quad \gamma = \begin{pmatrix}  N &  \tensor*[^t]{\boldsymbol{r}}{} \\ \boldsymbol{p} & q  \end{pmatrix} \quad \boldsymbol{s} = \boldsymbol{x} - \boldsymbol{y},\] we have that (\ref{eqn:GammaDuplicateCond}) can be rewritten as \begin{align}\label{eqn:GammaDuplicateCondMatForm}
 \begin{pmatrix}  A &  \tensor*[^t]{\boldsymbol{u}}{} \\ \boldsymbol{v} & c  \end{pmatrix} \begin{pmatrix}  N &  \tensor*[^t]{\boldsymbol{r}}{} \\ \boldsymbol{p} & q  \end{pmatrix} = \begin{pmatrix}  I_{d-1} & \tensor*[^t]{\boldsymbol{0}}{} \\\boldsymbol{s} & 1   \end{pmatrix} \begin{pmatrix}  A &  \tensor*[^t]{\boldsymbol{u}}{} \\ \boldsymbol{v} & c  \end{pmatrix}, \end{align} which yields four equations, two of which are \begin{align}\label{eqnGammaDuplicates1} A(N - I_{d-1}) + \tensor*[^t]{\boldsymbol{u}}{} \boldsymbol{p} &= 0_{d-1,d-1} \\\nonumber
A \tensor*[^t]{\boldsymbol{r}}{} + \tensor*[^t]{\boldsymbol{u}}{} (q-1) &= \tensor*[^t]{\boldsymbol{0}}{}.
  \end{align}  Expressing this pair of equations in matrix form, we obtain \[\begin{pmatrix} A  & \tensor*[^t]{\boldsymbol{u}}{}   \end{pmatrix}\begin{pmatrix}  N - I_{d-1} &  \tensor*[^t]{\boldsymbol{r}}{} \\ \boldsymbol{p} & q -1  \end{pmatrix}  = \begin{pmatrix} A  & \tensor*[^t]{\boldsymbol{u}}{}   \end{pmatrix} (\gamma - I_d) = 0_{d-1, d},\] or, equivalently, \[(\tensor*[^t]{\gamma}{} - I_d) \begin{pmatrix} \tensor*[^t]{A }{} \\ \boldsymbol{u}   \end{pmatrix} =  0_{d, d-1}.\]  Here, $0_{i, j}$ denotes, for $i, j \in \NN$, the $i \times j$ matrix with all zero entries.  
  
Consequently, a necessary condition for the two points to be $\Gamma$-duplicates is for the first $d-1$ rows of $L^{-1}$ to lie in the eigenspace of some $\gamma \in \Gamma \backslash \{I_d\}$ for the eigenvalue $1$.  By the rank-nullity theorem, the geometric multiplicity of this eigenspace for any element of $\Gamma \backslash \{I_d\}$ is at most $d-1$.  As $\Gamma$ is countable, we have that, when $\Gamma$-duplicates exist, the first $d-1$ rows of $L^{-1}$ lie in a countable union of $d-1$-dimensional hyperplanes in $\RR^d$.  

We now show that such $L$ lie in a set of Haar measure zero.  Let $M^{-1} \in G$.  Now, if the first row of $M^{-1}$ lies in a $d-1$-dimensional hyperplane $\mathfrak{p}$ in $\RR^d$, then the first row of $t^{1/d} M^{-1}$, for every real number $t >0$, also lies in $\mathfrak{p}$.  Thus, by (\ref{eqnHaarMeasAsMultOFLebMeas}), we have $M^{-1}$ lies in a set of Haar measure zero.  As the Haar measure on $G$ is invariant under taking inverses, we have that $M$ lies in a set of Haar measure zero.  By the countable additivity of measure, we have shown that $\Gamma$-duplicates exist only for elements $L$ in a set of Haar measure zero.  This proves the first assertion.

We now prove the rest of the theorem.  Now let $L$ lie in the set of Haar measure zero for which $\Gamma$-duplicates exist.  Consequently, (\ref{eqn:GammaDuplicateCondMatForm}) holds for some $\boldsymbol{s} \in \RR^d \backslash \{\boldsymbol{0}\}$.  Now (\ref{eqn:GammaDuplicateCondMatForm}) is equivalent to the two equations from (\ref{eqnGammaDuplicates1}) and the following two additional equations \begin{align*} \boldsymbol{s} A &= \boldsymbol{v} \left(N -I_{d-1} \right) + c \boldsymbol{p} \\  \boldsymbol{s}\tensor*[^t]{\boldsymbol{u}}{} &= \boldsymbol{v} \tensor*[^t]{\boldsymbol{r}}{} +c(q-1).\end{align*}  Moreover, these four equations are equivalent to \begin{align}\begin{pmatrix} A  & \tensor*[^t]{\boldsymbol{u}}{}  \\ \boldsymbol{v} & c  \end{pmatrix}\begin{pmatrix}  N - I_{d-1} &  \tensor*[^t]{\boldsymbol{r}}{} \\ \boldsymbol{p} & q -1  \end{pmatrix}  = \begin{pmatrix} 0_{d-1, d-1} & \tensor*[^t]{\boldsymbol{0}}{} \\ \boldsymbol{s} A & \boldsymbol{s}\tensor*[^t]{\boldsymbol{u}}{}  \end{pmatrix}.
  \end{align}  Multiplying by $L$ on both sides and solving for $\boldsymbol{p}$ yields \begin{align}\boldsymbol{p} =  \delta(L)\boldsymbol{s} A.
  \end{align}
  
There are two cases:  $\det(A) \neq 0$ and $\det(A) = 0$.   For $\det(A) \neq 0$, we have that $A$ is invertible and, thus, $\boldsymbol{s} = \boldsymbol{p} \frac{A^{-1}}{\delta(L)}$ where $\delta(L)$ is computed as in (\ref{eqnInverseMatrixddCoordInvertibleA}).  Using (\ref{Decomp2InG2}) gives the desired result.  Finally, for $\det(A) = 0$, we have that $\det(\widehat{A}) \neq 0$.  Using (\ref{eqnHorosphereDuplicates}) and \[\begin{pmatrix}  \widehat{N} &  \tensor*[^t]{\widehat{\boldsymbol{r}}}{} \\ \widehat{\boldsymbol{p}} & \widehat{q}  \end{pmatrix} :=s^{-1}_{j,d} \gamma s_{j,d} \in  \Gamma \backslash \{I_d\}\] in (\ref{eqn:GammaDuplicateCondMatForm}) allows us to apply the previous case to obtain the desired result.  This proves the theorem.

\end{proof}

\subsection{Equidistribution of translated Farey sequences:  preparation}\label{subsecEquTranslatedFareySeq}

To prove the equidistribution of translated Farey sequences (namely, Theorem~\ref{thmGeneralMarklof}), we need the following version of Shah's theorem (\cite[Theorem~1.4]{Shah}), namely~\cite[Theorem~5.8]{MS10}.

\begin{theo}\label{thm:TransShah:TransHoro}  Let $\lambda$ be a Borel probability measure on $\RR^{d-1}$ which is absolutely continuous with respect to the Lebesgue measure and $f: \RR^{d-1} \times \Ga \backslash G \rightarrow \RR$ be a bounded continuous function.  Then, for every $L \in G$, we have that \[\lim_{t \rightarrow \infty} \int_{\RR^{d-1}} f(\boldsymbol{x}, L n_-(\boldsymbol{x}) \Phi^t)~\wrt\lambda(\boldsymbol{x}) = \int_{\RR^{d-1} \times \Ga \backslash G}  f(\boldsymbol{x}, M)~\wrt\lambda(\boldsymbol{x})~\wrt\mu(M). \]
 
\end{theo}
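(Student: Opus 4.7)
The plan is to deduce Theorem~\ref{thm:TransShah:TransHoro} from mixing of $\Phi^t$ on $(\Ga \backslash G, \mu)$ via Margulis's thickening trick. First I would make the standard reductions: by a monotone class argument and linearity, it suffices to treat the case $f(\boldsymbol{x}, M) = \phi(\boldsymbol{x}) h(M)$ with $\phi \in C_c^\infty(\RR^{d-1})$ and $h \in C_c(\Ga \backslash G)$, where I have absorbed the density of $\lambda$ (with respect to $\wrt \boldsymbol{x}$) into $\phi$. The mixing of the diagonal flow $\Phi^t$ on $\Ga \backslash G$ is classical and follows from Howe--Moore applied to the regular representation on $L^2_0(\Ga \backslash G)$.

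Next I would exploit the fact that $N^+$ contracts under $\Phi^t$. Fix a small symmetric neighborhood $U \subset N^+$ of the identity and a nonnegative $\psi \in C_c^\infty(N^+)$ supported in $U$ with $\int_{N^+} \psi~\wrt n = 1$. For $u \in U$, the conjugation identity $L n_-(\boldsymbol{x}) u \Phi^t = L n_-(\boldsymbol{x}) \Phi^t \cdot (\Phi^{-t} u \Phi^t)$ and the computation showing that $\Phi^{-t} u \Phi^t \to \mathrm{id}$ as $t \to \infty$ (uniformly for $u \in U$) together with the uniform continuity of $h$ give
\[
\left| \int \phi(\boldsymbol{x}) h(L n_-(\boldsymbol{x}) \Phi^t)~\wrt\boldsymbol{x} - \int \phi(\boldsymbol{x}) \int_{N^+} h(L n_-(\boldsymbol{x}) u \Phi^t) \psi(u) ~\wrt n(u)~\wrt\boldsymbol{x} \right| \longrightarrow 0.
\]
By choosing a further auxiliary nonnegative bump $\chi$ supported in a small neighborhood $V$ of the identity in the centralizer $Z$ of $\Phi^t$ with $\int \chi~\wrt z = 1$, the same argument inserts an average over $V$ at negligible cost (since $Z$ commutes with $\Phi^t$, no contraction is needed, only continuity of $h$).

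The key step is then to recognize the resulting thickened integral as the integration of $h \circ R_{\Phi^t}$ against a function on $\Ga \backslash G$. The map $(\boldsymbol{x}, z, u) \mapsto L n_-(\boldsymbol{x}) z u$ is a local diffeomorphism from $\RR^{d-1} \times V \times U$ onto an open subset of $G$, with Jacobian a smooth positive function; pushing forward, the thickened integral equals
\[
\int_G h(g \Phi^t) F(g)~\wrt\mu(g)
\]
for a fixed compactly supported continuous function $F$ on $\Gamma \backslash G$ built from $\phi, \chi, \psi$, and the Jacobian of the decomposition. Mixing of $\Phi^t$ then yields convergence to $\bigl(\int h~\wrt\mu\bigr)\bigl(\int F~\wrt\mu\bigr)$. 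The product structure of $F$ and the normalizations $\int \chi = \int \psi = 1$ make $\int F~\wrt\mu = \int \phi~\wrt\boldsymbol{x}$, giving the claimed limit for product functions; density of $C_c^\infty(\RR^{d-1}) \otimes C_c(\Ga \backslash G)$ in $C_b(\RR^{d-1} \times \Ga \backslash G)$ for the relevant topology finishes the reduction.

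The main obstacle is the change-of-variables step: one must verify that $(\boldsymbol{x}, z, u) \mapsto L n_-(\boldsymbol{x}) z u$ parametrizes an open chart of $G$ (for $z, u$ near the identity in $Z, N^+$), compute the Radon--Nikodym factor between the product measure $\wrt\boldsymbol{x}~\wrt z~\wrt n$ and the Haar measure $\wrt\mu$ in these coordinates, and check that the resulting density $F$ lies in $C_c(\Gamma \backslash G)$ so that Howe--Moore mixing applies. Uniformity in $L$ is automatic because $L$ merely translates the chart. A secondary subtlety is that the noncompactness of $\Gamma \backslash G$ forces one to rely on $h \in C_c$ rather than $L^2$ mixing alone in order to recover a pointwise bound uniform in $\boldsymbol{x}$ on the support of $\phi$; this is why compactly supported test functions are used throughout.
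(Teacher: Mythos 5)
The paper does not prove this theorem: it is quoted as \cite[Theorem~5.8]{MS10}, with Remark~\ref{rmk:TransShah:TransHoro} noting that a proof via mixing (rather than Ratner's theorems) is sketched at the end of Section~5.4 of that reference. Your thickening-plus-mixing sketch is exactly that argument, so the overall strategy is right: absorb the density, thicken the pushed horosphere along $Z \times N^+$ to an open chart so that the thickened test function is a fixed $F \in C_c(\Gamma \backslash G)$, control the error via contraction of $N^+$ under $\Phi^{-t}(\cdot)\Phi^t$ and continuity of $h$, then invoke Howe--Moore.

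There is, however, one genuine gap: your final reduction from bounded continuous $f$ to compactly supported $f$ is justified by the claim that $C_c^\infty(\RR^{d-1}) \otimes C_c(\Gamma \backslash G)$ is dense in $C_b(\RR^{d-1} \times \Gamma \backslash G)$ ``for the relevant topology.'' In the sup-norm topology $C_c$ is \emph{not} dense in $C_b$, and density in the topology of locally uniform convergence does not by itself control the integrals, since $\Gamma \backslash G$ is noncompact and mass could a priori escape to the cusp along the pushed horosphere. What actually closes the argument is that the pushforward measures $\nu_t := (\boldsymbol{x} \mapsto (\boldsymbol{x}, Ln_-(\boldsymbol{x})\Phi^t))_*\lambda$ and the limit $\lambda \times \mu$ are all probability measures; once weak$^*$ convergence is established against $C_c$ test functions, the equality of total masses forces tightness, and the extension to $C_b$ (and to indicators of continuity sets) follows by a Portmanteau-type argument. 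This is precisely what Remark~\ref{rmk:TransShah:TransHoro} alludes to, and it should replace the density claim. The remaining ingredients --- the chart $(\boldsymbol{x}, z, u) \mapsto Ln_-(\boldsymbol{x}) z u$, the Jacobian in the Bruhat-type coordinates, and the two error terms controlled respectively by shrinking $V$ and by letting $t \to \infty$ --- are sound, though you should be explicit that one first shrinks $V$ to make the $Z$-error small, and only then takes $t \to \infty$.
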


\begin{rema}\label{rmk:TransShah:TransHoro}
Note that the Portmanteau theorem (see~\cite[Chapter~III]{Shi95} for example) allows us to extend Theorem~\ref{thm:TransShah:TransHoro} to the indicator functions of continuity sets (i.e. sets whose topological boundary has measure zero).  Also note that Theorem~\ref{thm:TransShah:TransHoro} is a consequence of mixing and does not require Ratner's theorems (see the end of Section~5.4 in~\cite{MS10}).

\end{rema}

\subsection{Equidistribution of translated Farey sequences:  proof of Theorem~\ref{thmGeneralMarklof}}\label{sec:proofGeneralMarklof}

The proof is an adaptation of the proof of Theorem~\ref{thmMarklof} in~\cite{Mar10}, but with a different version of Shah's theorem (namely with Theorem~\ref{thm:TransShah:TransHoro}).  Since we are adapting the proof in \cite{Mar10}, \textit{Steps 0 --  6} will refer to the original proof in~\cite{Mar10}.  First, note that multiplying $L$ on the left by an element of $\Ga$ leaves the intersection points invariant and it follows that, without loss of generality, we may assume that \begin{align} \label{eqnCondOnL}
 \delta(L) > 0  \end{align} because, if this were not so, then we replace $L$ by $\tensor*[^t]{\gamma}{} L$ for some $\gamma \in \Ga$ for which  Lemma~\ref{lemmChoiceLDeltaNonZero} holds.  Now, let $\widetilde{L} :=  \tensor*[^t]{L}{^{-1}}$ and let us define the following subsets of $\widehat{\If}_Q$:

\[\widehat{\If}_{Q,\theta} :=\left\{\left(\frac{\alpha_1}{\alpha_d}, \cdots,  \frac{\alpha_{d-1}}{\alpha_d}\right) \in \widehat{\If}_Q : \theta Q < \alpha_d \leq Q \right\} \] for any $\theta \in (0,1)$.  The constant $\theta$ remains fixed until the very end of {\em Step~6}.  By the bijection between $\If$ and $\left(\widehat{\ZZ}^d \tensor*[^t]{L}{^{-1}}\right)^+$, we have that each element of \[\left(\frac{\alpha_1}{\alpha_d}, \cdots,  \frac{\alpha_{d-1}}{\alpha_d}\right) \in\bigcup_{Q=1}^\infty \widehat{\If}_{Q,\theta}\] determines a unique element $(\boldsymbol{\alpha}', \alpha_d) := \left(\alpha_1, \cdots,  \alpha_d \right)$ for which $\alpha_d >0$ and also determines a unique element $\gamma_{(\boldsymbol{\alpha}', \alpha_d)} \in \Ga_H \backslash \Ga$.

With the change of the version of Shah's theorem used to the proof in~\cite{Mar10} and the replacement of $\F_{Q, \theta}$ with $\widehat{\If}_{Q,\theta}$, we need to make the following changes to {\em Steps~0 -- 6}.  (The main changes are to \textit{Steps 1, 5}.)

\subsubsection{Step 0:  Uniform continuity}  The assertions in this step are the same for us.  However, since we have replaced the torus $\TT^{d-1}$ with $\Aa \subset \RR^{d-1}$, we need to specify the compact support of $f$.  The only change in this step is that we have compact sets $\widehat{\Aa} \subset \RR^{d-1}$ and $\Cc \subset G$ such that $\supp(f), \supp(\widetilde{f}) \subset \widehat{\Aa} \times \Gamma \backslash \Gamma \Cc$.

Note that the uniform continuity is expressed as the following:  given ${\delta} >0$, there exists $\epsilon >0$ such that for all $(\boldsymbol{x}, M), (\boldsymbol{x}', M') \in \RR^{d-1} \times G$, \begin{align}\label{eqn:UnifContEpDelTranHoro}\|\boldsymbol{x} - \boldsymbol{x}'\|_2 < \epsilon \quad \textrm{ and } \quad d(M, M') < \epsilon \implies |{f}(\boldsymbol{x}, M) - {f}(\boldsymbol{x}', M')| < {\delta}.
\end{align} The same applies to $\widetilde{f}$.

\subsubsection{Step 1: Thicken the translated Farey sequence.}  We thicken the points in the set $\widehat{\If}_{Q,\theta}$ under the correspondence given by Theorem~\ref{thmTransFareyInterPointCorr} as follows.  Let \[\Cf_\epsilon := \left\{(y_1, \cdots, y_d) \in \RR^d : \|(y_1, \cdots, y_{d-1})\|_2 < \epsilon y_d, \theta < y_d \leq 1 \right\}.\]  For any $\boldsymbol{u} \in \RR^d$ (thought of as a row vector), define \[ \Hc_\epsilon(\boldsymbol{u}) := \left\{\widetilde{M} \in G :  \boldsymbol{u} \widetilde{M} \in \Cf_\epsilon  \right\} .\]  An important relationship between $G$ and $\RR^d$ is the following observation.

 \begin{lemm}\label{lemmRelGEuclid}
For every $M\in G$ and every $\boldsymbol{u} \in \RR^d$, we have that \[M \Hc_\epsilon(\boldsymbol{u} M)  =  \Hc_\epsilon(\boldsymbol{u}).\]
\end{lemm}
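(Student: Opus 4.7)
The proof is a direct verification using the definition of $\Hc_\epsilon$ and associativity of matrix multiplication. The plan is to reparametrize: an element of $M\Hc_\epsilon(\boldsymbol{u}M)$ is of the form $M\widetilde{M}$ with $\widetilde{M} \in G$ satisfying $(\boldsymbol{u}M)\widetilde{M} \in \Cf_\epsilon$, and I would set $N := M\widetilde{M}$, so $\widetilde{M} = M^{-1}N \in G$ and $(\boldsymbol{u}M)\widetilde{M} = \boldsymbol{u}(M\widetilde{M}) = \boldsymbol{u}N$. This identifies the set with $\{N \in G : \boldsymbol{u}N \in \Cf_\epsilon\} = \Hc_\epsilon(\boldsymbol{u})$, giving one inclusion. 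The reverse inclusion follows by reversing the substitution: given $N \in \Hc_\epsilon(\boldsymbol{u})$, write $N = M(M^{-1}N)$, and observe that $M^{-1}N \in \Hc_\epsilon(\boldsymbol{u}M)$ since $(\boldsymbol{u}M)(M^{-1}N) = \boldsymbol{u}N \in \Cf_\epsilon$.

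There is no real obstacle here; the lemma is purely formal and expresses the fact that the family $\{\Hc_\epsilon(\boldsymbol{u})\}_{\boldsymbol{u}\in\RR^d}$ is equivariant under the right action of $G$ on $\RR^d$ and the left action of $G$ on itself. I would write the argument as a short two-line chain of equalities
\begin{align*}
M\Hc_\epsilon(\boldsymbol{u}M) &= \{M\widetilde{M} : \widetilde{M}\in G,\ (\boldsymbol{u}M)\widetilde{M}\in\Cf_\epsilon\} \\
&= \{N \in G : \boldsymbol{u}N \in \Cf_\epsilon\} = \Hc_\epsilon(\boldsymbol{u}),
\end{align*}
with the middle equality justified by the substitution $N = M\widetilde{M}$, which is a bijection of $G$ onto itself since $M \in G$. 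No properties of $\Cf_\epsilon$ beyond its being a fixed subset of $\RR^d$ are needed, so the statement and proof extend to any replacement of $\Cf_\epsilon$ by an arbitrary subset of $\RR^d$; this generality will likely be invoked implicitly later in the paper when $\Cf_\epsilon$ is thickened or perturbed.
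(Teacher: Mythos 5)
Your proof is correct and is essentially the same as the paper's one-line argument, which rests on the identity $\boldsymbol{u}\widetilde{M}\in\Cf_\epsilon\iff\boldsymbol{u}MM^{-1}\widetilde{M}\in\Cf_\epsilon$; you have simply made the substitution $N=M\widetilde{M}$ explicit and checked both inclusions. No gap.
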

\begin{proof}
 The proof follows from the fact that \[\boldsymbol{u} \widetilde{M} \in \Cf_\epsilon \iff  \boldsymbol{u} M M^{-1} \widetilde{M} \in \Cf_\epsilon.\]
\end{proof}

Moreover, we have that (\cite[(3.20)]{Mar10}) \begin{align}\Hc^1_\epsilon:=\Hc_\epsilon((\boldsymbol{0}, 1)) = H\left\{\begin{pmatrix}  y_d^{-1/(d-1)} I_{d-1} & \tensor*[^t]{\boldsymbol{0}}{} \\ \boldsymbol{y}' & y_d \end{pmatrix} : (\boldsymbol{y}', y_d) \in \Cf_\epsilon \right\}.
  \end{align}

We claim that, for each $\boldsymbol{\alpha}'/\alpha_d \in \widehat{\If}_{Q,\theta}$ and its unique corresponding $\gamma :=  \gamma_{(\boldsymbol{\alpha}', \alpha_d)} \in \Ga_H \backslash \Ga$, we have that their unique corresponding intersection point \[\pf:= \pf_{(\boldsymbol{\alpha}', \alpha_d)} := \tensor*[^t]{\left(\tensor*[^t]{\gamma}{^{-1}} L n_-(\boldsymbol{\alpha}'/\alpha_d) \Phi^{t} \right)}{^{-1}}\] lies in $\Hc^1_\epsilon$.  We now prove the claim.  Since $\boldsymbol{\alpha}'/\alpha_d \in \widehat{\If}_{Q,\theta}$, we have that $\delta(\gamma \widetilde{L}) > 0$ and Lemma~\ref{lemmUniqueDecomIfYdNot0} gives a unique decomposition \[\gamma \widetilde{L} = (\gamma \widetilde{L})_H \begin{pmatrix}  \alpha_d^{-1/(d-1)} I_{d-1} & \tensor*[^t]{\boldsymbol{0}}{} \\\boldsymbol{\alpha'} &\alpha_d   \end{pmatrix}\] because $(\boldsymbol{\alpha}', \alpha_d) = (\boldsymbol{0}, 1) \gamma \widetilde{L}.$  Consequently, we have that \[\pf = (\gamma \widetilde{L})_H \begin{pmatrix}  \alpha_d^{-1/(d-1)} I_{d-1} & \tensor*[^t]{\boldsymbol{0}}{} \\\boldsymbol{\alpha'} &\alpha_d   \end{pmatrix} n_+\left(-\frac{\boldsymbol{\alpha}'}{\alpha_d}\right) \Phi^{-t} = (\gamma \widetilde{L})_H \begin{pmatrix}  (\frac{\alpha_d}{Q})^{-1/(d-1)} I_{d-1} & \tensor*[^t]{\boldsymbol{0}}{} \\\boldsymbol{0} & \frac{\alpha_d} {Q}  \end{pmatrix}\] where in {\em Step~0} we have set $\sigma =0$ and thus have $Q= e^{(d-1)t}$.  This shows that $\pf \in \Hc^1_\epsilon$.  Finally, to see that the element of $\Ga_H \backslash \Ga$ corresponding to  $\boldsymbol{\alpha}'/\alpha_d$ is unique, we set \[(\gamma \widetilde{L})_H^{-1} \gamma \widetilde{L} = (\widehat{\gamma }\widetilde{L})_H^{-1} \widehat{\gamma }\widetilde{L},\] from which it follows that $\gamma, \widehat{\gamma}$ are in the same coset in $\Ga_H \backslash \Ga$.  This proves the claim.

Note that the claim allows us to regard  $\Hc^1_\epsilon$ as a thickening of the translated Farey points.

\subsubsection{Step 2: Disjointness.}  This step is unchanged.  The result is the following~\cite[(3.22)]{Mar10}.  Given a compact subset $\Cc \subset G$, there exists an $\epsilon_0>0$ such that \[\gamma \Hc^1_\epsilon \cap \Hc^1_\epsilon \cap \Gamma \Cc = \emptyset \] for every $\epsilon \in (0, \epsilon_0], \gamma \in \Gamma \backslash \Gamma_H$.

\subsubsection{Step 3:  Apply Theorem~\ref{thm:TransShah:TransHoro}}  This step is essentially unchanged, except we apply a translated version of Shah's theorem.  The details are as follows.   For all $M \in \Gamma \Cc$, \textit{Step 2} gives \[\chi_\epsilon(M) = \sum_{\gamma \in \Gamma_H \backslash \Gamma} \chi_\epsilon^1(\gamma M)\] where $\chi_\epsilon^1$ is the indicator function of $\Hc^1_\epsilon$ and $\chi_\epsilon(M)$ is the indicator function of the disjoint union \[\Hc_\epsilon \cap \Gamma \Cc := \bigcup_{\gamma \in \Gamma / \Gamma_H} \left(\gamma \Hc^1_\epsilon \cap \Gamma \Cc \right).\]

Applying Theorem~\ref{thm:TransShah:TransHoro} and Remark~\ref{rmk:TransShah:TransHoro}, we have \begin{align}\label{eqn:EquDisTranTransHoro} \lim_{t \rightarrow \infty} \int_{\RR^{d-1}}  f(\boldsymbol{x}, L n_-(\boldsymbol{x}) \Phi^t)\widetilde{\chi_\epsilon}(Ln_-(\boldsymbol{x}) \Phi^{t})~\wrt{\boldsymbol{x}} &= \int_{\RR^{d-1} \times \Gamma \backslash G}  f(\boldsymbol{x}, M)\widetilde{\chi_\epsilon}(M)~\wrt{\boldsymbol{x}}~\wrt\mu(M) \\ \nonumber& = \int_{\RR^{d-1} \times \Gamma \backslash G}  \widetilde{f}(\boldsymbol{x}, M)\chi_\epsilon(M)~\wrt{\boldsymbol{x}}~\wrt\mu(M) 
  \end{align}

  \subsubsection{Step 4:  A volume computation}  Except for the fact that the domain of the first variable of the function $f$ is a compact subset $\widehat{\Aa}$ of $\RR^{d-1}$, this step is unchanged.  In particular, the proofs are exactly the same.  The result is the following~\cite[(3.46)]{Mar10}:

   \begin{align}\label{eqn:localVerTransHoro} &\left|  \lim_{t \rightarrow \infty} \int_{\RR^{d-1}}  f(\boldsymbol{x}, L n_-(\boldsymbol{x}) \Phi^t)\widetilde{\chi_\epsilon}(Ln_-(\boldsymbol{x}) \Phi^{t})~\wrt{\boldsymbol{x}}   \right. \\ \nonumber & \left. \quad - \frac{(d-1) \vol(B_1^{d-1}) \epsilon^{d-1} }{\zeta(d)} \int_{0}^{|\log \theta|/(d-1)}  \int_{\RR^{d-1} \times \Gamma_H \backslash H} \widetilde{f} (\boldsymbol{x}, M \Phi^{-s})~\wrt{\boldsymbol{x}}~\wrt\mu_H(M)e^{-d(d-1)s}~\wrt s\right|  \\ \nonumber & \quad \quad \quad \quad <\frac {\vol(B_1^{d-1}){\delta}\epsilon^{d-1}\vol(\widehat{\Aa})}{d \zeta(d)}\left( 1 -\theta^d\right)  
\end{align} for $\delta, \epsilon >0$ coming from \textit{Steps 0, 2}.  Here $B_1^{d-1}$ denotes the open ball around $\boldsymbol{0}$ of radius $1$ in $\RR^{d-1}$ and $\vol$ is the Lebesgue measure on $\RR^{d-1}$.

\subsubsection{Step 5:  Distance estimates}

\begin{lemm}
Let $\boldsymbol{\alpha}'/\alpha_d \in \widehat{\If}_{Q,\theta}$ and $\gamma :=  \gamma_{(\boldsymbol{\alpha}', \alpha_d)}$ be its unique corresponding element of $\Ga_H \backslash \Ga$.  Then \[\gamma \widetilde{L}n_+(-\boldsymbol{x}) \Phi^{-t} \in \Hc^1_\epsilon\] if and only if \[\left\|\frac{\boldsymbol{\alpha}'}{\alpha_d}- \boldsymbol{x}\right\|_2 < \epsilon e^{-dt} \quad \textrm { and } \quad \theta Q < \alpha_d \leq Q\] hold.
\end{lemm}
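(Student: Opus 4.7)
The plan is to compute the $(H,\RR^d)$-decomposition of $\gamma\widetilde{L}n_+(-\boldsymbol{x})\Phi^{-t}$ explicitly and then read off membership in $\Hc^1_\epsilon$ by invoking uniqueness (Lemma~\ref{lemmUniqueDecomIfYdNot0}) together with the description $\Hc^1_\epsilon=H\{A_{\boldsymbol{y}}:\boldsymbol{y}\in\Cf_\epsilon\}$, where I write $A_{\boldsymbol{y}}:=\begin{pmatrix}y_d^{-1/(d-1)}I_{d-1}&\tensor*[^t]{\boldsymbol{0}}{}\\\boldsymbol{y}'&y_d\end{pmatrix}$ for $\boldsymbol{y}=(\boldsymbol{y}',y_d)$ with $y_d>0$. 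Since $\boldsymbol{\alpha}'/\alpha_d\in\widehat{\If}_{Q,\theta}\subset\If_Q$, we already have $\delta(\gamma\widetilde{L})=\alpha_d>0$, so Step~1 provides the factorization $\gamma\widetilde{L}=(\gamma\widetilde{L})_H\,A_{(\boldsymbol{\alpha}',\alpha_d)}$ with $(\gamma\widetilde{L})_H\in H$. Right-multiplying by $n_+(-\boldsymbol{x})\Phi^{-t}$ will leave the $H$-part untouched and modify only the $\RR^d$-part, reducing the task to a single block computation.

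That computation is straightforward: since $n_+(-\boldsymbol{x})\Phi^{-t}=\begin{pmatrix}e^{t}I_{d-1}&\tensor*[^t]{\boldsymbol{0}}{}\\-e^{t}\boldsymbol{x}&e^{-(d-1)t}\end{pmatrix}$, a direct multiplication yields
\[A_{(\boldsymbol{\alpha}',\alpha_d)}\,n_+(-\boldsymbol{x})\Phi^{-t}=\begin{pmatrix}(\alpha_d e^{-(d-1)t})^{-1/(d-1)}I_{d-1}&\tensor*[^t]{\boldsymbol{0}}{}\\(\boldsymbol{\alpha}'-\alpha_d\boldsymbol{x})e^{t}&\alpha_d e^{-(d-1)t}\end{pmatrix}=A_{\boldsymbol{y}},\]
where $\boldsymbol{y}:=\bigl((\boldsymbol{\alpha}'-\alpha_d\boldsymbol{x})e^{t},\,\alpha_d e^{-(d-1)t}\bigr)$. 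Consequently $\gamma\widetilde{L}n_+(-\boldsymbol{x})\Phi^{-t}=(\gamma\widetilde{L})_H\,A_{\boldsymbol{y}}$ with $y_d>0$ automatic, so this is exactly the $(H,\RR^d)$-decomposition of the product in the sense of Lemma~\ref{lemmUniqueDecomIfYdNot0}.

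The last step is to combine the uniqueness of that decomposition with the shape of $\Hc^1_\epsilon$: the product lies in $\Hc^1_\epsilon$ if and only if $\boldsymbol{y}\in\Cf_\epsilon$, i.e.\ $\|(\boldsymbol{\alpha}'-\alpha_d\boldsymbol{x})e^{t}\|_2<\epsilon\,\alpha_d e^{-(d-1)t}$ and $\theta<\alpha_d e^{-(d-1)t}\le 1$. Dividing the first inequality through by $\alpha_d e^{t}$ gives $\|\boldsymbol{\alpha}'/\alpha_d-\boldsymbol{x}\|_2<\epsilon e^{-dt}$, while the second rearranges to $\theta Q<\alpha_d\le Q$ upon recalling $Q=e^{(d-1)t}$, completing the equivalence.

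I do not anticipate any genuine obstacle; the argument is essentially bookkeeping. The only point requiring care is being explicit that $(\gamma\widetilde{L})_H\in H$ so that the uniqueness assertion of Lemma~\ref{lemmUniqueDecomIfYdNot0} rules out alternative factorizations of the product that might accidentally place it inside $\Hc^1_\epsilon$ with some other $\RR^d$-coordinate.
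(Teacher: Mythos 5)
Your proof is correct and follows essentially the same route as the paper's: both start from the Step~1 factorization $\gamma\widetilde{L}=(\gamma\widetilde{L})_H A_{(\boldsymbol{\alpha}',\alpha_d)}$ and then compute the product with $n_+(-\boldsymbol{x})\Phi^{-t}$ to read off the $\RR^d$-coordinate. The paper is marginally more economical, extracting only the bottom row by applying $(\boldsymbol{0},1)$ on the left and then using $\Hc^1_\epsilon=\Hc_\epsilon((\boldsymbol{0},1))=\{\widetilde{M}:(\boldsymbol{0},1)\widetilde{M}\in\Cf_\epsilon\}$ directly, so it does not need to invoke the uniqueness clause of Lemma~\ref{lemmUniqueDecomIfYdNot0} as you do; this is a stylistic difference, not a different method.
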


\begin{proof}

Recall that $Q = e^{(d-1)t}$.  Consider \begin{align*}
 (\boldsymbol{0}, 1)\gamma \widetilde{L}n_+(-\boldsymbol{x}) \Phi^{-t} & = (\boldsymbol{0}, 1)  (\gamma \widetilde{L})_H \begin{pmatrix}  \alpha_d^{-1/(d-1)} I_{d-1} & \tensor*[^t]{\boldsymbol{0}}{} \\\boldsymbol{\alpha'} &\alpha_d   \end{pmatrix}n_+(-\boldsymbol{x}) \Phi^{-t} \\\nonumber & =  (\boldsymbol{0}, 1) \begin{pmatrix}  e^t\alpha_d^{-1/(d-1)} I_{d-1} & \tensor*[^t]{\boldsymbol{0}}{} \\ e^t(\boldsymbol{\alpha'} -\boldsymbol{x}\alpha_d)  &e^{-(d-1)t}\alpha_d   \end{pmatrix}.\end{align*}  The desired result now follows by the definitions of the sets $\Hc^1_\epsilon$ and $\Cf_\epsilon$.

\end{proof}

\textit{Step 2}, the lemma, and the fact that $f$ and $\widetilde{f}$ are both left $\Gamma$-invariant give us that \begin{align}\label{eqnDisjointDist:TransHoro}\int_{\RR^{d-1}}  {f}(\boldsymbol{x}, {L} & n_-(\boldsymbol{x}) \Phi^{t}){\chi_\epsilon}(\widetilde{L}n_+(-\boldsymbol{x}) \Phi^{-t})~\wrt{\boldsymbol{x}} \\\nonumber & =
\int_{\RR^{d-1}}  \widetilde{f}(\boldsymbol{x}, \widetilde{L} n_+(-\boldsymbol{x}) \Phi^{-t}){\chi_\epsilon}(\widetilde{L}n_+(-\boldsymbol{x}) \Phi^{-t})~\wrt{\boldsymbol{x}} \\\nonumber &= \sum_{\gamma \in \Gamma_H \backslash \Gamma} \int_{\RR^{d-1}}  \widetilde{f}(\boldsymbol{x}, \gamma \widetilde{L} n_+(-\boldsymbol{x}) \Phi^{-t})  \chi_\epsilon^1(\gamma \widetilde{L}n_+(-\boldsymbol{x}) \Phi^{-t})~\wrt{\boldsymbol{x}}
 \\\nonumber &= \sum_{\frac{\boldsymbol{\alpha}'}{\alpha_d} \in \widehat{\If}_{Q,\theta}} \int_{\left\|\frac{\boldsymbol{\alpha}'}{\alpha_d}- \boldsymbol{x}\right\|_2 < \epsilon e^{-dt} }  \widetilde{f}(\boldsymbol{x},  \gamma \widetilde{L} n_+(-\boldsymbol{x}) \Phi^{-t})~\wrt{\boldsymbol{x}}
 \\\nonumber &= \sum_{\frac{\boldsymbol{\alpha}'}{\alpha_d} \in \widehat{\If}_{Q,\theta}} \int_{\left\|\frac{\boldsymbol{\alpha}'}{\alpha_d}- \boldsymbol{x}\right\|_2 < \epsilon e^{-dt} }  {f}(\boldsymbol{x}, {L} n_-(\boldsymbol{x}) \Phi^{t})~\wrt{\boldsymbol{x}}. \end{align}
 
Applying the uniform continuity with $\delta, \epsilon>0$ as above gives \begin{align}\label{eqnDisjointDist2:TransHoro} & \left |  \int_{\left\|\frac{\boldsymbol{\alpha}'}{\alpha_d}- \boldsymbol{x}\right\|_2 < \epsilon e^{-dt} }  {f}(\boldsymbol{x},  {L} n_-(\boldsymbol{x}) \Phi^{t})~\wrt{\boldsymbol{x}} - \frac {\vol(B_1^{d-1}) \epsilon^{d-1}}{e^{d(d-1)t}} {f}\left(\frac{\boldsymbol{\alpha}'}{\alpha_d},  {L} n_-\left(\frac{\boldsymbol{\alpha}'}{\alpha_d}\right) \Phi^{t}\right)\right| \\ \nonumber& \quad \quad \quad < \frac {\vol(B_1^{d-1}) \delta \epsilon^{d-1}}{e^{d(d-1)t}}
  \end{align} uniformly for all $t \geq 0$.

\subsubsection{Step 6:  Conclusion}  This step is essentially unchanged.  Using (\ref{eqn:localVerTransHoro}), (\ref{eqnDisjointDist:TransHoro}), (\ref{eqnDisjointDist2:TransHoro}) and the uniform continuity, we have, as $\delta \rightarrow 0$, \begin{align}\label{eqn:MainConclusion:TransHoro}\lim_{t \rightarrow \infty} \frac {1} {e^{d(d-1)t}} & \sum_{\frac{\boldsymbol{\alpha}'}{\alpha_d} \in \widehat{\If}_{Q,\theta}} {f}\left(\frac{\boldsymbol{\alpha}'}{\alpha_d},  {L} n_-\left(\frac{\boldsymbol{\alpha}'}{\alpha_d}\right) \Phi^{t}\right) \\ \nonumber &= \frac{d-1}{\zeta(d)} \int_{0}^{|\log \theta|/(d-1)}  \int_{\RR^{d-1} \times \Gamma_H \backslash H} \widetilde{f} (\boldsymbol{x}, M \Phi^{-s})~\wrt{\boldsymbol{x}}~\wrt\mu_H(M)e^{-d(d-1)s}~\wrt s.\end{align}  The asymptotics (\ref{eqnAsyHatIf}) show that \[\limsup_{t \rightarrow \infty}\frac{\#\left(\widehat{\If}_{Q} \backslash \widehat{\If}_{Q,\theta} \right)}{e^{d(d-1)t}} \leq \frac{\theta}{\zeta(d)},\] which allows us to take the limit $\theta \rightarrow 0$ in (\ref{eqn:MainConclusion:TransHoro}).  This proves Theorem~\ref{thmGeneralMarklof} for $\sigma=0$ and $f$ compactly supported.  The general case follows in exactly the same way as in~\cite{Mar10}.  This completes the proof of Theorem~\ref{thmGeneralMarklof}.

\subsection{Shrinking target horospherical equidistribution for translated horospheres}\label{subsecUniEquiForTransHoro}  We now prove our STHE results for translated horospheres by adapting our proofs for the horosphere.

\begin{proof}[Proof of Theorem~\ref{thmNANResultRankOneFlow} for any $L \in G$]  In the proof of Theorem~\ref{thmNANResultRankOneFlow} for the case $L = I_d$, replace the Farey sequence with the translated Farey sequence corresponding to $\Gamma L N^-$.  All of the proof up to (\ref{eqnEquiOnSectTransA}) remains the same.  Applying Theorem~\ref{thmGeneralMarklof} in place of Theorem~\ref{thmMarklof}, we obtain the analog of (\ref{eqnEquiOnSectTransA}):  \begin{align}\label{eqnEquiOnSectTransATransHoro}\lim_{t \rightarrow \infty} &\frac {T^{d-1}} {\#(\widehat{\If}_Q)} \sum_{\frac{\boldsymbol{\alpha}'}{\alpha_d} \in \widehat{\If}_{Q}}  f_{T, \varepsilon}\left(\frac{\boldsymbol{\alpha}'}{\alpha_d},  {L} n_-\left(\frac{\boldsymbol{\alpha}'}{\alpha_d}\right) \Phi^{t}\right)  
\\ \nonumber&
  = (d-1)T_0^{d-1}\varepsilon^{d-1} \int_0^\infty \int_{\RR^{d-1} \times \Ga_H \backslash H} \In_{\Aa \times \Ss_{T_0}}(\boldsymbol{x},  M \Phi^{-s})~\wrt{\boldsymbol{x}}~\wrt \mu_H(M) e^{-d(d-1)s}~\wrt s 
    \\\nonumber & = \frac {\varepsilon^{d-1}} {d}  \int_{\RR^{d-1}} \In_{\Aa}(\boldsymbol{x})~\wrt{\boldsymbol{x}}.
   \end{align}  
   
Finally, as in Theorem~\ref{thmNANResultRankOneFlow} for the case $L = I_d$, we normalize with respect to the Haar measure on $N^-$, namely the Haar measure of  $\Phi^{-t}N_-(\TT^{d-1})\Phi^t =  \Phi^{-t}N_-([0,1]^{d-1})\Phi^t$ to obtain the desired result.

\end{proof}

\begin{rema}
The sets $\widehat{\If}_Q$, $\F_Q$ are defined slightly differently and, in particular, have different asymptotics, namely (\ref{eqnAsypFQOriginal}) and (\ref{eqnAsyHatIf}), respectively.  That these two sets are different does not affect any of our STHE results.
\end{rema}

\begin{proof}[Proof of Theorem~\ref{thmShrinkCuspNeigh} for any $L \in G$]
  In the proof of Theorem~\ref{thmShrinkCuspNeigh} for the case $L = I_d$, replace the Farey sequence with the translated Farey sequence corresponding to $\Gamma L N^-$.  All of the proof up to (\ref{eqnEquiOnSectTransB}) remains the same.  Applying Theorem~\ref{thmGeneralMarklof} in place of Theorem~\ref{thmMarklof}, we obtain the analog of (\ref{eqnEquiOnSectTransB}):     
   \begin{align}\label{eqnEquiOnSectTransBTransHoro}\lim_{t \rightarrow \infty} &\frac {T^{d-1}} {\#(\widehat{\If}_Q)} \sum_{\frac{\boldsymbol{\alpha}'}{\alpha_d} \in \widehat{\If}_{Q}}  \widehat{f}_{T, \varepsilon}\left(\frac{\boldsymbol{\alpha}'}{\alpha_d},  {L} n_-\left(\frac{\boldsymbol{\alpha}'}{\alpha_d}\right) \Phi^{t}\right)  \\ \nonumber&
  = (d-1)T_0^{d-1}\varepsilon^{d-1} \int_0^\infty \int_{\RR^{d-1} \times \Ga_H \backslash H} \In_{\Aa \times \Cc}(\boldsymbol{x},  M \Phi^{-s})~\wrt{\boldsymbol{x}}~\wrt \mu_H(M) e^{-d(d-1)s}~\wrt s 
  \\ \nonumber&
  = T_0^{d-1}  \zeta(d) \mu\left(\Cc N^+_\varepsilon(\boldsymbol{\widetilde{y}})\right)\int_{\RR^{d-1}} \In_{\Aa}(\boldsymbol{x})~\wrt{\boldsymbol{x}}.
  \end{align}

Finally, as in Theorem~\ref{thmShrinkCuspNeigh} for the case $L = I_d$, we normalize with respect to the Haar measure on $N^-$, namely the Haar measure of  $\Phi^{-t}N_-(\TT^{d-1})\Phi^t =  \Phi^{-t}N_-([0,1]^{d-1})\Phi^t$ to obtain the desired result.

\end{proof}

\begin{proof}[Proof of Theorem~\ref{thmNAKResultRankOneFlow} for any $L \in G$]  In the proof of Theorem~\ref{thmNAKResultRankOneFlow} for the case $L = I_d$, replace the Farey sequence with the translated Farey sequence corresponding to $\Gamma L N^-$.  All of the proof up to (\ref{eqnMainDerivUsingFrobNAKCase}) remains the same.  Applying Theorem~\ref{thmGeneralMarklof} in place of Theorem~\ref{thmMarklof}, we obtain the analog of (\ref{eqnMainDerivUsingFrobNAKCase}):

\begin{align}\label{eqnMainDerivUsingFrobNAKCaseTransHoro}\lim_{t \rightarrow \infty} &\frac {T^{d-1}} {\#(\widehat{\If}_Q)} \sum_{\frac{\boldsymbol{\alpha}'}{\alpha_d} \in \widehat{\If}_{Q}}  \widehat{f}_{T, \Dd}\left(\frac{\boldsymbol{\alpha}'}{\alpha_d},  {L} n_-\left(\frac{\boldsymbol{\alpha}'}{\alpha_d}\right) \Phi^{t}\right)    
  = \zeta(d) T_0^{d-1}\mu(\Ss_{T_0} \Ee)\left(\int_{\RR^{d-1}} \In_{\Aa}(\boldsymbol{x})~\wrt{\boldsymbol{x}}\right).
        \end{align}

Finally, as in Theorem~\ref{thmNAKResultRankOneFlow} for the case $L = I_d$, we normalize with respect to the Haar measure on $N^-$, namely the Haar measure of  $\Phi^{-t}N_-(\TT^{d-1})\Phi^t =  \Phi^{-t}N_-([0,1]^{d-1})\Phi^t$ to obtain (\ref{eqnthmNAKResultRankOneFlow}).

\end{proof}

\begin{proof}[Proof of Theorem~\ref{thmThickenSubsetSectionSpherical} for any $L \in G$]
  In the proof of Theorem~\ref{thmThickenSubsetSectionSpherical} for the case $L = I_d$, replace the Farey sequence with the translated Farey sequence corresponding to $\Gamma L N^-$ and follow the proof of Theorem~\ref{thmNAKResultRankOneFlow}.

\end{proof}

The analog of Lemma~\ref{lemm:HorosphereInSphereEdge} is 

\begin{lemm}\label{lemm:HorosphereInSphereEdgeGenL}  Let $L \in G$.  Then there exists a null set $\Nn$ with respect to $\wrt {\boldsymbol{x}}$ such that, for every $t \in \RR$, we have that \[\{\boldsymbol{x} \in \RR^{d-1} : L n_-(\boldsymbol{x}) \Phi^t \in \Ss_1 K_0 \} \subset \Nn.\]  Moreover, $\Nn$ can be chosen to be a countable union of affine hyperplanes in $\RR^{d-1}$ of dimension $d-2$.
 
\end{lemm}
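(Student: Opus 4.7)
The plan is to adapt the proof of Lemma~\ref{lemm:HorosphereInSphereEdge} almost verbatim, replacing only the step where the row vector $(\boldsymbol{p},q)$ is read off. Suppose $\boldsymbol{x} \in \RR^{d-1}$ and $t \in \RR$ satisfy $Ln_-(\boldsymbol{x})\Phi^t \in \Ss_1 K_0$. Then, by the same argument giving (\ref{eqn:HorosphereInSphereEdgeMain}), there exist $\gamma \in \Gamma$, $M \in H$, $y>0$, and a matrix as in $K_0$ (i.e.\ with $c=0$) such that
\[
 \gamma^{-1}\begin{pmatrix} B & \tensor*[^t]{\boldsymbol{b}}{} \\ \boldsymbol{0} & 1\end{pmatrix}\begin{pmatrix} y^{1/2(d-1)} I_{d-1} & \tensor*[^t]{\boldsymbol{0}}{} \\ \boldsymbol{0} & y^{-1/2}\end{pmatrix}\begin{pmatrix} A & \tensor*[^t]{\boldsymbol{w}}{} \\ \boldsymbol{v} & 0\end{pmatrix} = L n_-(\boldsymbol{x}) \Phi^t.
\]

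The key change is in the analog of (\ref{eqn:HorosphereInSphereEdgeMain2}): I set
\[
(\boldsymbol{p},q) := (\boldsymbol{0},1)\gamma L,
\]
so that $(\boldsymbol{p},q)$ is the last row of $\gamma L$ rather than of $\gamma$. Applying $(\boldsymbol{0},1)\gamma$ on the left of both sides of the displayed equation and performing the same computation as in the original proof then yields $(y^{-1/2}\boldsymbol{v}, 0) = (e^{-t}\boldsymbol{p}, e^{(d-1)t}(\boldsymbol{p}\tensor*[^t]{\boldsymbol{x}}{} + q))$, which forces
\[
\boldsymbol{p}\tensor*[^t]{\boldsymbol{x}}{} + q = 0.
\]

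Next I would argue that $\boldsymbol{p} \ne \boldsymbol{0}$: if $\boldsymbol{p}=\boldsymbol{0}$ then $q=0$ as well, so the last row of $\gamma L$ would be zero, contradicting $\gamma L \in G$. Hence $\boldsymbol{p} \ne \boldsymbol{0}$ and some coordinate $p_i \ne 0$, so the equation cuts out an affine hyperplane of $\RR^{d-1}$ of dimension $d-2$, exactly as in the original argument. The main (and only) substantive difference from Lemma~\ref{lemm:HorosphereInSphereEdge} is that now $(\boldsymbol{p},q)$ ranges through the countable set $\widehat{\ZZ}^d L$ rather than through $\widehat{\ZZ}^d$; since the last row of $\gamma$ runs over $\widehat{\ZZ}^d$ as $\gamma$ runs over $\Gamma$, we still obtain at most countably many possibilities for $(\boldsymbol{p},q)$, and therefore a countable union of affine hyperplanes of dimension $d-2$. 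Taking $\Nn$ to be this union (independent of $t$, since $t$ dropped out of the constraint) gives the desired null set. No new obstacle arises: the countability of $\Gamma$ and the nonvanishing of the last row of $\gamma L$ are what replace the integrality used in the original statement.
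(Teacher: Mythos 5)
Your proof is correct and follows essentially the same route as the paper's: the paper's proof of Lemma~\ref{lemm:HorosphereInSphereEdgeGenL} consists precisely of replacing $n_-(\boldsymbol{x})$ by $Ln_-(\boldsymbol{x})$ in (\ref{eqn:HorosphereInSphereEdgeMain}) and $(\boldsymbol{p},q)=(\boldsymbol{0},1)\gamma$ by $(\boldsymbol{p},q)=(\boldsymbol{0},1)\gamma L$, then noting that $(\boldsymbol{p},q)$ now ranges over the countable primitive lattice $\widehat{\ZZ}^d L$, exactly as you did. Your added remark that $\gamma L \in G$ guarantees $(\boldsymbol{p},q)\neq\boldsymbol{0}$ correctly replaces the integrality argument used in the $L=I_d$ case.
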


\begin{proof}
Replace $n_-(\boldsymbol{x})$ with $L n_-(\boldsymbol{x})$ in (\ref{eqn:HorosphereInSphereEdgeMain}) and replace (\ref{eqn:HorosphereInSphereEdgeMain2}) with \begin{align} \label{eqn:HorosphereInSphereEdgeMain2GenL} (\boldsymbol{p}, q) = (\boldsymbol{0}, 1) \gamma L.\end{align}  This yields (\ref{eqn:HorosphereInSphereEdge}) and an affine hyperplane as in Lemma~\ref{lemm:HorosphereInSphereEdge}.  Finally, the latter replacement gives that $(\boldsymbol{p}, q) \in \If(\tensor*[^t]{L}{^{-1}})$, which is comprised of the points of the set $\widehat{\ZZ}^d L$, implying that there are at most a countable number of such affine hyperplanes.  This yields the desired result.
\end{proof}

Finally, we have

\begin{proof}[Proof of Theorem~\ref{thmThickenSubsetSectionSphericalGeneral} for any $L \in G$]
  In the proof of Theorem~\ref{thmThickenSubsetSectionSphericalGeneral} for the case $L = I_d$, replace the Farey sequence with the translated Farey sequence corresponding to $\Gamma L N^-$ and use Theorem~\ref{thmThickenSubsetSectionSpherical}.  For the case $c=0$, replace Lemma~\ref{lemm:HorosphereInSphereEdge} with Lemma~\ref{lemm:HorosphereInSphereEdgeGenL}.  \end{proof}

\section{More on volumes}\label{secMoreOnVolumes}  In this section, we compute more volumes.  These results are used in previous sections.  Recall the definition of $\widetilde{\Ss}_T(\boldsymbol{z})$ in (\ref{eqnPullingBackOfSectionalNeigh2}) and that of $\widehat{\Ss}_T(\boldsymbol{z})$ in (\ref{eqn:SphericalSectionCoords}).  The following generalizes Theorem~\ref{thmHaarMeaSE}:
\begin{theo}\label{thmHaarMeaWidetildeSE}  We have that  \begin{align*}\mu\left(\left\{\widetilde{\Ss}_T(\boldsymbol{z}) E^{-1}(\boldsymbol{z}): \boldsymbol{z} \in \Dd\right\} \right)& \\ =\frac{d-1}{\zeta(d)} \int \int \int &\In_{\left\{\widehat{\Ss}_T(\boldsymbol{z}) n_+(\boldsymbol{z'}):  \boldsymbol{z} \in \Dd \right\}}\left(M \Phi^{-s}n_+(\widetilde{\boldsymbol{x}})\right)~\wrt \mu_H(M) e^{-d(d-1)s}~\wrt s~\wrt{\widetilde{\boldsymbol{x}}}.
\end{align*}

\end{theo}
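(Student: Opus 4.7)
The plan is to mirror the proof of Theorem~\ref{thmHaarMeaSE} almost verbatim, with Lemma~\ref{lemmMMyCoordinatesForSECuspNeigh} playing the role that Lemma~\ref{lemmMMyCoordinatesForSE} played there. The key observation is that Lemma~\ref{lemmMMyCoordinatesForSECuspNeigh} lets us rewrite the spherically--thickened set in coordinates adapted to the parametrization (\ref{eqnParametrizationForG}), turning the spherical thickening into an $N^+$--thickening of a family of Grenier boxes indexed by $\boldsymbol{z}\in\Dd$, namely the sets $\widehat{\Ss}_T(\boldsymbol{z})$ defined in (\ref{eqn:SphericalSectionCoords}). Once this rewriting is in place, the Haar measure calculation is formally identical to that of Theorem~\ref{thmHaarMeaSE}.

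First I would apply Lemma~\ref{lemmMMyCoordinatesForSECuspNeigh} to replace $\{\widetilde{\Ss}_T(\boldsymbol{z}) E^{-1}(\boldsymbol{z}): \boldsymbol{z} \in \Dd\}$ by $\{\widehat{\Ss}_T(\boldsymbol{z}) n_+(\boldsymbol{z}'):  \boldsymbol{z} \in \Dd\}$. Next, using the parametrization (\ref{eqnParametrizationForG}) together with the Haar measure formulas (\ref{eqnFormulaDy}) and (\ref{eqnHaarInMMyCoord}), I would expand $\mu(\cdot)$ as an integral over $H$ times $\RR^d\setminus(\RR^{d-1}\times\{0\})$. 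Writing the argument of the indicator function in Iwasawa form
\[
M\begin{pmatrix} y^{1/(d-1)} I_{d-1}&\tensor*[^t]{\boldsymbol{0}}{}  \\ \boldsymbol{0} & y^{-1}   \end{pmatrix}\begin{pmatrix} I_{d-1}&\tensor*[^t]{\boldsymbol{0}}{}  \\ y \boldsymbol{y'}/y & 1   \end{pmatrix}
\]
(after the change of variables $y_d\mapsto y^{-1}$, absorbing the Jacobian $y^{d-1}$), the integral over $\boldsymbol{y'}$ becomes an unconstrained $N^+$ integral because $\widehat{\Ss}_T(\boldsymbol{z}) n_+(\boldsymbol{z}')$ is preserved under the $n_+$ factor in the obvious way. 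Finally, changing variables $y=e^{(d-1)s}$ turns the $y$--integral into the $s$--integral with weight $e^{-d(d-1)s}$, producing
\begin{align*}
\mu\left(\left\{\widetilde{\Ss}_T(\boldsymbol{z}) E^{-1}(\boldsymbol{z}): \boldsymbol{z} \in \Dd\right\}\right) &= \frac{d-1}{\zeta(d)} \int\int\int \In_{\{\widehat{\Ss}_T(\boldsymbol{z}) n_+(\boldsymbol{z'}):  \boldsymbol{z} \in \Dd\}}\left(M \Phi^{-s}n_+(\widetilde{\boldsymbol{x}})\right) \\
&\qquad\qquad\qquad \wrt \mu_H(M)\, e^{-d(d-1)s}\,\wrt s\,\wrt{\widetilde{\boldsymbol{x}}},
\end{align*}
which is exactly the claim.

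The only mildly subtle point, and therefore the step I would pay most attention to, is that Lemma~\ref{lemmMMyCoordinatesForSECuspNeigh} presents the right--hand side as a \emph{disjoint} union of pieces indexed by $\widehat{k}\in\widetilde{K}_U(\boldsymbol{z})$ when $d\geq 3$. Since all of these pieces sit inside $\Ss_1$ with uniquely determined Grenier coordinates (by the injectivity statement in Corollary~\ref{coroSymmetricFactorCholesky}), the integrand $\In_{\{\cdots\}}$ is still an honest indicator of a measurable subset, so disjointness causes no double--counting and Fubini applies. Everything else is a line--by--line reproduction of the computation already carried out in Theorem~\ref{thmHaarMeaSE}, with $\Ss_{Tc^{-d/(d-1)}}$ replaced by $\widehat{\Ss}_T(\boldsymbol{z})$ throughout.
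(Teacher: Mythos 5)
Your proposal matches the paper's proof exactly: the paper also proves Theorem~\ref{thmHaarMeaWidetildeSE} by substituting Lemma~\ref{lemmMMyCoordinatesForSECuspNeigh} for Lemma~\ref{lemmMMyCoordinatesForSE} in the computation from Theorem~\ref{thmHaarMeaSE}. Your additional remark on disjointness of the pieces in Lemma~\ref{lemmMMyCoordinatesForSECuspNeigh} (via Corollary~\ref{coroSymmetricFactorCholesky}) is a correct and sensible observation, though the paper leaves it implicit.
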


\begin{proof}
Use Lemma~\ref{lemmMMyCoordinatesForSECuspNeigh} in place of Lemma~\ref{lemmMMyCoordinatesForSE} in the proof of Theorem~\ref{thmHaarMeaSE} to obtain the desired result.
\end{proof}

The following generalizes Theorem~\ref{thnHaarMeaSTEvsST0E}:
\begin{theo}\label{thnHaarMeaSTEvsST0ECuspidalVersion} 
We have that \begin{align*}\mu\left(\left\{\widetilde{S}_T(\boldsymbol{z}) E^{-1}(\boldsymbol{z}): \boldsymbol{z}\in \Dd\right\}\right)& = \frac {T_0^{d-1}} {T^{d-1}} \mu\left(\left\{\widetilde{S}(\boldsymbol{z}) E^{-1}(\boldsymbol{z}):  \boldsymbol{z} \in \Dd\right\}\right).\end{align*}

\end{theo}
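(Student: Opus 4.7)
The plan is to adapt the proof of Theorem~\ref{thnHaarMeaSTEvsST0E} by combining three ingredients: the Grenier coordinate description of $\widehat{\Ss}_T(\boldsymbol{z})$ from (\ref{eqn:SphericalSectionCoords}), the conjugation identity (\ref{eqnConjy1}) between $\Phi$ and $N^+$, and a change of variables to absorb the resulting scaling factor. The original proof of Theorem~\ref{thnHaarMeaSTEvsST0E} relied critically on the fact that $\Ss_{T_0}$ has $\widetilde{K} = K'$, allowing the $\widetilde{k}$-integral to factor; here $\widetilde{K}_U(\boldsymbol{z})$ depends on $\boldsymbol{z}$, so I will instead route the argument through Theorem~\ref{thmHaarMeaWidetildeSE}.

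First I would rewrite, using Lemma~\ref{lemmMMyCoordinatesForSECuspNeigh}, each slice as
\[
\widetilde{\Ss}_T(\boldsymbol{z})E^{-1}(\boldsymbol{z}) = \widehat{\Ss}_T(\boldsymbol{z}) n_+(\boldsymbol{z}').
\]
Inspecting (\ref{eqn:SphericalSectionCoords}), I note that $\widehat{\Ss}_T(\boldsymbol{z})$ and $\widehat{\Ss}_{T_0}(\boldsymbol{z})$ share all Grenier parameters except the $y_{d-1}$-range, which is scaled by the factor $T/T_0$. By (\ref{eqnRelCTAndC}) this scaling is precisely the effect of right-multiplication by $\Phi^{-\frac{1}{d}\log(T/T_0)}$, so
\[
\widehat{\Ss}_T(\boldsymbol{z}) = \widehat{\Ss}_{T_0}(\boldsymbol{z})\, \Phi^{-\frac{1}{d}\log(T/T_0)}.
\]

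Next I would commute this flow past $n_+(\boldsymbol{z}')$ via (\ref{eqnConjy1}), giving
\[
\Phi^{-\frac{1}{d}\log(T/T_0)} n_+(\boldsymbol{z}') = n_+\!\left(\tfrac{T_0}{T}\boldsymbol{z}'\right) \Phi^{-\frac{1}{d}\log(T/T_0)}.
\]
Since the flow factor is the same for every $\boldsymbol{z} \in \Dd$, it can be pulled outside the union:
\[
\{\widetilde{\Ss}_T(\boldsymbol{z}) E^{-1}(\boldsymbol{z}) : \boldsymbol{z} \in \Dd\} = \left(\bigcup_{\boldsymbol{z}\in\Dd} \widehat{\Ss}_{T_0}(\boldsymbol{z}) n_+\!\left(\tfrac{T_0}{T}\boldsymbol{z}'\right)\right) \Phi^{-\frac{1}{d}\log(T/T_0)}.
\]
Right-invariance of $\mu$ then reduces the claim to showing that rescaling the stable parameter by $T_0/T$ contributes exactly a factor $(T_0/T)^{d-1}$.

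Finally, I would apply Theorem~\ref{thmHaarMeaWidetildeSE} to both $\{\widehat{\Ss}_{T_0}(\boldsymbol{z}) n_+(\frac{T_0}{T}\boldsymbol{z}'):\boldsymbol{z}\in\Dd\}$ and $\{\widehat{\Ss}_{T_0}(\boldsymbol{z}) n_+(\boldsymbol{z}'):\boldsymbol{z}\in\Dd\}$, and perform the linear substitution $\widetilde{\boldsymbol{x}} \mapsto \tfrac{T_0}{T}\widetilde{\boldsymbol{x}}$ in the outer integral, whose Jacobian is $(T_0/T)^{d-1}$. The indicator transforms identically under the substitution because the constraint $\widetilde{\boldsymbol{x}} = \tfrac{T_0}{T}\boldsymbol{z}'$ becomes $\widetilde{\boldsymbol{x}} = \boldsymbol{z}'$, while the $M\Phi^{-s}$ membership in $\widehat{\Ss}_{T_0}(\boldsymbol{z})$ is untouched. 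Combining all factors yields the claimed ratio $T_0^{d-1}/T^{d-1}$. The main technical point to check carefully will be that the change of variables interacts cleanly with the union over $\boldsymbol{z}$; this works precisely because the bijection $\boldsymbol{z}\mapsto \boldsymbol{z}'$ is injective on $\Dd$ under the standing hypothesis (\ref{eqnRestrictionsOnD}), so the substitution is a single global rescaling of $N^+$ rather than something that varies with the fiber.
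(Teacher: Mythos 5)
Your argument is correct, but it follows a genuinely different path from the paper's own proof, which is considerably shorter. The paper works directly in Iwasawa coordinates $nak$ via Lemma~\ref{lemm:FullSecCoordLeftHaarHPhi} and (\ref{eqnFundDomCoordForG}): since $E^{-1}(\boldsymbol{z})\in K$ and $\Phi^{-\frac 1d\log(T/T_0)}$ commutes with $K'$, the Iwasawa decomposition of $\widetilde{S}_T(\boldsymbol{z})E^{-1}(\boldsymbol{z})$ differs from that of $\widetilde{S}(\boldsymbol{z})E^{-1}(\boldsymbol{z})$ only by right-translating the $a$-coordinate by $\Phi^{-\frac1d\log(T/T_0)}\in A$; translation-invariance of $\wrt a$ together with $\rho(\Phi^{-\frac1d\log(T/T_0)}) = (T_0/T)^{d-1}$ (from (\ref{eqnModularFct})) then produces the factor in three lines, without ever touching the $N^+$-parametrization. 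Your route instead passes through the $(H,\widetilde\Phi,N^+)$ parametrization: after Lemma~\ref{lemmMMyCoordinatesForSECuspNeigh}, you observe $\widehat{\Ss}_T(\boldsymbol{z}) = \widehat{\Ss}_{T_0}(\boldsymbol{z})\Phi^{-\frac1d\log(T/T_0)}$, commute $\Phi$ past $n_+(\boldsymbol{z}')$ via (\ref{eqnConjy1}) to place a single $\boldsymbol{z}$-independent $\Phi$-factor at the far right, annihilate it with right-invariance of $\mu$, and extract the ratio from the Jacobian of the residual $N^+$-rescaling after applying the integral formula underlying Theorem~\ref{thmHaarMeaWidetildeSE}. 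Each step is sound, and the transformation of the indicator under $\widetilde{\boldsymbol{x}}\mapsto\tfrac{T_0}{T}\widetilde{\boldsymbol{x}}$ goes through exactly as you say because the substitution sends the constraint $\widetilde{\boldsymbol{x}}=\tfrac{T_0}{T}\boldsymbol{z}'$ to $\widetilde{\boldsymbol{x}}=\boldsymbol{z}'$ while leaving the $H\widetilde\Phi$-component untouched. One small caveat: Theorem~\ref{thmHaarMeaWidetildeSE} is stated only for the set $\{\widetilde{\Ss}_T(\boldsymbol{z})E^{-1}(\boldsymbol{z})\}$, so to apply it to $\{\widehat{\Ss}_{T_0}(\boldsymbol{z})n_+(\tfrac{T_0}{T}\boldsymbol{z}')\}$ you should observe that its proof (via (\ref{eqnFormulaDy}), (\ref{eqnHaarInMMyCoord})) actually yields a general volume formula in $(H,\widetilde\Phi,N^+)$-coordinates valid for any measurable set. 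Your approach has the merit of mirroring the $N^+$-rescaling arguments the paper already uses in Lemmas~\ref{lemmUnstableVolumesNAKCase} and~\ref{lemmUnstableVolumes2}, but the paper's modular-function observation buys a substantially shorter proof.
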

\begin{proof}

Using Iwasawa coordinates, we have that \[\wrt{\mu} = \widetilde{C}_0 \rho(a)~\wrt n~\wrt a~\wrt k,\] where $\widetilde{C}_0>0$ is a constant. Equation (\ref{eqnPullingBackOfSectionalNeigh2}) states that $\widetilde{S}_T(\boldsymbol{z})$ and $\widetilde{S}(\boldsymbol{z})$ are the same in Iwasawa coordinates except for a factor of $\Phi^{-\frac 1 d \log(T/T_0)}$.  Equation (\ref{eqnModularFct}) implies that we obtain the factor \[\frac{T_0^{d-1}}{T^{d-1}},\] which is the desired result.
\end{proof}

\end{document}